\renewcommand{\labelenumi}{\theenumi}
\renewcommand{\theenumi}{{\it(\roman{enumi})}}
\newcommand*\fixitem {\item[]%
  \refstepcounter{enumi}\hskip-\leftmargin\labelenumi\hskip\labelsep}
\def\AP{\ensuremath{{\bf(AP)}}}
\def\APN{\ensuremath{{\bf(AP_{\mathbb{N}})}}}
\def\API{\ensuremath{{\bf(AP_{\mathbb{I}})}}}
\def\Star{\ensuremath{{\bf(\star)}}}
\def\SStar{\ensuremath{{\bf(\star\star)}}}
\def\SSStar{\ensuremath{{\bf(\star\star\star)}}}
\def\CCC{\ensuremath{{\bf(CCC)}}}
\def\CC{\ensuremath{{\bf(CC)}}}
\def\CCN{\ensuremath{{\bf(CC_{\mathbb{N}})}}}
\def\sNIP{\ensuremath{{\bf(NIP_{\text{s}})}}}
\def\wNIP{\ensuremath{{\bf(NIP_{\text{w}})}}}
\def\MCP{\ensuremath{{\bf(MCP)}}}
\def\BWP{\ensuremath{{\bf(BWP)}}}
\def\ES{\ensuremath{{\bf(ES)}}}
\def\CA{\ensuremath{{\bf(CA)}}}
\def\IVT{\ensuremath{{\bf(IVT)}}}
\def\BVT{\ensuremath{{\bf(BVT)}}}
\def\EVT{\ensuremath{{\bf(EVT)}}}
\def\CFT{\ensuremath{{\bf(CFT)}}}
\def\IFT{\ensuremath{{\bf(IFT)}}}
\def\UCT{\ensuremath{{\bf(UCT)}}}
\def\CVT{\ensuremath{{\bf(CVT)}}}
\def\RT{\ensuremath{{\bf(RT)}}}
\def\PCP{\ensuremath{{\bf(PCP)}}}
\def\UAS{\ensuremath{{\bf(UAS)}}}
\def\ADT{\ensuremath{{\bf(ADT)}}}
\def\DIT{\ensuremath{{\bf(DIT)}}}
\def\RIT{\ensuremath{{\bf(RIT)}}}
\def\IAT{\ensuremath{{\bf(IAT)}}}
\def\FTCi{\ensuremath{{\bf(FTC1)}}}
\def\FTCii{\ensuremath{{\bf(FTC2)}}}
\def\LCL{\ensuremath{{\bf(LCL)}}}
\def\MVT{\ensuremath{{\bf(MVT)}}}
\def\eMVT{\ensuremath{{\bf(eMVT)}}}
\def\iMVT{\ensuremath{{\bf(iMVT)}}}
\def\TT{\ensuremath{{\bf(TT_L)}}}
\def\lpar{\left(}
\def\rpar{\right)}
\def\lbrak{\left[}
\def\rbrak{\right]}
\def\abClosed{\lbrak a,b\rbrak}
\def\abOpen{\lpar a,b\rpar}
\def\N{\mathbb{N}}
\def\R{\mathbb{R}}
\def\Inf{\mathbb{I}}
\def\seqlimOp{\displaystyle\lim_{n\rightarrow\infty}}
\def\seqlimOpIII{\displaystyle\lim_{k\rightarrow\infty}}
\def\aseq{(a_k)_{k\in\N}}
\def\anseq{\lpar a_n\rpar_{n\in\N}}
\def\bseq{(b_k)_{k\in\N}}
\def\cseq{(c_n)_{n\in\N}}
\def\csubseq{\lpar c_{N_k}\rpar_{k\in\N}}
\def\tseq{(t_n)_{n\in\N}}
\def\tsubseq{\lpar t_{N_k}\rpar_{k\in\N}}
\def\Nkseq{(N_k)_{k\in\N}}
\def\nseq{(n)_{n\in\N}}
\def\Iseq{\lpar I_k\rpar_{k\in\N}}
\def\sseq{(s_n)_{n\in\N}}
\def\ssubseq{\lpar s_{N_k}\rpar_{k\in\N}}
\def\xseq{(x_n)_{n\in\N}}
\def\xsubseq{\lpar x_{N_k}\rpar_{k\in\N}}
\def\into{\longrightarrow}
\def\theSet{\mathcal{S}}
\def\sub{\subseteq}
\def\setdiff{\backslash}
\def\UIC{\ensuremath{{\bf(I1)}}}
\def\NTD{\ensuremath{{\bf(I2)}}}
\def\AAC{\ensuremath{{\bf(I3)}}}
\def\RIC{\ensuremath{{\bf(I4)}}}
\def\UIK{\ensuremath{{\bf(I5)}}}
\def\setOpen{\mathcal{O}}
\def\txthalf{\textstyle{\frac{1}{2}}}
\def\Base{\mathscr{B}}
\def\Filter{\mathscr{T}}
\def\lFilter{\mathscr{L}}
\def\rFilter{\mathscr{R}}
\def\IntFilter{\mathscr{I}}
\def\Cover{\mathscr{C}}
\def\Riemann{\mathcal{R}}
\def\xcLim{\displaystyle{\lim_{x\rightarrow c}}\ }
\def\xcLimL{\displaystyle{\lim_{x\rightarrow c-}}}
\def\xcLimR{\displaystyle{\lim_{x\rightarrow c+}}}
\def\BcLim{\displaystyle{\lim_{\Base\rightarrow c}}\ }
\def\tcLim{\displaystyle{\lim_{t\rightarrow c}}\ }
\def\xaLim{\displaystyle{\lim_{x\rightarrow a}}\ }
\def\xaLimR{\displaystyle{\lim_{x\rightarrow a+}}}
\def\xbLim{\displaystyle{\lim_{x\rightarrow b}}\ }
\def\xbLimL{\displaystyle{\lim_{x\rightarrow b-}}}
\def\id{\operatorname{id}}
\def\abIntUp{\overline{\int_a^b}}
\def\abIntLow{\underline{\int_a^b}}
\def\abInt{\int_a^b}
\def\aaIntUp{\overline{\int_a^a}}
\def\aaIntLow{\underline{\int_a^a}}
\def\StepsSet{\mathscr{S}}
\def\abSteps{\StepsSet[a,b]}
\def\acSteps{\StepsSet[a,c]}
\def\cbSteps{\StepsSet[c,b]}
\def\abParts{\mathscr{P}[a,b]}
\def\iff{\Longleftrightarrow}
 \newtheorem{theorem}{Theorem}
 \newtheorem{lemma}[theorem]{Lemma}
  \newtheorem{proposition}[theorem]{Proposition}
 \newtheorem{corollary}[theorem]{Corollary}
\theoremstyle{definition}
\newtheorem{example}[theorem]{Example}
\theoremstyle{plain}
\begin{document}
\hrule \vspace{3pt}\medskip
\noindent{ {\Large\bf \!Five Circles: Real Analysis Theorems equivalent to Completeness} \hfill\\ by: \emph{Rafael Cantuba}\footnote{\label{dms}Associate Professor, Department of Mathematics and Statistics,  De La Salle University, 2401 Taft Ave., Malate, Manila 1004, Philippines, ORCID: 0000-0002-4685-8761}} \medskip\hrule\ 

\begin{quote}  \textsc{{Abstract}}. This is an exposition of the work of O. Riemenschneider about five ``circles'' of implications relating real analysis theorems each equivalent to the Dedekind completeness of the real field. These circles cover five elements of real function theory: convergence, connectedness, differentiability, compactness and integration.\\

\textsc{Mathematics Subject Classification (2020)}: 26-01

\textsc{Keywords}: real analysis, topology, advanced calculus
\end{quote}

\section{Introduction}

Proofs in elementary real analysis may be categorized into two types. There are proofs that are immediate from set theory and logic, such as those that involve the use of nested quantifiers. The real difficulty comes from the other kind of proofs that make use of the Completeness Axiom, or the Dedekind completeness (least upper bound property) of the real number system. The survey \cite{dev14} lists 72 equivalent forms of the completeness axiom, ranging from the Monotone Convergence Principle, the Intermediate Value Theorem, to the Mean Value Theorem, the Uniform Continuity Theorem, one form of the Fundamental Theorem of Calculus, and two forms of the Arzel\`a-Ascoli Theorem. [In \cite{can24}, we also find that Littlewood's Three Principles from measure theory are recent additions to the list.] Two 2013 papers \cite{pro13,tei13} independently explored this perspective on real analysis, but there is a third one that actually came earlier\textemdash a year 2001 unpublished but online-available work \cite{rie01}, of which this exposition is about. It is said to have been the result of an attempt to organize proofs of fundamental real analysis theorems in such a way that one can see their equivalence to the Completeness Axiom without much extra work, but with the ``price'' of having an early introduction to topological concepts. This we do not necessarily see as a disadvantage. It seems that the ``busy'' nature of the endeavor (proving the equivalences) is a good way to get deep into topology in a pace that usually does not happen when traditional topology texts are used. This is, of course, just an opinion.

The real analysis principles are categorized into five ``circles'' where each ``circle'' is more precisely a finite sequence of implications that begins with one statement \Star\  and ends up back to \Star. Any two of the five circles overlap, and the presence of these statements that are at the intersections of circles shows us that the statements in all five circles are equivalent to Dedekind completeness. The themes in these five circles are important notions in real analysis: convergence, connectedness, differentiation, compactness and integration. The contents of this exposition are organized according to these five themes.

\begin{enumerate}
    \item \emph{Convergence.} In the proof of Theorem~\ref{FirstCircle} we show the equivalence of real analysis principles that explicitly involve sequences of real numbers. These include the Cauchy Convergence Criterion, the Monotone Convergence Principle, the Bolzano-Weierstrass Property of (sequences in) $\R$, and two forms of the Nested Intervals Principle. For the benefit of the real analysis student, all the necessary definitions, and theorems with proofs that rely only on these definitions, may be found in Section~\ref{FirstCirclePrelims}, where we introduce the notions of sequence, monotonically increasing sequence, subsequence, Cauchy sequence, bounded sequence, convergent sequence and nested intervals. This is the order that they were defined, which is in accordance to the flow of implications in the proof of Theorem~\ref{FirstCircle}. This style is also used in the preliminaries for the other circles of real analysis principles. Also, some of these principles each have to be joined by logical conjunction with the Archimedean Principle (or the weaker ``Countable Cofinality of $\R$'') in order for it to be included in the circle. Thus, we also have a short set of preliminaries in Section~\ref{APPrelims} devoted to all proofs that depend on the Archimedean Principle.
    \item \emph{Connectedness.} Theorem~\ref{SecondCircle} deals with real analysis principles related to the topological notion of connectedness. These real analysis principles include the Least Upper Bound Property or Existence of Suprema, the Intermediate Value Theorem, Dedekind's Cut Axiom, and some statements about connectedness of intervals. The corresponding preliminaries may be found in Section~\ref{SecondCirclePrelims} in which we introduce upper bounds, suprema, interior points and open sets, points of closure and closed sets, connected sets, total disconnectedness, continuity at a point, continuity on a set, the Pasting Lemma, and also cuts, gaps and cut points.
    \item \emph{Differentiability.} In Theorem~\ref{ThirdCircle}, we collect some of the fundamental theorems of Differential Calculus, such as the Extreme Value Theorem, Rolle's Theorem, the Mean Value Theorem, its extended form, which also called Cauchy's Mean Value Theorem, and we also have Taylor's Theorem, the characterization of polynomials of degree at most $n$ by a zero $(n+1)$st order derivative, the convexity of functions with nonnegative second derivative, the monotonicity of functions with nonnegative first derivative, and the statement that a function with a zero derivative is constant. In the corresponding preliminaries section, which is Section~\ref{ThirdCirclePrelims}, we develop the machinery: limits defined in terms of the topological notion of a filter base, the notion of differentiability, and the algebra of limits, continuous functions and differentiable functions.
    \item \emph{Compactness.} Essentially all real analysis theorems are about compactness, but for the fourth circle of real analysis principles, which are in Theorem~\ref{FourthCircle}, we have those with statements or proofs that explicitly or superficially involve compactness. These are the Lebesgue Covering Lemma, the Uniform Continuity Theorem, the boundedness of functions continuous on a closed and bounded interval, the uniform approximation of continuous functions by step functions, and of course, the statement that the unit interval $[0,1]$ is compact. The preliminaries are in Section~\ref{FourthCircle}, where we introduce the notions of cover and subcover, compact set, Lebesgue number, uniform continuity, partition and step function. 
    \item \emph{Integration.} The theory culminates in proving equivalence between theorems concerning the elementary integral (Darboux/Riemann), which are collected in Theorem~\ref{FifthCircle}, and these include the Darboux/Riemann integrability of functions continuous on a closed and bounded interval, two forms of the Fundamental Theorem of Calculus, the statement that nonnegative continuous functions on a closed and bounded interval have monotonically increasing antiderivatives, and that the difference of any two antiderivatives is constant. As will be seen in the proofs, in comparison to the previous circles, there is increased dependence here on previously proven equivalences. The preliminaries in Section~\ref{FifthCirclePrelims} include integrals of step functions, lower and upper Darboux integrals, Riemann sums, integrable functions defined by the equality of upper and lower Darboux integrals, the Riemann integral as the limit of a filter base of sets of Riemann sums, boundedness of integrable functions, and the additivity of integrals.
\end{enumerate}

The correct technical setting for the proofs is an arbitrary ordered field, which is indeed the approach in \cite{rie01}. However, we chose instead to use a naive approach of doing the proofs in the setting of just the real field $\R$, and this is with the real analysis student in mind. Practically, one of the real analysis principles may be chosen as the completeness axiom for $\R$, and the rest of the principles in the five circles may be proven within $\R$ anyway. The drawback is that some real analysis principles in \cite{rie01} turned out not to be within the scope of this exposition. These real analysis principles (which are mainly about series) that are in \cite{rie01} but are not covered in this work are the ones that are outside the five circles: theorems that, in order to be proven equivalent to Dedekind completeness, would have to involve the following argument type: an ordered subfield of $\R$ would have to be assumed to satisfy the principle as an axiom, and then the subfield shall be proven to be $\R$ itself. It seems awkward to carry out arguments of this type within the said naive approach. 

This exposition is intended for an advanced undergraduate or a beginning graduate student that knows basics of proving in pure mathematics: nested quantifiers, negation of statements with nested quantifiers; rules of inference; direct proofs, proofs by contradiction and by contraposition; mathematical induction; proofs of set inclusions and set equations; using the definition of set unions and intersections in an argument (not to be underestimated); when something can be assumed ``without loss of generality'' or WLOG; using images and inverse images of functions. These are not obtainable from a calculus course or its natural progression to differential equations or to ``higher'' applied mathematics. True higher mathematics has mathematical sophistication: the level of difficulty increases, the method of study increases in complexity, but not necessarily the complexity of the objects of study. Real analysis is concerned only with sequences, intervals and basic objects in calculus, but the mathematical sophistication is high. [See also the introductory paragraph in Section~\ref{FourthCircleSec} about how the topological notion of compactness is a ``gate-keeper topic.'']

\section{Preliminaries} The thematic structure of this exposition is based on the level of proving needed. If the arguments proceed only from definitions, then these are the preliminaries that can be found in Sections~\ref{FirstCirclePrelims}--\ref{FifthCirclePrelims}. If a real analysis argument should be based on Dedekind completeness, then this is the very point of the exposition, and these are the arguments that are in the proofs for the five circles of real analysis principles in Theorems~\ref{FirstCircle}--\ref{FifthCircle}. As detailed in the introduction, the preliminaries needed for Theorems~\ref{FirstCircle}--\ref{FifthCircle} can be found in Sections~\ref{FirstCirclePrelims}--\ref{FifthCirclePrelims}, respectively. Section~\ref{APPrelims} is somewhat midway between Sections~\ref{FirstCirclePrelims}--\ref{FifthCirclePrelims} and Theorems~\ref{FirstCircle}--\ref{FifthCircle}. The Archimedean Principle is a consequence of Dedekind completeness, but the converse (which is beyond the scope of this exposition) is false. Thus, the proofs in Section~\ref{APPrelims} are technically not immediate from real analysis definitions, but are not as involved as the ``compactness arguments'' for Theorems~\ref{FirstCircle}--\ref{FifthCircle}.

The basic intention is to supplement \cite{rie01} with all the basic concepts and proofs that a beginner needs in order to understand the circles of implications, or equivalently, to show that the theory that can be gleaned from the five circles of real analysis principles may indeed be bridged into introductory level real analysis.

\subsection{Sequences of Real Numbers}\label{FirstCirclePrelims}

The set of all positive integers shall be denoted by $\N$, and a function $\N\into\R$ shall be called a \emph{sequence}. The usual notation for functions, such as $c : \N\into\R$ with $c:n\mapsto c(n)$, is customarily NOT used for sequences. Instead, we call $c_n:=c(n)$ as  the \emph{$n$th term} of the sequence with $n$ as the \emph{index} of the term $c_n$. The sequence itself is denoted symbolically by enclosing the $n$th term in parenthesis and indicating as a further subcript that $n\in\N$ is used to index the terms: $\cseq$ which may be read as ``the sequence with terms $c_n$.'' 

\begin{proposition}\label{monoDefProp} Given a sequence $\cseq$, the following are equivalent.
\begin{enumerate}\item\label{monoDefI} For any $m,n\in\N$, if $m< n$, then $c_m\leq c_{n}$ (respectively, $c_m< c_{n}$).
\item\label{monoDefII} For each $n\in\N$, the inequality $c_n\leq c_{n+1}$ (respectively, $c_n< c_{n+1}$) holds.
\end{enumerate}
[In such a case, $\cseq$ is said to be a \emph{monotonically increasing} (respectively, \emph{strictly increasing\footnote{A strictly increasing sequence is monotonically increasing, but the converse is false: any constant sequence (any two terms are equal) is monotonically increasing but not strictly increasing.}}) sequence.]
\end{proposition}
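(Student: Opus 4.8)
The plan is to establish the two implications \emph{(i)}$\Rightarrow$\emph{(ii)} and \emph{(ii)}$\Rightarrow$\emph{(i)} separately, running each argument once with $\leq$ throughout but observing that every step remains valid with $<$ in place of $\leq$, since the only facts about the relation that the proof uses are its transitivity on $\R$ and the fact that such inequalities can be chained. So the ``respectively'' clause will come for free, with no separate work.

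The implication \emph{(i)}$\Rightarrow$\emph{(ii)} is immediate: fix $n\in\N$; since $n<n+1$, applying hypothesis \emph{(i)} to the pair of indices $n$ and $n+1$ yields $c_n\leq c_{n+1}$ at once. For the converse \emph{(ii)}$\Rightarrow$\emph{(i)}, I would fix $m\in\N$ and prove by induction on $n\in\N$ the statement $P(n)$: ``if $m<n$, then $c_m\leq c_n$.'' For $n\leq m$ there is nothing to prove, because the antecedent $m<n$ is false; this anchors the induction (in particular $P(1)$ holds). For the inductive step, assume $P(n)$ and suppose $m<n+1$, i.e. $m\leq n$. If $m=n$, then $c_m=c_n\leq c_{n+1}$ by \emph{(ii)}. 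If $m<n$, then $c_m\leq c_n$ by the inductive hypothesis $P(n)$ and $c_n\leq c_{n+1}$ by \emph{(ii)}, so $c_m\leq c_{n+1}$ by transitivity of $\leq$. Thus $P(n+1)$ holds, completing the induction; since $m\in\N$ was arbitrary, \emph{(i)} follows. An equivalent bookkeeping is to induct on $k\geq 1$ with $n=m+k$, telescoping $c_m\leq c_{m+1}\leq\cdots\leq c_{m+k}$.

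There is no serious obstacle: the mathematical content is just transitivity of the order relation, packaged by an induction, and the strict case needs nothing beyond transitivity of $<$, which is on the same footing. The only point deserving a moment's care is the vacuous handling of the cases $n\leq m$, so that the induction is genuinely anchored rather than starting mid-stream. As for the footnote's aside, that a strictly increasing sequence is monotonically increasing is simply the termwise application of $a<b\Rightarrow a\leq b$, while a constant sequence satisfies \emph{(ii)} for $\leq$ with equality but never satisfies the strict form, showing the converse fails.
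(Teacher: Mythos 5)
Your proof is correct and uses essentially the same approach as the paper's: the forward direction is immediate from $n<n+1$, and the converse is by induction on $n$ using transitivity to chain the inequalities from (\emph{ii}). The only cosmetic difference is that you fix $m$ before inducting on $n$ and then split the inductive step into the cases $m=n$ and $m<n$, whereas the paper inducts on $n$ with $m$ quantified inside the inductive statement (``for all $m<n$, $c_m\leq c_n$''), which lets it package the whole chain $c_1\leq\cdots\leq c_n$ at once and avoid the case split; both are valid and amount to the same telescoping idea.
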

\begin{proof}\ref{monoDefI} $\implies$ \ref{monoDefII} Use the fact that $n< n+1$.\\

\noindent \ref{monoDefII} $\implies$ \ref{monoDefI} We use induction on $n$. If $n=1$, then there is no positive integer $m<n$ and the desired statement is vacuously true. Suppose that for some $n\in\N$, for all positive integers $m<n$, we have $c_m\leq c_{n}$ (respectively, $c_m<c_n$). Equivalently, $c_1\leq c_2\leq \cdots\leq c_n$ (respectively, $c_1< c_2< \cdots < c_n$). But then, we simply use \ref{monoDefII} to produce $c_1\leq c_2\leq \cdots\leq c_n\leq c_{n+1}$ (respectively, $c_1< c_2< \cdots < c_n<c_{n+1}$), which implies that for all positive integers $m<n+1$, we have $c_m\leq c_{n+1}$ (respectively, $c_m< c_{n+1}$). By induction, we get the desired statement.
\end{proof}

If $\Nkseq$ is a sequence such that, for all $k\in\N$, we have $N_k\in\N$, or equivalently, that the rule of assignment $k\mapsto N_k$ is a function $\N\into\N$, then, given a sequence $\cseq$, the composition of $k\mapsto N_k$ with $n\mapsto c_n$ is again a function $\N\into\R$, or is again a sequence. We may denote it by $\csubseq$. If we have the additional condition that $\Nkseq$ is strictly increasing, then $\csubseq$ is said to be a \emph{subsequence} of $\cseq$.

\begin{proposition}\label{subseqNkkProp} If $\csubseq$ is a subsequence of $\cseq$, then for all $k\in\N$, we have $N_k\geq k$.
\end{proposition}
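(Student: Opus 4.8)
The plan is to prove this by induction on $k$, using the two pieces of structure carried by a subsequence: that every $N_k$ lies in $\N$, and that $\Nkseq$ is strictly increasing. By the equivalence recorded in Proposition~\ref{monoDefProp}, strict increasingness of $\Nkseq$ means precisely that $N_k< N_{k+1}$ for every $k\in\N$, and this is the only form of the hypothesis I expect to need.

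For the base case $k=1$: since $N_1\in\N$ and every positive integer is at least $1$, we have $N_1\geq 1$. For the inductive step, I would assume $N_k\geq k$ for some fixed $k\in\N$ and aim to deduce $N_{k+1}\geq k+1$. From strict increasingness, $N_{k+1}> N_k$, and combining with the inductive hypothesis gives $N_{k+1}> N_k\geq k$, hence $N_{k+1}> k$. The last move is to upgrade the strict inequality $N_{k+1}> k$ to $N_{k+1}\geq k+1$, which is where the real (if elementary) content sits.

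The main obstacle is therefore exactly that final upgrade: it relies on the discreteness of $\N$, namely the fact that there is no positive integer strictly between $k$ and $k+1$, so that $N_{k+1}> k$ with $N_{k+1}\in\N$ forces $N_{k+1}\geq k+1$. This is a basic property of the positive integers that I would either invoke directly or, if the paper prefers, derive from the well-ordering of $\N$ or from a prior characterization of $\N$ inside $\R$. Once that is in hand, induction closes the argument and yields $N_k\geq k$ for all $k\in\N$.

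Everything else is routine; there are no estimates or limiting arguments, and the proof does not touch the sequence $\cseq$ at all\textemdash only the index sequence $\Nkseq$ matters.
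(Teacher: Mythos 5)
Your proof is correct and takes essentially the same approach as the paper's: induction on $k$, with the base case $N_1\geq 1$ from $N_1\in\N$, and the inductive step using strict increasingness to get $N_{k+1}>N_k\geq k$ and then the discreteness of $\N$ to upgrade $N_{k+1}>k$ to $N_{k+1}\geq k+1$. The paper phrases the discreteness step informally ("the smallest possible value for the integer $N_{k+1}$ is one integer above $k$"), but the content is identical to what you describe.
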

\begin{proof}     Since $N_1\in\N$, we have $N_1\geq 1$. If, for some $k\in\N$, we have $N_k\geq k$, by the definition of subsequence, $N_{k+1}>N_k\geq k$, so $N_{k+1}>k$. This strict inequality means that the smallest possible value for the integer $N_{k+1}$ is one integer above $k$. That is, $N_{k+1}\geq k+1$. By induction, we obtain the desired conclusion.
\end{proof}

A sequence $\cseq$ is said to be a \emph{Cauchy sequence} if for each $\varepsilon>0$, there exists $N\in\N$ such that for all $m,n\in\N$ if $m,n\geq N$, then $|c_m-c_n|<\varepsilon$.

\begin{lemma}\label{CCCtoMCPLem}
If $(c_n)_{n\in\N}$ is monotonically increasing but is not a Cauchy sequence, then there exists $\varepsilon>0$ and a subsequence $(c_{N_k})_{k\in\N}$ of $(c_n)_{n\in\N}$ such that, for each $k\in\N$, we have $c_{N_{k+1}}\geq c_{N_1}+k\varepsilon$.
\end{lemma}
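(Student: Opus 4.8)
The plan is to start from the negation of the Cauchy condition and build the required subsequence recursively, using monotonicity to convert an absolute-value estimate into an order estimate.

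First I would unpack what it means for $\cseq$ not to be a Cauchy sequence: there exists $\varepsilon>0$ such that for every $N\in\N$ there are indices $m,n\geq N$ with $|c_m-c_n|\geq\varepsilon$. Since $\cseq$ is monotonically increasing, for such a pair I may assume without loss of generality that $m\leq n$, and then $|c_m-c_n|=c_n-c_m$. The strict positivity $c_n-c_m\geq\varepsilon>0$ forces $m\neq n$, hence $m<n$, and since $m\geq N$ this gives $n>N$. Using monotonicity once more, $c_m\geq c_N$, so $c_n\geq c_m+\varepsilon\geq c_N+\varepsilon$. Thus I obtain the cleaner intermediate statement: there is an $\varepsilon>0$ such that for every $N\in\N$ there exists an index $n>N$ with $c_n\geq c_N+\varepsilon$.

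With this $\varepsilon$ fixed, I would construct $\Nkseq$ by recursion. Put $N_1:=1$, and having defined $N_k$, apply the intermediate statement with $N=N_k$ to obtain an index $N_{k+1}>N_k$ with $c_{N_{k+1}}\geq c_{N_k}+\varepsilon$. Because $N_{k+1}>N_k$ for every $k$, the sequence $\Nkseq$ is strictly increasing, so $\csubseq$ is genuinely a subsequence of $\cseq$, and by construction $c_{N_{k+1}}\geq c_{N_k}+\varepsilon$ for all $k\in\N$.

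It then remains to iterate this last inequality, which I would do by induction on $k$. For $k=1$ the claim $c_{N_2}\geq c_{N_1}+1\cdot\varepsilon$ is precisely the first instance of the construction; assuming $c_{N_{k+1}}\geq c_{N_1}+k\varepsilon$, the construction yields $c_{N_{k+2}}\geq c_{N_{k+1}}+\varepsilon\geq c_{N_1}+(k+1)\varepsilon$, closing the induction. I do not expect a real obstacle; the one step that needs care is the passage from the negated Cauchy condition — which only supplies a pair of indices, both $\geq N$, with large absolute difference — to a single index strictly above $N$ whose term exceeds $c_N$ by at least $\varepsilon$, and this is exactly where monotonicity is invoked twice: once to discard the absolute value and once to replace $c_m$ by the smaller $c_N$.
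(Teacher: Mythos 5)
Your proof is correct, and it is structurally the same strategy as the paper's: negate the Cauchy condition, use monotonicity to convert the absolute-value gap into an order statement, build the subsequence by recursion, and close with an induction on $k$. The one genuine difference is your intermediate statement. The paper proves a two-index version [\SStar]: for every $N$ there exist $m>n\geq N$ with $c_m\geq c_n+\varepsilon$, and must then run a recursion that tracks both auxiliary indices $m_k$ and $N_k$, feeding $N=m_{k-1}+1$ into \SStar\ at each stage and invoking monotonicity again inside the recursion to pass from $c_{m_{k-1}}$ to $c_{N_k}$. You instead apply monotonicity twice up front (once to drop the absolute value, once to replace $c_m$ by $c_N$) and land on the cleaner single-index statement that for every $N$ there is $n>N$ with $c_n\geq c_N+\varepsilon$. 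Anchoring the drift at $c_N$ itself, rather than at some $c_n$ with $n\geq N$, lets the subsequence construction run in one variable with $N_1=1$ and $N_{k+1}$ obtained directly from $N_k$, which is a real simplification of the bookkeeping. What the paper's version buys is nothing logically extra here; your route is simply the tighter packaging of the same idea, and the final induction giving $c_{N_{k+1}}\geq c_{N_1}+k\varepsilon$ is identical.
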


\begin{proof} Since $(c_n)$ is not a Cauchy sequence, there exists an $\varepsilon > 0$ such that
    \begin{enumerate}
        \item[\Star] for all $N\in\N$, there exist $m,n\in\N$ such that $m,n\geq N$ and $|c_m - c_n| \ge \varepsilon$.
    \end{enumerate}
    We show that the above condition implies
        \begin{enumerate}
        \item[\SStar] for all $N\in\N$, there exist $m,n\in\N$ such that $m>n\geq N$ and $c_m  \ge c_n+\varepsilon$.
    \end{enumerate}
Given $N\in\N$, suppose the integers $m,n$ from \Star\   are equal. Since $k\mapsto c_k$ is a function, we have $c_m=c_n$, which implies $0=|c_m-c_n|\geq \varepsilon>0$, and we get the contradiction $0>0$. Henceforth, $m\neq n$, and we assume, WLOG, that $m>n$. Since $\cseq$ is monotonically increasing, this implies $c_m\geq c_n$, or that $c_m-c_n\geq 0$, so the nonnegative number $c_m-c_n$ must be equal to its absolute value. That is, $c_m-c_n=|c_m-c_n|\geq \varepsilon$, which implies $c_m\geq c_n+\varepsilon$, and this proves \SStar\  .

    We now use \SStar\   to construct a subsequence of $\cseq$. For $N=1$, there exist $m_1,N_1\in\N$ with $m_1>N_1\geq 1$ such that $c_{m_1}  \ge c_{N_1}+\varepsilon$. We use \SStar\   on $N=m_1+1$ to obtain $m_2,N_2\in\N$ with $m_2>N_2\geq N=m_1+1>m_1>N_1\geq 1$ and $c_{m_2}\geq c_{N_2}+\varepsilon$. From the previous inequalities, we obtain $N_2>m_1$. Since $\cseq$ is monotonically increasing, $c_{N_2}\geq c_{m_1} \geq c_{N_1}+\varepsilon$, so $c_{N_2} \geq c_{N_1}+\varepsilon$. Suppose that for some integer $k\geq 2$, we have determined the terms $c_{m_t}$ and $c_{N_t}$ of $\cseq$ for all positive integers $t<k$ with $m_t>N_t\geq t$, with $N_1<N_2<\cdots<N_{k-1}$, and if $t\leq k-2$, then $c_{N_{t+1}}\geq c_{N_t}+\varepsilon$. We use \SStar\   on $N=m_{k-1}+1$ to produce $m_k,N_k\in\N$ such that $m_k>N_k\geq N=m_{k-1}+1>m_{k-1}>N_{k-1}\geq k-1$ (in particular, $N_k>N_{k-1}$ and $N_k>m_{k-1}$) and also $c_{m_k}\geq c_{N_k}+\varepsilon$. From $N_k>m_{k-1}$, we obtain $c_{N_k}\geq c_{m_{k-1}}$ and from the inductive hypothesis, $c_{m_{k-1}}\geq c_{N_{k-1}}+\varepsilon$, so $c_{N_k}\geq c_{k-1}+\varepsilon$. By induction, we have produced a sequence $\csubseq$ where $\Nkseq$ is a strictly increasing sequence, and 
    \begin{flalign}
       && c_{N_{k+1}} &\geq c_{N_k}+\varepsilon, &(\mbox{for all }k\in\N).\label{notCauchyEQ}
    \end{flalign}
    Since $\Nkseq$ is strictly increasing, $\csubseq$ is indeed a subsequence of $\cseq$.

    We can now prove the final inequality $c_{N_{k+1}}\geq c_{N_1}+k\varepsilon$ by induction. Setting $k=1$ in \eqref{notCauchyEQ} we obtain the desired inequality at $k=1$. If the desired inequality holds for some $k\in\N$, then adding $\varepsilon$ to both sides, $c_{N_{k+1}}+\varepsilon\geq c_{N_1}+(k+1)\varepsilon$, where the left-hand side, according to \eqref{notCauchyEQ}, is at most $c_{N_{k+2}}$, and this completes the induction.
\end{proof}

Given $\theSet\sub\R$, a sequence $\cseq$ is said to be a sequence \underline{\emph{in}} $\theSet$ if for any $n\in\N$, we have $c_n\in\theSet$.

\begin{proposition}\label{boundedDefProp} Given a sequence $\cseq$, the following are equivalent.
\begin{enumerate}\item\label{boundedDefI} There exists $M>0$ such that for each $n\in\N$, we have $|c_n|\leq M$.
\item\label{boundedDefII} There exists an interval $\abClosed$ such that $\cseq$ is a sequence in $\abClosed$.
\end{enumerate}
[In such a case, $\cseq$ is said to be a \emph{bounded} sequence.]
\end{proposition}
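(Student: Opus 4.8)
The plan is to prove the two implications separately, each by a direct construction; no appeal to completeness, or even to the Archimedean Principle, is needed.

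For \ref{boundedDefI} $\implies$ \ref{boundedDefII}: assuming there is $M>0$ with $|c_n|\leq M$ for every $n\in\N$, the meaning of absolute value gives $-M\leq c_n\leq M$ for each $n$, which is precisely the assertion $c_n\in[-M,M]$. Hence the interval $[a,b]:=[-M,M]$ does the job, and $\cseq$ is a sequence in $[a,b]$ directly from the definition of ``sequence in $\theSet$'' applied to $\theSet=[-M,M]$.

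For \ref{boundedDefII} $\implies$ \ref{boundedDefI}: assuming $\cseq$ is a sequence in some interval $[a,b]$, we have $a\leq c_n\leq b$ for all $n$. I would take $M:=|a|+|b|+1$, which is positive because $|a|,|b|\geq 0$. To verify $|c_n|\leq M$, split on the sign of $c_n$: if $c_n\geq 0$, then $|c_n|=c_n\leq b\leq |b|\leq M$; if $c_n<0$, then $|c_n|=-c_n\leq -a\leq |a|\leq M$, using $c_n\geq a$ (so $-c_n\leq -a$) and $-a\leq |a|$. In either case $|c_n|\leq M$, as required.

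The argument is entirely elementary, and I anticipate no genuine obstacle. The only point that warrants a moment's care is the \emph{strict} positivity $M>0$ demanded in \ref{boundedDefI}: one should not simply set $M=\max\{|a|,|b|\}$, since a degenerate interval such as $[0,0]$ would force $M=0$; padding by $1$ (or any fixed positive amount) removes this nuisance while leaving the inequality $|c_n|\leq M$ intact.
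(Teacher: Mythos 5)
Your proof is correct and takes essentially the same elementary approach as the paper. The only cosmetic difference is in direction \ref{boundedDefII} $\implies$ \ref{boundedDefI}: the paper uses $M:=\max\{|a|,|b|\}$ and handles the degenerate case $a=b=0$ separately (exactly the nuisance you flagged), whereas you sidestep that case split by padding to $M:=|a|+|b|+1$; both are fine.
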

\begin{proof}\ref{boundedDefI} $\implies$ \ref{boundedDefII} Because of the fact that $|c_n|\leq M$ is equivalent to $-M\leq c_n\leq M$, we may use the interval $\abClosed=\lbrak-M,M\rbrak$.\\

\noindent\ref{boundedDefII} $\implies$ \ref{boundedDefI} If $a=0=b$, then we simply use $M=1>0$. Henceforth, we assume that one of $a$ or $b$ is nonzero, so one of $|a|$ or $|b|$ is positive, which means that $M:=\max\{|a|,|b|\}$ is positive. Given $n\in\N$, by assumption, $a\leq c_n\leq b$, where the first inequality can be rewritten as $-c_n\leq -a$. Thus, we have $c_n\leq b\leq |b|\leq \max\{|a|,|b|\}=M$ and $-c_n\leq -a\leq |a|\leq \max\{|a|,|b|\}=M$, which further simplify to $c_n\leq M$ and $-c_n\leq M$, where the left-hand sides are the only possible values of $|c_n|$, so in any case, $|c_n|\leq M$.
\end{proof}

If the constant $M$ in Proposition~\ref{boundedDefProp} is relevant in a given context, we say that the sequence $\cseq$ is bounded by $M$. This extends to the situation where only some of the terms have absolute value at most $M$, in which case we say that those terms are bounded by $M$.

\begin{proposition}\label{CauchyBoundProp} A Cauchy sequence is bounded.
\end{proposition}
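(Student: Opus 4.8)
The plan is to exploit the Cauchy condition at one convenient value of $\varepsilon$ in order to bound all but finitely many of the terms by a single constant, and then to enlarge that constant so as to also cover the finitely many initial terms; the resulting uniform bound is exactly condition \ref{boundedDefI} of Proposition~\ref{boundedDefProp}, which is all we need.

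First I would apply the definition of Cauchy sequence with $\varepsilon=1$ to obtain some $N\in\N$ such that $|c_m-c_n|<1$ whenever $m,n\geq N$. Specializing to $m=N$, this gives $|c_n-c_N|<1$ for every $n\geq N$, and the triangle inequality $|c_n|\leq|c_n-c_N|+|c_N|$ then yields $|c_n|<|c_N|+1$ for all $n\geq N$. Thus every term whose index is at least $N$ is bounded by the positive constant $|c_N|+1$.

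Next I would dispose of the remaining terms $c_1,\dots,c_{N-1}$ (an empty list if $N=1$). Since these form a finite set of real numbers, the number $M':=\max\{|c_1|,\dots,|c_{N-1}|\}$ exists — this is a purely finite fact, provable by an easy induction on $N$ and in particular not relying on completeness — with the convention $M':=0$ when the list is empty. Setting $M:=\max\{M',\,|c_N|+1\}$, we have $M>0$ because $|c_N|+1>0$. For any $n\in\N$: if $n<N$ then $|c_n|\leq M'\leq M$, and if $n\geq N$ then $|c_n|<|c_N|+1\leq M$; in either case $|c_n|\leq M$. This verifies \ref{boundedDefI} of Proposition~\ref{boundedDefProp}, so $\cseq$ is bounded.

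As for the main obstacle: there is genuinely little difficulty here, and the only point requiring care is conceptual rather than technical — one must be sure that the ``maximum of a finite set of reals'' invoked above is being used as a consequence of the order axioms together with induction, and is not a disguised appeal to the least upper bound property, since the whole point of the preliminaries is to keep such results independent of completeness. The particular choice $\varepsilon=1$ is immaterial; any fixed positive value works equally well.
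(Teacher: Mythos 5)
Your proof is correct and follows essentially the same approach as the paper: apply the Cauchy condition with $\varepsilon=1$, fix one index at $N$ to bound the tail by $|c_N|+1$, and then take the maximum with the finitely many initial terms. The only cosmetic difference is that you use the ordinary triangle inequality where the paper invokes the Reverse Triangle Inequality; both routes yield the same tail bound.
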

\begin{proof} Suppose $\cseq$ is Cauchy. Since $1>0$, there exists $N\in\N$ such that for all $m,n\in\N$,\linebreak if $m,n\geq N$, then $|c_m-c_n|<1$. In particular, this is true for $n=N$. That is, for all indices $m\geq N$, we have $|c_m-c_N|<1$. The left-hand side may be replaced to get $||c_m|-|c_N||<1$, and this is due to the Reverse Triangle Inequality\footnote{Given $a\in\R$: for any $k\in\{-1,1\}$, we have $ka\leq |a|$; and there exists $k\in\{-1,1\}$ such that $ka=|a|$. Given $a,b\in\R$, there exists $k\in\{-1,1\}$ such that  $|a+b|=k(a+b)=ka+kb\leq |a|+|b|$. Given $x,y,z\in\R$, set $a=x-y$ and $b=y-z$ to get the \emph{Triangle Inequality} $|x-z|\leq |x-y|+|y-z|$. Setting $a=u$ and $b=v-u$, we get $-|u-v|\leq |u|-|v|$, while setting $a=u-v$ and $b=v$, we get $|u|-|v|\leq |u-v|$. Combining: $-|u-v|\leq |u|-|v|\leq |u-v|$, which is equivalent to $||u|-|v||\leq |u-v|$, the \emph{Reverse Triangle Inequality}.}. The new left-hand side is at least $|c_m|-|c_N|$. Thus, $|c_m|-|c_N|<1$, and transposing $|c_N|$ to the right, we have proven at this point that for all indices $m\geq N$, we have $|c_m|<|c_N|+1$, which implies $|c_m|\leq|c_N|+1$. That is, all sequence terms at index $m$ and beyond are bounded by $|c_N|+1$. Then the entire sequence is bounded by $M:=\max\{|c_1|,|c_2|,\ldots,|c_N|+1\}$, which is positive because one of the numbers in the set, $|c_N|+1$, is positive.
\end{proof}

A sequence $\cseq$ is said to be \emph{bounded above} if there exists $M\in\R$ such that for all $n\in\N$, we have $c_n\leq M$. If in a given context the constant $M$ is relevant, then we say that $\cseq$ is \emph{bounded above by $M$}.

\begin{proposition}\label{monoboundProp} If a sequence $\cseq$ is monotonically increasing or strictly increasing, then the following are equivalent:
\begin{enumerate}\item\label{monoboundI} $\cseq$ is bounded;
\item\label{monoboundII} $\cseq$ is bounded above.
\end{enumerate}
\end{proposition}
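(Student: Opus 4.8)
The plan is to prove the two implications separately. The direction \ref{monoboundI} $\implies$ \ref{monoboundII} needs no monotonicity at all: if $\cseq$ is bounded, then by Proposition~\ref{boundedDefProp} there is $M>0$ with $|c_n|\leq M$ for every $n\in\N$, and since $|c_n|\leq M$ forces $c_n\leq M$, the same $M$ witnesses that $\cseq$ is bounded above.

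For \ref{monoboundII} $\implies$ \ref{monoboundI} the hypothesis finally comes into play. Since a strictly increasing sequence is in particular monotonically increasing, it suffices to treat the monotonically increasing case. Suppose $\cseq$ is bounded above by some $M\in\R$. The key observation is that the first term is a lower bound for all the terms: for every $n\in\N$ we have $1\leq n$, so Proposition~\ref{monoDefProp}, in the form \ref{monoDefI}, gives $c_1\leq c_n$ (the case $n=1$ being trivial). Combined with $c_n\leq M$, every term lies in the closed interval $\lbrak c_1,M\rbrak$, which is a genuine (nonempty) interval because $c_1\leq M$ by taking $n=1$ in the upper bound. Hence $\cseq$ is a sequence in $\lbrak c_1,M\rbrak$, and Proposition~\ref{boundedDefProp}, via the implication \ref{boundedDefII} $\implies$ \ref{boundedDefI}, yields that $\cseq$ is bounded.

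There is essentially no obstacle here; the proposition is a bookkeeping consequence of the two earlier propositions. The only points requiring a little care are invoking the correct part of Proposition~\ref{monoDefProp} to extract the lower bound $c_1$, and checking that $\lbrak c_1,M\rbrak$ is nonempty so that Proposition~\ref{boundedDefProp} genuinely applies.
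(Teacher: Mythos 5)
Your proof is correct and follows essentially the same route as the paper: the forward direction from $c_n\leq |c_n|\leq M$, and the backward direction by establishing $c_1\leq c_n\leq M$ and recognizing the sequence as lying in $\lbrak c_1,M\rbrak$. The only cosmetic difference is that you reduce the strictly increasing case to the monotonically increasing one up front, whereas the paper carries the two cases along in parallel; both are fine.
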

\begin{proof}\ref{monoboundI} $\implies$ \ref{monoboundII} Use the fact that $c_n\leq |c_n|$.\\

\noindent \ref{monoboundII} $\implies$ \ref{monoboundI} By \ref{monoboundII}, there exists $M\in\R$ such that for all $n\in\N$, we have $c_n\leq M$. Since $\cseq$ is monotonically increasing (respectively, strictly increasing), for any $n\in\N$, if $n>1$, then $c_n\geq c_1$ (respectively, $c_n>c_1$, but this also implies $c_n\geq c_1$). The inequality $c_n\geq c_1$ is also true for $n=1$: since $k\mapsto c_k$ is a function, $n=1$ implies $c_n=c_1$, which further implies $c_n\geq c_1$. At this point, we have proven that $n\in\N$ implies $c_1\leq c_n\leq M$. That is, $\cseq$ is a sequence in $\lbrak c_1,M\rbrak$, and is hence a bounded sequence.
\end{proof}

\begin{proposition}\label{strictsubProp} If $\cseq$ is a strictly increasing sequence that is not bounded above, then so is any subsequence of $\cseq$.
\end{proposition}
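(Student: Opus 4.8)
The plan is to unpack the phrase ``so is any subsequence'' into its two components and verify each: given an arbitrary subsequence $\csubseq$ of $\cseq$, I must show (a) that $\csubseq$ is again strictly increasing, and (b) that $\csubseq$ is again not bounded above.

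Component (a) is quick and I would do it first. Since $\csubseq$ is a subsequence, the index sequence $\Nkseq$ is strictly increasing, so $N_k < N_{k+1}$ for every $k\in\N$; because $\cseq$ is strictly increasing, this gives $c_{N_k} < c_{N_{k+1}}$ for every $k$. By the equivalence \ref{monoDefII} $\Rightarrow$ \ref{monoDefI} in Proposition~\ref{monoDefProp}, applied to the sequence $\csubseq$, this is enough to conclude that $\csubseq$ is strictly increasing.

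Component (b) I would prove by contradiction. Suppose $\csubseq$ were bounded above, say $c_{N_k}\le M$ for all $k\in\N$ and some $M\in\R$. Fix an arbitrary $n\in\N$. By Proposition~\ref{subseqNkkProp}, $N_n\ge n$, and since $\cseq$ is strictly increasing it is in particular monotonically increasing, so $N_n\ge n$ forces $c_n\le c_{N_n}$; combined with $c_{N_n}\le M$ this yields $c_n\le M$. As $n$ was arbitrary, $\cseq$ is bounded above by $M$, contradicting the hypothesis. Hence $\csubseq$ is not bounded above. There is no real obstacle here; the one point worth a moment's care is recognizing that the inequality $N_n\ge n$ supplied by Proposition~\ref{subseqNkkProp} is exactly the device that transfers an upper bound for the subsequence back to an upper bound for the whole sequence, which is why a contradiction (or contrapositive) argument is the natural route.
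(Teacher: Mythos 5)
Your proof is correct, and in one respect more thorough than the paper's own. The paper reads ``so is any subsequence'' as asserting only that the subsequence is not bounded above, and proves exactly that by a direct argument: given $M$, pick $K$ with $c_K>M$, note $N_K\geq K$ by Proposition~\ref{subseqNkkProp}, and conclude $c_{N_K}\geq c_K>M$. Your component~(b) is the contrapositive of the same argument, and the pivot is identical: the inequality $N_n\geq n$ from Proposition~\ref{subseqNkkProp} is what ferries a bound between the sequence and the subsequence, only in the opposite logical direction (you push an alleged upper bound for $\csubseq$ back to $\cseq$; the paper pushes an unbounded term of $\cseq$ forward to $\csubseq$). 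The direct route is a touch more economical since it introduces no supposition to contradict, but both are fine. Where you genuinely add something is component~(a): you read ``so is'' as referring to the full property ``a strictly increasing sequence that is not bounded above'' and supply the short verification that $\csubseq$ is again strictly increasing via Proposition~\ref{monoDefProp}. The paper silently omits this, which is consistent with its later use of the proposition (only unboundedness is ever invoked), but your literal reading of the statement is the more careful one and the argument for it is correct.
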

\begin{proof} Let $M\in\R$, and let $\csubseq$ be a subsequence of $\cseq$. Since $\cseq$ is not bounded above, there exists $K\in\N$ such that $c_K>M$. By Proposition~\ref{subseqNkkProp}, $N_K>K$, and since $\cseq$ is strictly increasing, $c_{N_K}>c_K$. Thus, $c_{N_K}>M$, and $\csubseq$ is not bounded above. 
\end{proof}

A sequence $\cseq$ \emph{converges} to $c\in\R$  if, for each $\varepsilon>0$, there exists $N\in\N$ such that for all $n\in\N$ if $n\geq N$, then $|c_n-c|<\varepsilon$. If there exists $c\in\R$ such that $\cseq$ converges to $c$, then $\cseq$ \emph{converges}, or is \emph{convergent}. 

If $\cseq$ converges to $c\in\R$, then the classic ``epsilon-over-two'' technique may be used to prove that $c$ is unique, which we refer to as the \emph{limit of (the sequence) $\cseq$} and, in symbols, $\seqlimOp c_n:=c$. The epsilon-over-two technique may also be used to prove that a convergent sequence is Cauchy, so by Proposition~\ref{CauchyBoundProp}, a convergent sequence is bounded. The converse is false: the sequence with terms alternating between the values $-1$ and $1$ is bounded but is not convergent. An additional condition needed for boundedness to lead to convergence is one of the reasons the ``First Circle'' of real analysis theorems is relevant, as shall be explored later.

Back to the fact that ``convergent implies Cauchy,'' the converse (that a Cauchy sequence is convergent) is the direction with a disproportionately longer proof, which is another reason for studying the ``First Circle'' of real analysis theorems. For now, we emphasize some simple but subtle consequences of the notion of convergence followed by a lemma that shows an interrelationship of the notion of Cauchy sequence with the convergence of a subsequence.

\begin{lemma}\label{NIPtoBWPLemInew}\begin{enumerate} \fixitem\label{convII} Given $k\in\R$, if $\seqlimOp c_n=c$, then $\seqlimOp kc_n=kc$.
\item\label{convIV} If $\seqlimOp\lambda_n=0$, and for any $n\in\N$, $0\leq c_n\leq \lambda_n$, then\footnote{This is obviously a weaker version of the Squeeze Theorem for sequence limits, but that is all we need at this point.} $\seqlimOp c_n=0$.
\item\label{convI} The assertions $\seqlimOp|c_n-c|=0$, and $\seqlimOp c_n=c$ are equivalent\footnote{The given proof shows that no rules on operations with convergent sequences are needed.}.
\end{enumerate}
\end{lemma}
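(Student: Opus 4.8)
The plan is to prove all three parts directly from the $\varepsilon$--$N$ definition of convergence, without invoking any algebra of limits; indeed, part~\ref{convI} is precisely the kind of elementary fact that partially substitutes for such algebra later, so it must be obtained by bare-hands manipulation.

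For part~\ref{convII}, I would split on whether $k=0$. If $k=0$, then $kc_n=0$ for every $n$, and the constant sequence with value $0$ converges to $0=kc$ (take $N=1$ for any $\varepsilon>0$). If $k\neq 0$, then $|k|>0$, so given $\varepsilon>0$ the quantity $\varepsilon/|k|$ is a legitimate positive tolerance; feeding it to the hypothesis $\seqlimOp c_n=c$ produces $N\in\N$ with $|c_n-c|<\varepsilon/|k|$ whenever $n\geq N$, whence $|kc_n-kc|=|k|\,|c_n-c|<|k|\cdot\varepsilon/|k|=\varepsilon$. The only thing to be careful about is remembering to dispose of $k=0$ first, since otherwise the division by $|k|$ is not permitted.

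For part~\ref{convIV}, given $\varepsilon>0$ I would apply $\seqlimOp\lambda_n=0$ to get $N\in\N$ with $|\lambda_n|<\varepsilon$ for all $n\geq N$. The hypothesis $0\leq c_n\leq\lambda_n$ forces $\lambda_n\geq 0$, so $|\lambda_n|=\lambda_n$, and likewise $|c_n|=c_n$; then for $n\geq N$ the chain $|c_n-0|=c_n\leq\lambda_n=|\lambda_n|<\varepsilon$ gives the conclusion. The subtle point is that the two assumptions "$c_n\leq\lambda_n$" and "$c_n\geq 0$" are exactly what is needed to collapse both absolute values and glue the inequalities together.

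For part~\ref{convI}, the observation is that $\big||c_n-c|-0\big|=\big||c_n-c|\big|=|c_n-c|$, since $|c_n-c|$ is already nonnegative. Hence the defining condition for $\seqlimOp|c_n-c|=0$ (for each $\varepsilon>0$ there is $N$ with $\big||c_n-c|-0\big|<\varepsilon$ whenever $n\geq N$) is, after this rewriting, \emph{literally the same sentence} as the defining condition for $\seqlimOp c_n=c$, so the equivalence holds with no further argument, and in particular with no rules on operations with convergent sequences. There is essentially no obstacle in any of the three parts; the ``hard part,'' such as it is, is only to resist importing limit laws that have not yet been established and to keep the absolute-value bookkeeping honest.
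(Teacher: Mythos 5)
Your proposal is correct, and for parts~\ref{convII} and~\ref{convI} it matches the paper's argument essentially line for line (the $k=0$ split with the scaled tolerance $\varepsilon/|k|$ in the first, the observation that $\big||c_n-c|-0\big|=|c_n-c|$ makes the two defining conditions identical in the third). The only genuine divergence is in part~\ref{convIV}: you give the expected direct proof, while the paper runs a proof by contradiction. The paper assumes $\cseq$ does not converge to $0$, extracts an $\varepsilon>0$ and indices $N_k\geq k$ with $|c_{N_k}|\geq\varepsilon$, then uses $\seqlimOp\lambda_n=0$ at a well-chosen index $N_K$ to produce $\lambda_{N_K}<c_{N_K}$, contradicting the hypothesis $c_n\leq\lambda_n$. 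Your direct chain $|c_n-0|=c_n\leq\lambda_n=|\lambda_n|<\varepsilon$ for $n\geq N$ establishes the same thing in a single pass and avoids the negation-of-convergence quantifier manipulation entirely; it is shorter and, in a preliminaries section aimed at beginners, arguably the more instructive route. The contradiction form does make slightly more explicit the role of the sandwich inequalities in ruling out a divergent subsequence, but there is no logical content in it that your argument lacks. Both are sound.
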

\begin{proof}\ref{convII} If $k=0$, then $\cseq$ becomes the zero sequence $(0)_{n\in\N}$ which trivially converges to $0$. Otherwise, given $\varepsilon>0$, we find that $\frac{\varepsilon}{|k|}>0$, so there exists $N\in\N$ such that if $n\in\N$ with $n\geq N$, then $|c_n-c|<\frac{\varepsilon}{|k|}$, which implies $|kc_n-kc|<\varepsilon$.\\

\noindent\ref{convIV} If $\cseq$ does not converge to $0$, then there exists $\varepsilon>0$ such that 

\begin{enumerate}
    \item[\Star]\label{SqueezeI} for any $k\in\N$, there exists $N_k\in\N$ with $N_k\geq k$ and $|c_{N_k}|=|c_{N_k}-0|\geq\varepsilon$.
\end{enumerate} From the assumption that  $\seqlimOp\lambda_n=0$, there exists $K\in\N$ such that
\begin{enumerate}[resume]
    \item[\SStar]   if $n\in\N$ and $n\geq K$, then $|\lambda_n|=|\lambda_n-0|<\varepsilon$. 
\end{enumerate} The trick is to set $k=K$ in \Star\ to obtain $N_K\in\N$, $N_K\geq K$ and $|c_{N_K}|\geq \varepsilon$, where the first two conditions make the hypothesis in \SStar\ true at $n=N_K$, so $|\lambda_{N_K}|<\varepsilon$. At this point, we have $|\lambda_{N_K}|<\varepsilon\leq |c_{N_K}|$, or that $|\lambda_{N_K}|<|c_{N_K}|$. But by assumption, all sequences involved have nonnegative terms so they must be equal to their absolute values, and we have $\lambda_{N_K}<c_{N_K}$. This contradicts the assumption that $c_n\leq \lambda_n$ for all $n\in\N$. Therefore, $\cseq$ converges to $0$. \\

\noindent\ref{convI} In the definition of convergent sequence, $|(c_n-c)-0|<\varepsilon$ is equivalent to $|c_n-c|<\varepsilon$.
\end{proof}

\begin{lemma}\label{limsubLem} If $\cseq$ converges to $c$, then so does any subsequence of $\cseq$.
\end{lemma}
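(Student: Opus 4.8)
The plan is to unwind the definitions of convergence and of subsequence, and then the entire argument rests on the single fact, already established in Proposition~\ref{subseqNkkProp}, that the index sequence $\Nkseq$ of any subsequence satisfies $N_k \geq k$ for all $k \in \N$.

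Concretely, I would start by fixing a subsequence $\csubseq$ of $\cseq$ and an arbitrary $\varepsilon > 0$. Since $\cseq$ converges to $c$, there is some $N \in \N$ such that for all $n \in \N$ with $n \geq N$, we have $|c_n - c| < \varepsilon$. The claim is that the same $N$ works as the threshold index for the subsequence: I would take $K := N$ and show that for every $k \in \N$ with $k \geq K$, the inequality $|c_{N_k} - c| < \varepsilon$ holds. Indeed, for such a $k$, Proposition~\ref{subseqNkkProp} gives $N_k \geq k \geq K = N$, so $N_k \geq N$, and therefore $|c_{N_k} - c| < \varepsilon$ by the choice of $N$. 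Since $\varepsilon > 0$ was arbitrary, $\csubseq$ converges to $c$, which is the desired conclusion.

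There is no real obstacle here; the only subtlety worth flagging explicitly is that one must pass through Proposition~\ref{subseqNkkProp} rather than simply asserting that ``large $k$ forces large $N_k$,'' since the monotonicity of $\Nkseq$ alone is what makes $N_k \geq k$ true, and it is this inequality — not any faster growth of $N_k$ — that lets the same threshold $N$ be reused verbatim.
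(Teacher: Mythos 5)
Your proof is correct and takes essentially the same approach as the paper: both use the threshold $N$ from the original sequence as the threshold for the subsequence and then invoke Proposition~\ref{subseqNkkProp} to conclude $N_k \geq N$. The only difference is cosmetic: you go directly from $N_k \geq k \geq N$, whereas the paper first argues $k \geq K \implies N_k \geq N_K$ via monotonicity and then applies the proposition at $K$ to get $N_K \geq K$; your route is slightly more streamlined but rests on the same lemma.
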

\begin{proof} Let $\varepsilon$. Since $c=\seqlimOp c_n$, there exists $K\in\N$ such that for all $n\in\N$, 
\begin{flalign}
    && |c_n-c|&<\varepsilon, &(\mbox{if }n\geq K).\label{EVTEQ}
\end{flalign} Let $k\in\N$ such that $k\geq K$. By the definition of subsequence the case $k>K$ leads to $N_k>N_K$, while the fact that $k\mapsto N_k$ is a function, the case $k=K$ leads to $N_k=N_K$. Thus, $k\geq K$ implies $N_k\geq N_K$. By Proposition~\ref{subseqNkkProp}, $N_k\geq N_K\geq K$, so $N_k\geq K$. Thus, the hypothesis in \eqref{EVTEQ} is true at $n=N_k$. This gives us $\left|c_{N_k}-c\right|<\varepsilon$. At this point, we have proven that $\seqlimOpIII c_{N_k}=c$.
\end{proof}

\begin{proposition}\label{monostrictsubProp} A monotonically increasing sequence that does not converge has a strictly increasing subsequence that, also, does not converge.
\end{proposition}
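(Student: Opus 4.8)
The plan is to reduce the statement to an explicit recursive construction, after first ruling out the one degenerate way such a construction could fail. Let $\cseq$ be monotonically increasing and not convergent. The first observation is that $\cseq$ cannot be eventually constant: if there were an $N$ with $c_n = c_N$ for all $n \geq N$, then the definition of convergence would be met with limit $c_N$ (for any $\varepsilon > 0$ this very $N$ works), contradicting the hypothesis. Negating ``eventually constant'' gives: for every $N \in \N$ there is an $n \geq N$ with $c_n \neq c_N$. Since $c_N = c_N$, such an $n$ must satisfy $n > N$, and then monotonicity ($c_n \geq c_N$) upgrades $c_n \neq c_N$ to $c_n > c_N$. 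So we have the fact $(\ast)$: for every $N \in \N$ there exists $n > N$ with $c_n > c_N$.

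Using $(\ast)$ recursively, I would build the index sequence $\Nkseq$ by setting $N_1 := 1$ and, given $N_k$, choosing $N_{k+1} > N_k$ with $c_{N_{k+1}} > c_{N_k}$. Then $\Nkseq$ is strictly increasing, so $\csubseq$ is genuinely a subsequence of $\cseq$, and the inequalities $c_{N_{k+1}} > c_{N_k}$ show, via Proposition~\ref{monoDefProp}, that $\csubseq$ is strictly increasing.

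It remains to show that $\csubseq$ does not converge, and this is the heart of the matter. I would argue by contradiction: assume $\seqlimOpIII c_{N_k} = L$ and deduce $\seqlimOp c_n = L$, contradicting non-convergence of $\cseq$. First, $c_n \leq L$ for every $n \in \N$: if instead $c_{n_0} > L$ for some $n_0$, put $\delta := c_{n_0} - L > 0$; by convergence there is $K$ with $|c_{N_k} - L| < \delta$ for all $k \geq K$, and by Proposition~\ref{subseqNkkProp} $N_k \geq k$, so for any $k \geq \max\{K, n_0\}$ monotonicity gives $c_{N_k} \geq c_{n_0}$, hence $c_{N_k} - L \geq \delta$, contradicting $|c_{N_k} - L| < \delta$. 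Second, given $\varepsilon > 0$, choose $K$ with $|c_{N_K} - L| < \varepsilon$; since $c_{N_K} \leq L$, this says $L - c_{N_K} < \varepsilon$, and for every $n \geq N_K$ monotonicity together with $c_n \leq L$ traps $c_n$ between $c_{N_K}$ and $L$, whence $|c_n - L| < \varepsilon$. Thus $\cseq$ converges, the desired contradiction, and therefore $\csubseq$ does not converge.

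I expect the final step to be the only real obstacle — it is essentially the lemma ``a monotonically increasing sequence possessing a convergent subsequence is itself convergent,'' used contrapositively — and within it the delicate part is pinning down the putative limit from above ($c_n \leq L$ for all $n$), which is exactly where Proposition~\ref{subseqNkkProp} ($N_k \geq k$) is needed so that the subsequence eventually overtakes any fixed index $n_0$. The reduction to a non-eventually-constant sequence and the recursive construction are routine. (One could alternatively split into the cases ``$\cseq$ bounded above'' and ``$\cseq$ not bounded above,'' using Propositions~\ref{monoboundProp} and~\ref{strictsubProp} in the latter case, but the argument above dispatches both at once.)
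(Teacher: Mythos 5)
Your proof is correct and follows essentially the same strategy as the paper's: rule out eventual constancy to build a strictly increasing subsequence, then argue that a monotone sequence possessing a convergent subsequence must itself converge (contradiction). The one small organizational difference is that you first pin down $L$ as an upper bound for all of $\cseq$ before doing the $\varepsilon$-argument, whereas the paper traps $c_m$ directly between $c_{N_K}$ and $c_{N_m}$ by invoking the subsequence convergence at both $k=K$ and $k=m$; both routes rely on Proposition~\ref{subseqNkkProp} in the same essential way, so this is a matter of presentation rather than substance.
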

\begin{proof} Suppose $\cseq$ is a monotonically increasing sequence that does not converge. Let $c_{N_1}:=c_1$. Suppose that for some $k\in\N$, we have identified terms of the sequence $\cseq$, ordered as:

\noindent $c_{N_1}<c_{N_2}<\cdots<c_{N_{k-1}}$, such that $N_1<N_2<\cdots<N_{k-1}$.

Tending towards a contradiction, suppose that for all $n\geq N_{k-1}+1$, we have $c_{N_{k-1}}{\not<}c_n$. The condition $n\geq N_{k-1}+1$ implies $N_{k-1}<n$, which, because $\cseq$ is monotonically increasing, further implies $c_{N_{k-}1}\leq c_n$. In conjunction with $c_{N_{k-1}}{\not<}c_n$, becomes $c_{N_{k-1}}=c_n$, or that $c_n-c_{N_{k-1}}=0$. Thus, given $\varepsilon>0$, there exists $N_{k-1}+1\in\N$ such that if $n\geq N_{k-1}+1$, we have $|c_n-c_{N_{k-1}}|=|0|=0<\varepsilon$. Hence, $\cseq$ converges.$\lightning$

Henceforth, there exists $N_k\geq N_{k-1}+1$ [so $N_k>N_{k-1}$] such that $c_{N_{k-1}}<c_{N_k}$. By induction, for any $k\in\N$, we have identified terms of the sequence $\cseq$, ordered as $c_{N_1}<c_{N_2}<\cdots<c_{N_{k}}$, such that $N_1<N_2<\cdots<N_{k}$. This implies that $\csubseq$ is a strictly increasing subsequence of $\cseq$.

Suppose $\csubseq$ converges (to some $L\in\R$). Given $\varepsilon>0$, there exists $K\in\N$ such that for any integer $k\geq K$, we have $|c_{N_k}-L|<\varepsilon$, which implies $-\varepsilon<c_{N_k}-L<\varepsilon$, and so, $L-\varepsilon<c_{N_k}<L+\varepsilon$. That is,
\begin{enumerate}
    \item[\Star] for any $k\in\N$, if $N_k\geq N_K$, then $L-\varepsilon<c_{N_k}<L+\varepsilon$.
\end{enumerate}
Let $m\in\N$ such that $m\geq N_K$. By Proposition~\ref{subseqNkkProp}, $N_K\leq m\leq N_m$, and using either the fact that $\cseq$ is monotonically increasing, or the fact that $n\mapsto c_n$ is a function, $c_{N_K}\leq c_m\leq c_{N_m}$. Because $k\geq K$, we may set $k=K$ in \Star\ and using the first inequality in the conclusion, we obtain $L-\varepsilon<c_{N_K}\leq c_m\leq c_{N_m}$. From $N_K\leq m\leq N_m$ and the definition of subsequence, we have $K\leq m$. Thus, we may set $k=m$ in \Star\ and use the second inequality in the conclusion to obtain $L-\varepsilon<c_{N_K}\leq c_m\leq c_{N_m}<L+\varepsilon$. This implies $L-\varepsilon<c_n<L+\varepsilon$. Subtracting $L$ from every member and expressing in terms of absolute value, $|c_n-L|<\varepsilon$. Hence, $\cseq$ converges, contradicting an assumption. Therefore, $\csubseq$ does not converge.
\end{proof}

\begin{lemma}\label{BWPtoCCCLem}
A Cauchy sequence that has a convergent subsequence converges to the limit of said subsequence.
\end{lemma}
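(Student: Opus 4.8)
The plan is to use the classic ``epsilon-over-two'' decomposition, combining the Cauchy condition on $\cseq$ with the convergence of the given subsequence $\csubseq$ to some number $L$, and to use Proposition~\ref{subseqNkkProp} (which says $N_k\geq k$) to guarantee that the subsequence indices eventually exceed any prescribed threshold. Concretely: suppose $\cseq$ is Cauchy and $\csubseq$ is a subsequence with $\seqlimOpIII c_{N_k}=L$; I will show $\seqlimOp c_n=L$.

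First I would fix $\varepsilon>0$. Applying the Cauchy condition to $\txthalf\varepsilon>0$ produces an $N\in\N$ such that $|c_m-c_n|<\txthalf\varepsilon$ whenever $m,n\geq N$. Applying the definition of $\seqlimOpIII c_{N_k}=L$ to $\txthalf\varepsilon$ produces a $K\in\N$ such that $|c_{N_k}-L|<\txthalf\varepsilon$ whenever $k\geq K$. Next I would select a single good index into the subsequence: put $k_0:=\max\{K,N\}$. Then $k_0\geq K$, so $|c_{N_{k_0}}-L|<\txthalf\varepsilon$; and $k_0\geq N$, while Proposition~\ref{subseqNkkProp} gives $N_{k_0}\geq k_0\geq N$, so the index $N_{k_0}$ falls in the range where the Cauchy estimate is valid.

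Finally, for an arbitrary $n\geq N$, since both $n\geq N$ and $N_{k_0}\geq N$ we get $|c_n-c_{N_{k_0}}|<\txthalf\varepsilon$, and the Triangle Inequality yields
\[
|c_n-L|\leq |c_n-c_{N_{k_0}}|+|c_{N_{k_0}}-L|<\txthalf\varepsilon+\txthalf\varepsilon=\varepsilon.
\]
As $\varepsilon>0$ was arbitrary, this is exactly the statement $\seqlimOp c_n=L$, i.e.\ $\cseq$ converges to the limit of the subsequence. I do not expect any real obstacle here; the only point demanding attention is the bookkeeping that makes a single subsequence index $N_{k_0}$ simultaneously satisfy $k_0\geq K$ (for the subsequence estimate) and $N_{k_0}\geq N$ (for the Cauchy estimate), and Proposition~\ref{subseqNkkProp} is precisely the tool that forces this.
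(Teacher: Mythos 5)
Your proof is correct and takes essentially the same route as the paper: the ``epsilon-over-two'' split, the Triangle Inequality, and Proposition~\ref{subseqNkkProp} to ensure the chosen subsequence index lands in the Cauchy range. The only cosmetic difference is that you declare the final threshold to be $N$ itself (which works, since $N_{k_0}\geq N$ already), whereas the paper uses the slightly larger threshold $N_H$ (their name for $N_{\max\{K,N\}}$); both are valid and the underlying argument is identical.
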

\begin{proof} Given $\varepsilon>0$, we have $\frac{\varepsilon}{2}>0$. Given a Cauchy sequence $\cseq$, there exists $C\in\N$ such that
    \begin{enumerate}
        \item[\Star] if $m,n\in\N$ and $m,n\geq C$, then $|c_m-c_n|<\frac{\varepsilon}{2}$.
    \end{enumerate}
    Given a convergent subsequence $\csubseq$ of $\cseq$ with $c:=\seqlimOpIII c_{N_k}$, there exists $S\in\N$ such that
        \begin{enumerate}
        \item[\SStar] if $k\in\N$ and $k\geq S$, then $|c_{N_k}-c|<\frac{\varepsilon}{2}$.
    \end{enumerate}

    Let $H:=\max\{S,C\}$. Since $H$ is either $S$ or $C$ which are both in $\N$, we find that $H\in\N$. Also, $H\geq C$, and by Proposition~\ref{subseqNkkProp}, $N_H\geq H\geq C$. For any $n\geq N_H$, we also have $n\geq C$, and by \Star, $|c_n-c_{N_H}|<\frac{\varepsilon}{2}$. From $H=\max\{S,C\}$, we also have $H\geq S$, which, by \SStar, implies $|c_{N_H}-c|<\frac{\varepsilon}{2}$.

    By the Triangle Inequality, $|c_n-c|\leq |c_n-c_{N_H}|+|c_{N_H}-c|<\frac{\varepsilon}{2}+\frac{\varepsilon}{2}=\varepsilon$. Therefore, $\seqlimOp c_n=c$.
\end{proof}
 
We shall also be considering sequences of intervals, which simply just mean functions that send every $n\in\N$ to an interval. The notation shall be the same as in sequences, such as $\Iseq$ but with emphasis that the terms are intervals. For our purposes, we shall be considering only closed and bounded intervals. For each such interval, say, $I=\abClosed$, the \emph{length} of $\abClosed$ is defined as $\ell(I):=b-a$. 

\begin{proposition}\label{NIPtoBWPProp} If $c,x\in I$, then $|c-x|\leq \ell(I)$.
\end{proposition}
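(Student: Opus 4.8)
The plan is to unwind the definition of membership in a closed bounded interval and then use the basic order arithmetic of $\R$. Write $I=\abClosed$, so that by hypothesis $c,x\in\abClosed$ means $a\leq c\leq b$ and $a\leq x\leq b$, and recall $\ell(I)=b-a$.

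First I would bound the two possible values of $|c-x|$ separately. From $c\leq b$ and $a\leq x$ (the latter rewritten as $-x\leq -a$), adding the inequalities gives $c-x\leq b-a$. Symmetrically, from $x\leq b$ and $a\leq c$ one gets $x-c\leq b-a$. Since $|c-x|$ equals either $c-x$ or $x-c$ (the same Reverse/ordinary Triangle Inequality bookkeeping on absolute values already used in the excerpt, or simply $|c-x|=\max\{c-x,x-c\}$), in either case $|c-x|\leq b-a=\ell(I)$, which is the desired conclusion.

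There is no real obstacle here: the statement is immediate from the order axioms of $\R$ and the definitions of interval membership and of $\ell(I)$, so the only thing to be careful about is phrasing the case split on the sign of $c-x$ cleanly rather than invoking any property of limits or suprema. If one prefers to avoid the case split entirely, one can note that $c,x\in\abClosed$ forces $|c-x|$ to be at most the diameter of the set, but spelling that out amounts to exactly the two inequalities above, so the two-line argument is the most economical route.
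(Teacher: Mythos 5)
Your proof is correct and is essentially the same argument the paper gives: both unwind $c,x\in\abClosed$ into the two-sided bounds and add inequalities to get $c-x\leq b-a$ and $x-c\leq b-a$; the paper just packages these as a single two-sided inequality $-(b-a)\leq c-x\leq b-a$ before concluding $|c-x|\leq\ell(I)$, whereas you make the sign case split explicit. There is no substantive difference.
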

\begin{proof} If $I=\abClosed$, then $a\leq c\leq b$ and $a\leq x\leq b$, where the latter implies $-b\leq -x\leq -a$, which we add to the first system of inequalities to obtain $-(b-a)\leq c-x\leq b-a$. By the definition of length, $-\ell(I)\leq c-x\leq \ell(I)$. Therefore, $|c-x|\leq \ell(I)$.
\end{proof}

\begin{proposition} Given a sequence $\Iseq$ of nonempty closed and bounded intervals, the following are equivalent.
\begin{enumerate}\item\label{nestDefI} For any $h,k\in\N$, if $h< k$, then $I_h\sub I_{k}$.
\item\label{nestDefII} For each $k\in\N$, the inclusion $I_k\leq I_{k+1}$ holds.
\end{enumerate}
[In such a case, $\Iseq$ is said to be a sequence of \emph{nested intervals}.]
\end{proposition}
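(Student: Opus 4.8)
The plan is to follow verbatim the structure of the proof of Proposition~\ref{monoDefProp}: the statement is the exact analogue of that proposition with the order relation ``$\leq$'' on $\R$ replaced by the inclusion relation $\sub$ on intervals, and the only property of $\leq$ that was used there (besides $n<n+1$) was its transitivity, which set inclusion also enjoys. So I expect no new ideas are needed.

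For \ref{nestDefI} $\implies$ \ref{nestDefII}: given $k\in\N$, apply \ref{nestDefI} with $h=k$, using the fact that $k<k+1$, to obtain the required inclusion relating $I_k$ and $I_{k+1}$. For \ref{nestDefII} $\implies$ \ref{nestDefI}: fix the larger index and induct on $k$. The base case $k=1$ is vacuous, since there is no positive integer $h<1$. For the inductive step, assume that for every positive integer $h<k$ the inclusion relating $I_h$ and $I_k$ holds; equivalently, $I_1,I_2,\ldots,I_k$ form a chain under $\sub$. Appending the inclusion supplied by \ref{nestDefII} relating $I_k$ and $I_{k+1}$ and invoking transitivity of $\sub$, we extend the chain to $I_1,I_2,\ldots,I_k,I_{k+1}$, which is precisely the assertion that for every positive integer $h<k+1$ the desired inclusion holds. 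By induction, \ref{nestDefI} follows.

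The main (indeed, only) obstacle is purely expository rather than mathematical: one must be careful that ``$I_k\leq I_{k+1}$'' in \ref{nestDefII} is read as an inclusion in the same direction as the one asserted in \ref{nestDefI}, so that the two conditions genuinely match. Once the direction is pinned down, the transitivity of $\sub$ does all the work, exactly as transitivity of $\leq$ did in Proposition~\ref{monoDefProp}; note in particular that the hypotheses that the $I_k$ are nonempty, closed, and bounded are not used anywhere in this equivalence.
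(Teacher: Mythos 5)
Your proposal matches the paper's proof exactly: the paper itself only says ``The proof is analogous to that of Proposition~\ref{monoDefProp},'' and you have carried out precisely that analogy, with transitivity of $\sub$ playing the role of transitivity of $\leq$. Your side remarks are also correct, both that the direction of the inclusion written as ``$I_k\leq I_{k+1}$'' must be read consistently with \ref{nestDefI} (the source in fact intends the \emph{reverse} inclusion, $I_{k+1}\sub I_k$, as confirmed by its use in Lemma~\ref{MCPtoNIPLem}), and that nonemptiness, closedness, and boundedness of the $I_k$ play no role in this particular equivalence.
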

\begin{proof} The proof is analogous to that of Proposition~\ref{monoDefProp}.
\end{proof}

\begin{lemma}\label{MCPtoNIPLem}\begin{enumerate}\fixitem\label{NIPi} If $\Iseq$ is a sequence of nested intervals, then for each $n\in\N$, the sequence $\aseq$ of left-endpoints of the intervals $I_k$ is monotonically increasing and is bounded above by the right-endpoint $b_n$ of $I_n$.
\item\label{NIPii} If $\aseq$ is monotonically increasing and bounded above by $M$, and if $\seqlimOpIII a_k=c$, then for each $k\in\N$, we have $a_k\leq c\leq M$.
\end{enumerate}
\end{lemma}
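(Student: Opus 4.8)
The plan is to prove the two assertions separately; each is a short argument straight from the definitions, with no use of completeness. Throughout I would write $I_k=[a_k,b_k]$, so that nonemptiness of $I_k$ is exactly the inequality $a_k\leq b_k$.

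For assertion~\ref{NIPi} I would first fix $k\in\N$: by the definition of nested intervals (equivalently, $I_{k+1}\sub I_k$ for every $k$), $a_{k+1}\in I_{k+1}\sub I_k=[a_k,b_k]$, and reading off the left endpoint gives $a_k\leq a_{k+1}$. Since $k$ was arbitrary, Proposition~\ref{monoDefProp} upgrades this to monotonicity of $\aseq$. For the bound, fix $n\in\N$ and split on the position of $k$ relative to $n$: if $k\leq n$, then monotonicity gives $a_k\leq a_n$ while nonemptiness of $I_n$ gives $a_n\leq b_n$, so $a_k\leq b_n$; if $k>n$, then $I_k\sub I_n=[a_n,b_n]$, so $a_k\in I_n$ forces $a_k\leq b_n$. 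In either case $a_k\leq b_n$, which is the claim that $\aseq$ is bounded above by $b_n$.

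For assertion~\ref{NIPii} I would argue each inequality by contradiction, using only the definition of convergence of $\aseq$ to $c$. To get $c\leq M$: if $c>M$, set $\varepsilon:=c-M>0$ and pick an index $N$ with $|a_N-c|<\varepsilon$; then $a_N>c-\varepsilon=M$, contradicting that $M$ bounds $\aseq$ above. To get $a_{k_0}\leq c$ for an arbitrary fixed index $k_0$: if $a_{k_0}>c$, set $\varepsilon:=a_{k_0}-c>0$; by monotonicity every $k\geq k_0$ satisfies $a_k\geq a_{k_0}$, hence $|a_k-c|=a_k-c\geq\varepsilon$; but convergence supplies an $N$ with $|a_k-c|<\varepsilon$ whenever $k\geq N$, so any $k\geq\max\{N,k_0\}$ satisfies both $|a_k-c|\geq\varepsilon$ and $|a_k-c|<\varepsilon$, a contradiction. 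Hence $a_k\leq c$ for every $k$.

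I do not expect a genuine obstacle here, since the lemma is essentially bookkeeping from definitions. The two spots needing a little care are, in \ref{NIPi}, getting the direction of the inclusions right in the case split and invoking nonemptiness of $I_n$ precisely at the step from $a_n$ to $b_n$; and, in \ref{NIPii}, resisting the urge to quote a ``limits preserve weak inequalities'' theorem, since none has been established at this stage of the development, and instead running the two short $\varepsilon$-contradiction arguments by hand. One could alternatively obtain the second inequality of \ref{NIPii} by applying Lemma~\ref{limsubLem} to the tail $(a_k)_{k\geq k_0}$, but that still bottoms out at the same estimate, so the direct route is cleaner.
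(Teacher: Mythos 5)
Your proof is correct and follows essentially the same route as the paper's: part~\ref{NIPi} reduces to a comparison of $a_k$ with $b_n$ by splitting on whether $k\leq n$ or $k>n$ (the paper wraps this case analysis inside a proof by contradiction, while you run it directly, which is a bit cleaner), and part~\ref{NIPii} is the same pair of $\varepsilon$-contradictions, choosing $\varepsilon=c-M$ and $\varepsilon=a_{k_0}-c$ respectively and pitting convergence against boundedness or monotonicity.
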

\begin{proof}\ref{NIPi} Since the intervals $I_k$ are nested, given $k\in\N$, we have $ I_{k+1}\sub I_k$, which implies\linebreak $a_{k+1}\in\lbrak a_{k+1},b_{k+1}\rbrak\sub\lbrak a_k,b_k\rbrak$. Consequently, $a_k\leq a_{k+1}\leq b_k$. By the first inequality, $\aseq$ is monotonically increasing.

Suppose there exist $K,N\in\N$ such that $a_K>b_N$. Since the $N$th interval $I_N=\lbrak a_N,b_N\rbrak$ is nonempty, we have $a_N\leq b_N$, so $a_N\leq b_N<a_K\in\lbrak a_K,b_K\rbrak=I_K$, which implies $a_K\notin I_N$. That is, $I_K$ has an element that is not in $I_N$, or equivalently, $I_K{\not\sub}I_N$. This means that $K{\not\leq}N$. Otherwise, we get a contradiction from the definition of sequence (if $K=N$) or from the definition of nested intervals (if $K<N$). Henceforth, $N< K$. By the definition of nested intervals, $I_N\sub I_K$. Thus, $b_N\in\lbrak a_N,b_N\rbrak\sub\lbrak a_K,b_K\rbrak$, which implies $a_K\leq b_N\leq b_K$, where the first inequality contradicts the assumption that $a_K>b_N$. Therefore, for any $k,n\in\N$, we have $a_k\leq b_n$.\\

\noindent\ref{NIPii} Suppose there exists $K\in\N$ such that $a_K>c$. This means that $a_K-c>0$. By convergence, there exists $M\in\N$ such that for all $k\in\N$, if $k\geq M$, then $|a_k-c|<a_K-c$, where the left-hand side is at least $a_k-c$. As a consequence $a_k-c<a_K-c$, or that $a_k<a_K$. That is, we have shown that for all indices $k\geq M$, we have $a_k<a_K$. This is true, in particular, for any $k> H:=\max\{M,K\}$, and even more particular, for the case $k=H> K$. That is, we have produced indices $H$ and $K$ such that $K<H$ but $a_K>a_H$, contradicting that assumption that $\aseq$ is monotonically increasing. Therefore, for any $k\in\N$, we have $a_k\leq c$.

Suppose $c>M$. This means that $c-M>0$, and by convergence, there exists $K\in\N$ such that for all $k\in\N$, if $k\geq K$, then $|c-a_k|=|a_k-c|<c-M$, where the left-most quantity is at least $c-a_k$. Thus, $c-a_k<c-M$, which implies $M<a_k$. This is true, in particular, at $k=K$. That is, we have produced and index $K$ such that $a_K>M$, contradicting the fact that $\aseq$ is bounded above by $M$. Therefore, $c\leq M$.
\end{proof}

If $\cseq$ is convergent, then setting $k=-1$ in Lemma~\ref{NIPtoBWPLemInew}\ref{convII}, we find that $\lpar-c_n\rpar_{n\in\N}$ is also convergent, with $\seqlimOp(-c_n)=-\seqlimOp c_n$. If $\cseq$ has the property that $\lpar-c_n\rpar_{n\in\N}$ is monotonically increasing, then we say that $\cseq$ is \emph{monotonically decreasing}, while if $\lpar-c_n\rpar_{n\in\N}$ is bounded above by $-M$, then we say that \emph{$\cseq$ is bounded below by $M$}. By these, we immediately have the following consequence of Lemma~\ref{MCPtoNIPLem}\ref{NIPii}.

\begin{corollary}\label{decCor} If $\bseq$ is monotonically decreasing and bounded below by $M$, and if $\seqlimOpIII b_k=c$, then for each $k\in\N$, we have $M\leq c\leq b_k$.
\end{corollary}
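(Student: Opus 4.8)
The plan is to reduce everything to Lemma~\ref{MCPtoNIPLem}\ref{NIPii} by passing to the negated sequence, exactly as the definitions of ``monotonically decreasing'' and ``bounded below'' were set up to allow. Set $a_k := -b_k$ for each $k\in\N$, so that $\aseq = \lpar -b_k\rpar_{k\in\N}$.

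First I would check the three hypotheses of Lemma~\ref{MCPtoNIPLem}\ref{NIPii} for $\aseq$ with the constant $-M$ in place of $M$. By the definition of ``monotonically decreasing,'' the sequence $\lpar -b_k\rpar_{k\in\N} = \aseq$ is monotonically increasing. By the definition of ``bounded below by $M$,'' the sequence $\lpar -b_k\rpar_{k\in\N} = \aseq$ is bounded above by $-M$. Finally, since $\bseq$ converges with $\seqlimOpIII b_k = c$, setting $k=-1$ in Lemma~\ref{NIPtoBWPLemInew}\ref{convII} gives that $\aseq = \lpar -b_k\rpar_{k\in\N}$ converges with $\seqlimOpIII a_k = -\seqlimOpIII b_k = -c$.

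Next I would apply Lemma~\ref{MCPtoNIPLem}\ref{NIPii} to $\aseq$ (with bound $-M$ and limit $-c$): for each $k\in\N$ we obtain $a_k \leq -c \leq -M$, that is, $-b_k \leq -c \leq -M$. Multiplying through by $-1$ and reversing the inequalities yields $M \leq c \leq b_k$ for each $k\in\N$, which is the desired conclusion.

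There is no real obstacle here; the only thing to be careful about is bookkeeping the sign changes so that the roles of the upper bound $-M$ and the limit $-c$ in Lemma~\ref{MCPtoNIPLem}\ref{NIPii} are tracked correctly when negating back. Since all of this is immediate from the already-established definitions and Lemmas~\ref{NIPtoBWPLemInew} and \ref{MCPtoNIPLem}, the proof is short.
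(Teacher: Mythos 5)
Your proposal is correct and is exactly the argument the paper intends: the corollary is stated immediately after the definitions of ``monotonically decreasing'' and ``bounded below by $M$'' (both defined via negation) and after the remark that $\seqlimOp(-c_n)=-\seqlimOp c_n$, so applying Lemma~\ref{MCPtoNIPLem}\ref{NIPii} to $(-b_k)_{k\in\N}$ with bound $-M$ and negating back is precisely the intended route. No gap.
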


By the number of terms (finite or infinite) a sequence $\cseq$ has in a set $\theSet$, we mean the cardinality of the set $\{n\in\N\  :\  c_n\in\theSet\}$, or the pre-image of $\theSet$ under the function $n\mapsto c_n$. This has nothing to do with the set $\{c_n\  :\  n\in\N\}\cap\theSet$ or the intersection of $\theSet$ with the range of $n\mapsto c_n$. Equivalently, we say that $\cseq$ \emph{has finitely (respectively, infinitely) many terms in $\theSet$} if the set of all \underline{indices} $n$ such that $c_n\in\theSet$ is finite (respectively, infinite). This notion is not seen in the statement, but is used in the proof, of the following.

\begin{lemma}\label{NIPtoBWPLemII} If $\cseq$ is a bounded sequence, then there exists a sequence $\Iseq$ of nested intervals and a subsequence $\csubseq$ of $\cseq$ such that for each $k\in\N$, we have $c_{N_k}\in I_k$ and $\ell(I_k)=\frac{b-a}{2^{k-1}}$.
\end{lemma}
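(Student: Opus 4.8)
The plan is to run the classical repeated-bisection argument — the interval-halving proof of Bolzano--Weierstrass. By Proposition~\ref{boundedDefProp}, boundedness of $\cseq$ furnishes an interval $\abClosed$ such that $\cseq$ is a sequence in $\abClosed$; I set $I_1:=\abClosed$. Then every $n\in\N$ satisfies $c_n\in I_1$, so in particular $\cseq$ has infinitely many terms in $I_1$, and $\ell(I_1)=b-a=\tfrac{b-a}{2^{1-1}}$; also $a\le b$ since $I_1$ is nonempty. I would then build $\Iseq$ recursively so as to preserve three properties at stage $k$: that $I_k$ is a nonempty closed bounded interval, that $I_{k+1}\sub I_k$, and that $\cseq$ has infinitely many terms in $I_k$, all while tracking the lengths.

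For the recursion, suppose $I_k=[p,q]$ (with $p\le q$) has been produced with $\{n\in\N:c_n\in I_k\}$ infinite. Let $r:=\tfrac{p+q}{2}$, so that $I_k=[p,r]\cup[r,q]$ with $\ell([p,r])=\ell([r,q])=\tfrac12\ell(I_k)$. The index set $\{n:c_n\in I_k\}$ is the union of $\{n:c_n\in[p,r]\}$ and $\{n:c_n\in[r,q]\}$; since a union of two finite sets is finite, at least one of these two index sets is infinite. I take $I_{k+1}$ to be that half (the left one if its index set is infinite, otherwise the right one). Then $I_{k+1}\sub I_k$, the interval $I_{k+1}$ is nonempty (its index set, being infinite, is nonempty, so $I_{k+1}$ contains some $c_n$), $\cseq$ has infinitely many terms in $I_{k+1}$, and $\ell(I_{k+1})=\tfrac12\ell(I_k)$. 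An immediate induction from $\ell(I_1)=b-a$ then yields $\ell(I_k)=\tfrac{b-a}{2^{k-1}}$ for all $k\in\N$, and $\Iseq$ is a sequence of nested intervals of the required lengths.

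To extract the subsequence, set $N_1:=1$; since $\cseq$ is a sequence in $I_1$, we have $c_{N_1}\in I_1$. Given $N_k$ with $c_{N_k}\in I_k$, use that $\{n\in\N:c_n\in I_{k+1}\}$ is infinite, hence unbounded, to select $N_{k+1}$ in this set with $N_{k+1}>N_k$; then $c_{N_{k+1}}\in I_{k+1}$. By construction $\Nkseq$ is strictly increasing, so $\csubseq$ is a subsequence of $\cseq$, and $c_{N_k}\in I_k$ for every $k\in\N$, as required.

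The one step with genuine content — and hence the one to phrase with care — is the pigeonhole passage: ``$\cseq$ has infinitely many terms in $I_k$'' must be read, as in the paragraph preceding the lemma, as infinitude of the \emph{index} set $\{n\in\N:c_n\in I_k\}$, and one needs that this set, being the union of the two index sets of the halves, forces at least one half's index set to be infinite, because a union of two finite sets is finite. The companion remark that an infinite subset of $\N$ is unbounded is what lets the $N_k$ be chosen strictly increasing. Everything else — the nesting, the halving of lengths, the closed form for $\ell(I_k)$, and the nonemptiness of each $I_k$ — is routine induction.
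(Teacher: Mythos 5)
Your proof follows the same classical bisection argument as the paper: start from $I_1=[a,b]$ via boundedness, repeatedly halve and keep the half whose index set is infinite (pigeonhole), track the length halving inductively, and then extract a strictly increasing sequence of indices using the infinitude of the index sets. Your "an infinite subset of $\N$ is unbounded" is a slight rephrasing of the paper's "removing finitely many indices leaves the set infinite," but the underlying step and the rest of the argument are essentially identical.
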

\begin{proof} Suppose $\cseq$ is a sequence in $I_1=\abClosed$. 
Suppose that for some $k\in\N$, closed and bounded intervals $\lbrak a_{k-1},b_{k-1}\rbrak= I_{k-1}\sub I_{k-2}\sub\cdots\sub I_1$ have been found such that $\cseq$ has infinitely many terms in all these intervals, and $\ell(I_n)=\frac{b-a}{2^{n-1}}$ for all positive integers $n<k$. If $L_{k-1}:=\lbrak a_{k-1},\frac{a_{k-1}+b_{k-1}}{2}\rbrak$ and $R_{k-1}:=\lbrak\frac{a_{k-1}+b_{k-1}}{2},b_{k-1}\rbrak$, then $I_{k-1}=L_{k-1}\cup R_{k-1}$. If each of $L_{k-1}$ and $R_{k-1}$ has a finite number of terms of $\cseq$, then there are a finite number of terms of $\cseq$ in $L_{k-1}\cup R_{k-1}=I_{k-1}$.$\lightning$ Hence, there exists $I_{k}=\lbrak a_{k},b_{k}\rbrak\in\{L_{k-1},R_{k-1}\}$ (with $I_{k}\sub I_{k-1}$) such that $\cseq$ has infinitely many terms in $I_{k}$, with $\ell(I_{k})=\frac{b_k-a_k}{2}=\txthalf\ell(I_{k-1})=\frac{b-a}{2^{k}}$. By induction, we have produced a sequence $\Iseq$ of nested intervals such that for each $k\in\N$, we have $\ell(I_k)=\frac{b-a}{2^{k-1}}$ and that there are infinitely many terms of $\cseq$ in $I_k$. For $k=1$, since $\cseq$ is a sequence in $I_1=\abClosed$, we set $N_1=1$ and so $c_{N_1}=I_1$. 
Suppose that for some $k\in\N$, indices $N_1<N_2<\cdots<N_{k-1}$ have been identified such that $c_{N_t}\in I_t$ for all positive integers $t<k$. Since $I_{k}$ has infinitely many terms of $\cseq$,  the set of indices $\{n\in\N\  :\  c_n\in I_{k}\}\setdiff\{1,2,\ldots,N_{k-1}\}$ [that is, excluding all positive integers from $1$ to $N_{k-1}$ and not just the preidentified indices $N_1<N_2<\cdots<N_{k-1}$] is still infinite, so there is still one index $N_{k}$ which is now bigger than $N_{k-1}$ for which $c_{N_{k}}\in I_{k}$. By induction, we have produced a subsequence $\csubseq$ of $\cseq$ such that for each $k\in\N$, we have $c_{N_k}\in I_k$.
\end{proof}

\subsection{Suprema and Connected Sets}\label{SecondCirclePrelims}

Given $\theSet\sub\R$ and $b\in\R$, we say that $b$ is an \emph{upper bound} of $\theSet$, or that \emph{$\theSet$ is bounded above by $b$} if for each $x\in\theSet$, we have $x\leq b$. If $c\in\R$ is an upper bound of $\theSet$ such that for any upper bound $b$ of $\theSet$, we have $c\leq b$, then $c$ is said to be a \emph{supremum} or \emph{least upper bound} of $\theSet$. If indeed $\theSet$ has a supremum, then suppose $c_0$ and $c$ are suprema of $\theSet$. The upper bound $c_0$ and the supremum $c$ are related by $c_0\leq c$, while the supremum $c_0$ and the upper bound $c$ are also related by $c\leq c_0$. Hence, $c_0=c$, or that (if $\theSet$ has a supremum, then) the supremum is unique, which we denote by $\sup\theSet$.

\begin{lemma}\label{NIPtoESLem}\begin{enumerate}\fixitem\label{elemIsSup} If $b$ is an element and an upper bound of $\theSet$, then $b=\sup\theSet$.
\item\label{NIPforES} If $\theSet$ is a nonempty subset of $\R$ and $b_1$ is an upper bound of $\theSet$, then either $\sup\theSet$ exists (as an element of $\R$), or there exists a sequence $\Iseq$ of nested intervals such that for any $k\in\N$, the left endpoint $a_k$ of $I_k$ is an element of $\theSet$, the right endpoint $b_k$ of $I_k$ is an upper bound but not an element of $\theSet$, and $\ell(I_k)=\frac{b_1-a_1}{2^{k-1}}$.
\end{enumerate}
\end{lemma}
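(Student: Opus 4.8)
The plan is to obtain (i) in one line and to prove (ii) by the classical interval‑halving (bisection) construction, the only real subtlety being the bookkeeping of which endpoint of the current interval lies in $\theSet$ and which is an upper bound of $\theSet$. For (i): if $b$ is both an element and an upper bound of $\theSet$, then $b\le b'$ for every upper bound $b'$ of $\theSet$ (precisely because $b\in\theSet$), so $b$ is an upper bound of $\theSet$ lying below every upper bound, i.e.\ $b=\sup\theSet$.

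For (ii): I would first dispose of the easy alternative — if $\sup\theSet$ exists in $\R$ we are done — so assume it does not. The useful consequence, via (i), is that no upper bound of $\theSet$ can belong to $\theSet$ (it would be $\sup\theSet$); in particular $b_1\notin\theSet$, and $\theSet$ has no maximum, so every element of $\theSet$ is exceeded by another. Pick $a_1\in\theSet$ (possible since $\theSet\neq\varnothing$) and put $I_1=[a_1,b_1]$, so $\ell(I_1)=b_1-a_1$, the left endpoint is in $\theSet$, and the right endpoint is an upper bound not in $\theSet$. For the induction, given $I_k=[a_k,b_k]$ with $a_k\in\theSet$, $b_k$ an upper bound of $\theSet$ (hence $b_k\notin\theSet$), and $\ell(I_k)=\tfrac{b_1-a_1}{2^{k-1}}$, I would look at the midpoint $m_k=\tfrac{1}{2}(a_k+b_k)$. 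If $m_k$ is an upper bound of $\theSet$, take $I_{k+1}=[a_k,m_k]$: the left endpoint is still $a_k\in\theSet$, and the right endpoint $m_k$ is an upper bound which, since $\sup\theSet$ does not exist, lies outside $\theSet$. If $m_k$ is not an upper bound, choose $x\in\theSet$ with $x>m_k$ (so $m_k<x<b_k$, as $b_k$ is an upper bound outside $\theSet$) and take $I_{k+1}=[m_k,b_k]$: the right endpoint $b_k$ retains both properties, and the new left endpoint $m_k$ at least fails to be an upper bound, witnessed by $x$. In either case $I_{k+1}\subseteq I_k$ and $\ell(I_{k+1})=\tfrac{b_1-a_1}{2^{k}}$, so the induction delivers the required nested sequence.

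The hard part is exactly the last clause of the second case: the new left endpoint $m_k$ is only guaranteed \emph{not} to be an upper bound of $\theSet$, not to \emph{belong} to $\theSet$. To land the conclusion verbatim one must argue that the construction can be steered — via the choice of $a_1$ and of the witnesses $x$ — so that every midpoint promoted to a left endpoint does lie in $\theSet$; alternatively, and at no cost to the way the lemma is used (to derive Existence of Suprema from a nested‑intervals principle), one carries the slightly weaker invariant ``$a_k$ is not an upper bound of $\theSet$'', which is all the later argument needs, since there the common point $c=\lim a_k=\lim b_k$ is shown to be the \emph{least} upper bound precisely by using that every $a_k$ is exceeded by some element of $\theSet$. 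I would also note that the Archimedean Principle plays no role here: the lemma records only $\ell(I_k)=\tfrac{b_1-a_1}{2^{k-1}}$, and the passage to $\ell(I_k)\to0$ is deferred to the point of application.
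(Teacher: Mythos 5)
Your part (i) is a one-liner and matches the paper. For part (ii), your bisection branches on whether the midpoint $m$ is an upper bound of $\theSet$; the paper's branches on whether $m\in\theSet$ (if $m\in\theSet$, take $[m,b_{k-1}]$; else take $[a_{k-1},m]$). The obstruction you flag is real, and it is not peculiar to your version: the problematic configuration is $m$ being \emph{neither} an element of $\theSet$ \emph{nor} an upper bound, in which case taking the right half puts a non-element of $\theSet$ on the left, while taking the left half puts a non-upper-bound on the right. So no exact-halving step can preserve both invariants ``$a_k\in\theSet$'' and ``$b_k$ is an upper bound.'' You see this from the ``$m$ not an upper bound'' side; the paper, in its ``$m\notin\theSet$'' branch, simply asserts that the new right endpoint $m$ is an upper bound, which is the same gap viewed from the other side. (A concrete instance: $\theSet=\{1\}\cup\{q\in\Q:1.3<q<\sqrt{2}\}$ with $a_{k-1}=1$, $b_{k-1}=1.5$ gives $m=1.25$, which is neither in $\theSet$ nor an upper bound.)

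Of your two proposed repairs, only the second is viable. ``Steering'' via the choice of $a_1$ and the witnesses $x$ does not work: once $a_1,b_1$ are fixed the midpoints are determined, and your construction never uses the witness $x$ as an endpoint (doing so would destroy the exact halving $\ell(I_k)=\tfrac{b_1-a_1}{2^{k-1}}$). The weakened invariant ``$a_k$ is not an upper bound of $\theSet$'' is the right fix: branch on whether $m$ is an upper bound, and in the negative branch the new left endpoint $m$ satisfies the weaker invariant tautologically. You are also correct that this suffices downstream. In the $\AP\&\wNIP\implies\ES$ step of Theorem~\ref{SecondCircle}, the only use of ``$a_K\in\theSet$'' is to conclude from $a_K>b$ that $b$ fails to be an upper bound; ``$a_K$ not an upper bound'' gives $x\in\theSet$ with $x>a_K>b$, which does the same job. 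Your closing observation that the Archimedean Principle plays no role in this lemma is accurate; it is only invoked later, when $\ell(I_k)\to 0$ is needed to feed \wNIP.
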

\begin{proof}\ref{elemIsSup} Let $\beta$ be an upper bound of $\theSet$. Since $b\in\S$, we have $b\leq \beta$. Therefore, $b=\sup\theSet$.\\

\noindent\ref{NIPforES} Since $\theSet$ is nonempty, there exists $a_1\in\theSet$. If $b_1\in\theSet$, then by part~\ref{elemIsSup}, we are done. Suppose otherwise; that is, $b_1\notin\theSet$. 

Suppose that for some $k\in\N$, intervals $I_{k-1}\sub I_{k-2}\sub\cdots\sub I_1$ have been identified such that for all positive integers $n<k$, the left endpoint $a_n$ of $I_n$ is an element of $\theSet$, the right endpoint $b_n$ of $I_n$ is an upper bound but not an element of $\theSet$, and $\ell(I_n)=\frac{b_1-a_1}{2^{n-1}}$. If the midpoint $\frac{a_{k-1}+b_{k-1}}{2}$ of $I_{k-1}$ is an element of $\theSet$, let $a_{k}:=\frac{a_{k-1}+b_{k-1}}{2}$ and $b_{k}:=b_{k-1}$. If the midpoint $\frac{a_{k-1}+b_{k-1}}{2}$ of $I_{k-1}$ is not in $\theSet$, then let $a_{k}:=a_{k-1}$ and $b_{k}:=\frac{a_{k-1}+b_{k-1}}{2}$. In both cases, $a_{k-1}\leq a_{k}\leq b_{k-1}$ and $a_{k-1}\leq b_{k}\leq b_{k-1}$, so $I_{k}:=\lbrak a_{k},b_{k}\rbrak\sub I_{k-1}$, and also in both cases, $a_k$ is an element of $\theSet$ while $b_k$ is an upper bound of $\theSet$. For the length of $I_k$, we find that in both cases, one of the endpoints is $\frac{a_{k-1}+b_{k-1}}{2}$ so its distance from the other endpoint (which is one of $a_{k-1}$ or $b_{k-1}$) is $\ell(I_k)=\frac{b_{k-1}-a_{k-1}}{2}=\txthalf\ell(I_{k-1})=\frac{b_1-a_1}{2^{k-1}}$. If $b_k\in\theSet$, then by part~\ref{elemIsSup}, we are done. Otherwise, the condition that $b_k\notin\theSet$ completes the induction.
\end{proof}

\begin{proposition}\label{supIntProp} If $\theSet\sub\abClosed$, $a\in\theSet$, and $c=\sup\theSet$, then $c\in\abClosed$.
\end{proposition}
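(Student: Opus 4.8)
The plan is to show directly that $a\le c\le b$, which is exactly the assertion $c\in\abClosed$. The lower bound is immediate: since $a\in\theSet$ and $c=\sup\theSet$ is in particular an upper bound of $\theSet$, the defining property of an upper bound applied to the element $a$ gives $a\le c$.

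For the upper bound, I would first observe that $b$ is itself an upper bound of $\theSet$: the hypothesis $\theSet\sub\abClosed$ means every $x\in\theSet$ satisfies $a\le x\le b$, and in particular $x\le b$, which is precisely the statement that $b$ is an upper bound of $\theSet$. Now invoke the ``least'' part of the definition of supremum: since $c=\sup\theSet$ is a least upper bound and $b$ is an upper bound of $\theSet$, we get $c\le b$. Combining, $a\le c\le b$, so $c\in\abClosed$.

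There is essentially no obstacle here; the only thing to be careful about is not to conflate ``$c$ is an upper bound'' (used for the lower estimate $a\le c$) with ``$c$ is the least upper bound'' (used for the upper estimate $c\le b$) — both halves of the definition of supremum are needed, one for each inequality. Note also that the existence of $\sup\theSet$ is part of the hypothesis (it is assumed that $c=\sup\theSet$), so no appeal to completeness is required; this is a purely definitional argument of the kind collected in the preliminaries.
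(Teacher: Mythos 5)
Your proof is correct and follows essentially the same route as the paper's: show $b$ is an upper bound of $\theSet$ to get $c\le b$ from the leastness of the supremum, and use $a\in\theSet$ together with $c$ being an upper bound to get $a\le c$. The only difference is the order in which you establish the two inequalities, which is immaterial.
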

\begin{proof} If $x\in\theSet\sub\abClosed$, then $x\leq b$, so $b$ is an upper bound of $\theSet$, so by the definition of supremum, $c\leq b$. Since $a$ is an element, and $c$ is an upper bound, of $\theSet$, we have $a\leq c$. At this point, we have $a\leq c\leq b$, so $c\in\abClosed$.
\end{proof}

\begin{proposition}\label{xiProp} If $a<c=\sup\theSet$, then there exists $\xi\in\theSet$ such that $a<\xi\leq c$.
\end{proposition}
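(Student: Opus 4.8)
The plan is to extract $\xi$ directly from the definition of supremum by a contraposition argument: I will show that $a$ fails to be an upper bound of $\theSet$, which immediately yields an element of $\theSet$ exceeding $a$, and then use that $c$ is an upper bound to cap that element from above by $c$.

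First I would argue by contradiction (or contraposition) that $a$ is \emph{not} an upper bound of $\theSet$. Indeed, if $a$ were an upper bound of $\theSet$, then since $c=\sup\theSet$ is the \emph{least} upper bound, we would have $c\leq a$, contradicting the hypothesis $a<c$. Hence $a$ is not an upper bound, which by the definition of upper bound means that the statement ``for each $x\in\theSet$, we have $x\leq a$'' is false; negating the quantifier, there exists some $\xi\in\theSet$ with $\xi>a$, i.e.\ $a<\xi$.

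Next I would observe that this same $\xi$ satisfies $\xi\leq c$: since $c=\sup\theSet$ is in particular an upper bound of $\theSet$, and $\xi\in\theSet$, the defining property of an upper bound gives $\xi\leq c$. Combining the two inequalities produces $a<\xi\leq c$, as required.

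I do not expect any genuine obstacle here; the only point requiring a little care is the logical step that turns the negation of ``$a$ is an upper bound'' into the existence of a witness $\xi\in\theSet$ with $\xi>a$, which is just the negation of a bounded universal statement. Everything else is a direct appeal to the uniqueness-free part of the definition of supremum already recorded before Lemma~\ref{NIPtoESLem}.
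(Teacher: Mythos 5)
Your proof is correct and follows essentially the same approach as the paper's: show by contraposition (or, in your phrasing, contradiction) that $a$ is not an upper bound of $\theSet$, extract a witness $\xi\in\theSet$ with $a<\xi$ from the negated universal, and then cap $\xi$ by $c$ using that the supremum is in particular an upper bound.
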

\begin{proof} From the definition of supremum, if $b$ is an upper bound of $\theSet$, then $c\leq b$. By contraposition, if $b<c$, then $b$ is not an upper bound of $\theSet$. By the assumption in the statement, this applies to $b=a$, so $a$ is not an upper bound of $\theSet$. From the definition of upper bound, this means that there exists $\xi\in\theSet$ such that $\xi{\not\leq} a$, or that $a<\xi$. Since $\xi$ is an element, and $c$ is an upper bound, of $\theSet$, we have $\xi\leq c$.
\end{proof}


\begin{quote}
    \noindent\underline{\emph{A review of the standard topology on $\R$}}. Given a set $A$ of real numbers, a real number $x$ is said to be an \emph{interior point} of $A$ if there exists a (bounded) open interval $\abOpen$ such that $x\in \abOpen$ and $\abOpen\sub A$, while $x$ is said to be a \emph{point of closure} of $A$ if, for any open interval $\abOpen$, if $x\in \abOpen$, then $\abOpen\cap A\neq\emptyset$ (or equivalently, $\abOpen{\not\sub}A^c$, where $A^c:=\R\setdiff A$). The \emph{interior} $A^o$ of $A$ is the set of all interior points of $A$, while the \emph{closure} $A^-$ is the set of all points of closure. The aforementioned definitions of interior point and point of closure have, as immediate consequences, the set inclusions $A^o\sub A$ and $A\sub A^-$. If $A\sub A^o$, then $A$ is said to be an \emph{open set}, while $A$ is a \emph{closed set} if $A^-\sub A$.  Thus, $A$ is open if and only if $A= A^o$, and $A$ is closed if and only if $A^-= A$. All bounded open intervals are trivially open. Since the interval $\R=\lpar-\infty,\infty\rpar$ contains any bounded open interval, $\R$ is trivially open. The empty set $\emptyset$ is vacuously open. 

If $x$ is an element of the union $\setOpen$ of all sets in a collection $\{A_\lambda\  :\  \lambda\in\Lambda \}$ of open sets, then $x$ is in one of these sets $A_\ell\sub A_\ell^o$ for some $\ell\in\Lambda $, so there exists an open interval $I\sub A_\ell\sub\setOpen$ such that $x\in I$. This means $x\in\setOpen^o$, so $\setOpen\sub\setOpen^o$, and $\setOpen$ is open. That is, the union of an arbitrary collection of open sets is open. Given $a,b\in\R$, we have $(-\infty,a)=\bigcup_{x\in(-\infty,a)}(x,a)$ and $(b,\infty)=\bigcup_{x\in(b,\infty)}(b,x)$ where the bounded open intervals that form the union are open. Thus, the unbounded open intervals $(-\infty,a)$ and $(b,\infty)$ are open.

The intersection of two bounded open intervals $I$ and $J$ is also a bounded open interval. Simply take the smaller of the right endpoints, and the bigger of the left endpoints of $I$ and $J$. If $A$ and $B$ are open, and if $x\in A\cap B$, then $x\in A$ implies there exists a bounded open interval $I_1$ such that $x\in I_1\sub A$ and $x\in B$ implies there exists a bounded open interval $I_2$ such that $x\in I_2\sub B$. Consequently, $x\in I_1\cap I_2\sub A\cap B$, where $I_1\cap I_2$ is a bounded open interval. This means $x\in\lpar A\cap B\rpar^o$, so $A\cap B\sub\lpar A\cap B\rpar^o $, and $A\cap B$ is open. Using induction, the intersection of any finite number of open sets is open. 

If $A\sub B$, then the aforementioned statements of the definitions of interior point and point of closure shall now end with ``$\abOpen\sub A\sub B$,'' and ``$\emptyset\neq\abOpen\cap A\sub\abOpen\cap B$,'' respectively. Thus, $A^o\sub B^o$ and $A^-\sub B^-$ are both consequences of $A\sub B$. 

Of the aforementioned definition of point of closure, with the alternative ending ``$I{\cancel\sub}A^c$,'' an immediate consequence is that $x$ is NOT an interior point of $A$ if and only if $x$ is a point of closure of $A^c$, and also that $x$ is an interior point of $A^c$ if and only if $x$ is NOT a point of closure of $\lpar A^c\rpar^c=A$. Consequently, $x\in\lpar A^o\rpar^c$ if and only if $x\in\lpar A^c\rpar^-$, while $x\in\lpar A^c\rpar^o$ if and only if $x\in\lpar A^-\rpar^c$. These give us the identities $\lpar A^o\rpar^c=\lpar A^c\rpar^-$ and $\lpar A^c\rpar^o=\lpar A^-\rpar^c$.

Because of the aforementioned identities, a set $A$ of real numbers is open if and only if\linebreak $A^o=A$ if and only if $(A^o)^c=A^c$ if and only if $(A^c)^-=A^c$ if and only if $A^c$ is closed. That is, $A$ is open if and only if $A^c$ is closed. Since this also applies to $A^c$, we find that $A^c$ is open if and only if $A=(A^c)^c$ is closed. The complement of a closed and bounded interval $\abClosed$ is $\lpar-\infty,a\rpar\cup\lpar b,\infty\rpar$, a union of open sets and is itself open, so $\abClosed$ is closed. Since $(-\infty,a)$ and $(b,\infty)$ are open, their complements $[a,\infty)=(-\infty,a)^c$ and $(-\infty,b]=(b,\infty)^c$ are closed.

If $\{A_\lambda\  :\  \lambda\in\Lambda \}$ is a collection of closed sets, then $\{A_\lambda^c\  :\  \lambda\in\Lambda \}$ is a collection of open sets, so $\bigcup_{\lambda\in\Lambda }A_\lambda^c$ is open, and $\lpar\bigcup_{\lambda\in\Lambda }A_\lambda^c\rpar^c=\bigcap_{\lambda\in\Lambda }A_\lambda$ is closed. That is, the intersection of an arbitrary collection of closed sets is closed. If $A_1$, $A_2$, $\ldots$ , $A_N$ are closed sets, then $A_1^c$, $A_2^c$, $\ldots$ , $A_N^c$ are open sets, and so is $\bigcap_{k=1}^N A_k^c$. Thus, $\bigcup_{k=1}^NA_k=\lpar\bigcap_{k=1}^N A_k^c\rpar^c$ is closed. That is, the union of finitely many closed sets is closed.

At this point, we have reviewed all the topological notions that we need for succeeding proofs. We mention a bit more, which is concerning how the notion of open set relates to the distance function in the real number system, or the function $\R\times\R\into\R$ denoted by $(x,c)\mapsto |x-c|$.

The assertion that $c\in\abOpen$ is equivalent to the condition that there exists $\delta>0$ such that\linebreak $(c-\delta,c+\delta)\sub\abOpen$. [For necessity, use $\delta:=\min\{b-c,c-a\}$, while sufficiency follows from $c-\delta<c<c+\delta$.] This equivalence can be used to prove that $c$ is an interior point of $G$ if and only if there exists $\delta>0$ such that $(c-\delta,c+\delta)\sub G$. Thus, $G$ is open if and only if for each $c\in G$, there exists $\delta>0$ such that $(c-\delta,c+\delta)\sub G$.

For any $x,c\in\R$ the condition that $x\in(c-\delta,c+\delta)$ is true if and only if $c-\delta<x<c+\delta$ if and only if $-\delta<x-c<\delta$ if and only if $|x-c|<\delta$. Henceforth, any assertion of the form $x\in(c-\delta,c+\delta)$ can be used interchangeably with $|x-c|<\delta$. For instance, $(c-\delta,c+\delta)\sub G$, which is equivalent to ``$x\in (c-\delta,c+\delta)$ implies $x\in G$,'' is further equivalent to ``$|x-c|<\delta$ implies $x\in G$.'' Thus, $G$ is open if and only if for each $c\in G$, there exists $\delta>0$ such that $|x-c|<\delta$ implies $x\in G$.
\end{quote}

\begin{proposition}\label{supCloseProp} If $c=\sup\theSet$, then $c\in \theSet^-$.
\end{proposition}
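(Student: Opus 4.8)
The plan is to verify the definition of ``point of closure'' directly, reducing everything to Proposition~\ref{xiProp}. So I would begin by fixing an arbitrary bounded open interval $\abOpen$ with $c\in\abOpen$, and aim to show $\abOpen\cap\theSet\neq\emptyset$; once this is done for every such interval, $c$ is by definition a point of closure of $\theSet$, i.e. $c\in\theSet^-$.

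From $c\in\abOpen$ I get the two strict inequalities $a<c$ and $c<b$. The first one, combined with the hypothesis $c=\sup\theSet$, is precisely the situation of Proposition~\ref{xiProp}, which hands me an element $\xi\in\theSet$ with $a<\xi\leq c$. Chaining $\xi\leq c$ with $c<b$ gives $\xi<b$, hence $a<\xi<b$, that is, $\xi\in\abOpen$. Therefore $\xi\in\abOpen\cap\theSet$, so this intersection is nonempty, as required.

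I do not expect any real obstacle here; the only point worth a brief remark is that the hypothesis $c=\sup\theSet$ already forces $\theSet$ to be nonempty (an empty subset of $\R$ has no least upper bound in $\R$), which is exactly the circumstance under which Proposition~\ref{xiProp} is applicable — and that proposition's proof in any case disposes of the degenerate case internally. Everything else is immediate manipulation of inequalities and the definition of interval membership.
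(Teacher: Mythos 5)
Your argument is correct and coincides with the paper's own proof: both fix an arbitrary open interval $\abOpen$ containing $c$, invoke Proposition~\ref{xiProp} with $a < c$ to produce $\xi \in \theSet$ with $a < \xi \leq c$, and then observe $\xi < b$ so that $\xi \in \abOpen \cap \theSet$. The remark about nonemptiness of $\theSet$ is a sound sanity check but adds nothing the paper needed to state.
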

\begin{proof} Given an interval $\abOpen$ such that $c\in\abOpen$, we have $a<c<b$, and by Proposition~\ref{xiProp}, there exists $\xi\in\theSet$ such that $a<\xi\leq c<b$. Equivalently, $\xi\in\theSet$ and $\xi\in\abOpen$. That is, $\xi\in\abOpen\cap\theSet$, so this intersection is not empty. Therefore, $c$ is a point of closure of $\theSet$. 
\end{proof}

\begin{proposition}\label{limCloseProp} If $\cseq$ is a sequence in $\theSet$ that is convergent, then $\seqlimOp c_n\in S^-$.
\end{proposition}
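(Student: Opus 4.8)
Write $c := \seqlimOp c_n$. The plan is to verify directly that $c$ satisfies the definition of a point of closure of $\theSet$: given an arbitrary open interval $\abOpen$ with $c\in\abOpen$, I must produce an element of $\abOpen\cap\theSet$. The natural candidate is a suitably late term of the sequence, since those terms lie in $\theSet$ by hypothesis and are eventually close to $c$.

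First I would use the distance-function reformulation from the topology review: since $c\in\abOpen$, there exists $\delta>0$ such that $(c-\delta,c+\delta)\sub\abOpen$, equivalently, any $x$ with $|x-c|<\delta$ belongs to $\abOpen$. Then I would invoke convergence of $\cseq$ to $c$ at this $\delta>0$: there exists $N\in\N$ such that $|c_n-c|<\delta$ for all $n\geq N$. In particular, taking $n=N$ gives $|c_N-c|<\delta$, so $c_N\in\abOpen$. Since $\cseq$ is a sequence in $\theSet$, we also have $c_N\in\theSet$, hence $c_N\in\abOpen\cap\theSet$ and this intersection is nonempty. As $\abOpen$ was an arbitrary open interval containing $c$, this shows $c$ is a point of closure of $\theSet$, i.e. $\seqlimOp c_n=c\in\theSet^-$.

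There is essentially no obstacle here; the only thing to be careful about is quoting the correct definitions (the characterization of $c\in\abOpen$ via some $\delta>0$ with $(c-\delta,c+\delta)\sub\abOpen$, and the interchangeability of $x\in(c-\delta,c+\delta)$ with $|x-c|<\delta$, both established in the topology review) and remembering that ``$\cseq$ in $\theSet$'' means every term $c_n$ lies in $\theSet$, so in particular $c_N$ does.
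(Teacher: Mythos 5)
Your proposal is correct and follows essentially the same approach as the paper's proof: extract a $\delta>0$ from $c\in\abOpen$ via the distance-function characterization of open sets, apply the definition of convergence at that $\delta$ to find an index $N$ with $|c_N-c|<\delta$, and conclude $c_N\in\abOpen\cap\theSet$. No meaningful differences.
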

\begin{proof} Let $c:=\seqlimOp c_n$, let $\abOpen$ be an interval such that $c\in\abOpen$. Since $\abOpen$ is an open set, there exists $\delta>0$ such that $|x-c|<\delta$ implies $x\in\abOpen$. Since $\delta>0$ and $c=\seqlimOp c_n$, there exists $N\in\N$ such that for all $n\in\N$, if $n\geq N$ then $|c_n-c|<\delta$. In particular, at $n=N$, we have $|c_N-c|<\delta$, which earlier has been shown to imply $c_N\in\abOpen$. But since $\cseq$ is a sequence in $\theSet$, we have $c_N\in\theSet$, so $c_N\in\abOpen\cap\theSet$, so this intersection is not empty. At this point, we have proven that $c$ is a point of closure of $\theSet$.  
\end{proof}

If $U\sub I\sub\R$, then we say that $U$ is \emph{open relative to $I$} if there exists an open set $G$ such that $U=G\cap I$. This definition of relatively open set and our previous definition of open set are related in the following manner: a set $U\sub\R$ is an open set if and only if $U$ is open relative to $\R$.

\begin{proposition}\label{closedRelProp} If $U\sub I\sub\R$, then the following are equivalent.
\begin{enumerate}
    \item\label{closedRelI} There exists a closed set $F$ such that $U=F\cap I$.
    \item\label{closedRelII} The set $I\setdiff U$ is open relative to $I$.
\end{enumerate}
[If one of the above conditions is true, then we say that \emph{$U$ is closed relative to $I$}.]
\end{proposition}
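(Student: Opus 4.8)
The plan is to exploit the duality ``the complement of an open set is closed, and the complement of a closed set is open,'' which has already been established in the topology review, together with the elementary set-theoretic identity $I\setdiff(A\cap I)=I\cap A^c$, valid for any sets $A$ and $I$. Both implications then reduce to a single bookkeeping computation with complements, so no real obstacle is expected; the only points requiring care are not to drop the standing hypothesis $U\sub I$ and to keep straight that, for $x\in I$, the condition $x\in A\cap I$ collapses to $x\in A$.

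For \ref{closedRelI} $\implies$ \ref{closedRelII}: assuming $U=F\cap I$ with $F$ closed, I would compute $I\setdiff U=I\setdiff(F\cap I)=I\cap F^c$. Since $F$ is closed, $F^c$ is open, so the equality $I\setdiff U=F^c\cap I$ exhibits $I\setdiff U$ as the intersection of an open set with $I$; by the definition of ``open relative to $I$,'' this is precisely the assertion in \ref{closedRelII}.

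For \ref{closedRelII} $\implies$ \ref{closedRelI}: assuming $I\setdiff U$ is open relative to $I$, fix an open set $G$ with $I\setdiff U=G\cap I$, and set $F:=G^c$, which is closed because $G$ is open. It remains to verify $F\cap I=U$, which I would do pointwise on elements of $I$: for $x\in I$ we have $x\in G\cap I\iff x\in G$, hence $x\in I\setdiff U\iff x\in G$, and negating this equivalence (still among elements of $I$) gives $x\in U\iff x\notin G\iff x\in F$. Since $U\sub I$ by hypothesis, $U$ equals its own trace on $I$, so this pointwise equivalence yields $U=\{x\in I:x\in F\}=F\cap I$, which is \ref{closedRelI}. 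This closes the circle of equivalences.
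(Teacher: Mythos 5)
Your proposal is correct and takes essentially the same approach as the paper: pass to complements and use the already-established duality between open and closed sets. The only cosmetic difference is that you verify $F\cap I=U$ pointwise on elements of $I$, whereas the paper carries out the same computation by complementing both sides of $G\cap I = I\setdiff U$ and then intersecting with $I$; both are the same bookkeeping.
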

\begin{proof} \ref{closedRelI} $\implies$\ref{closedRelII} The sets $F^c$ and $F$ have the property that $\R=F\cup F^c$. If we take the intersection of both sides with $I\sub\R$, we obtain $I=(F\cap I)\cup(F^c\cap I)$, where the right-hand side is equal to $U\cup(F^c\cap I)$. Consequently, $I\setdiff U=F^c\cap I$, where $F^c$ is open because $F$ is closed. Therefore, $I\setdiff U$ is open relative to $I$.\\

\noindent \ref{closedRelII} $\implies$\ref{closedRelI} If there exists an open set $G$ such that $G\cap I=I\setdiff U$, then $(G\cap I)^c=(I\cap U^c)^c$, and $G^c\cup I^c=I^c\cup U$. We take the intersection of both sides with $I$ to obtain $(G^c\cap I)\cup\emptyset=\emptyset\cup(U\cap I)$, where $U\cap I$ reduces to $U$ because $U\sub I$. Thus,  $G^c\cap I=U$, where $G^c$ is closed because $G$ is open.
\end{proof}

\begin{corollary}\label{closedRelCor} A set $U$ is open relative to $I$ if and only if $I\setdiff U$ is closed relative to $I$.
\end{corollary}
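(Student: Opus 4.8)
The plan is to apply Proposition~\ref{closedRelProp} with $U$ replaced by the set $V := I\setdiff U$. First I would note that, since $U\sub I$, we have $V = I\setdiff U \sub I \sub \R$, so $V$ satisfies the standing hypotheses of Proposition~\ref{closedRelProp}. That proposition, read for $V$, says that condition~\ref{closedRelI} for $V$ — the existence of a closed set $F$ with $V = F\cap I$, which is exactly the definition of ``$V$ is closed relative to $I$'' — is equivalent to condition~\ref{closedRelII} for $V$, namely that $I\setdiff V$ is open relative to $I$.

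Next I would rewrite $I\setdiff V$. Because $U\sub I$, the set identity $I\setdiff(I\setdiff U)=U$ holds, so $I\setdiff V = U$. Substituting this into the equivalence from the previous paragraph yields precisely: $I\setdiff U$ is closed relative to $I$ if and only if $U$ is open relative to $I$, which is the assertion of the corollary.

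The only step needing any attention is the set identity $I\setdiff(I\setdiff U)=U$: it is true here exactly because $U\sub I$ (it can fail without that hypothesis), and this is the single place the containment $U\sub I$ is used beyond placing the sets inside $\R$. I do not expect a genuine obstacle; the corollary is a purely formal consequence of Proposition~\ref{closedRelProp}, with no ``compactness''-type argument involved.
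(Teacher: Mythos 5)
Your proposal is correct and matches the paper's own proof: the paper also substitutes $V = I\setdiff U$ into Proposition~\ref{closedRelProp} and uses $I\setdiff V = U$. The only difference is that you explicitly justify the set identity $I\setdiff(I\setdiff U)=U$ via $U\sub I$, which the paper leaves implicit.
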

\begin{proof} A set $V$ is closed relative to $I$ if and only if (from Proposition~\ref{closedRelProp}\ref{closedRelII}) $I\setdiff V$ is open relative to $I$. Set $V=I\setdiff U$.
\end{proof}

\begin{proposition}\label{condefProp} Given $I\sub\R$, the following are equivalent.
\begin{enumerate}

    \item\label{condefI} There exist nonempty disjoint subsets $A$ and $B$ of $I$, both open relative to $I$, such that $I=A\cup B$.
    \item\label{condefII} There exists a nonempty subset $U$ of $I$, both open and closed relative to $I$, such that $U\neq I$.
\end{enumerate}

[A set that does NOT satisfy one of the above statements is said to be a \emph{connected set}.]
\end{proposition}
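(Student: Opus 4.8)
The plan is to prove the equivalence by moving between the ``two-piece decomposition'' picture in \ref{condefI} and the ``single relatively clopen set'' picture in \ref{condefII}, with Corollary~\ref{closedRelCor}---which relates relative openness of a set to relative closedness of its complement in $I$---doing essentially all of the work. The only content beyond that corollary is careful bookkeeping of complements taken relative to $I$, together with the nonemptiness and properness clauses.

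For \ref{condefI} $\implies$ \ref{condefII} I would take $U:=A$. It is nonempty and open relative to $I$ by hypothesis. Since $A$ and $B$ are disjoint with $A\cup B=I$, we have $B=I\setdiff A=I\setdiff U$; as $B$ is open relative to $I$, Corollary~\ref{closedRelCor} gives that $I\setdiff B=A=U$ is closed relative to $I$. Finally $U\neq I$: since $B$ is nonempty and $B\sub I\setdiff A$, there is a point of $I$ not in $A$, so $A\neq I$. Hence $U$ witnesses \ref{condefII}.

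For \ref{condefII} $\implies$ \ref{condefI} I would set $A:=U$ and $B:=I\setdiff U$. Then $A$ is nonempty by hypothesis; $B$ is nonempty because $U\neq I$ while $U\sub I$; the two are disjoint by construction; and $A\cup B=U\cup(I\setdiff U)=I$ since $U\sub I$. Also $A=U$ is open relative to $I$ by hypothesis. For $B$, I apply Corollary~\ref{closedRelCor} with the ``set'' taken to be $I\setdiff U$, using the identity $I\setdiff(I\setdiff U)=U$ (valid since $U\sub I$): the set $B=I\setdiff U$ is open relative to $I$ precisely because $I\setdiff(I\setdiff U)=U$ is closed relative to $I$, which holds by hypothesis. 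Thus $A$ and $B$ witness \ref{condefI}.

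The one spot that needs attention---and the only thing I would flag as a possible obstacle---is the direction of the relative-complement identities: one must verify $I\setdiff(I\setdiff U)=U$ and $B=I\setdiff A$ from $U\sub I$ (respectively from $A\cup B=I$ together with $A\cap B=\emptyset$), rather than complementing in $\R$. Once those are pinned down, both implications are immediate from Corollary~\ref{closedRelCor} and a line of checking for the ``nonempty'' and ``$\neq I$'' conditions.
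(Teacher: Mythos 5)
Your proof is correct and takes essentially the same approach as the paper's: both directions choose $U:=A$ and $A:=U$, $B:=I\setdiff U$ respectively, and both rely on the same complement-flipping fact (you invoke Corollary~\ref{closedRelCor} in both directions, while the paper uses Corollary~\ref{closedRelCor} in one direction and the equivalent Proposition~\ref{closedRelProp}\ref{closedRelII} in the other). The bookkeeping you flag — that $B=I\setdiff A$ and $I\setdiff(I\setdiff U)=U$ — is exactly what the paper checks too, so nothing is missing.
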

\begin{proof}\ref{condefI} $\implies$ \ref{condefII} We show that the desired set is $U:=A$. The condition $I=A\cup B$ implies $A=I\setdiff B$. Since $B$ is open relative to $I$, by Proposition~\ref{closedRelCor}, $A$ is closed relative to $I$. Since $A$ and $B$ are disjoint and nonempty, there exist $a\in A$ and $b\in B$ such that $a\neq b$. Thus, $b$ is an element of $B\sub I$ that is not in $A$, so $I$ is not a subset of $A$, and hence cannot be equal to $A$.\\

\noindent\ref{condefII} $\implies$ \ref{condefI} We show that the desired sets are $A:=U$ and $B:=I\setdiff U$, which are disjoint and with union $I$. Since $U$ is nonempty and open relative to $I$, so is $A$, and since $U$ is closed relative to $I$, by Proposition~\ref{closedRelProp}, $B$ is open relative to $I$. Since $A=U\neq I$, we find that $I$ is not a subset of $A=U$, so there exists $b\in I$ such that $b\notin U$, or that $b\in I\setdiff U=B$, so $B\neq\emptyset$.
\end{proof}

\begin{example}\label{disconEx} The set $\{0,1\}$ is not connected. Its subsets $A:=\{0\}$ and $B:=\{1\}$ are nonempty, disjoint and have $\{0,1\}$ as union. The open sets $\lpar -\infty,\txthalf\rpar$ and $\lpar\txthalf,\infty\rpar$ have the properties $A=\lpar-\infty,\txthalf\rpar\cap \{0,1\}$ and $B=\{0,1\}\cap\lpar\txthalf,\infty\rpar$, so $A$ and $B$ are both open relative to $\{0,1\}$.
\end{example}

\begin{lemma}\label{ConnectedLem} Let $I\sub\R$, and let $a\in I$. A necessary and sufficient condition for $I$ to be connected is that if $U$ is a subset of $I$ that contains $a$ and is both open and closed relative to $I$, then $U=I$.
\end{lemma}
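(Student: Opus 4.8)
The plan is to deduce this directly from Proposition~\ref{condefProp}, which (via its form \ref{condefII}) tells us that $I$ fails to be connected precisely when $I$ has a nonempty proper subset that is both open and closed relative to $I$. So the statement to be proved amounts to saying that in this criterion one may insist, without loss of generality, that the offending clopen set contains the chosen point $a$.

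For the necessity direction I would argue directly. Assume $I$ is connected, and let $U\sub I$ be open and closed relative to $I$ with $a\in U$. Then $U\neq\emptyset$ because $a\in U$, and since $I$ is connected, $U$ cannot be a nonempty proper clopen subset of $I$; hence $U=I$.

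For the sufficiency direction I would use the contrapositive: assume $I$ is not connected, and produce a subset of $I$ that contains $a$, is open and closed relative to $I$, and is not all of $I$, contradicting the hypothesis. By Proposition~\ref{condefProp} there is a nonempty $U\sub I$, open and closed relative to $I$, with $U\neq I$. If $a\in U$ we are done immediately. If $a\notin U$, set $V:=I\setdiff U$; then $a\in V$ since $a\in I$, and $V\neq I$ because $U\neq\emptyset$. It remains to check that $V$ is clopen relative to $I$: since $U$ is open relative to $I$, Corollary~\ref{closedRelCor} gives that $V=I\setdiff U$ is closed relative to $I$; and since $U$ is closed relative to $I$, Proposition~\ref{closedRelProp} gives that $V$ is open relative to $I$. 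Thus $V$ is the required set, and the hypothesis forces $V=I$, a contradiction.

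The argument is essentially bookkeeping with the earlier relative-topology results; the only point requiring a little care is the case split on whether $a\in U$ and the verification that $I\setdiff U$ inherits the clopen property, which is exactly what Corollary~\ref{closedRelCor} and Proposition~\ref{closedRelProp} supply. I expect no substantive obstacle beyond this.
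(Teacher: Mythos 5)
Your proof is correct and uses essentially the same ingredients as the paper: Proposition~\ref{condefProp}\ref{condefII}, Corollary~\ref{closedRelCor}, and Proposition~\ref{closedRelProp}, with the key observation for the hard direction being that if the given clopen $U$ misses $a$, one passes to $I\setdiff U$. The only superficial difference is that the paper proves sufficiency directly (taking an arbitrary nonempty clopen $U$ and arguing by contradiction that $a\in U$), whereas you argue by contraposition with a case split on whether $a\in U$; these are the same reasoning packaged differently.
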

\begin{proof} $(\!\!\implies\!\!)$ Suppose $I$ is connected. By our definition of connectedness, this is equivalent to the negation of Proposition~\ref{condefProp}\ref{condefII}, which is that any nonempty subset $U$ of $I$, both open and closed relative to $I$, is equal to $I$. This applies to the case when $U$ contains $a$.\\

\noindent$(\!\!\impliedby\!\!)$ Let $U$ be a nonempty subset of $I$ that is both open and closed relative to $I$. 

We show that $a\in U$. Otherwise, $I\setdiff U$ contains $a$, and is hence nonempty. Since $U$ is both open and closed relative to $I$, by Proposition~\ref{closedRelProp} and Corollary~\ref{closedRelCor}, so is $I\setdiff U$. Since $a\in I\setdiff U$, by assumption, $I\setdiff U=I$, which implies $U=\emptyset$, contradicting the assumption that $U$ is nonempty. Hence, $a\in U$, and by assumption, $U=I$. At this point, we have proven that $I$ satisfies the negation of Proposition~\ref{condefProp}\ref{condefII}. Therefore, $I$ is connected.
\end{proof}

\begin{example}\label{singConEx} A singleton $\{r\}$ is connected. The open set $(r-1,r+1)$ and the closed set $[r-1,r+1]$ have the properties $\{r\}=(r-1,r+1)\cap\{r\}$ and $\{r\}=[r-1,r+1]\cap\{r\}$, so $\{r\}$ is open and closed relative to itself, and the only possible subset  that contains $r$ that is both open and closed relative to $\{r\}$ is itself. By Lemma~\ref{ConnectedLem}, $\{r\}$ is connected.
\end{example}

The conclusion of the characterization of connected set in the statement of Lemma~\ref{ConnectedLem} is always true when the set in question is a singleton (contains only one element). A subset of $\R$ for which any connected subset is a singleton is said to be \emph{totally disconnected}.

\begin{proposition}\label{conNecProp}If $I$ is connected, then for any $a,b\in I$, we have $\abClosed\sub I$.
\end{proposition}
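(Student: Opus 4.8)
The plan is to argue by contradiction, invoking the characterization of disconnectedness in Proposition~\ref{condefProp}\ref{condefI}. Fix $a,b\in I$; we may assume $a<b$, since if $a=b$ then $\abClosed=\{a\}\sub I$ and if $a>b$ then $\abClosed=\emptyset$, so in either case there is nothing to prove. Suppose, toward a contradiction, that $\abClosed{\not\sub}I$. Then some $c\in\abClosed$ fails to lie in $I$, and since $a,b\in I$ this missing point must satisfy $a<c<b$.

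The heart of the argument is to use such a $c$ to separate $I$. I would set $A:=I\cap(-\infty,c)$ and $B:=I\cap(c,\infty)$. As recorded in the review of the standard topology on $\R$, the unbounded intervals $(-\infty,c)$ and $(c,\infty)$ are open, so $A$ and $B$ are both open relative to $I$. They are disjoint because $(-\infty,c)\cap(c,\infty)=\emptyset$, and their union is
\[
A\cup B=I\cap\big((-\infty,c)\cup(c,\infty)\big)=I\cap(\R\setdiff\{c\})=I\setdiff\{c\}=I,
\]
the final equality holding precisely because $c\notin I$. Lastly $a\in A$ (as $a\in I$ with $a<c$) and $b\in B$ (as $b\in I$ with $c<b$), so neither set is empty. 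Thus $A$ and $B$ witness Proposition~\ref{condefProp}\ref{condefI} for $I$, so $I$ is not connected, contradicting the hypothesis; hence $\abClosed\sub I$.

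I do not anticipate a genuine obstacle. Once the separating pair $A,B$ is written down the verification is entirely routine; the only points deserving a moment's care are the remark that a point of $\abClosed$ lying outside $I$ must be \emph{strictly} between $a$ and $b$ (so that $A$ and $B$ are nonempty), and the small set-algebra computation showing $A\cup B=I$ precisely because the omitted point $c$ does not belong to $I$.
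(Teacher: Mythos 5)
Your proof is correct and follows essentially the same route as the paper's: you separate $I$ by the missing point $c$ with the relatively open sets $A=I\cap(-\infty,c)$ and $B=I\cap(c,\infty)$, and reach a contradiction with connectedness. The only cosmetic difference is that you dispose of the cases $a\geq b$ explicitly up front, whereas the paper lets the derived chain $a<c<b$ handle them implicitly.
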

\begin{proof}Suppose $I$ is connected, but there exist $a,b\in I$ such that $\abClosed{\not\sub}I$, so there exists $c\in\abClosed$ such that $c\notin I$. From $c\in\abClosed$, we obtain $a\leq c\leq b$, while from $c\notin I$ and $a,b\in I$, we find that $a\neq c$ and $c\neq b$. Thus, $a<c<b$. The intervals $(-\infty,c)$ and $(c,\infty)$ are open sets, and so, the subsets $A:=\{x\in I\  :\  x<c\}=I\cap(-\infty,c)$ and $B:=\{x\in I\  :\  c<x\}=I\cap(c,\infty)$ of $I$ are both open relative to $I$. From $a<c<b$ and $a,b\in I$, we find that $a\in A$ and $b\in B$, so $A$ and $B$ are nonempty. If $x\in I$, then by the Trichotomy Law, either $x<c$, $x=c$ or $c<x$. We cannot have the second possibility because $c\notin I$. Thus, $x<c$ or $c<x$, and this proves that $I\sub (-\infty,c)\cup(c,\infty)$. Taking the intersection of both sides with $I$, we obtain $I\sub (I\cap (-\infty,c))\cup(I\cap(c,\infty))=A\cup B$. Since $A,B\sub I$, we also have $A\cup B\sub I$. Hence, $I=A\cup B$, and this shows that $I$ is not connected.$\lightning$ Therefore, for any $a,b\in I$, we have $\abClosed\sub I$.
\end{proof}

\begin{proposition}\label{conIntProp} Let $I$ be an interval. If any interval $\abClosed\sub I$ is connected, then $I$ is connected.
\end{proposition}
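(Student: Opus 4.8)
The plan is to reduce everything to Lemma~\ref{ConnectedLem}. If $I=\emptyset$ it is connected by default, since the condition in Proposition~\ref{condefProp}\ref{condefI} requires a \emph{nonempty} piece $A$; so assume $I\neq\emptyset$ and fix some $a\in I$. By Lemma~\ref{ConnectedLem}, it suffices to show that every subset $U\sub I$ which contains $a$ and is both open and closed relative to $I$ must equal $I$.

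So suppose toward a contradiction that such a $U$ satisfies $U\neq I$, and pick $b\in I\setdiff U$. Since $a\in U$ and $b\notin U$ we have $a\neq b$; by the symmetry of the argument (replacing the roles of the two endpoints, i.e.\ working with $[b,a]$ instead of $\abClosed$) we may assume $a<b$. Because $I$ is an interval with $a,b\in I$, the closed and bounded interval $\abClosed$ is a subset of $I$, and by hypothesis $\abClosed$ is connected.

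The key step is to restrict $U$ down to $\abClosed$. Write $U=G\cap I$ with $G$ open and $U=F\cap I$ with $F$ closed, using the definition of relatively open set and Proposition~\ref{closedRelProp}. Since $\abClosed\sub I$ we get $U\cap\abClosed=G\cap\abClosed=F\cap\abClosed$, so $V:=U\cap\abClosed$ is simultaneously open and closed relative to $\abClosed$; moreover $a\in V$, so $V\neq\emptyset$. Now $\abClosed$ is connected, so it satisfies the negation of Proposition~\ref{condefProp}\ref{condefII}: every nonempty subset of $\abClosed$ that is both open and closed relative to $\abClosed$ equals $\abClosed$. Hence $V=\abClosed$, and in particular $b\in V\sub U$, contradicting $b\in I\setdiff U$. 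Therefore $U=I$, and Lemma~\ref{ConnectedLem} gives that $I$ is connected.

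I expect the only real obstacle to be bookkeeping rather than ideas: one must check carefully that relative openness and relative closedness transfer from $I$ to the subinterval $\abClosed$ (this is exactly where the inclusion $\abClosed\sub I$ enters), and one must be sure to invoke connectedness of $\abClosed$ in the ``no proper nonempty clopen subset'' form rather than the ``no splitting into two nonempty clopen pieces'' form — the two being interchanged by Proposition~\ref{condefProp}. The WLOG on $a<b$ versus $b<a$ and the degenerate cases (empty $I$, singleton $I$) each cost a sentence but are otherwise immediate, the singleton case in fact being subsumed since the only subset of a singleton containing its point is the singleton itself.
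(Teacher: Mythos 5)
Your proof is correct, and it is at bottom the same restriction-to-a-subinterval idea the paper uses, just packaged through Lemma~\ref{ConnectedLem} rather than by contraposition with the two-piece characterization in Proposition~\ref{condefProp}\ref{condefI}. The paper assumes $I=A\cup B$ with $A=G_1\cap I$, $B=G_2\cap I$ a separation, picks $a\in A$, $b\in B$, and shows $G_1\cap\abClosed$ and $G_2\cap\abClosed$ separate $\abClosed$; you instead fix $a\in I$, take $U\ni a$ both open and closed relative to $I$ with $b\in I\setdiff U$, and argue that $V:=U\cap\abClosed$ is nonempty and both open and closed relative to $\abClosed$, so $V=\abClosed$ by connectedness of $\abClosed$ and hence $b\in U$, a contradiction. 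The key verification --- that $G\cap I\cap\abClosed=G\cap\abClosed$ because $\abClosed\sub I$ --- is identical in both proofs; your version has the modest advantage of tracking a single set $U$ rather than a pair. One wording quibble: the reduction to $a<b$ is not strictly a symmetry (the roles of $a$ and $b$ are asymmetric here, since $a$ is the fixed basepoint of Lemma~\ref{ConnectedLem} and lies in $U$ while $b$ does not), but as you indicate, the argument runs verbatim on $[b,a]$ when $b<a$, i.e.\ on $[\min\{a,b\},\max\{a,b\}]$, so the conclusion stands.
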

\begin{proof} Let $I$ be an interval. We prove the rest of the statement by contraposition. Suppose $I$ is not connected. This means that there exist open sets $G_1$ and $G_2$ such that the subsets $A:=G_1\cap I$ and $B:=G_2\cap I$ of $I$ are nonempty, disjoint and both open relative to $I$, with $I=A\cup B$. Since $A$ and $B$ are nonempty, there exist $a\in A$ and $b\in B$. Since $A$ and $B$ are disjoint, we have $a\neq b$, and, WLOG, we assume $a<b$, so $\abClosed\neq\emptyset$. Since $I$ is an interval, the conditions $a,b\in I$ and $a<b$ imply $\abClosed\sub I$. 

The sets $X:=G_1\cap \abClosed$ and $Y:=G_2\cap\abClosed$ are both open relative to $\abClosed$. We take the intersection with $\abClosed$ of both sides of $I=A\cup B$, substitute $A=G_1\cap I$ and $B=G_2\cap I$, and after some set-theoretic manipulation, every instance of $\abClosed\cap I$ may be replaced by $\abClosed$ because $\abClosed\sub I$. The result is $\abClosed=\lpar G_1\cap \abClosed\rpar\cup\lpar G_2\cap\abClosed\rpar =X\cup Y$. 

If there exists $k\in X\cap Y=\lpar G_1\cap \abClosed\rpar\cup\lpar G_2\cap\abClosed\rpar\sub \lpar G_1\cap I\rpar\cup\lpar G_2\cap I\rpar=A\cap B$, then $x\in A\cap B$, contradicting the disjointness of $A$ and $B$. At this point, we have proven that $\abClosed$ is not connected. Therefore, there exists a closed and bounded interval contained in $I$ that is not connected, and this completes the proof.
\end{proof}

Given a function $f:I\into\R$, and given $c\in I$, we say that $f$ is \emph{continuous at $c$} if for each $\varepsilon>0$, there exists $\delta>0$ such that for any $x\in I$, if $|x-c|<\delta$, then $\left| f(x)-f(c)\right|<\varepsilon$.

\begin{example}[\emph{Constant and linear functions are continuous}]\label{contEx} The above definition of ``continuity at a point'' is the so-called ``epsilon-delta'' definition. This assertion is trivially true for a \emph{constant function} $f(x)=k$ for some constant $k$ since the conclusion becomes $|k-k|=0<\varepsilon$. Thus, constant functions are continuous at any real number in the declared domain. If $f(x)=mx+b$ for some constants $m\neq 0$ and $b$ (or that $f$ is a ``linear function''), then  the usual ``epsilon-delta'' proof that makes use of $\delta=\frac{\varepsilon}{|m|}>0$ may be used to show that $f$ is continuous. In general, for any $m,b\in\R$ the function $x\mapsto mx+b$ is continuous at any element of any subset of $\R$. 
\end{example}

\begin{proposition}\label{contDefProp} Given a function $f:I\into\R$, the following are equivalent.
\begin{enumerate}
    \item\label{contDefI} For each $c\in I$, the function $f$ is continuous at $c$.
    \item\label{contDefIII} For any open set $G\sub\R$, the inverse image $f^{-1}[G]:=\{t\in I\  :\  f(t)\in G\}$ (of $G$ under $f$) is open relative to $I$.
       \item\label{contDefIV} For any closed set $F\sub\R$, the inverse image $f^{-1}[F]$ (of $G$ under $f$) is closed relative to $I$.
\end{enumerate}
[A function $f$ that satisfies one of the conditions above is said to be \emph{continuous on $I$}.]
\end{proposition}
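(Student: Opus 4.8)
The plan is to run the cycle of implications \ref{contDefI} $\implies$ \ref{contDefIII} $\implies$ \ref{contDefIV} $\implies$ \ref{contDefI}, since once \ref{contDefIII} and \ref{contDefIV} are both linked to \ref{contDefI} their mutual equivalence comes for free. Throughout I will lean on the review material: that a bounded open interval is open, that a union of open sets is open, that $G$ is open iff each of its points $c$ admits $\delta>0$ with ``$|x-c|<\delta$ implies $x\in G$'', and on Proposition~\ref{closedRelProp} together with Corollary~\ref{closedRelCor} for the passage between relatively open and relatively closed sets.

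For \ref{contDefI} $\implies$ \ref{contDefIII}: let $G\sub\R$ be open. For each $c\in f^{-1}[G]$ we have $f(c)\in G$, so there is $\varepsilon_c>0$ such that $|y-f(c)|<\varepsilon_c$ implies $y\in G$; feeding this $\varepsilon_c$ into continuity at $c$ yields $\delta_c>0$ with the property that $x\in I$ and $|x-c|<\delta_c$ force $|f(x)-f(c)|<\varepsilon_c$, hence $f(x)\in G$, hence $x\in f^{-1}[G]$. In other words $(c-\delta_c,c+\delta_c)\cap I\sub f^{-1}[G]$. I then set $H:=\bigcup_{c\in f^{-1}[G]}(c-\delta_c,c+\delta_c)$, which is open as a union of bounded open intervals, and verify the set equality $H\cap I=f^{-1}[G]$: the inclusion $\supseteq$ holds because each such $c$ lies in its own interval and in $I$, and the inclusion $\sub$ holds because any $x\in H\cap I$ lies in some $(c-\delta_c,c+\delta_c)\cap I\sub f^{-1}[G]$. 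Thus $f^{-1}[G]$ is open relative to $I$.

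For \ref{contDefIII} $\implies$ \ref{contDefIV}: given a closed set $F$, its complement $F^c$ is open, so by \ref{contDefIII} the set $f^{-1}[F^c]$ is open relative to $I$. The point is the set identity $f^{-1}[F^c]=I\setdiff f^{-1}[F]$ (a point $t\in I$ has $f(t)\notin F$ precisely when $t\notin f^{-1}[F]$), combined with $f^{-1}[F]\sub I$ by the very definition of the inverse image; hence $I\setdiff f^{-1}[F]$ is open relative to $I$, and Corollary~\ref{closedRelCor} applied with $U:=I\setdiff f^{-1}[F]$ (so that $I\setdiff U=f^{-1}[F]$) gives that $f^{-1}[F]$ is closed relative to $I$. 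For \ref{contDefIV} $\implies$ \ref{contDefI}: fix $c\in I$ and $\varepsilon>0$, and take $F:=\lpar-\infty,f(c)-\varepsilon\rbrak\cup\lbrak f(c)+\varepsilon,\infty\rpar$, a union of two closed sets and hence closed, with $F^c=\lpar f(c)-\varepsilon,f(c)+\varepsilon\rpar$. By \ref{contDefIV} and Corollary~\ref{closedRelCor} the set $I\setdiff f^{-1}[F]=f^{-1}[F^c]=\{t\in I:|f(t)-f(c)|<\varepsilon\}$ is open relative to $I$, say it equals $H\cap I$ with $H$ open. Since $|f(c)-f(c)|=0<\varepsilon$ we have $c\in H$, so there is $\delta>0$ with $(c-\delta,c+\delta)\sub H$; then any $x\in I$ with $|x-c|<\delta$ lies in $H\cap I$, i.e. $|f(x)-f(c)|<\varepsilon$, which is exactly continuity at $c$.

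I expect the only real obstacle to be the step \ref{contDefI} $\implies$ \ref{contDefIII}: not any individual estimate, but the local-to-global maneuver of assembling one genuine ambient open set $H$ out of the pointwise data $\{\delta_c\}$ and then carefully checking $H\cap I=f^{-1}[G]$ as an honest set equality (the edge case $f^{-1}[G]=\emptyset$ being handled automatically, as $H=\emptyset$ is open). The remaining two implications are essentially bookkeeping with complements, the identity $f^{-1}[F^c]=I\setdiff f^{-1}[F]$, and the already-proven correspondence between relatively open and relatively closed subsets of $I$.
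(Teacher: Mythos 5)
Your proposal is correct and follows essentially the same route as the paper: the same cycle \ref{contDefI} $\implies$ \ref{contDefIII} $\implies$ \ref{contDefIV} $\implies$ \ref{contDefI}, the same local-to-global assembly of an ambient open set from the pointwise $\delta_c$ data in the first step, the same use of complements and Corollary~\ref{closedRelCor} in the second, and the same closed-set-as-complement-of-an-$\varepsilon$-ball argument in the third (you build $F$ directly as a union of two closed rays, the paper takes $J^c$ with $J$ the open ball, but these are the identical set).
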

\begin{proof}\ref{contDefI} $\implies$ \ref{contDefIII} Let $G$ be an open set, and let $c\in f^{-1}[G]$. Thus, $f(c)\in G$, and since $G$ is open, there exists $\varepsilon>0$ such that $|y-f(c)|=|f(c)-y|<\varepsilon$ implies $y\in G$. By \ref{contDefI}, there exists $\delta>0$ such that $x\in I$ and $|x-c|<\delta$ imply $|f(x)-f(c)|<\varepsilon$. Thus, if $x\in I$ and $|x-c|<\delta$, then $f(x)\in G$, which further implies $x\in f^{-1}[G]$. That is, $x\in I$ and $x\in(c-\delta,c+\delta)$ imply $x\in f^{-1}[G]$. Equivalently, $I\cap\lpar c-\delta,c+\delta\rpar\sub f^{-1}[G]$. If $Q$ is the union of all intervals (which are open sets) $\lpar c-\delta,c+\delta\rpar$ for all $c\in f^{-1}[G]$, then $Q$ is an open set. From $(c-\delta,c+\delta)\sub f^{-1}[G]$, we obtain \begin{equation}
    I\cap Q=I\cap\lpar\bigcup_{c\in f^{-1}[G]} \lpar c-\delta,c+\delta\rpar \rpar=\bigcup_{c\in f^{-1}[G]} \lpar I\cap\lpar c-\delta,c+\delta\rpar\rpar\sub f^{-1}[G],\label{ContEqSet}
\end{equation}
but then, the arbitrary element $c$ of $f^{-1}[G]\sub I$ is contained in $\bigcup_{c\in f^{-1}[G]} \lpar c-\delta,c+\delta\rpar=Q$. Thus, $c\in I$ and $c\in Q$, so $c\in I\cap Q$. This proves $f^{-1}[G]\sub I\cap Q$, which, in conjunction with \eqref{ContEqSet}, gives us $f^{-1}[G]= I\cap Q$, where $Q$ is an open set. Therefore, $f^{-1}[G]$ is open relative to $I$.\\

\noindent\ref{contDefIII} $\implies$ \ref{contDefIV} If $F$ is a closed set, then $F^c$ is open. By \ref{contDefIII}, $f^{-1}[F^c]$ is open relative to $I$. By Corollary~\ref{closedRelCor}, the set $I\setdiff f^{-1}[F^c]=I\cap [f^{-1}[F^c]]^c=\cap [[f^{-1}[F]]^c]^c=I\cap f^{-1}[F]=f^{-1}[F]$ is closed relative to $I$ (where the last set equality is because $f^{-1}[F]\sub I$).\\

\noindent\ref{contDefIV} $\implies$ \ref{contDefI} Let $c\in I$, and let $\varepsilon>0$. The interval $J:=(f(c)-\varepsilon,f(c)+\varepsilon)$ is an open set, so $J^c$ is closed. By \ref{contDefIII}, the inverse image $f^{-1}[J^c]$ is closed relative to $I$. This implies that $$I\setdiff f^{-1}[J^c]=I\cap [f^{-1}[J^c]]^c=\cap [[f^{-1}[J]]^c]^c=I\cap f^{-1}[J]=f^{-1}[J],$$ is open relative to $I$. (The last set equality is because $f^{-1}[J]\sub I$.) Since $f(c)\in(f(c)-\varepsilon,f(c)+\varepsilon)=J$, we have $c\in f^{-1}[J]$. Since $f^{-1}[J]$ is open relative to $I$, there exists an open set $G$ such that $f^{-1}[J]=G\cap I$, so $c\in f^{-1}[J]=G\cap I\sub G$. Since $G$ is open, there exists $\delta>0$ such that $(c-\delta,c+\delta)\sub G$, which further implies $I\cap(c-\delta,c+\delta)\sub G\cap I$, where the right-hand side is equal to $f^{-1}[J]$. That is, $I\cap(c-\delta,c+\delta)\sub f^{-1}[J]$. Equivalently, $x\in I$ and $x\in(c-\delta,c+\delta)$ [which is equivalent to $|x-c|<\delta$] imply $x\in f^{-1}[J]$. If indeed $x\in I$ and $|x-c|<\delta$, then $x\in f^{-1}[J]$, so $f(x)\in J=(f(c)-\varepsilon,f(c)+\varepsilon)$, which implies $|f(x)-f(c)|<\varepsilon$.
\end{proof}

\begin{proposition}\label{contSeqProp} If $f$ is continuous on $I$, and if $\cseq$ is a sequence in $I$ that converges to $c\in I$, then $\seqlimOp f(c_n)=f(c)$.
\end{proposition}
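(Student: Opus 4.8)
The plan is to run the classical ``$\varepsilon$--$\delta$ meets convergence'' argument, feeding the modulus of continuity at $c$ into the hypothesis that $\cseq\to c$. First I would invoke Proposition~\ref{contDefProp}, using the equivalence between condition~\ref{contDefI} and the assertion that $f$ is continuous on $I$, to extract the pointwise fact that $f$ is continuous at the particular point $c\in I$. Unwinding that definition: given the arbitrary $\varepsilon>0$, there exists $\delta>0$ such that every $x\in I$ with $|x-c|<\delta$ satisfies $|f(x)-f(c)|<\varepsilon$.

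Next I would feed this $\delta$ into the hypothesis that $\cseq$ converges to $c$. Since $\delta>0$, by the definition of convergence there exists $N\in\N$ such that for all $n\in\N$, if $n\geq N$ then $|c_n-c|<\delta$. The one bookkeeping point that needs care is that the continuity statement is quantified only over points of $I$; but $\cseq$ is assumed to be a sequence \emph{in} $I$, so by that definition $c_n\in I$ for every $n\in\N$, and in particular for every $n\geq N$. Combining the two facts: for all $n\geq N$ we simultaneously have $c_n\in I$ and $|c_n-c|<\delta$, and therefore $|f(c_n)-f(c)|<\varepsilon$. Since $\varepsilon>0$ was arbitrary, this is precisely the statement that the sequence $\lpar f(c_n)\rpar_{n\in\N}$ converges to $f(c)$, i.e. $\seqlimOp f(c_n)=f(c)$.

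I do not expect any genuine obstacle here: the result is essentially the substitution of one quantified statement into another, with the membership condition $c_n\in I$ supplied verbatim by the definition of ``sequence in a set.'' (One could alternatively argue topologically, pulling back the open interval $(f(c)-\varepsilon, f(c)+\varepsilon)$ via Proposition~\ref{contDefProp}\ref{contDefIII} and observing that a relatively open set of $I$ containing $c$ must eventually contain all $c_n$, but the direct $\varepsilon$--$\delta$ route is shorter and relies only on the already-available pointwise continuity.)
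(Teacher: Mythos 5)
Your proof is correct and is essentially the same argument the paper gives: extract $\delta$ from continuity at $c$, feed it into the convergence of $\cseq$, and use the fact that each $c_n\in I$ to apply the continuity estimate at $x=c_n$. The only cosmetic difference is that you explicitly route through Proposition~\ref{contDefProp} to isolate pointwise continuity at $c$, whereas the paper invokes continuity at $c$ directly from the hypothesis.
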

\begin{proof} Let $\varepsilon>0$. Since $f$ is continuous at $c$, there exists $\delta>0$ such that 
\begin{flalign}
&&    |f(x)-f(c)| &<\varepsilon, &(\mbox{if }x\in I, |x-c|<\delta).\label{contseqEq}
\end{flalign}
Since $\delta>0$ and $\cseq$ converges to $c$, there exists $N\in\N$ such that for all $n\in\N$, if $n\geq N$, then $|c_n-c|<\delta$. If indeed $n\in\N$ with $n\geq N$, then $|c_n-c|<\delta$, and, since $\cseq$ is a sequence in $I$, we have $c_n\in I$. Thus, the hypotheses in \eqref{contseqEq} are true at $x=c_n$, so $|f(c_n)-f(c)|<\varepsilon$. Therefore, $\seqlimOp f(c_n)=f(c)$.
\end{proof}

\begin{proposition}\label{contImProp} If $I$ is connected and $f$ is continuous on $I$, then the image set $f[I]:=\{f(x)\  :\ x\in I\}$ (of $I$ under $f$) is connected.
\end{proposition}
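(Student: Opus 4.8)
The plan is to argue by contraposition using the two-open-sets characterization in Proposition~\ref{condefProp}\ref{condefI}: assuming $f[I]$ is disconnected, I will pull the disconnection back through $f$ to disconnect $I$. So suppose there are open sets $G_1,G_2\sub\R$ for which $A:=G_1\cap f[I]$ and $B:=G_2\cap f[I]$ are nonempty, disjoint, open relative to $f[I]$, and satisfy $f[I]=A\cup B$.

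The candidate decomposition of $I$ is $A':=f^{-1}[G_1]$ and $B':=f^{-1}[G_2]$, which (by the definition of inverse image in Proposition~\ref{contDefProp}) are subsets of $I$, and which are open relative to $I$ by Proposition~\ref{contDefProp}\ref{contDefIII} since $f$ is continuous on $I$. It then remains to check three things. First, $A'$ and $B'$ are nonempty: if $y\in A$ then $y=f(x)$ for some $x\in I$ with $f(x)\in G_1$, so $x\in A'$, and symmetrically for $B'$. Second, $A'$ and $B'$ are disjoint: if $x\in A'\cap B'$ then $f(x)\in G_1\cap G_2$ while $f(x)\in f[I]$, so $f(x)\in\lpar G_1\cap f[I]\rpar\cap\lpar G_2\cap f[I]\rpar=A\cap B=\emptyset$, a contradiction. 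Third, $I=A'\cup B'$: the inclusion $A'\cup B'\sub I$ is automatic, and conversely if $x\in I$ then $f(x)\in f[I]=A\cup B$, so either $f(x)\in A\sub G_1$, giving $x\in A'$, or $f(x)\in B\sub G_2$, giving $x\in B'$.

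These three facts say precisely that $I$ satisfies Proposition~\ref{condefProp}\ref{condefI}, i.e., $I$ is not connected, which contradicts the hypothesis; hence $f[I]$ is connected. I do not expect any real obstacle in this argument; the one point requiring care is the bookkeeping with relatively open sets and the convention that $f^{-1}[\,\cdot\,]$ is taken inside the domain $I$, so that Proposition~\ref{contDefProp}\ref{contDefIII} delivers relative openness in $I$ (not openness in $\R$), which is exactly what the definition of connectedness for $I$ consumes. An equivalent route would go through the clopen-set form, combining Proposition~\ref{condefProp}\ref{condefII} with Lemma~\ref{ConnectedLem}, but the two-open-sets form used here is the most economical.
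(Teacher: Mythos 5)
Your proposal is correct and is essentially the paper's own argument: both pull the disconnection of $f[I]$ back along $f$ using Proposition~\ref{contDefProp}\ref{contDefIII} to get a disconnection of $I$, and your sets $f^{-1}[G_1]$, $f^{-1}[G_2]$ coincide with the paper's $f^{-1}[A]$, $f^{-1}[B]$ because the paper's convention $f^{-1}[G]=\{t\in I : f(t)\in G\}$ makes the intersection with $f[I]$ inside $A$ and $B$ immaterial. The only cosmetic difference is that you phrase it as contraposition while the paper phrases it as a contradiction.
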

\begin{proof} Suppose $I$ is connected but, tending towards a contradiction, that the image set $f[I]$ is not connected. This means that there exist open sets $G_1$ and $G_2$ such that $A:=G_1\cap f[I]$ and $B:=G_2\cap f[I]$ are nonempty and disjoint, with $f[I]=A\cup B$. Since $f$ is continuous, the inverse images $f^{-1}[G_1]$ and $f^{-1}[G_2]$ are both open sets. Since taking inverse images preserves intersections, $f^{-1}[A]=f^{-1}[G_1]\cap f^{-1}[f[I]]$ and $f^{-1}[B]=f^{-1}[G_2]\cap f^{-1}[f[I]]$, on which we use the set-theoretic rule $f^{-1}[f[I]]=I$ (which is valid for the domain $I$ of $f$), and so, we obtain $f^{-1}[A]=f^{-1}[G_1]\cap I$ and $f^{-1}[B]=f^{-1}[G_2]\cap I$, which mean that these inverse images are both open relative to $I$. Since $A$ and $B$ are nonempty, these exist $a\in A=G_1\cap f[I]\sub f[I]$ and $b\in B=G_2\cap f[I]\sub f[I]$, and so there exist $t_1,t_2\in I$ such that $f(t_1)=a\in A$ and $f(t_2)=b\in B$. Thus, $t_1\in f^{-1}[A]$ and $t_2\in f^{-1}[B]$, and these inverse image sets are hence nonempty. If there exists $k\in f^{-1}[A]\cap f^{-1}[B]$, then $f(k)$ is in both $A$ and $B$ contradicting the fact that $A$ and $B$ are disjoint. Thus, $f^{-1}[A]\cap f^{-1}[B]=\emptyset$. Since taking inverse images also preserves unions, taking the inverse images of both sides of $f[I]=A\cup B$ under $f$, and then using the rule $f^{-1}[f[I]]=I$, results to $I=f^{-1}[A]\cup f^{-1}[B]$. At this point, we have completed a proof that $I$ is not connected.$\lightning$ Therefore, $f[I]$ is connected.
\end{proof}

\begin{proposition}\label{homeoProp} Given two closed and bounded intervals $\lbrak a_0,b_0\rbrak$ and $\abClosed$, with $a_0<b_0$, there exists a continuous function $f:\lbrak a_0,b_0\rbrak\into \abClosed$ such that the image of $\lbrak a_0,b_0\rbrak$ under $f$ is equal to $\abClosed$.
\end{proposition}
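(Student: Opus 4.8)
The plan is to write down an explicit affine map from $[a_0,b_0]$ onto $[a,b]$ and to verify surjectivity by elementary inequality manipulations, after first disposing of the degenerate case in which $[a,b]$ is a single point.

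First I would handle the case $a=b$. Then $[a,b]=\{a\}$, and the constant function $f\colon[a_0,b_0]\into[a,b]$ defined by $f(x):=a$ is continuous by Example~\ref{contEx}, and the image of $[a_0,b_0]$ under $f$ is $\{a\}=[a,b]$. Henceforth assume $a<b$ (the hypothesis already gives $a_0<b_0$), and set
\[
m:=\frac{b-a}{b_0-a_0}>0,\qquad f(x):=a+m(x-a_0)=mx+(a-ma_0),
\]
regarded first as a function $[a_0,b_0]\into\R$. Being of the form $x\mapsto mx+c$ with $m\neq 0$, this $f$ is continuous on $[a_0,b_0]$ by Example~\ref{contEx}.

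It remains to identify the image. For the inclusion into $[a,b]$: if $a_0\le x\le b_0$, then $0\le x-a_0\le b_0-a_0$; multiplying through by $m>0$ and then adding $a$ gives $a\le f(x)\le b$, so $f$ in fact restricts to a continuous map $[a_0,b_0]\into[a,b]$. For the reverse inclusion I would exhibit a preimage for each $y\in[a,b]$ by setting $x:=a_0+\frac{b_0-a_0}{b-a}(y-a)$; the same inequality-tracking, now using $b-a>0$ and $a\le y\le b$, shows $a_0\le x\le b_0$, while a direct substitution gives $f(x)=a+m\cdot\frac{b_0-a_0}{b-a}(y-a)=a+(y-a)=y$. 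Hence every element of $[a,b]$ is attained, and the image of $[a_0,b_0]$ under $f$ equals $[a,b]$.

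There is no genuine obstacle: the argument uses only the continuity of constant and affine functions from Example~\ref{contEx}, and in particular requires no appeal to completeness, connectedness, or the intermediate value property. The only points requiring a little care are the separate treatment of the degenerate case $a=b$ (where one cannot divide by $b-a$ and must fall back on a constant map) and keeping the direction of the inequalities correct when multiplying by the positive constants $m$ and $1/m$; everything else is routine bookkeeping.
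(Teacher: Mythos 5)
Your proposal is correct and follows essentially the same route as the paper: the same affine map $x\mapsto a+\frac{b-a}{b_0-a_0}(x-a_0)$, continuity via Example~\ref{contEx}, and surjectivity by exhibiting the explicit affine inverse. The one small difference is that you separate out the degenerate case $a=b$ before writing $\frac{b_0-a_0}{b-a}$, which the paper's proof glosses over (its inverse formula would involve division by zero there); that is a minor but genuine tightening of the argument.
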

\begin{proof} Since $a_0<b_0$, $\frac{1}{b_0-a_0}$ exists in $\R$, and the function $f$ defined, for each $x\in I:=\lbrak a_0,b_0\rbrak$, by 
\begin{equation}
    f(x)=\frac{b-a}{b_0-a_0}(x-a_0)+a=\frac{b-a}{b_0-a_0}x+\frac{ab_0-a_0b}{b_0-a_0},\nonumber
\end{equation}
according to Example~\ref{contEx}, is continuous on $I$. If $a_0\leq x\leq b_0$, then subtracting $a_0$ from every member, dividing every member by $b_0-a_0$, multiplying every member by $b-a$, and then adding $a$ to every member leads to $a\leq f(x)\leq b$. Thus, $f^{-1}[I]\sub \abClosed$. If $a\leq y\leq b$, then by similar computations, the number $c=\frac{b_0-a_0}{b-a}(y-a)+a_0$ has the properties $a_0\leq c\leq b_0$ and $y=f(c)$. This completes a proof that $f^{-1}[I]= \abClosed$.
\end{proof}

\begin{lemma}[Pasting Lemma] Suppose $F_1$ and $F_2$ are sets both closed relative to some set $I$. If $f_1$ is continuous on $F_1$ and $f_2$ is continuous on $F_2$ and if, for all $x\in F_1\cap F_2$, we have $f_1(x)=f_2(x)$, then the function $g: F_1\cup F_2\into\R$ defined by
\begin{equation}
    g(x)=\begin{cases} f_1(x), & x\in F_1,\\
        f_2(x), & x\in F_2,
    \end{cases}\nonumber
\end{equation}
is continuous on $F_1\cup F_2$.
\end{lemma}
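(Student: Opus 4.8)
The plan is to prove continuity of $g$ via the closed-set characterization in Proposition~\ref{contDefProp}, namely the equivalence of \ref{contDefI} and \ref{contDefIV}: it suffices to show that for every closed set $F\sub\R$, the inverse image $g^{-1}[F]$ is closed relative to $F_1\cup F_2$. (A direct $\varepsilon$-$\delta$ argument is also possible, using that the complement of $F_2$ in $I$ is open relative to $I$, but the relative-topology route is shorter.)

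First I would record two bookkeeping facts about the relation ``closed relative to,'' which are the heart of the argument. \textbf{(a) Transitivity.} If $A$ is closed relative to $F_1$ and $F_1$ is closed relative to a superset $J$, then $A$ is closed relative to $J$: writing $A = C\cap F_1$ and $F_1 = D\cap J$ with $C,D$ closed in $\R$, we get $A = (C\cap D)\cap J$, and $C\cap D$ is closed. \textbf{(b) Finite unions.} If $A$ and $B$ are each closed relative to $J$, say $A=C\cap J$ and $B=D\cap J$ with $C,D$ closed, then $A\cup B=(C\cup D)\cap J$, which is closed relative to $J$ since $C\cup D$ is a union of two closed sets and hence closed. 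I would also observe that, because $F_1,F_2\sub I$ gives $F_1\cup F_2\sub I$, each of $F_1$ and $F_2$ is closed relative to $F_1\cup F_2$: if $F_1=D\cap I$ with $D$ closed, then $D\cap(F_1\cup F_2)=(D\cap I)\cap(F_1\cup F_2)=F_1\cap(F_1\cup F_2)=F_1$, and likewise for $F_2$.

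Next I would compute $g^{-1}[F]$. For $x\in F_1$ we have $g(x)=f_1(x)$, so $x\in g^{-1}[F]$ exactly when $x\in f_1^{-1}[F]$; for $x\in F_2$ the same holds with $f_2$ in place of $f_1$, the hypothesis $f_1(x)=f_2(x)$ on $F_1\cap F_2$ being precisely what makes $g$ well defined so that no ambiguity arises. Since $g^{-1}[F]\sub F_1\cup F_2$, it follows that $g^{-1}[F]=f_1^{-1}[F]\cup f_2^{-1}[F]$, where $f_i^{-1}[F]$ denotes the inverse image with respect to $f_i:F_i\into\R$.

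Finally I would assemble the pieces. Since $f_1$ is continuous on $F_1$, Proposition~\ref{contDefProp} gives that $f_1^{-1}[F]$ is closed relative to $F_1$; combining this with the fact that $F_1$ is closed relative to $F_1\cup F_2$, fact (a) shows $f_1^{-1}[F]$ is closed relative to $F_1\cup F_2$, and symmetrically for $f_2^{-1}[F]$. By fact (b), the union $g^{-1}[F]=f_1^{-1}[F]\cup f_2^{-1}[F]$ is closed relative to $F_1\cup F_2$. As $F$ was an arbitrary closed subset of $\R$, Proposition~\ref{contDefProp} yields that $g$ is continuous on $F_1\cup F_2$. I do not expect a real obstacle here; the only care needed is in tracking the three nested relative topologies (on $F_i$, on $F_1\cup F_2$, and on $I$), which is exactly what facts (a) and (b) handle.
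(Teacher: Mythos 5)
Your proof is correct and takes essentially the same approach as the paper: both use the closed-set characterization of continuity from Proposition~\ref{contDefProp}, establish $g^{-1}[F]=f_1^{-1}[F]\cup f_2^{-1}[F]$, and then show this union is closed relative to $F_1\cup F_2$. The only cosmetic difference is that you package the set-theoretic bookkeeping into two reusable facts (transitivity of ``closed relative to'' and closure under finite unions), whereas the paper carries out the corresponding intersections with $I$ and then with $F_1\cup F_2$ inline.
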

\begin{proof} Let $F$ be a closed set. Since $f_1$ is continuous on $F_1$, and $f_2$ is continuous on $F_2$, the set $f_1^{-1}[F]$ is closed relative to $F_1$, and $f_2^{-1}[F]$ is closed relative to $F_2$. Thus, there exist closed sets $F_3$ and $F_4$ such that $f_1^{-1}[F]=F_3\cap F_1$ and $f_2^{-1}[F]=F_4\cap F_2$. Since $F_1$ and $F_2$ are both closed relative to $I$, there exist closed sets $F_5$ and $F_6$ such that $F_1=F_5\cap I$ and $F_2=F_6\cap I$, and we further have $f_1^{-1}[F]=F_3\cap F_5\cap I$ and $f_2^{-1}[F]=F_4\cap F_6\cap I$. The set $F_7:=F_3\cap F_5\cup F_4\cap F_6$ is a closed set, and we further have, $f_1^{-1}[F]\cup f_2^{-1}[F]=(F_3\cap F_5\cup F_4\cap F_6)\cap I=F_7\cap I$. Since $F_1,F_2\sub I$, we have $F_1\cup F_2\sub I$. Since $f_1^{-1}[F]\sub F_1\sub F_1\cup F_2$ and $f_2^{-1}[F]\sub F_2\sub F_1\cup F_2$, if we take the intersection with $F_1\cup F_2$ of both sides of $f_1^{-1}[F]\cup f_2^{-1}[F]=F_7\cap I$, we obtain $f_1^{-1}[F]\cup f_2^{-1}[F]=F_7\cap (F_1\cup F_2)$. At this point, we have proven that $f_1^{-1}[F]\cup f_2^{-1}[F]$ is closed relative to $F_1\cup F_2$.

To complete the proof, we show $g^{-1}[F]=f_1^{-1}[F]\cup f_2^{-1}[F]$. The condition $x\in g^{-1}[F]$ is equivalent to $g(x)\in F$, but by the definition of $g$, the number $g(x)$ is equal to either $f_1(x)$ or $f_2(x)$. Thus, $g(x)\in F$ implies $f_1(x)\in F$ or $f_2(x)\in F$. Conversely, if one of $f_1(x)\in F$ or $f_2(x)\in F$ is true, then\linebreak $g(x)=f_1(x)\in F$ if $x\in F_1$, or $g(x)=f_2(x)\in F$ if $x\in F_2$. Thus,  $g(x)\in F$ if and only if $f_1(x)\in F$ or $f_2(x)\in F$. This is equivalent to $x\in g^{-1}[F]$ if and only if $x\in f_1^{-1}[F]$ or $x\in f_2^{-1}[F]$. Furthermore,   $x\in g^{-1}[F]$ if and only if $x\in f_1^{-1}[F]\cup f_2^{-1}[F]$. Therefore, $g^{-1}[F]=f_1^{-1}[F]\cup f_2^{-1}[F]$, and this completes the proof that $g$ is continuous on $I=F_1\cup F_2$.
\end{proof}

\begin{proposition}\label{conNecPropII} If $I$ is connected, then any closed and bounded interval contained in $I$ is connected.
\end{proposition}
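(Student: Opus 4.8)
Let $\abClosed\sub I$ be the closed and bounded interval in question. The plan is to realize $\abClosed$ as a continuous image of $I$ and then quote Proposition~\ref{contImProp}, which says that a continuous image of a connected set is connected. Since $\abClosed\sub I$ we have in particular $a,b\in I$, and I would introduce the ``clamping'' map $g\colon I\into\R$ that sends $x$ to $a$ when $x\le a$, to $x$ when $a\le x\le b$, and to $b$ when $x\ge b$. By construction $g(x)\in\abClosed$ for every $x\in I$, so $g[I]\sub\abClosed$; and since every $y\in\abClosed\sub I$ satisfies $g(y)=y$, we get $\abClosed\sub g[I]$. Hence $g[I]=\abClosed$ exactly.

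The substance of the proof is the continuity of $g$ on $I$, and for this I would apply the Pasting Lemma twice, using that constant functions and the identity function are continuous on every subset of $\R$ (Example~\ref{contEx}). First, on $I\cap(-\infty,b]$ the restriction of $g$ is the pasting of the constant $a$ on $I\cap(-\infty,a]$ with the identity on $I\cap\abClosed$: both of these pieces are closed relative to $I$, each being the intersection of $I$ with a closed subset of $\R$; their intersection is the single point $a$, at which both pieces take the value $a$; and their union is $I\cap(-\infty,b]$. So the Pasting Lemma makes $g$ continuous on $I\cap(-\infty,b]$. Second, the sets $I\cap(-\infty,b]$ and $I\cap[b,\infty)$ are both closed relative to $I$, the restriction of $g$ to the latter is the continuous constant $b$, the two restrictions agree at $b$, and their union is all of $I$; a second application of the Pasting Lemma gives that $g$ is continuous on $I$.

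With $g$ continuous on the connected set $I$ and $g[I]=\abClosed$, Proposition~\ref{contImProp} yields at once that $\abClosed$ is connected. The degenerate case $a=b$ needs no separate handling: $g$ is then the constant $a$ and $g[I]=\{a\}$, so the same conclusion follows (consistently with the fact that a singleton is connected, Example~\ref{singConEx}). The only delicate point I anticipate is the bookkeeping around the Pasting Lemma: since it glues exactly two pieces, the three pieces of $g$ must be organized into two nested gluings, and at each stage one must verify that the pieces in play are genuinely closed relative to the ambient set and genuinely agree on their overlap. No appeal to the Completeness Axiom is involved---the whole argument sits at the level of the preliminaries.
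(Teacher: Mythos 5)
Your proof is correct and takes a genuinely different route from the paper's. The paper argues by contradiction: assuming $\abClosed$ is disconnected, it builds the two-valued function $f:\abClosed\into\{0,1\}$ from a disconnection, extends it to a continuous $g:I\into\R$ by making $g$ constant with the values $f(a)$, $f(b)$ on $I\cap(-\infty,a]$ and $I\cap[b,\infty)$, and then derives a contradiction from the fact that $g[I]=\{0,1\}$ would have to be connected (Proposition~\ref{contImProp} and Example~\ref{disconEx}). You instead give a direct argument: the clamping retraction $g:I\into\abClosed$ is continuous by two nested applications of the Pasting Lemma and surjects onto $\abClosed$, so Proposition~\ref{contImProp} immediately yields connectedness. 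Your route is somewhat cleaner: it removes the contradiction scaffolding and the need to invoke Example~\ref{disconEx}, it exhibits $\abClosed$ as a retract of $I$ (a fact of independent interest), and it handles the degenerate $a=b$ case with no separate argument, whereas the paper must explicitly rule that case out. You are also more careful than the paper about the fact that the Pasting Lemma as stated glues only two closed pieces: the paper pastes three pieces in one step, while you make the two nested gluings explicit. The one small thing worth saying out loud, to match the rigor of the paper, is that each of the pieces $I\cap(-\infty,a]$, $I\cap\abClosed$, $I\cap(-\infty,b]$, $I\cap[b,\infty)$ is closed relative to $I$ precisely because it is the intersection of $I$ with a closed subset of $\R$ (as reviewed in Section~\ref{SecondCirclePrelims}), and that the overlaps $\{a\}$ and $\{b\}$ are nonempty because $a,b\in\abClosed\sub I$ --- both of which you note, so the argument is complete.
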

\begin{proof} Suppose $I$ is connected, but, tending towards a contradiction, that there exists an interval $\abClosed\sub I$ that is not connected. As a consequence, there exist  nonempty, disjoint subsets $A$ and $B$ of $\abClosed$, both open relative to $\abClosed$, with $\abClosed=A\cup B$. Consequently, $A=\abClosed\setdiff B$ and $B=\abClosed\setdiff A$. By Corollary~\ref{closedRelCor}, $A$ and $B$ are closed relative to $\abClosed$. By Example~\ref{contEx}, the constant function $A\into\R$ defined by $x\mapsto 0$ is continuous on $A$, and the constant function $B\into\R$ given by $x\mapsto 1$ is continuous on $B$. Since $A\cap B=\emptyset$, at this point, we have satisfied all the hypotheses of the Pasting Lemma to deduce that the function $f:\abClosed\into\R$ defined by 
\begin{equation}
    f(x):=\begin{cases}
        0, & x\in A,\\
        1, & x\in B,
    \end{cases}\nonumber
\end{equation}
is continuous on $A\cup B=\abClosed$. 

Because the intervals $(-\infty,a]$, $[b,\infty)$ and $\abClosed$ are closed, the sets $L_a:=(-\infty,a]\cap I$,  $R_b:=I\cap [b,\infty)$ and $\abClosed=\abClosed\cap I$ (since $\abClosed\sub I$) are closed relative to $I$. 

Since, $R=(-\infty,\infty)=(-\infty,a]\cup\abClosed\cup[b,\infty)$, taking the intersection of both sides with $I$ gives us $I\cap\R=L_a\cup (\abClosed\cap I)\cup R_b$. Since $\abClosed\sub I\sub\R$, we further have $I=L_a\cup\abClosed\cup R_b$. We now define $g: I\into\R$ by
\begin{equation}
    g(x):=\begin{cases}
        f(a), & x\in L_a,\\
        f(x), & x\in\abClosed,\\
        f(b), & x\in  R_b.
    \end{cases}\nonumber
\end{equation}

We cannot have $a=b$ which will make $\abClosed$ a singleton, so by Example~\ref{singConEx}, $\abClosed$ is connected.$\lightning$ Hence, $a<b$. This means that $L_a\cap R_b=\emptyset$. For the two other intersections, we have $\{a\}=L_a\cap \abClosed$ and $\{b\}=\abClosed\cap R_b$, but for each of these intersections, the values of $g$ agree: whether we view $a$ as $a\in L_a$ or $a\in\abClosed$, still, $g(a)$ is $f(a)$ in both cases. Similarly, any of  $b\in L_a$ or $b\in\abClosed$ results to $g(b)=f(b)$. By the Pasting Lemma, $g$ is continuous on $L_a\cup\abClosed\cup R_b=I$. Since $I$ is connected, by Proposition~\ref{contImProp}, $g[I]=\{0,1\}$ is connected, contradicting Example~\ref{disconEx}. Therefore, any $\abClosed\sub I$ is connected.
\end{proof}

\begin{lemma}\label{BVTtoMCPLem} Let $\aseq$ be a monotonically increasing sequence that does not converge, and let $\cseq$ be an arbitrary sequence. Define $I_1:=\lpar-\infty,a_1\rbrak$, and for each integer $n\geq 2$, let $I_n:=\lbrak a_{n-1},a_n\rbrak$. Let $A:=\bigcup_{n=1}^\infty I_n$. The function $f:\R\into\R$ defined by
\begin{eqnarray}
    f(x)=\begin{cases} c_1, & x \in I_1, \\
    c_{n-1} + \frac{x-a_{n-1}}{a_n-a_{n-1}}(c_n - c_{n-1}), & x \in I_n, n \geq 2, \\ 0, & x \notin A, \end{cases}\nonumber
\end{eqnarray}
is continuous on $\R$, with $\{c_n\  :\  \in\N\}\sub f[\{a_n\  :\  n\in\N\}]$.
\end{lemma}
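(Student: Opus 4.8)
The plan is to read the image statement straight off the definition of $f$, and to obtain continuity by applying the Pasting Lemma to the \emph{finite} truncations of the union $\bigcup_n I_n$, one at a time, and then localising. Two preliminary remarks are in order. First, the formula defining $f$ on $I_n$ only makes sense when $a_{n-1}<a_n$, so I take $\aseq$ to be strictly increasing (the generality in which the lemma is applied; cf.\ Proposition~\ref{monostrictsubProp}); this also makes $f$ well defined at each common endpoint $a_n\in I_n\cap I_{n+1}$, where both branches evaluate to $c_n$. Second, since everything here is done inside $\R$, a bounded monotonically increasing sequence converges; as $\aseq$ does not converge it is unbounded above, so for each $x\in\R$ there is an $N$ with $a_N>x$, whence $x\in(-\infty,a_N]\sub A$. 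Thus $A=\bigcup_n I_n=\R$ and the branch ``$f(x)=0$ for $x\notin A$'' never applies. (Without this, i.e.\ if $\aseq$ were bounded above, $f$ would be discontinuous at $\sup_n a_n$ for a generic $\cseq$, and the lemma would be false.)

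The image claim is then immediate: $a_1\in I_1$ gives $f(a_1)=c_1$, and for $n\geq2$, $a_n\in I_n$ gives $f(a_n)=c_{n-1}+\frac{a_n-a_{n-1}}{a_n-a_{n-1}}(c_n-c_{n-1})=c_n$. Hence $\{c_n : n\in\N\}=f[\{a_n : n\in\N\}]$, which contains the asserted inclusion.

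For continuity, set $J_N:=\bigcup_{n=1}^N I_n$; since $\aseq$ is increasing, $J_N=(-\infty,a_N]$. I would prove, by induction on $N$, that $f|_{J_N}$ is continuous on $J_N$. For $N=1$, $f|_{J_1}$ is the constant function $c_1$, continuous by Example~\ref{contEx}. For the inductive step, $J_{N+1}=J_N\cup I_{N+1}$ with $I_{N+1}=[a_N,a_{N+1}]$; both $J_N$ and $I_{N+1}$ are closed sets, hence closed relative to $J_{N+1}$ (they equal $(-\infty,a_N]\cap J_{N+1}$ and $[a_N,a_{N+1}]\cap J_{N+1}$). By the inductive hypothesis $f|_{J_N}$ is continuous, and $f|_{I_{N+1}}$ is the linear map $x\mapsto c_N+\frac{x-a_N}{a_{N+1}-a_N}(c_{N+1}-c_N)$, continuous by Example~\ref{contEx}; on the overlap $J_N\cap I_{N+1}=\{a_N\}$ both restrictions take the value $c_N$. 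The Pasting Lemma then gives that $f|_{J_{N+1}}=f|_{J_N\cup I_{N+1}}$ is continuous on $J_{N+1}$, completing the induction. Finally, for arbitrary $c\in\R$ I would choose (using unboundedness) $N$ with $a_N>c$, so that $c$ lies in the open set $(-\infty,a_N)\sub J_N$; given $\varepsilon>0$, continuity of $f|_{J_N}$ at $c$ supplies $\delta_1>0$, openness of $(-\infty,a_N)$ supplies $\delta_2>0$ with $(c-\delta_2,c+\delta_2)\sub J_N$, and then $\delta:=\min\{\delta_1,\delta_2\}$ witnesses continuity of $f$ itself at $c$. As $c$ was arbitrary, $f$ is continuous on $\R$ by Proposition~\ref{contDefProp}.

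The main obstacle is that the Pasting Lemma in this text is stated for only two sets, so one cannot directly ``paste $f$ together'' over the infinite family $\{I_n\}_{n\in\N}$; the truncation-and-induction above circumvents this, at the cost of then needing the local neighbourhood argument to pass from continuity of each $f|_{J_N}$ to continuity of $f$ on all of $\R$. The only other point that requires care — straightforward once spotted — is that one must invoke the ambient field being $\R$ to know $A=\R$, so that the third branch of the definition of $f$ is vacuous.
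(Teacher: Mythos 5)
Your proof follows the same outline as the paper's: reduce to strictly increasing $(a_n)$ via Proposition~\ref{monostrictsubProp}, read the image claim directly off the formula, prove continuity on the finite truncations $\bigcup_{n=1}^{N}I_n$ by an inductive use of the Pasting Lemma (each truncation being a finite union of closed intervals, hence closed), and then localise. The inductive step is exactly the paper's.

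Where you add genuine clarity is in observing up front that $A=\R$, making the third branch of the definition of $f$ vacuous. The paper instead argues continuity at each $x\in A^c$ separately, and its argument there rests on the assertion that ``$A$ is a union of closed intervals, it is a closed set'' --- an inference that is invalid for an \emph{infinite} union (indeed $\bigcup_n(-\infty,a_n]=(-\infty,\sup_n a_n)$ is \emph{open} whenever $\sup_n a_n$ is finite). The conclusion ``$A$ is closed'' is true only because $A=\R$, i.e.\ because $(a_n)$ is unbounded above --- precisely the point you make explicit. Your final localisation step (choose $N$ with $a_N>c$, shrink to $(c-\delta_2,c+\delta_2)\sub(-\infty,a_N)\sub J_N$) also handles, cleanly, the passage from continuity of the restrictions $f|_{J_N}$ to continuity of $f$ on $\R$, which the paper treats rather tersely.

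One thing to be alert to: to conclude $A=\R$ you invoke the fact that a bounded monotone sequence converges in $\R$, i.e.\ \MCP. Since this lemma is precisely the tool used in Theorem~\ref{FourthCircle} to derive \MCP\  from $\CC\&\BVT$, this would be circular if the lemma were meant to be a ``from-definitions'' preliminary in the sense of Section~2. The paper's own proof has the same hidden dependence --- its claim that $A$ is closed needs $A=\R$, which needs unboundedness, which needs \MCP\  --- so the concern is not specific to your route; but it is worth recognising that the lemma is genuinely a theorem \emph{of} the complete field $\R$, not of an arbitrary ordered field, which is exactly the concession the paper's ``naive approach'' makes at the outset.
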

\begin{proof} By Proposition~\ref{monostrictsubProp}, we may assume, WLOG, that $\anseq$ is strictly increasing. More precisely, if the original arbitrary monotonically increasing sequence (that does not converge) is not strictly increasing, we can choose $\anseq$ to be a strictly increasing subsequence, which also does not converge.

By Example~\ref{contEx}, the constant function $x\mapsto c_1$ is continuous on $\R$, and in particular, on $I_1=(-\infty,a_1]$, so $f$ is continuous on $I_1=\bigcup_{n=1}^1 I_n$. Suppose that for some $k\in\N$, the function $f$ is continuous on $\bigcup_{n=1}^kI_n$, which is a union of closed intervals and is hence a closed set. Also by Example~\ref{contEx}, the function $x\mapsto c_k + \frac{x - a_k}{a_{k+1} - a_k}(c_{k+1} - c_k)$ is continuous on $\R$, because it is of the form $x\mapsto mx+b$, where\linebreak $m = \frac{c_{k+1} - c_k}{a_{k+1} - a_k}$ and $b = \frac{a_{k+1} c_k - a_k c_{k+1}}{a_{k+1} - a_k}$. In particular, the restriction of this function, which is $f$, on the closed set $[a_{k},a_{k+1}]=I_{k+1}$, is continuous. At the intersection of $\bigcup_{n=1}^kI_n$ and $I_{k+1}$, which is at $a_k$, the value of $f:\bigcup_{n=1}^kI_n\into\R$ is either $0$ [if $k=1$] or $c_k$ [if $k\geq 2$], which is equal to the value of $f:I_{k+1}\into\R$ at $a_k$. By the Pasting Lemma, $f$ is continuous on $\bigcup_{n=1}^{k+1}I_n$, and by induction, we have proven that for any $k\in\N$, the function $f$ is continuous on $\bigcup_{n=1}^kI_n$. If there exists $x\in A=\bigcup_{n=1}^\infty I_n$ such that $f$ is not continuous on $x$, then there exists $N\in\N$ such that $x\in I_N\sub\bigcup_{n=1}^NI_n$, so $f$ is not continuous on $\bigcup_{n=1}^NI_n$.$\lightning$ Hence, $f$ is continuous on $A$. 

We now prove the continuity of $f$ on $A^c$. Let $\varepsilon>0$, and let $x\in A^c$. Since $A$ is a union of closed intervals, it is a closed set, and $A^c$ is an open set, so there exists $\delta>0$ such that $(x-\delta,x+\delta)\sub A^c$. If $|x-t|<\delta$, then $t\in (x-\delta,x+\delta)\sub A^c$, and by the definition of $f$, the condition $t\in A^c$ implies $f(t)=0$. But since $x\in A^c$, we also have $f(x)=0$. Thus, $|f(t)-f(x)|=|0-0|=0<\varepsilon$. Therefore, $f$ is continuous at the arbitrary $x\in A^c$. This completes the proof that $f$ is continuous on $\R$. 

For any $n\in\N$, we have $a_n\in I_n$, and so $f(a_n)=c_{n-1}+1\cdot(c_n-c_{n-1})=c_n$, and this proves that $\{c_n\  :\  \in\N\}\sub f[\{a_n\  :\  n\in\N\}]$.
\end{proof}

\begin{proposition}\label{CutProp} For a nonempty proper subset of $A$ of $\R$, the following are equivalent.
\begin{enumerate}\item\label{CutPairDef} For each $a\in A$ and each $b\in{A^c}$, $a<b$.
\item\label{CutClosedDef} For each $x\in\R$ and each $a\in A$, if $x<a$, then $x\in A$.
\item\label{CutOpenDef} $\bigcup_{a\in A}\lpar-\infty,a\rpar\sub A$.
\item\label{CutOpenDefc} $\bigcup_{b\in {A^c}}\lpar b,\infty\rpar\sub{A^c}$.
\end{enumerate}
[If $A$ satisfies one of the conditions \ref{CutPairDef}--\ref{CutOpenDefc}, we say that $A$ is a \emph{cut}\footnote{The traditional definition of cut involves a pair of subsets, where, for instance, $A$ in \ref{CutPairDef} is paired with ${A^c}$. However, we find the definition \ref{CutClosedDef} from \cite{mon01,mon08} as more expedient. The statement \ref{CutClosedDef} is what we interpret from \cite[p. 195]{mon01} as the meaning of ``downward closed,'' which should not be confused with the previously defined closed subsets of $\R$.} of $\R$.]
\end{proposition}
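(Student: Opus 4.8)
The plan is to prove the four conditions equivalent by running the cycle of implications $\ref{CutPairDef}\implies\ref{CutClosedDef}\implies\ref{CutOpenDef}\implies\ref{CutOpenDefc}\implies\ref{CutPairDef}$. Each link is a one‑line direct argument or proof by contradiction that uses nothing beyond the definition of set union, the meaning of the unbounded open intervals $\lpar-\infty,a\rpar$ and $\lpar b,\infty\rpar$, and the fact that $A$ and $A^c$ partition $\R$ (in particular $A\cap A^c=\emptyset$). I note in passing that the hypotheses ``nonempty'' and ``proper'' are not actually needed for the equivalence — if $A$ is $\emptyset$ or $\R$, each of the four statements is vacuously true — so they will be mentioned only as the reason the word ``cut'' is meaningful, and will not be invoked in the argument.

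For $\ref{CutPairDef}\implies\ref{CutClosedDef}$: given $x\in\R$ and $a\in A$ with $x<a$, argue by contradiction — if $x\notin A$, then $x\in A^c$, so $\ref{CutPairDef}$ forces $a<x$, contradicting $x<a$; hence $x\in A$. For $\ref{CutClosedDef}\implies\ref{CutOpenDef}$: any $y\in\bigcup_{a\in A}\lpar-\infty,a\rpar$ satisfies $y<a$ for some $a\in A$, and $\ref{CutClosedDef}$ gives $y\in A$ immediately. For $\ref{CutOpenDef}\implies\ref{CutOpenDefc}$: if $y\in\bigcup_{b\in A^c}\lpar b,\infty\rpar$, pick $b\in A^c$ with $b<y$; were $y\in A$, then $b\in\lpar-\infty,y\rpar\sub\bigcup_{a\in A}\lpar-\infty,a\rpar\sub A$ by $\ref{CutOpenDef}$, contradicting $b\in A^c$, so $y\in A^c$. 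Finally, for $\ref{CutOpenDefc}\implies\ref{CutPairDef}$: given $a\in A$ and $b\in A^c$, disjointness forces $a\neq b$; if $b<a$ then $a\in\lpar b,\infty\rpar\sub\bigcup_{b'\in A^c}\lpar b',\infty\rpar\sub A^c$ by $\ref{CutOpenDefc}$, contradicting $a\in A$; so by the Trichotomy Law the remaining possibility $a<b$ holds.

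I do not expect a genuine obstacle here: once the notation of the displayed unions is unpacked, every implication is essentially a single line. The one recurring ``trick,'' used in three of the four steps, is to exhibit a specific element (either $b$ or $a$) sitting inside one of the unions in order to contradict its membership in $A$ or $A^c$. The only point requiring a moment's care is, in the last implication, to rule out $a=b$ via $A\cap A^c=\emptyset$ before appealing to trichotomy.
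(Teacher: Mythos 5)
Your proposal is correct and follows exactly the same cycle $\ref{CutPairDef}\implies\ref{CutClosedDef}\implies\ref{CutOpenDef}\implies\ref{CutOpenDefc}\implies\ref{CutPairDef}$ as the paper, with each link argued the same way (by unpacking the union and contradicting membership in $A$ or $A^c$). Your aside that the ``nonempty proper'' hypothesis plays no role in the equivalence itself is also accurate, and your phrasing of $\ref{CutOpenDefc}\implies\ref{CutPairDef}$ is in fact a touch cleaner than the paper's, which appears to contain a small slip (``Thus, $b\in A$'' where $a\in A^c$ is the contradiction actually obtained).
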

\begin{proof}\ref{CutPairDef} $\implies$ \ref{CutClosedDef} Let $x\in\R$, and let $a\in A$ such that $x<a$. Tending towards a contradiction, suppose $x\notin A$. By \ref{CutPairDef}, we have $a<x$, contradicting the Trichotomy Law in $\R$. Therefore, \ref{CutClosedDef} is true.\\

\noindent\ref{CutClosedDef} $\implies$ \ref{CutOpenDef} Given $x\in\bigcup_{a\in A}\lpar-\infty,a\rpar$, there exists $\alpha\in A$ such that $x\in\lpar-\infty,\alpha\rpar$, or that $x<\alpha$. By \ref{CutClosedDef}, $x\in A$. Therefore, $\bigcup_{a\in A}\lpar-\infty,a\rpar\sub A$.\\

\noindent\ref{CutOpenDef} $\implies$ \ref{CutOpenDefc} Given $x\in\bigcup_{b\in {A^c}}\lpar b,\infty\rpar$, there exists $\beta\in{A^c}$ such that $x\in\lpar\beta,\infty\rpar$, or that $\beta<x$. Tending towards a contradiction, suppose $x\notin {A^c}$. Equivalently, $x\in A$. By \ref{CutOpenDef}, $\lpar-\infty,x\rpar\sub A$, so from $\beta<x$, we obtain $\beta\in A$.$\lightning$  Hence, $x\in {A^c}$, and so, $\bigcup_{b\in {A^c}}\lpar b,\infty\rpar\sub{A^c}$.\\

\noindent\ref{CutOpenDefc} $\implies$ \ref{CutPairDef} Let $a\in A$ and $b\in{A^c}$. Tending towards a contradiction, suppose $a\geq b$. Because $A$ and ${A^c}$ are disjoint, we cannot have $a=b$. Henceforth, $a>b$, which further implies $a\in\lpar b,\infty\rpar$, where the interval, according to \ref{CutOpenDefc}, is a subset of ${A^c}$. Thus, $b\in A$.$\lightning$ Therefore, $a<b$.
\end{proof}

\begin{proposition}\label{GapProp} Given a cut $A$ of $\R$, the following are equivalent.
\begin{enumerate}\item\label{GapPairDef} For any $c\in\R$, there exist $a\in A$ and $b\in A^c$ such that $c\notin\abClosed$.
\item\label{GapOpenDef} $\bigcup_{a\in A}\lpar-\infty,a\rpar= A$ and $\bigcup_{b\in {A^c}}\lpar b,\infty\rpar={A^c}$.
\end{enumerate}
[If $A$ satisfies one of the conditions \ref{GapPairDef}--\ref{GapOpenDef}, then we say that $A$ is a \emph{gap} in $\R$. If $A$ is not a gap, then the number $c$ in the negation of \ref{GapPairDef} is called a \emph{cut point} of $A$.]
\end{proposition}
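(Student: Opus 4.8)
The plan is to show that \ref{GapPairDef} and \ref{GapOpenDef} both amount to the assertion that $A$ has no greatest element and $A^c$ has no least element, with the cut ordering of Proposition~\ref{CutProp}\ref{CutPairDef} doing the bookkeeping throughout. Recall first that since $A$ is a cut, Proposition~\ref{CutProp}\ref{CutOpenDef} and Proposition~\ref{CutProp}\ref{CutOpenDefc} already supply the inclusions $\bigcup_{a\in A}\lpar-\infty,a\rpar\sub A$ and $\bigcup_{b\in A^c}\lpar b,\infty\rpar\sub A^c$, so the content of \ref{GapOpenDef} is entirely in the reverse inclusions.

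First I would prove \ref{GapPairDef}~$\implies$~\ref{GapOpenDef}. For the reverse inclusion $A\sub\bigcup_{a\in A}\lpar-\infty,a\rpar$, take an arbitrary $a_0\in A$ and apply \ref{GapPairDef} with $c=a_0$: there are $a\in A$ and $b\in A^c$ with $a_0\notin\abClosed$. Since $a_0\in A$ and $b\in A^c$, Proposition~\ref{CutProp}\ref{CutPairDef} gives $a_0<b$, so the only way $a_0$ can fail to lie in $\abClosed$ is $a_0<a$; hence $a_0\in\lpar-\infty,a\rpar$ and $a_0$ lies in the union. The reverse inclusion $A^c\sub\bigcup_{b\in A^c}\lpar b,\infty\rpar$ is symmetric: for $b_0\in A^c$, apply \ref{GapPairDef} with $c=b_0$ to get $a\in A$, $b\in A^c$ with $b_0\notin\abClosed$; now $a<b_0$ by Proposition~\ref{CutProp}\ref{CutPairDef}, so $b_0\notin\abClosed$ forces $b_0>b$, putting $b_0$ in $\bigcup_{b\in A^c}\lpar b,\infty\rpar$.

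Next I would prove \ref{GapOpenDef}~$\implies$~\ref{GapPairDef}. Let $c\in\R$; since $A$ is a nonempty proper subset of $\R$, both $A$ and $A^c$ are nonempty. If $c\in A$, then by the first equation of \ref{GapOpenDef} we have $c\in\bigcup_{a\in A}\lpar-\infty,a\rpar$, so $c<a$ for some $a\in A$; choosing any $b\in A^c$ (for which $a<b$ by Proposition~\ref{CutProp}\ref{CutPairDef}, so that $\abClosed$ is a genuine interval), the inequality $c<a$ yields $c\notin\abClosed$. If instead $c\in A^c$, then by the second equation of \ref{GapOpenDef} we have $c\in\bigcup_{b\in A^c}\lpar b,\infty\rpar$, so $b<c$ for some $b\in A^c$; choosing any $a\in A$, again $a<b$, and $c>b$ yields $c\notin\abClosed$. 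In either case we have produced the required $a\in A$ and $b\in A^c$.

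The argument is routine; the only point requiring attention is that $c\notin\abClosed$ unpacks to the disjunction ``$c<a$ or $c>b$,'' and in each application one must invoke the cut ordering $a<b$ together with the known position of $c$ relative to $A$ and $A^c$ to see which disjunct is actually available. There is no substantive obstacle beyond this case-splitting.
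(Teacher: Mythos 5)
Your proof is correct and essentially the same as the paper's: both rest on the cut-ordering $a<b$ from Proposition~\ref{CutProp}\ref{CutPairDef} and the unpacking of $c\notin\abClosed$ as ``$c<a$ or $c>b$,'' with the ordering resolving the disjunction. The only difference is cosmetic: you prove \ref{GapPairDef}~$\implies$~\ref{GapOpenDef} directly (take $a_0\in A$, apply \ref{GapPairDef} at $c=a_0$, conclude $a_0$ lies in the union), whereas the paper proves the contrapositive (take a witness outside the union and show it violates \ref{GapPairDef}); the two arguments are logically identical, and your direct version is arguably a touch cleaner since it avoids handling negations.
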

\begin{proof}\ref{GapPairDef} $\implies$ \ref{GapOpenDef} We prove this by contraposition. The negation of \ref{GapOpenDef} is equivalent to the assertion that one of the two set equations in \ref{GapOpenDef} is false. In view of Proposition~\ref{CutProp}\ref{CutOpenDef}--\ref{CutOpenDefc}, one of the set inclusions $A\sub\bigcup_{a\in A}\lpar-\infty,a\rpar$ and $A^c\sub\bigcup_{b\in {A^c}}\lpar b,\infty\rpar$ is false. 

If there exists $c\in A$ such that $c\notin\bigcup_{a\in A}\lpar-\infty,a\rpar$, then for any $a\in A$, we have $c\notin\lpar-\infty,a\rpar$, or that $c\in\lbrak a,\infty\rpar$. Thus, $a\leq c$. Given $b\in A^c$, since $c\in A$, by the definition of cut, $c< b$. At this point, we have $a\leq c<b$, so $c\in\abClosed$.

If there exists $c\in A^c$ such that $c\notin\bigcup_{b\in A^c}\lpar b,\infty\rpar$, then for any $b\in A^c$, we have $c\notin\lpar b,\infty\rpar$, which implies $c\in\lpar-\infty,b\rbrak$, and we obtain $c\leq b$. From $b\in A^c$ and the definition of gap, if $a\in A$, then $a<c\leq b$. Hence, $c\in\abClosed$.

In both cases, we obtained the negation of \ref{GapPairDef}.\\

\noindent\ref{GapOpenDef} $\implies$ \ref{GapPairDef} Let $c \in \R=A\cup A^c$.

If $c\in A$, then by the first set equation in \ref{GapOpenDef}, there exists $\alpha\in A$ such that $c\in(-\infty,\alpha)$, or that $c<\alpha$. Since $A$ is a proper subset of $\R$, the complement $A^c$ is nonempty, so there exists $\beta\in A^c$. By the definition of cut, $\alpha<\beta$. At this point, we have $c<\alpha<\beta$, so $c\notin\lbrak\alpha,\beta\rbrak$.

If $c\in A^c$, then the second set equation in \ref{GapOpenDef} implies that there exists $\beta\in A$ such that $c\in(\beta,\infty)$. Since $A$ is nonempty, there exists $\alpha\in A$, and by the definition of cut, $\alpha<\beta<c$, so $c\notin\lbrak\alpha,\beta\rbrak$.
\end{proof}

\subsection{Filter Bases, Limits and Differentiable Functions}\label{ThirdCirclePrelims}

A nonempty collection $\Base$ of nonempty subsets of $\R$ is said to be a \emph{filter base\footnote{This is a standard topological notion, but is rarely emphasized in mainstream topology or real analysis despite its potency. We used \cite[p.~14]{dix84} and \cite[pp.~121--122]{sie92}.}} if for every $A,B\in\Base$, there exists $C\in\Base$ such that $C\sub A\cap B$. If $c\in\R$, then we say that a filter base \emph{$\Base$ approaches $c$}, or in symbols $\Base\into c$, if for each $\delta>0$, there exists $I\in\Base$ such that $I\sub(c-\delta,c+\delta)$. 


Given $L\in\R$, $X\sub\R$, a function $f:X\into\R$, and a filter base $\Base$ that approaches $c\in\R$, we say that \emph{$f$ approaches $L$ along $\Base$}, or that \emph{$f$ approaches $L$ as $\Base$ approaches $c$}, or in symbols, \emph{$f\into L$ as $\Base\into c$}, if for each $\varepsilon>0$, there exists $I\in\Base$ such that $f[I]\sub\lpar L-\varepsilon,L+\varepsilon\rpar$.

\begin{proposition}\label{BaseLimUniqueProp} As a consequence of the Trichotomy Law, if $f\into L$ as $\Base\into c$, then $L$ is unique. [In such a case, we define $\BcLim f:=L$, which is called the \emph{limit of $f$ along $\Base$}, or the \emph{limit of $f$ as $\Base\into c$}. If there exists $L\in\R$ such that $L=\BcLim f$, then ``\emph{the  $\BcLim f$ exists}.'']
\end{proposition}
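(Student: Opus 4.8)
The plan is a proof by contradiction modeled on the classical ``epsilon-over-two'' argument for uniqueness of sequence limits, with the role played by ``tails of the sequence'' now played by members of the filter base. So I would suppose, for contradiction, that both $f\into L_1$ as $\Base\into c$ and $f\into L_2$ as $\Base\into c$, with $L_1\neq L_2$. By the Trichotomy Law I may assume, WLOG, that $L_1<L_2$, and then set $\varepsilon:=\txthalf(L_2-L_1)$, which is positive. The first small step is to record that the open intervals $\lpar L_1-\varepsilon,L_1+\varepsilon\rpar$ and $\lpar L_2-\varepsilon,L_2+\varepsilon\rpar$ are disjoint: the choice of $\varepsilon$ gives $L_1+\varepsilon=\txthalf(L_1+L_2)=L_2-\varepsilon$, so every element of the first interval is strictly less than $\txthalf(L_1+L_2)$ while every element of the second is strictly greater, and the Trichotomy Law forbids a common element.

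Next I would apply the definition of ``$f$ approaches $L$ along $\Base$'' with this $\varepsilon$ to each of $L_1$ and $L_2$ separately, obtaining $A,B\in\Base$ with $f[A]\sub\lpar L_1-\varepsilon,L_1+\varepsilon\rpar$ and $f[B]\sub\lpar L_2-\varepsilon,L_2+\varepsilon\rpar$. Now comes the one point where the filter-base structure is actually used: applying the defining property of a filter base to $A$ and $B$ produces $C\in\Base$ with $C\sub A\cap B$. Since every member of a filter base is a \emph{nonempty} subset of $\R$, there exists $x\in C$. Then $f(x)\in f[C]\sub f[A]\cap f[B]\sub\lpar L_1-\varepsilon,L_1+\varepsilon\rpar\cap\lpar L_2-\varepsilon,L_2+\varepsilon\rpar$, so $f(x)$ lies in both of the disjoint intervals above, which is impossible. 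This contradiction forces $L_1=L_2$, so the limit value is unique and we may legitimately set $\BcLim f:=L$.

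I do not expect any genuine obstacle here; the argument is entirely routine. The only things worth being alert to are that the proof needs only that $\Base$ is a filter base (the hypothesis $\Base\into c$ is never invoked), and that the existence of the contradiction-producing point $x\in C$ rests on the standing convention that a filter base consists of nonempty sets — without that, $C$ could be empty and the argument would collapse.
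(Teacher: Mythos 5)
Your proof is correct and takes essentially the same approach as the paper: contradiction via the filter-base property, producing a member $C\sub A\cap B$ whose (nonempty) image must lie in two disjoint $\varepsilon$-intervals. The only cosmetic difference is that you establish the disjointness of the two intervals up front, whereas the paper picks $y$ in the intersection and derives $0<y-\frac{L+K}{2}<0$ directly; your side remark that only the filter-base property (not $\Base\into c$) is used is also accurate.
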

\begin{proof} Tending towards a contradiction, suppose there exists a real number $K$, distinct from $L$, such that $f$ approaches both $K$ and $L$ along the filter base $\Base$. WLOG, we suppose further that $K<L$. Thus, $\varepsilon:=\frac{L-K}{2}>0$. Since $f$ approaches $K$ and $L$ along $\Base$, there exist $I\in\Base$ and $J\in\Base$ such that\linebreak $f[I]\sub\lpar L-\varepsilon,L+\varepsilon\rpar$, and $f[J]\sub\lpar K-\varepsilon,K+\varepsilon\rpar$. Since $\Base$ is a filter base, there exists $X\in\Base$ such that $X\sub I\cap J$, and so $f[X]\sub f[I\cap J]\sub f[I]$, and $f[X]\sub f[I\cap J]\sub f[J]$. At this point, we have $f[X]\sub\lpar L-\varepsilon,L+\varepsilon\rpar$ and $f[X]\sub\lpar K-\varepsilon,K+\varepsilon\rpar$, which further imply $$f[X]\sub \lpar L-\varepsilon,L+\varepsilon\rpar\cap \lpar K-\varepsilon,K+\varepsilon\rpar.$$ Since $\Base$ is a filter base, $X\in\Base$ implies $X$ is nonempty and hence so is $f[X]$. Thus, there exists\linebreak $y\in f[X]\sub \lpar L-\varepsilon,L+\varepsilon\rpar\cap \lpar K-\varepsilon,K+\varepsilon\rpar$. Consequently, $L-\varepsilon<y<K+\varepsilon$, to which we substitute $\varepsilon=\frac{L-K}{2}$, and we obtain $0<y-\frac{L+K}{2}<0$, which contradicts the Trichotomy Law. Therefore, for any real number $K$ distinct from $L$, the function $f$ cannot approach $K$ along $\Base$. This completes the proof.
\end{proof}

\begin{proposition}\label{AlgLimProp} Let $X\sub\R$, let $c\in\R$, and let $\Base$ be a filter base that approaches $c$.
\begin{enumerate}\item\label{AlgConst} Given $C\in\R$, for the constant function $F:X\into\R$ defined by $F:x\mapsto C$, we have $\BcLim F=C$.
\item\label{AlgId} For the identity function $\id:X\into\R$ defined by $\id: x\mapsto x$, we have $\BcLim \id=c$. As a consequence, if $\Base\into c$, then $c$ is unique.
\item\label{AlgIFF0} Given $L\in\R$, and given a function $f:X\into\R$, if the function $X\into\R$ defined by $x\mapsto f(x)-L$ is denoted by $f-L$, then $\BcLim f=L$ if and only if $\BcLim(f-L)=0$.
\item\label{AlgProd0} Given functions $f,g:X\into\R$, if $\BcLim f=0=\BcLim g$, then $\BcLim fg=0$.
\item\label{AlgLinear} Given $C\in\R$ and functions $f,g:X\into\R$, if $\BcLim f$ and $\BcLim g$ exist, then
\begin{eqnarray}
    \BcLim (f+g) &=& \BcLim f+\BcLim g,\nonumber\\
    \BcLim Cf &=& C\BcLim f.\nonumber
\end{eqnarray}
\item\label{AlgProd} Given functions $f,g:X\into\R$, if $\BcLim f$ and $\BcLim g$ exist, then
\begin{eqnarray}
    \BcLim fg &=& \BcLim f\  \BcLim g.\nonumber
\end{eqnarray}
\end{enumerate}
\end{proposition}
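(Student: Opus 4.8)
The plan is to reduce the product rule entirely to the already-proven parts~\ref{AlgIFF0}, \ref{AlgProd0} and~\ref{AlgLinear}, using a single algebraic identity. Write $L:=\BcLim f$ and $M:=\BcLim g$ (these exist by hypothesis), and recall from the convention introduced before part~\ref{AlgIFF0} that $f-L$, $g-M$, and likewise $(f-L)(g-M)$, $M(f-L)$, $L(g-M)$ all denote functions $X\into\R$ taken pointwise. The key identity is
\[
fg - LM = (f-L)(g-M) + M(f-L) + L(g-M),
\]
which I would verify by expanding the right-hand side pointwise: $(f-L)(g-M)=fg-Mf-Lg+LM$, adding $M(f-L)=Mf-ML$ and $L(g-M)=Lg-LM$ cancels the $Mf$ and $Lg$ terms and leaves $fg+LM-ML-LM=fg-LM$ (real numbers commute). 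So this is a genuine equality of functions on $X$.

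Next I would apply the earlier parts termwise. By part~\ref{AlgIFF0}, the hypotheses $\BcLim f=L$ and $\BcLim g=M$ are equivalent to $\BcLim(f-L)=0$ and $\BcLim(g-M)=0$. Feeding these two facts into part~\ref{AlgProd0} gives $\BcLim(f-L)(g-M)=0$; feeding them into the scalar-multiple clause of part~\ref{AlgLinear} (legitimate, since $\BcLim(f-L)$ and $\BcLim(g-M)$ now exist) gives $\BcLim M(f-L)=M\cdot 0=0$ and $\BcLim L(g-M)=L\cdot 0=0$. Thus all three summands on the right-hand side of the identity have limit $0$ along $\Base$, so two applications of the additive clause of part~\ref{AlgLinear} yield $\BcLim\bigl[(f-L)(g-M)+M(f-L)+L(g-M)\bigr]=0$, that is, $\BcLim(fg-LM)=0$. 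One last appeal to part~\ref{AlgIFF0}, now with the function $fg$ and the constant $LM\in\R$, converts this into $\BcLim fg = LM = \BcLim f\ \BcLim g$, as required.

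I do not expect a real obstacle here; the only point that needs care is the \emph{choice} of decomposition of $fg-LM$. The naive two-term split $fg-LM=(f-L)g+L(g-M)$ is tempting but unusable with the tools at hand, because its cross term $(f-L)g$ pairs a null function with $g$, whose limit $M$ need not be $0$, and we have no product rule for that case yet — only part~\ref{AlgProd0}, which demands \emph{both} factors have limit $0$. The symmetric three-term identity above is engineered precisely so that the only genuine product appearing is $(f-L)(g-M)$, both of whose factors are null, while the remaining two terms are mere constant scalings handled by part~\ref{AlgLinear}. Beyond that, the proof is just bookkeeping: keeping straight which of parts~\ref{AlgIFF0}, \ref{AlgProd0}, \ref{AlgLinear} is being invoked at each step, and invoking part~\ref{AlgLinear} only on functions whose limits are already known (here the scalings $M(f-L)$ and $L(g-M)$, whose limits we compute before summing).
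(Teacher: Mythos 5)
Your proposal proves only part~\ref{AlgProd}, treating parts~\ref{AlgIFF0}, \ref{AlgProd0} and~\ref{AlgLinear} as given, and for that part it is correct and takes exactly the paper's route: the same three-term decomposition $fg-LM=(f-L)(g-M)+M(f-L)+L(g-M)$, with the same sequence of appeals to part~\ref{AlgIFF0} (twice), part~\ref{AlgProd0} for the cross term, and part~\ref{AlgLinear} for the scalings and the sum. The closing remark on why the two-term split $fg-LM=(f-L)g+L(g-M)$ is unusable here is a nice piece of motivation but does not change the argument.
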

\begin{proof}\ref{AlgConst} Let $\varepsilon>0$. Since the filter base $\Base$ is nonempty, there exists $I\in\Base$, and since $F$ is a constant function with value $C$, the set $F[I]$ is either the singleton $\{C\}$ or the empty set, both of which are subsets of $(C-\varepsilon,C+\varepsilon)$. Therefore,  $\BcLim F=C$.\\

\noindent\ref{AlgId} Let $\varepsilon>0$, and define $\delta:=\varepsilon$, so $\delta>0$. Since $\Base\into c$, there exists $I\in\Base$ such that $I\sub(c-\delta,c+\delta)$, where the left-hand side, because $\id$ is the identity function, is equal to $\id [I]$. Therefore, $\BcLim \id=c$. This equation is true if and only if for each $\delta>0$ there exists $I\in\Base$ such that $I=\id[I]\sub(c-\delta,c+\delta)$, if and only if $\Base\into c$. Thus, if $\Base\into c$ and $\Base\into k$, then both $c$ and $k$ are equal to $\BcLim\id$, which, by Proposition~\ref{BaseLimUniqueProp}, is unique. Therefore, $c=k$.\\

\noindent\ref{AlgIFF0} With reference to our definition of limit, proving the equivalence of $f[I]\sub\lpar L-\varepsilon,L+\varepsilon\rpar$ and\linebreak $(f-L)[I]\sub(-\varepsilon,\varepsilon)$ shall suffice. If $y\in (f-L)[I]$, then there exists $x\in I$ such that $y=f(x)-L$, so $y+L=f(x)\in f[I]\sub(L-\varepsilon,L+\varepsilon)$, which implies $|y+L-L|<\varepsilon$, or that $|y|<\varepsilon$, which is equivalent to $y\in\lpar -\varepsilon,\varepsilon\rpar$. This proves that $(f-L)[I]\sub(-\varepsilon,\varepsilon)$. Conversely, if $y\in f[I]$, then there exists $x\in I$ such that $y=f(x)$, so $y-L=f(x)-L\in F[I]\sub (-\varepsilon,\varepsilon)$, or equivalently, $|y-L+0|<\varepsilon$, or that $|y-L|<\varepsilon$, which is equivalent to $y\in \lpar L-\varepsilon,L+\varepsilon\rpar$. Therefore, $f[I]\sub\lpar L-\varepsilon,L+\varepsilon\rpar$.\\

\noindent\ref{AlgProd0} Let $\varepsilon>0$, and let $\eta:=\min\left\{\txthalf\varepsilon,\txthalf\right\}>0$. Thus, $\eta<\varepsilon$ and $\eta<1$. Furthermore, $(-\eta,\eta)\sub(-\varepsilon,\varepsilon)$. Using basic properties of inequalities, the condition $0<\eta<1$ implies that $(-\eta,\eta)$ is ``closed under multiplication.'' That is, if $x,y\in(-\eta,\eta)$, then $xy\in(-\eta,\eta)$. If $\BcLim f=0=\BcLim g$, then there exist $I,J\in\Base$ such that $f[I]\sub(-\eta,\eta)$ and $g[J]\sub(-\eta,\eta)$. Since $\Base$ is a filter base, there exists $K\in\Base$ such that  $K\sub I\cap J$. Given $y\in (fg)[K]$, there exists $x\in K\sub I\cap J\sub I$ (and also, $x\in I\cap J\sub J$) such that $y=f(x)g(x)$. But $x\in I$ and $x\in J$ imply $f(x)\in f[I]\sub(-\eta,\eta)$ and $g(x)\in g[J]\sub(-\eta,\eta)$. Thus, $y=f(x)g(x)\in(-\eta,\eta)$. Hence, $(fg)[I\cap J]\sub(-\eta,\eta)\sub(-\varepsilon,\varepsilon)$, and therefore, $\xcLim[f(x)g(x)]=0$.\\

\noindent\ref{AlgLinear} Given $\varepsilon>0$, we have $\txthalf\varepsilon>0$, so if $L:=\BcLim f$ and $K:=\BcLim g$, then there exist $I,J\in\Base$ such that $f[I]\sub \lpar L-\txthalf\varepsilon,L+\txthalf\varepsilon\rpar$ and $g[J]\sub \lpar K-\txthalf\varepsilon,K+\txthalf\varepsilon\rpar$. Since $\Base$ is a filter base, there exists $T\in\Base$ such that $T\sub I\cap J$. Given $y\in(f+g)[T]$, there exists $x\in T$ such that $y=f(x)+g(x)$. But $x\in T\sub I\cap J\sub I$ [and also $x\in I\cap J\sub J$] imply $f(x)\in f[I]\sub\lpar L-\txthalf\varepsilon,L+\txthalf\varepsilon\rpar $ and $g(x)\in g[J]\sub \lpar K-\txthalf\varepsilon,K+\txthalf\varepsilon\rpar$, which respectively imply $|f(x)-L|<\txthalf\varepsilon$ and $|g(x)-K|<\txthalf\varepsilon$. By the Triangle Inequality, 
\begin{eqnarray}
    |y-(L+K)| &=& |f(x)+g(x)-(L+K)|=|[f(x)-L]+[g(x)-K]|,\nonumber\\
    &\leq& |f(x)-L|+|g(x)-K| < \txthalf\varepsilon+\txthalf\varepsilon=\varepsilon,\nonumber
\end{eqnarray}
but $|y-(L+K)|<\varepsilon$ is equivalent to $y\in\lpar L+K-\varepsilon,L+K\varepsilon\rpar$. 

We have proven that $(f+g)[T]\sub\lpar L+K-\varepsilon,L+K\varepsilon\rpar$. Therefore, $\BcLim(f+g)=L+K$.

Let $\varepsilon>0$. If $C=0$, then the function $Cf$ is the zero constant function, and by \ref{AlgConst}, we are done. Supposing henceforth that $C\neq 0$, we have $\frac{\varepsilon}{|C|}>0$. If $L:=\BcLim f$, then there exists $I\in\Base$ such that $f[I]\sub\lpar L-\frac{\varepsilon}{|C|},L+\frac{\varepsilon}{|C|} \rpar$. Given $y\in(Cf)[I]$, there exists $x\in I$ such that $y=Cf(x)$, so $\frac{y}{C}=f(x)\in f[I]\sub\lpar L-\frac{\varepsilon}{|C|},L+\frac{\varepsilon}{|C|} \rpar$, which implies $\left|\frac{y}{C}-L\right|<\frac{\varepsilon}{|C|}$, both sides of which, we multiply by the positive number $|C|$ to obtain $|y-CL|<\varepsilon$. Equivalently, $y\in\lpar CL-\varepsilon,CL+\varepsilon\rpar$, and we have proven $(Cf)[I]\sub\lpar CL-\varepsilon,CL+\varepsilon\rpar$. Therefore, $\BcLim Cf=CL$.\\

\noindent\ref{AlgProd} If $L:=\BcLim f$ and $K:=\BcLim g$, then denote the functions $x\mapsto f(x)-L$, $x\mapsto g(x)-K$ and $x\mapsto f(x)g(x)-LK$ simply by $f-L$, $g-K$ and $fg-LK$, respectively. For any $x\in X$, we have the identity $f(x)g(x)-LK=[f(x)-L][g(x)-K]+K[f(x)-L]+L[g(x)-K]$. As an equation of functions, $fg-LK=(f-L)(g-K)+K(f-L)+L(g-K)$. The limit as $\Base\into c$ of the right-hand side may be obtained from previous parts of the proposition as shall be explained, and because the limit of the right-hand side exists, so does the limit of the left-hand side. According to the uniqueness theorem, Proposition~\ref{BaseLimUniqueProp}, we may indeed take the limit of both sides of the function equation. We now proceed with the details of the computation. From \ref{AlgIFF0}, $L=\BcLim f$ and $K=\BcLim g$ imply $\BcLim (f-L)=0=\BcLim(g-K)$. By \ref{AlgProd0}, $\BcLim(f-L)(g-K)=0$, and by \ref{AlgLinear}, $\BcLim(fg-LK)=0+K\cdot 0+L\cdot 0=0$, so using \ref{AlgIFF0} again, $\BcLim fg=LK$.
\end{proof}

\begin{proposition} Given a (nonempty) $X\sub\R$, and given $c\in X$, the sets
\begin{eqnarray}
\lFilter(X,c) &:=& \{X\cap\lpar c-\delta,c\rpar\  :\  \delta>0\},\nonumber\\
\rFilter(X,c) &:=& \{X\cap\lpar c,c+\delta\rpar\  :\  \delta>0\},\nonumber\\
\Filter(X,c) &:=& \{X\cap\lbrak\lpar c-\delta,c\rpar\cup \lpar c,c+\delta\rpar\rbrak\  :\  \delta>0\},\nonumber
\end{eqnarray}
are filter bases that approach $c$.
\end{proposition}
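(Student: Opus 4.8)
For each of the three families I would verify the two clauses of the definition of filter base — that it is a nonempty collection of nonempty sets, and that it has the ``refinement by intersection'' property — and then the approach-to-$c$ condition. The three arguments are identical up to which deleted neighbourhood of $c$ gets intersected with $X$, so the plan is to carry out $\lFilter(X,c)$ in full and then indicate the purely notational changes for $\rFilter(X,c)$ and $\Filter(X,c)$.

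Take $\lFilter(X,c)$. The family is nonempty because any $\delta>0$ produces a member. Each member $X\cap\lpar c-\delta,c\rpar$ is nonempty — this is the only non-formal point, and it amounts to requiring that $c$ be a left accumulation point of $X$; I would record this as a standing hypothesis on the pair $(X,c)$ (it holds in all later uses, where $X$ is an interval and $c$ is positioned accordingly), with the analogous remarks for $\rFilter$ (right accumulation point) and $\Filter$ (two-sided). For the refinement property, given $A=X\cap\lpar c-\delta_1,c\rpar$ and $B=X\cap\lpar c-\delta_2,c\rpar$ in $\lFilter(X,c)$, put $\delta:=\min\{\delta_1,\delta_2\}>0$; then $C:=X\cap\lpar c-\delta,c\rpar$ lies in $\lFilter(X,c)$, and since $\lpar c-\delta,c\rpar\sub\lpar c-\delta_1,c\rpar\cap\lpar c-\delta_2,c\rpar$ we get $C\sub A\cap B$ (indeed $C=A\cap B$). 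Finally, to show $\lFilter(X,c)\into c$, let $\delta>0$ and take $I:=X\cap\lpar c-\delta,c\rpar\in\lFilter(X,c)$; then $I\sub\lpar c-\delta,c\rpar\sub\lpar c-\delta,c+\delta\rpar$, as needed.

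For $\rFilter(X,c)$ replace $\lpar c-\delta,c\rpar$ by $\lpar c,c+\delta\rpar$ throughout; for $\Filter(X,c)$ replace it by $\lpar c-\delta,c\rpar\cup\lpar c,c+\delta\rpar$, using that this set is still contained in $\lpar c-\delta,c+\delta\rpar$ and that intersecting with the smaller-radius neighbourhood still shrinks it inside both of the given members. I do not expect any genuine obstacle here: the content is just that one-sided and two-sided deleted neighbourhoods of $c$ are nested by radius and sit inside the corresponding symmetric neighbourhood, so the ``minimum of the two radii'' choice supplies the refining member and the ``same radius'' choice supplies the approach condition. The only thing to be careful about is the nonemptiness of the individual sets, which is why the hypothesis must place $c$ suitably relative to $X$.
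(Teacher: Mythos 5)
Your proposal is correct and follows essentially the same route as the paper: the paper also compares the two radii (it states a lemma that the member with smaller $\delta$ is contained in the one with larger $\delta$, so any two members are nested, which is equivalent to your ``take $\delta = \min\{\delta_1,\delta_2\}$'') and then uses the same-radius member plus the inclusion of the deleted neighbourhood in $(c-\delta,c+\delta)$ for the approach condition. The one place you go beyond the paper is your explicit observation that the nonemptiness of the individual members $X\cap(c-\delta,c)$, etc., is not automatic from ``$X$ nonempty and $c\in X$'' and genuinely requires $c$ to be a left/right/two-sided accumulation point of $X$. That observation is correct: with $X=\{0\}$ and $c=0$ all three collections consist entirely of empty sets, so they fail the definition of filter base given earlier in the paper (which requires a collection of \emph{nonempty} subsets). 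The paper's proof silently passes over this point; your instinct to flag it and impose a standing positional hypothesis on $(X,c)$ — which is in fact satisfied in every subsequent use, where $X$ is an interval and $c$ lies at a suitable endpoint or interior point — is the more careful reading.
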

\begin{proof} Let $\Base$ be one of $\lFilter(X,c)$, $\rFilter(X,c)$ or $\Filter(X,c)$. Given $I\in\Base$, let $\delta_I$ be the positive real number such that $I=X\cap (c-\delta_I,c)$ [if $\Base=\lFilter(X,c)$; such that $I=X\cap (c, c+\delta_I)$ if $\Base=\rFilter(X,c)$; or such that $I=X\cap\lbrak(c-\delta_I)\cup(c+\delta_I)\rbrak$ if $\Base=\Filter(X,c)$]. We first prove that 
\begin{enumerate}
    \item[\Star] given $I,J\in\Base$, if $\delta_I<\delta_J$, then $I\sub J$.
\end{enumerate}
Thus, $-\varepsilon<-\delta$, and we further have $c-\varepsilon<c-\delta<c$ for the case $\Base=\lFilter(X,c)$ [respectively, $c<c+\delta<c+\varepsilon$ for the case $\Base=\rFilter(X,c)$, while we need both systems of inequalities for the case for the case $\Base=\Filter(X,c)$]. These inequalities imply that $(c-\delta,c)\sub(c-\varepsilon,c)$ for the case $\Base=\lFilter(X,c)$ [respectively, $(c,c+\delta)\sub(c,c+\varepsilon)$ for the case $\Base=\rFilter(X,c)$, while for the case for the case $\Base=\Filter(X,c)$, we need both set inclusions], and taking the intersection of both sides with $X$, we obtain $I\sub J$. This proves \Star.

Let $I,J\in\Base$. If $\delta_I=\delta_J$, then $I=J$, which implies $I\sub J$. If $\delta_I$ and $\delta_J$ are distinct, we may assume, WLOG, that $\delta_I<\delta_J$, and by \Star, $I\sub J$. In any case, $I\sub J$, which implies $I=I\cap J$, which further implies $I\sub I\cap J$, where $I\in\Base$. Therefore, $\Base$ is a filter base.

Given $\delta>0$, the sets $\lpar c-\delta,c\rpar$, $\lpar c,c+\delta\rpar$ and $\lpar c-\delta,c\rpar\cup \lpar c,c+\delta\rpar$ are all subsets of $(c-\delta,c+\delta)$, and hence, so is the intersection of any of these sets with $X$. Therefore, $\Base\into c$.
\end{proof}

Given a function $f:X\into\R$ and given $c\in X$, we define
\begin{eqnarray}
    \xcLim f(x) &:=&\displaystyle\lim_{\Filter(X,c)\rightarrow c} f,\label{xclimDEF}\\
    \xcLimL f(x) &:=&\displaystyle\lim_{\lFilter(X,c)\rightarrow c} f,\nonumber\\
    \xcLimR f(x) &:=&\displaystyle\lim_{\rFilter(X,c)\rightarrow c} f.\nonumber
\end{eqnarray}
Any limit along the filter base $\Filter(X,c)$ [respectively, along $\lFilter(X,c)$, and along $\rFilter(X,c)$] shall be referred to as a limit \emph{as $x\into c$} [respectively, \emph{as $x\into c-$}, and \emph{as $x\into c+$}].

\begin{proposition}\label{twoSidedLimProp} The following are equivalent.
\begin{enumerate}
\item  $\xcLim f(x)$ exists.
\item  $\xcLimL f(x)$ and $\xcLimR f(x)$ exist and are equal.
\end{enumerate}
In particular, if one of the above statements is true, then $\xcLimL f(x)=\xcLim f(x)=\xcLimR f(x)$.
\end{proposition}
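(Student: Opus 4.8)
The plan is to exploit the distributive identity $X\cap\lbrak\lpar c-\delta,c\rpar\cup\lpar c,c+\delta\rpar\rbrak=\lpar X\cap\lpar c-\delta,c\rpar\rpar\cup\lpar X\cap\lpar c,c+\delta\rpar\rpar$ together with the fact that images distribute over unions, $f[A\cup B]=f[A]\cup f[B]$. Write $D_\delta:=X\cap\lbrak\lpar c-\delta,c\rpar\cup\lpar c,c+\delta\rpar\rbrak\in\Filter(X,c)$, $L_\delta:=X\cap\lpar c-\delta,c\rpar\in\lFilter(X,c)$, and $R_\delta:=X\cap\lpar c,c+\delta\rpar\in\rFilter(X,c)$. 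The two identities give $f[D_\delta]=f[L_\delta]\cup f[R_\delta]$, so for any $M\in\R$ and any $\varepsilon>0$ the inclusion $f[D_\delta]\sub\lpar M-\varepsilon,M+\varepsilon\rpar$ holds if and only if $f[L_\delta]\sub\lpar M-\varepsilon,M+\varepsilon\rpar$ and $f[R_\delta]\sub\lpar M-\varepsilon,M+\varepsilon\rpar$ both hold. I would record at the outset that, by the proposition showing $\lFilter(X,c)$, $\rFilter(X,c)$, $\Filter(X,c)$ are filter bases, all their members are nonempty, and that $\delta\leq\delta'$ forces $L_\delta\sub L_{\delta'}$, $R_\delta\sub R_{\delta'}$, $D_\delta\sub D_{\delta'}$ (these come from $\lpar c-\delta,c\rpar\sub\lpar c-\delta',c\rpar$, etc., intersected with $X$); also Proposition~\ref{BaseLimUniqueProp} legitimises speaking of \emph{the} value of each of the three limits.

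For the implication (ii) $\implies$ (i): let $L$ be the common value of $\xcLimL f(x)$ and $\xcLimR f(x)$. Given $\varepsilon>0$, choose $\delta_1,\delta_2>0$ with $f[L_{\delta_1}]\sub\lpar L-\varepsilon,L+\varepsilon\rpar$ and $f[R_{\delta_2}]\sub\lpar L-\varepsilon,L+\varepsilon\rpar$, and set $\delta:=\min\{\delta_1,\delta_2\}>0$. Then $L_\delta\sub L_{\delta_1}$ and $R_\delta\sub R_{\delta_2}$, hence $f[L_\delta]$ and $f[R_\delta]$ are both contained in $\lpar L-\varepsilon,L+\varepsilon\rpar$, so by the equivalence above $f[D_\delta]\sub\lpar L-\varepsilon,L+\varepsilon\rpar$. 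Thus $\xcLim f(x)=L$.

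For the implication (i) $\implies$ (ii): let $L:=\xcLim f(x)$. Given $\varepsilon>0$, choose $\delta>0$ with $f[D_\delta]\sub\lpar L-\varepsilon,L+\varepsilon\rpar$. Since $L_\delta\sub D_\delta$ and $R_\delta\sub D_\delta$, taking images gives $f[L_\delta]\sub f[D_\delta]\sub\lpar L-\varepsilon,L+\varepsilon\rpar$ and likewise for $R_\delta$; hence $\xcLimL f(x)=L=\xcLimR f(x)$, and in particular both one-sided limits exist and agree. The final ``in particular'' assertion is then immediate, since in each direction all three limits were exhibited as equal to the same number $L$. I do not expect a genuine obstacle here: the only points needing care are keeping track of the nonemptiness of the filter-base members and the monotonicity in $\delta$ — both already supplied by the preceding proposition — so that the images and inclusions manipulated above are legitimate.
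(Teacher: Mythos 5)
Your proof is correct and follows essentially the same route as the paper: both directions hinge on the decomposition $D_\delta = L_\delta \cup R_\delta$ (hence $f[D_\delta]=f[L_\delta]\cup f[R_\delta]$), with the $(\impliedby)$ direction using $\delta:=\min\{\delta_1,\delta_2\}$ and the $(\implies)$ direction using the inclusions $L_\delta\sub D_\delta$, $R_\delta\sub D_\delta$. Your version is a little more modular in isolating the image-of-union identity up front, but the argument is the same as the paper's.
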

\begin{proof} Suppose $f$ is a function $X\into\R$, and let $c\in X$.

\noindent$(\!\!\implies\!\!)$ Let $L:=\xcLim f(x)$, and let $\varepsilon>0$. 

Since $f$ approaches $L$ along $\Filter_c$, there exists $I\in\Filter_c$ such that $f[I]\sub\lpar L-\varepsilon,L+\varepsilon\rpar$. From $I\in\Filter_c$, we find that there exists $\delta>0$ such that $I=X\cap\lbrak(c-\delta,c)\cup(c,c+\delta)\rbrak$. Consequently, $X\cap(c-\delta,c)\sub I$ is one of the sets in $\lFilter_c$, and $X\cap(c,c+\delta)\sub I$ is in $\rFilter_c$. Furthermore, 
\begin{eqnarray}
    f[X\cap(c-\delta,c)] &\sub& f[I]\sub\lpar L-\varepsilon,L+\varepsilon\rpar,\nonumber\\
    f[X\cap(c,c+\delta)] &\sub& f[I]\sub\lpar L-\varepsilon,L+\varepsilon\rpar.
\end{eqnarray} Therefore, $\xcLimL f(x)=L$ and $\xcLimR f(x)=L$.\\

\noindent $(\!\!\impliedby\!\!)$ Suppose $\xcLimL f(x)=L=\xcLimR f(x)$, and let $\varepsilon>0$. From $\xcLimL f(x)=L$, there exists $I\in\lFilter_c$ such that $f[I]\sub\lpar L-\varepsilon,L+\varepsilon\rpar$, while from $L=\xcLimR f(x)$, there exists $J\in\rFilter_c$ such that\linebreak $f[J]\sub\lpar L-\varepsilon,L+\varepsilon\rpar$. From $I\in\lFilter_c$ and $J\in\rFilter_c$, we find that there exist $\delta_1>0$ and $\delta_2>0$ such that $I=X\cap(c-\delta_1,c)$ and $J=X\cap(c,c+\delta_2)$. The number $\delta:=\min\{\delta_1,\delta_2\}$ is either one of two positive numbers and is hence positive, with the further properties $\delta\leq \delta_1$ and $\delta\leq \delta_2$, which imply\linebreak $c-\delta_1\leq c-\delta <c$ and $c<c+\delta\leq c+\delta_2$. 

These inequalities may be used to prove $(c-\delta,c)\sub (c-\delta_1,c)\cup(c,c+\delta_1)$ and\linebreak $(c,c+\delta)\sub (c-\delta_2,c)\cup(c,c+\delta_2)$. Taking the intersection of both sides (of both set inclusions) with $X$, and then taking unions, we find that $K:=X\cap\lbrak(c-\delta,c)\cup(c,c+\delta)\rbrak\sub I\cup J$ where $K\in\Filter_c$, and furthermore, $f[K]\sub f[I\cup J]=f[I]\cup f[J]\sub \lpar L-\varepsilon,L+\varepsilon\rpar$. Therefore, $\xcLim f(x)=L$.
\end{proof}

We have a special case, which is when $X=\abClosed$ or $X=\abOpen$. Here, we have the set equalities
\begin{eqnarray}
    \rFilter(\abClosed,a)=\Filter(\abClosed,a),&& \lFilter(\abClosed,b)=\Filter(\abClosed,b),\nonumber\\
        \rFilter(\abOpen,a)=\Filter(\abOpen,a),&& \lFilter(\abOpen,b)=\Filter(\abOpen,b),\nonumber
\end{eqnarray}
and so, 
\begin{flalign}
  && \xaLimR f(x)=\xaLim f(x),&\qquad \xbLimL f(x)=\xbLim f(x), &(\mbox{if }f:\abClosed\into\R,\mbox{ or }f:\abOpen\into\R).\label{LimShortCut} 
\end{flalign}

Given $X\sub\R$ and a real number $c$ that is both an element and an interior point of $X$, a function $f:X\into\R$ is said to be \emph{differentiable at $c$} if $\xcLim\frac{f(x)-f(c)}{x-c}$ exists. By Proposition~\ref{BaseLimUniqueProp}, the rule of assignment $f':c\mapsto\xcLim\frac{f(x)-f(c)}{x-c}$ is a function, which is called the \emph{derivative} of $f$. [Since $f$ is a function $X\into\R$, the implied domain of $f'$ is the subset of $X$ consisting of all $c$ at which $f$ is differentiable.] Given an expression for $f(x)$, if the corresponding expression for $f'(x)$ may also be known, then the \emph{differentiation operator $\frac{d}{dx}$ (with respect to the variable $x$)} is defined by the condition $\frac{d}{dx}f(x)=f'(x)$.


\begin{proposition}\label{AlgDiffProp}\begin{enumerate}\fixitem\label{AlgConti} A function $f$ is continuous at $c$ if and only if $\xcLim f(x)=f(c)$.
\item\label{DiffConst} Given a constant function with value $C$, we have $\frac{d}{dx}C=0$.
\item\label{DiffId} For the identity function $x\mapsto x$, we have $\frac{d}{dx}x=1$.
\item\label{DiffConti} If $f$ is differentiable at $c$, then $f$ is continuous at $c$.
\item\label{DiffLinear} Given a constant $C\in\R$, if $f$ and $g$ are differentiable (on some given set) then so are $f+g$ and $Cf$. Furthermore, $(f+g)'=f'+g'$ and $(Cf)'=C\cdot f'$.
\item\label{DiffProd} If $f$ and $g$ are differentiable (on some given set), then so is $fg$. Furthermore, $(fg)'=f'g+g'f$
\end{enumerate}
\end{proposition}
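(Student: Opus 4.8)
The plan is to prove part \ref{AlgConti} first, since it is the bridge between the $\varepsilon$--$\delta$ definition of continuity and the filter-base limit, and then to obtain parts \ref{DiffConst}--\ref{DiffProd} as essentially mechanical consequences of the algebra of limits in Proposition~\ref{AlgLimProp}. For \ref{AlgConti} I would unwind both sides. Continuity of $f$ at $c$ says that for each $\varepsilon>0$ there is $\delta>0$ with $|f(x)-f(c)|<\varepsilon$ whenever $x\in X$ and $|x-c|<\delta$. On the other hand, by the definition \eqref{xclimDEF} together with the definitions of limit along a filter base and of $\Filter(X,c)$, the assertion $\xcLim f(x)=f(c)$ unwinds to: for each $\varepsilon>0$ there is $\delta>0$ with $f[X\cap[(c-\delta,c)\cup(c,c+\delta)]]\sub(f(c)-\varepsilon,f(c)+\varepsilon)$, i.e.\ $|f(x)-f(c)|<\varepsilon$ whenever $x\in X$ and $0<|x-c|<\delta$. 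The two conditions differ only in whether $x=c$ is admitted, and at $x=c$ the inequality $|f(c)-f(c)|=0<\varepsilon$ holds automatically, so they are equivalent.

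Before turning to the remaining parts I would flag one point of bookkeeping: the difference quotient $x\mapsto\frac{f(x)-f(c)}{x-c}$ has domain $X\setdiff\{c\}$ rather than $X$, but since every member of $\Filter(X,c)$ already excludes $c$ we have $\Filter(X,c)=\Filter(X\setdiff\{c\},c)$, so every limit law of Proposition~\ref{AlgLimProp} applies verbatim to the difference quotient and any limit along $\Filter(X\setdiff\{c\},c)$ may be read back as a limit along $\Filter(X,c)$. With this understood, parts \ref{DiffConst} and \ref{DiffId} are immediate: for a constant $C$ the difference quotient is the constant function $0$ on $X\setdiff\{c\}$, and for the identity it is the constant function $1$, so by Proposition~\ref{AlgLimProp}\ref{AlgConst} their limits are $0$ and $1$, giving $\frac{d}{dx}C=0$ and $\frac{d}{dx}x=1$.

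For \ref{DiffConti} I would use the factorization $f(x)-f(c)=\frac{f(x)-f(c)}{x-c}\cdot(x-c)$, valid on $X\setdiff\{c\}$: the first factor tends to $f'(c)$ by differentiability, and the second tends to $0$ because $\xcLim\id=c$ by Proposition~\ref{AlgLimProp}\ref{AlgId}, hence $\xcLim(x-c)=0$ by Proposition~\ref{AlgLimProp}\ref{AlgIFF0}. By Proposition~\ref{AlgLimProp}\ref{AlgProd} the product tends to $f'(c)\cdot 0=0$, so $\xcLim(f(x)-f(c))=0$, and Proposition~\ref{AlgLimProp}\ref{AlgIFF0} then gives $\xcLim f(x)=f(c)$; part \ref{AlgConti} finishes it. For \ref{DiffLinear}, the difference quotient of $f+g$ is the sum of the two difference quotients and that of $Cf$ is $C$ times the difference quotient of $f$, so Proposition~\ref{AlgLimProp}\ref{AlgLinear} yields that the limits exist and equal $f'(c)+g'(c)$ and $Cf'(c)$. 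For \ref{DiffProd} I would use the add-and-subtract identity
\[
\frac{f(x)g(x)-f(c)g(c)}{x-c}=\frac{f(x)-f(c)}{x-c}\,g(x)+f(c)\,\frac{g(x)-g(c)}{x-c},
\]
valid on $X\setdiff\{c\}$. Since $g$ is differentiable it is continuous at $c$ by \ref{DiffConti}, hence $\xcLim g(x)=g(c)$ by \ref{AlgConti}; combined with $\xcLim\frac{f(x)-f(c)}{x-c}=f'(c)$, Proposition~\ref{AlgLimProp}\ref{AlgProd} shows the first summand tends to $f'(c)g(c)$, while Proposition~\ref{AlgLimProp}\ref{AlgLinear} shows the second tends to $f(c)g'(c)$, and one more use of Proposition~\ref{AlgLimProp}\ref{AlgLinear} gives $(fg)'(c)=f'(c)g(c)+f(c)g'(c)$.

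The main obstacle is conceptual rather than computational: it is part \ref{AlgConti}, where one must take care that the filter base $\Filter(X,c)$ deliberately omits the point $c$, so that matching it to the $\varepsilon$--$\delta$ definition of continuity rests on the observation that re-admitting $x=c$ changes nothing. Once this interface is in place, everything else is a direct transcription of the limit laws of Proposition~\ref{AlgLimProp}, the only recurring care being the $X$ versus $X\setdiff\{c\}$ domain bookkeeping; the add-and-subtract step in \ref{DiffProd} is entirely routine.
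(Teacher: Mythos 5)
Your proof is correct and follows essentially the same route as the paper: establish the $\varepsilon$--$\delta$/filter-base equivalence in \ref{AlgConti} by observing that admitting or excluding the point $x=c$ changes nothing, then deduce \ref{DiffConst}--\ref{DiffProd} mechanically from the limit laws of Proposition~\ref{AlgLimProp}, using the factorization $f(x)-f(c)=(x-c)\cdot\frac{f(x)-f(c)}{x-c}$ for \ref{DiffConti} and the usual add-and-subtract identity for \ref{DiffProd}. One small credit: your explicit remark that $\Filter(X,c)=\Filter(X\setminus\{c\},c)$ (so the limit laws apply verbatim to the difference quotient even though its domain drops the point $c$) is a bookkeeping step the paper passes over silently when it multiplies the difference quotient by $x-c$; making it explicit tightens the argument without changing it.
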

\begin{proof} \ref{AlgConti} $(\!\!\implies\!\!)$ Suppose $f$ is a function $X\into\R$. Given $\varepsilon>0$, there exists $\delta>0$ such that $|x-c|<\delta$ implies $|f(x)-f(c)|<\varepsilon$. The set $I:=X\cap\lbrak(c-\delta,c)\cup(c,c+\delta)\rbrak$ is in $\Filter_c$. Given $y\in f[I]$, there exists $x\in I\sub(c-\delta,c)\cup(c,c+\delta)\sub (c-\delta,c+\delta)$, which implies $|x-c|<\delta$, such that $y=f(x)$. By assumption, $|f(x)-f(c)|<\varepsilon$, or equivalently, $y=f(x)\in(f(c)-\varepsilon,f(c)+\varepsilon)$, and this proves $f[I]\sub(f(c)-\varepsilon,f(c)+\varepsilon)$. Therefore, $\xcLim f(x)=f(c)$.\\

\noindent \ref{AlgConti} $(\!\!\!\impliedby\!\!\!)$ Again, suppose that $f$ is a function $X\into\R$. Let $\varepsilon>0$. If $\xcLim f(x)=f(c)$, then there exists $I\in\Filter_c$ such that $f[I]\sub(f(c)-\varepsilon,f(c)+\varepsilon)$. Since $I\in\Filter_c$, there exists $\delta>0$ such that\linebreak $I=X\cap\lbrak(c-\delta,c)\cup(c,c+\delta)\rbrak$. 

If $x\in X$ with $|x-c|<\delta$, then $x\in(c-\delta,c+\delta)=(c-\delta,c)\cup\{c\}\cup(c,c+\delta)=I\cup\{c\}$. Thus, $f(x)\in f[I]\cup\{f(c)\}\sub (f(c)-\varepsilon,f(c)+\varepsilon)\cup\{f(c)\}$, where the right-hand side, because\linebreak $f(c)\in(f(c)-\varepsilon,f(c)+\varepsilon)$, can be further reduced into $(f(c)-\varepsilon,f(c)+\varepsilon)$. 

That is, $f(x)\in (f(c)-\varepsilon,f(c)+\varepsilon)$, or equivalently, $|f(x)-f(c)|<\varepsilon$. Therefore, $f$ is continuous at $c$.\\

\noindent\ref{DiffConst} and \ref{DiffId} By Example~\ref{contEx}, the constant functions $x\mapsto 0$ and $x\mapsto 1$ are continuous at any $c\in\R$. [Thus, for such $c$, any domain $X\sub\R$ for the said constant functions may be chosen according to the requirements in our definition of differentiability, which is that $c$ is both an element and an interior point of $X$, like when $X=(c-1,c+1)$.] By \ref{AlgConti}, $\xcLim 0=0$ and $\xcLim 1=1$, where the right-hand sides are equal to $\xcLim\frac{C-C}{x-c}$ and $\xcLim\frac{x-c}{x-c}$, respectively. Therefore, $x\mapsto 0$ and $x\mapsto 1$ are differentiable at $c$, where the derivatives are $c\mapsto 0$ and $c\mapsto 1$, respectively. Using our notation for the differentiation operator, $\frac{d}{dx}C=0$ and $\frac{d}{dx}x=1$.\\

\noindent\ref{DiffConti} Suppose $f:X\into\R$ is differentiable at $c\in X$. By Example~\ref{contEx}, the function $x\mapsto x-c$ is continuous at $c$, so by \ref{AlgConti} $\xcLim(x-c)=c-c=0$. We use Proposition~\ref{AlgLimProp}\ref{AlgProd} [with the filter base given in \eqref{xclimDEF}] on the trivial identity $f(x)-f(c)=(x-c)\frac{f(x)-f(c)}{x-c}$, to obtain:

\noindent$\xcLim[f(x)-f(c)]=\xcLim(x-c)\  \xcLim\frac{f(x)-f(c)}{x-c}=0\cdot f'(c)=0$. By Proposition~\ref{AlgLimProp}\ref{AlgIFF0}, $\xcLim f(x)=f(c)$, and by part \ref{AlgConti} of this proposition, $f$ is continuous at $c$.\\

\noindent\ref{DiffLinear} Evaluate $\xcLim\lbrak\frac{f(x)-f(c)}{x-c}+\frac{g(x)-g(c)}{x-c}\rbrak$ and $\xcLim\frac{Cf(x)-Cf(c)}{x-c}$ using Proposition~\ref{AlgLimProp}\ref{AlgLinear} [with the filter base given in \eqref{xclimDEF}].\\

\noindent\ref{DiffProd} Suppose $f,g:X\into\R$ are differentiable at $c\in X$. The limit as $x\rightarrow c$ of the right-hand side of the identity
\begin{eqnarray}
    \frac{f(x)g(x)-f(c)g(c)}{x-c} = g(x)\frac{f(x)-f(c)}{x-c}+f(c)\frac{g(x)-g(c)}{x-c},\nonumber
\end{eqnarray}
may be computed from Proposition~\ref{AlgLimProp}\ref{AlgLinear}--\ref{AlgProd} [with the filter base given in \eqref{xclimDEF}] because of the following reasons: the limits $\xcLim \frac{f(x)-f(c)}{x-c}$ and $\xcLim \frac{g(x)-g(c)}{x-c}$ exist because of the differentiability of $f$ and $g$ at $c$; the number $f(c)$ works as a ``constant'' in using Proposition~\ref{AlgLimProp}\ref{AlgLinear}, while $\xcLim g(x)=g(c)$ is by parts \ref{AlgConti} and \ref{DiffConti} of this proposition. The result is $\xcLim\frac{f(x)g(x)-f(c)g(c)}{x-c}=g(c)f'(c)+f(c)g'(c)$. That is, the derivative of $fg$ sends the arbitrary $c\in\R$ to $g(c)f'(c)+f(c)g'(c)=f(c)g'(c)+f'(c)g(c)$. Therefore, $(fg)'=fg'+f'g$.
\end{proof}

If $f'$ is differentiable (on some subset of $\R$), then the derivative of $f'$, or the \emph{second-order derivative} of $f$, is $f''$. We also use the notation $f^{(0)}:=f$, $f^{(1)}:=f'$ and $f^{(2)}:=f''$. If, for some nonnegative integer $n$, the function $f^{(n)}$ is differentiable (on the appropriate subset of $\R$), then we define $f^{(n+1)}$ as the derivative of $f^{(n)}$. By induction, for each $n\in\N$, we have defined the \emph{$n$th-order derivative} $f^{(n)}$ of $f$.


\begin{lemma}\label{negfunctionLem} 

If $f$ is continuous [respectively, differentiable] on $\theSet\sub\R$, then so is $-f$.
\end{lemma}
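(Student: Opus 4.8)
The plan is to realize $-f$ as the special case $C=-1$ of the scalar multiple $Cf$ and then quote the algebra results already in place. Concretely, $-f$ is the function $x\mapsto(-1)\cdot f(x)$, so for the differentiability half there is essentially nothing to prove beyond a citation: if $f$ is differentiable on $\theSet$, then Proposition~\ref{AlgDiffProp}\ref{DiffLinear}, applied with $C=-1$, gives that $Cf=-f$ is differentiable on the same set, with $(-f)'=(-1)\cdot f'=-f'$. The only point worth noting is that $-f$ has exactly the same domain as $f$, so the ``element and interior point'' requirement in the definition of differentiability at a point is satisfied at precisely the same points for $-f$ as for $f$.

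For the continuity half I would argue pointwise. Fix $c\in\theSet$; since $f$ is continuous on $\theSet$ it is continuous at $c$, so by Proposition~\ref{AlgDiffProp}\ref{AlgConti} we have $\xcLim f(x)=f(c)$. Applying the scalar-multiple clause of Proposition~\ref{AlgLimProp}\ref{AlgLinear}, with $C=-1$ and the filter base given in \eqref{xclimDEF}, yields $\xcLim(-f)(x)=(-1)\xcLim f(x)=-f(c)=(-f)(c)$, and a second appeal to Proposition~\ref{AlgDiffProp}\ref{AlgConti} turns this back into the assertion that $-f$ is continuous at $c$. Since $c\in\theSet$ was arbitrary, $-f$ is continuous on $\theSet$. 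Alternatively one can bypass the limit machinery entirely: the very same $\delta$ that witnesses continuity of $f$ at $c$ for a given $\varepsilon$ also works for $-f$, because $|(-f)(x)-(-f)(c)|=|f(x)-f(c)|$.

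I do not expect a genuine obstacle here; the lemma is a bookkeeping corollary of the linearity of limits, continuity and differentiation. The only thing requiring a little care is keeping the two cases in the ``respectively'' visibly parallel --- both are just multiplication by $-1$ --- and choosing how much of the earlier toolkit to invoke. I would lean on Proposition~\ref{AlgDiffProp}\ref{DiffLinear} for the differentiable case and the short $\varepsilon$--$\delta$ remark for the continuous case, which keeps the write-up brief while still respecting the exposition's habit of reusing its own machinery.
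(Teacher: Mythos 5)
Your proposal is correct and, for both halves, your primary argument matches the paper's proof almost verbatim: the paper also fixes $c\in\theSet$, passes from continuity of $f$ at $c$ to $\xcLim f(x)=f(c)$ via Proposition~\ref{AlgDiffProp}\ref{AlgConti}, applies the scalar clause of Proposition~\ref{AlgLimProp}\ref{AlgLinear} with $C=-1$, returns via \ref{AlgConti}, and disposes of the differentiability case with a one-line appeal to Proposition~\ref{AlgDiffProp}\ref{DiffLinear}. Your alternative $\varepsilon$--$\delta$ observation (same $\delta$ works because $|(-f)(x)-(-f)(c)|=|f(x)-f(c)|$) is a perfectly valid shortcut the paper does not use, but it is not a different route in any substantive sense.
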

\begin{proof} 

Let $c\in\theSet$. If $f$ is continuous at $c$, then by Proposition~\ref{AlgDiffProp}\ref{AlgConti}, $\xcLim f(x)=f(c)$, and by Proposition~\ref{AlgLimProp}\ref{AlgLinear}, $\xcLim(-f)(x)=-\xcLim f(x)=-f(c)$, so by Proposition~\ref{AlgDiffProp}\ref{AlgConti} again, $-f$ is continuous at $c$. For the differentiability case, use Propositions~\ref{AlgDiffProp}\ref{DiffLinear}.
\end{proof}

\begin{lemma}\label{MVTtoTTLem} Let $n\in\N$, and let $f$ be a function $\abClosed\into\R$. Suppose all of $f^{(0)}$, $f^{(1)}$, $f^{(2)}$, $\ldots$~, $f^{(n)}$ are continuous on $\abClosed$, and that $f^{(n)}$ is differentiable on $\abOpen$. Given $x\in\lpar a,b\rbrak$, there exists a unique $\rho(x)\in\R$ such that the function $F:\lbrak a,x\rbrak\into\R$ defined by $$F(t)=f(x)-\sum_{k=0}^n\frac{f^{(k)}(t)}{k!}(x-t)^k- \rho(x) \frac{(x-t)^{n+1}}{(n+1)!},$$ is continuous on $[a,x]$ and differentiable on $(a,x)$. Furthermore, $F(a)=0$ and $$F'(t) = \frac{(x-t)^n}{n!} \lbrak \rho(x) - f^{(n+1)}(t) \rbrak.$$
\end{lemma}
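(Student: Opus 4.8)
The plan is to regard $\rho(x)$ as a free real parameter. For \emph{every} value of it, $F$ will turn out to be continuous on $[a,x]$, differentiable on $(a,x)$, and to satisfy the stated formula for $F'$; the only genuine constraint on $\rho(x)$ is the requirement $F(a)=0$, and I will show that this requirement determines $\rho(x)$ uniquely.

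First I would settle the regularity of $F$, which does not depend on $\rho(x)$. With $x$ held fixed, $f(x)$ is a constant; the map $t\mapsto x-t$ is differentiable on $\R$ with derivative $-1$ by Proposition~\ref{AlgDiffProp}\ref{DiffConst},\ref{DiffId},\ref{DiffLinear}, and continuous on $\R$ by Example~\ref{contEx} (or Proposition~\ref{AlgDiffProp}\ref{DiffConti}); an easy induction on $k$ via Proposition~\ref{AlgDiffProp}\ref{DiffProd} then makes each $t\mapsto(x-t)^k$ differentiable, hence continuous, on $\R$. For $0\le k\le n$ the hypotheses give $f^{(k)}$ continuous on $[a,b]\supseteq[a,x]$ and differentiable on $(a,b)\supseteq(a,x)$: for $k=n$ this is assumed outright, and for $k<n$ it holds because $f^{(k+1)}$ is, by construction, the derivative of $f^{(k)}$ on $(a,b)$. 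Hence each summand $\frac{f^{(k)}(t)}{k!}(x-t)^k$, and the term $\rho(x)\frac{(x-t)^{n+1}}{(n+1)!}$, is differentiable on $(a,x)$ by Proposition~\ref{AlgDiffProp}\ref{DiffLinear},\ref{DiffProd} and continuous on $[a,x]$ (at the endpoints one uses Proposition~\ref{AlgDiffProp}\ref{AlgConti} together with the algebra of limits, Proposition~\ref{AlgLimProp}\ref{AlgLinear},\ref{AlgProd}); so $F$, a finite combination of these, is continuous on $[a,x]$ and differentiable on $(a,x)$.

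Next I would compute $F'$ on $(a,x)$ by differentiating termwise (Proposition~\ref{AlgDiffProp}\ref{DiffLinear}) and applying the product rule (Proposition~\ref{AlgDiffProp}\ref{DiffProd}) to each $\frac{f^{(k)}(t)}{k!}(x-t)^k$. For $k\ge1$ this yields $\frac{f^{(k+1)}(t)}{k!}(x-t)^k-\frac{f^{(k)}(t)}{(k-1)!}(x-t)^{k-1}$, while $k=0$ merely contributes $f^{(1)}(t)$. Reindexing the subtracted pieces by $j=k-1$ shows the whole sum telescopes: the $j=0$ term cancels the isolated $f^{(1)}(t)$ and the rest cancels in consecutive pairs, leaving only $\frac{f^{(n+1)}(t)}{n!}(x-t)^n$. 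Combining this with $\frac{d}{dt}\left[\frac{(x-t)^{n+1}}{(n+1)!}\right]=-\frac{(x-t)^n}{n!}$ and collecting signs gives precisely $F'(t)=\frac{(x-t)^n}{n!}\left[\rho(x)-f^{(n+1)}(t)\right]$, valid for any real $\rho(x)$. I expect this bookkeeping to be the main obstacle: one has to peel off the $k=0$ summand (where $(x-t)^0\equiv1$ is constant) and shift the index of the remaining sum correctly; beyond that it is a mechanical application of Proposition~\ref{AlgDiffProp}.

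Finally, for the existence and uniqueness of $\rho(x)$, I would evaluate $F$ at $t=a$:
\[
F(a)=\left[f(x)-\sum_{k=0}^{n}\frac{f^{(k)}(a)}{k!}(x-a)^k\right]-\rho(x)\,\frac{(x-a)^{n+1}}{(n+1)!},
\]
an affine function of the parameter $\rho(x)$ whose coefficient $-\frac{(x-a)^{n+1}}{(n+1)!}$ is nonzero because $x\in(a,b]$ forces $x-a>0$. Therefore $F(a)=0$ holds for exactly one value of $\rho(x)$, namely $\rho(x)=\frac{(n+1)!}{(x-a)^{n+1}}\left[f(x)-\sum_{k=0}^{n}\frac{f^{(k)}(a)}{k!}(x-a)^k\right]$, and for this and only this $\rho(x)$ all the asserted properties of $F$ hold, completing the proof. (As an aside, not needed here: $F(x)=0$ for \emph{every} choice of $\rho(x)$, since at $t=x$ the $k\ge1$ and remainder terms vanish while the $k=0$ term is $f(x)$; this, together with $F(a)=0$, is what readies $F$ for the Rolle/Mean Value step in the proof of Taylor's theorem.)
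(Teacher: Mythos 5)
Your proof is correct, and it takes a slightly different route than the paper on two points worth noting. First, the paper fixes $\rho(x)$ at the outset by writing down the explicit formula and then verifies the listed properties of $F$; you instead treat $\rho(x)$ as a free parameter, observe that the regularity of $F$ and the formula for $F'$ hold for \emph{every} choice, and only at the end impose $F(a)=0$, which is affine in $\rho(x)$ with nonzero slope $-\frac{(x-a)^{n+1}}{(n+1)!}$. That organization makes the asserted \emph{uniqueness} of $\rho(x)$ transparent, whereas the paper's proof verifies existence but never explicitly argues uniqueness. Second, to differentiate $t\mapsto(x-t)^k$ the paper expands by the binomial theorem and reindexes, while you obtain $\frac{d}{dt}(x-t)^k=-k(x-t)^{k-1}$ by induction on $k$ using the product rule, Proposition~\ref{AlgDiffProp}\ref{DiffProd}; both are legitimate given the machinery in Section~\ref{ThirdCirclePrelims}. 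Your telescoping computation for $F'$ is also carried out in more detail than the paper's, which simply asserts the result of the differentiation. In short, same overall strategy, but your deferred choice of $\rho(x)$ buys a cleaner uniqueness argument, and your product-rule induction replaces the paper's binomial-expansion computation.
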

\begin{proof} The assumption that $x\in\lpar a,b\rbrak$ implies $a<x$, so $x-a$ is nonzero, and the number $$\rho(x) = \frac{(n+1)!}{(x-a)^{n+1}}\lbrak f(x) - \sum_{k=0}^n \frac{f^{(k)}(a)}{k!} (x-a)^k \rbrak,$$ exists in $\R$. Define $F:\lbrak a,x\rbrak\into\R$ by
\begin{eqnarray}
    F(t) = f(x) - \sum_{k=0}^n \frac{f^{(k)}(t)}{k!} (x-t)^k - \rho(x) \frac{(x-t)^{n+1}}{(n+1)!}.\label{TTLeq}
\end{eqnarray}
We show that the right-hand side of \eqref{TTLeq} is continuous and differentiable on the desired sets, and we do this by building on the algebraic rules for continuity and differentiability established in Propositions~\ref{AlgLimProp} and \ref{AlgDiffProp}.

Let $c\in\abClosed$. By Proposition~\ref{AlgLimProp}\ref{AlgConst}--\ref{AlgId}, we have $\tcLim t^0=\tcLim 1=1$ and $\tcLim t^1=\tcLim t=c$. If, for some nonnegative integer $m$, we have $\tcLim t^m=c^m$, then by $\tcLim t^1=c$ and Proposition~\ref{AlgLimProp}\ref{AlgProd}, $\tcLim t^{n+1}=\tcLim t\  \tcLim t^n=c^{n+1}$. By induction, for each $m\in\N$, the function $t\mapsto t^m$ is continuous at $c$, so by Proposition~\ref{AlgDiffProp}\ref{AlgConti}, $\tcLim t^m=c^m$. By Proposition~\ref{AlgLimProp}\ref{AlgLinear}, the function $$t\mapsto \sum_{m=0}^{k} (-1)^m \binom{k}{m} x^{k-m} t^m=(x-t)^k,$$ is continuous at the arbitrary $c\in\abClosed$, where, by Proposition~\ref{AlgDiffProp}\ref{AlgConti}, $\tcLim\sum_{k=0}^mc^{m-1-k}t^k=mc^{m-1}$, where the the right-hand side is $\tcLim\frac{t^m-c^m}{t-c}$. This is true, in particular, for any $c\in\abOpen$. This proves that the function $t\mapsto t^m$ is differentiable at the arbitrary $c\in\abOpen$, with derivative $t\mapsto mt^{m-1}$. Using Proposition~\ref{AlgDiffProp}\ref{DiffLinear},
\begin{eqnarray}
    \frac{d}{dt}(x-t)^k &=&\sum_{m=0}^{k} (-1)^m \binom{k}{m} x^{k-m}\frac{d}{dt} t^m= 0+\sum_{m=1}^{k} (-1)^m \binom{k}{m} x^{k-m} mt^{m-1},\nonumber\\
&=&-k \sum_{m=1}^{k} (-1)^{m-1} \binom{k-1}{m-1} x^{k-m} t^{m-1}= -k \sum_{m=0}^{k-1} (-1)^{m} \binom{k-1}{m} x^{(k-1)-m} t^{m},\nonumber\\
 &=&-k(x-t)^{k-1}.\label{TTLeq2}
\end{eqnarray}

Given $k\in\{1,2,\ldots,n\}$, the assumed continuity of $f^{(k)}$ on $\abClosed$ implies that $f^{(k)}$ is defined on all of $\abOpen$, so $f^{(k-1)}$ is differentiable on $\abOpen$. By Propositions~\ref{AlgLimProp}\ref{AlgLinear}, \ref{AlgDiffProp}\ref{AlgConti} and \ref{AlgDiffProp}\ref{DiffLinear}, the right-hand side of \eqref{TTLeq}, as a function of $t$, is continuous on $\abClosed$ and differentiable on $\abOpen$, where the derivative may be computed from \eqref{TTLeq2} and Proposition~\ref{AlgDiffProp}\ref{AlgConst},\ref{DiffLinear},\ref{DiffProd} to be $F'(t) = \frac{(x-t)^n}{n!} \lbrak \rho(x) - f^{(n+1)}(t) \rbrak$.
\end{proof}

\begin{lemma}\label{EVTtoRTLem} Let $f$ be a function $\abOpen\into\R$, and let $c\in\abOpen$. 
\begin{enumerate}
    \item\label{DiffCal01a} If $\xcLim f(x)>0$, then there exist nonempty open intervals $I\sub(a,c)$ and $J\sub(c,b)$ such that $f[I],f[J]\sub(0,\infty)$.
    \item\label{DiffCal02a} If $f'(c)>0$, then there exists a nonempty open interval $I\sub(c,b)$ such that $f[I]\sub\lpar f(c),\infty\rpar$.
       \item\label{DiffCal04a} If $f'(c)<0$, then there exists a nonempty open interval $I\sub(a,c)$ such that $f[I]\sub\lpar f(c),\infty\rpar$.
    \item\label{DiffCal03a} If $f[\abOpen]\sub\lpar-\infty, f(c)\rbrak$, and if $f$ is differentiable at $c$, then $f'(c)= 0$.
\end{enumerate}
\end{lemma}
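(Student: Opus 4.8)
The plan is to establish part~\ref{DiffCal01a} directly from the definition of the limit along the filter base $\Filter(\abOpen,c)$, and to bootstrap the remaining three parts from it. Parts~\ref{DiffCal02a} and~\ref{DiffCal04a} will follow by applying part~\ref{DiffCal01a} (to the difference quotient, respectively to its negative) and then reading off the sign of $f(x)-f(c)=(x-c)\cdot\frac{f(x)-f(c)}{x-c}$ from the signs of the two factors on a suitable one-sided punctured neighbourhood of $c$. Part~\ref{DiffCal03a} is the classical interior-extremum argument: if $f[\abOpen]\sub\lpar-\infty,f(c)\rbrak$ then $f'(c)>0$ is ruled out by part~\ref{DiffCal02a} and $f'(c)<0$ by part~\ref{DiffCal04a}, so $f'(c)=0$ by the Trichotomy Law.

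For part~\ref{DiffCal01a}, set $L:=\xcLim f(x)$, which is positive by hypothesis. Unwinding the definition of $\xcLim f(x)=L$ (i.e.\ of $f\into L$ as $\Filter(\abOpen,c)\into c$) with $\varepsilon:=\txthalf L>0$ produces a set $I^{*}\in\Filter(\abOpen,c)$, necessarily of the form $I^{*}=\abOpen\cap\lbrak(c-\delta,c)\cup(c,c+\delta)\rbrak$ for some $\delta>0$, such that $f[I^{*}]\sub\lpar L-\varepsilon,L+\varepsilon\rpar\sub(0,\infty)$, the last inclusion holding because $L-\varepsilon=\txthalf L>0$. Put $I:=\abOpen\cap(c-\delta,c)$ and $J:=\abOpen\cap(c,c+\delta)$: each is an intersection of two open intervals, hence an open interval, and is nonempty since $a<c<b$; moreover $I\sub(a,c)$, $J\sub(c,b)$, and $I\cup J=I^{*}$. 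Hence $f[I]\sub f[I^{*}]\sub(0,\infty)$ and, likewise, $f[J]\sub(0,\infty)$.

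For parts~\ref{DiffCal02a} and~\ref{DiffCal04a}, regard the difference quotient as a function $g:\abOpen\into\R$ with $g(x)=\frac{f(x)-f(c)}{x-c}$ for $x\neq c$ and $g(c)$ assigned arbitrarily; its value at $c$ is immaterial because every member of $\Filter(\abOpen,c)$ excludes $c$, so $\xcLim g(x)=f'(c)$. If $f'(c)>0$, part~\ref{DiffCal01a} applied to $g$ yields in particular a nonempty open interval $J\sub(c,b)$ with $g[J]\sub(0,\infty)$; for $x\in J$ we have $x-c>0$ and $g(x)>0$, so $f(x)-f(c)=(x-c)g(x)>0$, i.e.\ $f[J]\sub\lpar f(c),\infty\rpar$, which is part~\ref{DiffCal02a}. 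If $f'(c)<0$, then $\xcLim(-g)(x)=-f'(c)>0$ by Proposition~\ref{AlgLimProp}\ref{AlgLinear}, and part~\ref{DiffCal01a} applied to $-g$ yields a nonempty open interval $I\sub(a,c)$ with $(-g)[I]\sub(0,\infty)$; for $x\in I$ we have $x-c<0$ and $g(x)<0$, so again $f(x)-f(c)=(x-c)g(x)>0$, i.e.\ $f[I]\sub\lpar f(c),\infty\rpar$, which is part~\ref{DiffCal04a}.

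For part~\ref{DiffCal03a}, assume $f$ is differentiable at $c$ and $f[\abOpen]\sub\lpar-\infty,f(c)\rbrak$. If $f'(c)>0$, part~\ref{DiffCal02a} gives a nonempty open interval $I\sub(c,b)\sub\abOpen$ with $f[I]\sub\lpar f(c),\infty\rpar$; any $x\in I$ then satisfies $f(x)>f(c)$, contradicting $f(x)\leq f(c)$. If $f'(c)<0$, part~\ref{DiffCal04a} yields the same contradiction from a point of a nonempty interval contained in $(a,c)$. By the Trichotomy Law, $f'(c)=0$. The only step beyond routine bookkeeping is the one inside part~\ref{DiffCal01a}: checking that a member of $\Filter(\abOpen,c)$ splits cleanly into two nonempty open pieces, one inside $(a,c)$ and one inside $(c,b)$, and — for parts~\ref{DiffCal02a}--\ref{DiffCal04a} — that the difference quotient may be treated as a function on all of $\abOpen$ without changing its limit at $c$. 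After that, the sign analysis and the Trichotomy argument in part~\ref{DiffCal03a} are immediate, so I do not anticipate a genuine obstacle.
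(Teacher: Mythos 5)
Your proof is correct and takes essentially the same approach as the paper: establish part~\ref{DiffCal01a} by unwinding the definition of the filter-base limit, derive parts~\ref{DiffCal02a}--\ref{DiffCal04a} by applying part~\ref{DiffCal01a} to the difference quotient (and its negative) and reading off signs, and finish part~\ref{DiffCal03a} by Trichotomy. The only cosmetic differences are the choice $\varepsilon=\txthalf L$ rather than $\varepsilon=L$ in part~\ref{DiffCal01a}, and that you extend the difference quotient to all of $\abOpen$ by an arbitrary value at $c$ whereas the paper works with $\abOpen\setdiff\{c\}$ directly.
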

\begin{proof}\ref{DiffCal01a} Since $L:=\xcLim f(x)>0$, there exists $X\in\Filter_c$ such that $f[X]\sub\lpar L-L,L+L\rpar=\lpar 0,2L\rpar$. From $X\in\Filter_c$, there exists $\eta>0$ such that $X=\abOpen\cap\lbrak(c-\eta,c)\cup(c,c+\eta)\rbrak$. From $c\in\abOpen$, we have $a<c<b$, so $(a,c)\sub\abOpen$ and $(c,b)\sub\abOpen$, and we further have
\begin{eqnarray}
    I :=(a,c)\cap\lbrak(c-\eta,c)\cap\abOpen\rbrak   &=& (c-\eta,c)\cap \lbrak(a,c)\cap\abOpen\rbrak,\nonumber\\
    &=& (c-\eta,c)\cap (a,c),\label{DiffCalSub3}\\
    & \sub & (a,c), \mbox{ as desired},\nonumber\\
    I =(a,c)\cap\lbrak(c-\eta,c)\cap\abOpen\rbrak &\sub & (c-\eta,c)\cap\abOpen,\nonumber\\
    &\sub & \lbrak(c-\eta,c)\cap\abOpen\rbrak\cup \lbrak(c,c+\eta)\cap\abOpen\rbrak=X,\label{DiffCalSub1}\\
    J :=(c,b)\cap\lbrak(c,c+\eta)\cap\abOpen\rbrak &=& (c,c+\eta)\cap \lbrak(c,b)\cap\abOpen\rbrak,\nonumber\\
    &=& (c,c+\eta)\cap (c,b),\label{DiffCalSub4}\\
    & \sub & (c,b), \mbox{ as desired},\nonumber\\
    J =(c,b)\cap\lbrak(c,c+\eta)\cap\abOpen\rbrak &\sub& (c,c+\eta)\cap\abOpen,\nonumber\\
     &\sub & \lbrak(c,c+\eta)\cap\abOpen\rbrak\cup\lbrak(c-\eta,c)\cap\abOpen\rbrak=X.\label{DiffCalSub2}
\end{eqnarray}
By \eqref{DiffCalSub3} and \eqref{DiffCalSub4}, each of $I$ and $J$ is an intersection of two bounded open intervals, and is hence a bounded open interval. From \eqref{DiffCalSub1} and \eqref{DiffCalSub2}, we find that $f[I]$ and $f[J]$ are both subsets of $f[X]\sub\lpar 0,2L\rpar\sub(0,\infty)$. 

To complete the proof, we show that $I$ and $J$ are nonempty. From $a<c<b$ and $\eta>0$, we obtain $a<c$ and $c-\eta<c$ [respectively, $c<b$ and $c<c+\eta$]. If $x:=\frac{c+\max\{a,c-\eta\}}{2}$ [respectively, $y:=\frac{c+\min\{b,c+\eta\}}{2}$], then $a<x<c$ and $c-\eta<x<c$ [respectively, $c<y<b$ and $c<y<c+\eta$]. Therefore, $x\in (c-\eta,c)\cap (a,c)=I\neq \emptyset$ [respectively, $y\in (c,c+\eta)\cap (c,b)=J\neq \emptyset$].\\

\noindent\ref{DiffCal02a} Define $F:\abOpen\setdiff\{c\}\into\R$ by $F(x)=\frac{f(x)-f(c)}{x-c}$. By assumption, $f'(c)=\xcLim F(x)>0$, so by \ref{DiffCal01a}, there exists a nonempty open interval $I\sub(c,b)\sub\abOpen$ such that $F[I]\sub(0,\infty)$. Let $y\in f[I]$. Since $I=(c,b)\sub\abOpen$, or that $I$ is a subset of the domain of $f$, there exists $x\in (c,b)$ such that $y=f(x)$. Also, for this $x$, since  $F[I]\sub(0,\infty)$, we have $\frac{f(x)-f(c)}{x-c}=F(x)>0$. Since $x\in(c,b)$, we have $c<x<b$, so $x-c$ is positive, and multiplying by it both sides of $\frac{f(x)-f(c)}{x-c}>0$, we get $f(x)-f(c)>0$, which implies $f(c)<f(x)=y$. Therefore, $f[I]\sub(f(c),\infty)$.\\

\noindent\ref{DiffCal04a} We also use the function $F:\abOpen\setdiff\{c\}\into\R$ defined by $F(x)=\frac{f(x)-f(c)}{x-c}$. From the assumption that $f'(c)=\xcLim F(x)<0$, we obtain $-\xcLim F(x)>0$. By Proposition~\ref{AlgLimProp}\ref{AlgLinear}, $\xcLim (-F)(x)>0$, so by \ref{DiffCal01a}, there exists a nonempty open interval $I\sub(a,c)\sub\abOpen$ such that $(-F)[I]\sub(0,\infty)$. Since $I$ is a subset of the domain of $f$, for an arbitrary $y\in f[I]$, there exists $x\in I=(a,c)$ such that $y=f(x)$, and $-\frac{f(x)-f(c)}{x-c}>0$. Since $x\in(a,c)$, we have $a<x<c$, so $x-c$ is negative, and multiplying by it both sides of $-\frac{f(x)-f(c)}{x-c}>0$ results to $-[f(x)-f(c)]<0$, which implies $-f(x)+f(c)<0$, so $f(c)<f(x)=y$. This proves $f[I]\sub(f(c),\infty)$.\\

\noindent\ref{DiffCal03a} If $f'(c)>0$, then by \ref{DiffCal02a}, there exists a nonempty open interval $I\sub(c,b)\sub\abOpen$ such that\linebreak $f[I]\sub (f(c),\infty)$. From $I\neq\emptyset$, there exists $\xi\in I$, and from $f[I]\sub (f(c),\infty)$, we have $f(c)<f(\xi)$. But since $\xi\in I\sub\abOpen$, by the assumption that $f[\abOpen]\sub\lpar -\infty,f(c)\rbrak$, we have $f(\xi)\leq f(c)$.$\lightning$

For the case $f'(c)<0$, we use \ref{DiffCal04a}, to obtain a nonempty open interval $I\sub(a,c)\sub\abOpen$ with the property $f[I]\sub (f(c),\infty)$. Thus, there exists $\xi\in I$, and from $f[I]\sub (f(c),\infty)$, we have $f(c)<f(\xi)$, but from $f[\abOpen]\sub\lpar -\infty,f(c)\rbrak$, we have $f(\xi)\leq f(c)$.$\lightning$

By assumption $f'(c)$ exists in $\R$, but since $f'(c)>0$ and $f'(c)<0$ have led to contradictions, by the Triangle Inequality, the only possibility is $f'(c)=0$.
\end{proof}

\subsection{Compact Sets and Uniform Continuity}\label{FourthCirclePrelims}

Given $\theSet\sub\R$ and given a collection $\Cover$ of subsets of $\R$, we say that $\Cover$ is a \emph{cover} of $\theSet$, or that \emph{$\Cover$ covers $\theSet$}, or that \emph{$\theSet$ can be covered by sets from $\Cover$}, if $\theSet\sub\bigcup_{U\in\Cover} U$. 

\begin{proposition}\label{UnionFiniteProp} If $\Cover$ is a cover of $\theSet$, and if each of $A,B\sub\theSet$ may be covered by a finite number of sets from $\Cover$, then so does $A\cup B$.
\end{proposition}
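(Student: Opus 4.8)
The plan is to simply unpack the two given finite subcovers and take their union. First I would invoke the hypothesis on $A$: since $A$ can be covered by finitely many sets from $\Cover$, there exist a positive integer $m$ and sets $U_1, U_2, \ldots, U_m \in \Cover$ such that $A \sub U_1 \cup U_2 \cup \cdots \cup U_m$. Likewise, applying the hypothesis to $B$, there exist a positive integer $n$ and sets $V_1, V_2, \ldots, V_n \in \Cover$ with $B \sub V_1 \cup V_2 \cup \cdots \cup V_n$.

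Next I would combine these inclusions. From $A \sub \bigcup_{i=1}^{m} U_i$ and $B \sub \bigcup_{j=1}^{n} V_j$ it follows that $A \cup B \sub \left( \bigcup_{i=1}^{m} U_i \right) \cup \left( \bigcup_{j=1}^{n} V_j \right)$, and the right-hand side is precisely the union of the $m+n$ sets $U_1, \ldots, U_m, V_1, \ldots, V_n$, each of which belongs to $\Cover$. Since $m+n$ is finite, this exhibits a finite subcollection of $\Cover$ whose union contains $A \cup B$, which is exactly the assertion that $A \cup B$ may be covered by a finite number of sets from $\Cover$.

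The only point requiring a word of care is the degenerate situation in which $A$ or $B$ is empty, so that "finitely many sets" could in principle be taken as zero sets; but this causes no difficulty, since one may always pad the family with a single arbitrarily chosen member of $\Cover$ (the collection $\Cover$ being nonempty, as a cover of $\theSet$ it must be). Indeed, the hypothesis that $\Cover$ covers $\theSet$ plays no role beyond this, and the substantive content of the proposition is just the elementary set-theoretic fact that the union of two finite families of sets is again a finite family. I expect no real obstacle: the proof is immediate from the definitions of cover and of finite coverability.
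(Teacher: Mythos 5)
Your proof is correct and follows exactly the same approach as the paper's: take the finitely many sets covering $A$, the finitely many sets covering $B$, and observe that their union is a finite subcollection of $\Cover$ covering $A\cup B$. The extra remark about the degenerate case of an empty family is a reasonable precaution but not part of the paper's (terse) argument.
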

\begin{proof} If there exist $U_1,U_2,\ldots, U_m\in\Cover$ such that $A\sub\bigcup_{k=1}^mU_k$ and if there exist $U_{m+1},U_{m+2},\ldots,$ $U_n\in\Cover$ such that $B\sub\bigcup_{k=m}^nU_k$, then $A\cup B\sub\bigcup_{k=1}^nU_k$, where $U_1$, $U_2$, \ldots , $U_n$ are finitely many sets from $\Cover$.
\end{proof}

\begin{proposition}\label{CoverLengthProp} If the intervals $(a_1,b_1)$, $(a_2,b_2)$, \ldots , $(a_n,b_n)$ cover $\abClosed$, then $b-a<\sum_{k=1}^n(b_k-a_k)$.
\end{proposition}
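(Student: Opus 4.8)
The plan is to induct on the number $n$ of intervals, using throughout that $\abClosed$ and each $(a_k,b_k)$ are nonempty — that is, $a\le b$ and $a_k<b_k$, so $b_k-a_k>0$ — which is implicit in calling them intervals. For the base case $n=1$, the single interval $(a_1,b_1)$ must contain both endpoints $a$ and $b$ of $\abClosed$, so $a_1<a\le b<b_1$, and subtracting gives $b-a<b_1-a_1$, the desired inequality.

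For the inductive step with $n\ge 2$, I would first locate the right endpoint: since $b\in\abClosed$ and the intervals cover $\abClosed$, some $(a_i,b_i)$ contains $b$, and because the right-hand side $\sum_{k=1}^n(b_k-a_k)$ is symmetric in the intervals, I may relabel so that $i=n$, giving $a_n<b<b_n$. Then I split on the position of $a_n$. If $a_n<a$, then $a_n<a\le b<b_n$, so $\abClosed\sub(a_n,b_n)$ and hence $b-a<b_n-a_n\le\sum_{k=1}^n(b_k-a_k)$, the last step using that the remaining intervals have positive length. If instead $a_n\ge a$, then no point of $\lbrak a,a_n\rbrak$ lies in $(a_n,b_n)$, so the $n-1$ intervals $(a_1,b_1),\dots,(a_{n-1},b_{n-1})$ cover the nonempty closed interval $\lbrak a,a_n\rbrak$; the inductive hypothesis then gives $a_n-a<\sum_{k=1}^{n-1}(b_k-a_k)$, and adding $b-a_n<b_n-a_n$ (which follows from $b<b_n$) yields $b-a<\sum_{k=1}^n(b_k-a_k)$.

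The endpoint chases and the symmetry remark are routine; the load-bearing step I expect to need care with is verifying, in the case $a_n\ge a$, that the smaller family $(a_1,b_1),\dots,(a_{n-1},b_{n-1})$ genuinely covers $\lbrak a,a_n\rbrak$, since this is exactly what makes the induction close. This uses that $(a_n,b_n)$ is \emph{open}: a point $x\in\lbrak a,a_n\rbrak$ also lies in $\abClosed$, hence in some $(a_i,b_i)$, but $i=n$ is impossible since $x\le a_n$ forces $x\notin(a_n,b_n)$. I would also flag at the outset that the intervals are nonempty, as $b_k-a_k>0$ is precisely what is needed to pass from $b-a<b_n-a_n$ to the full sum in the first case.
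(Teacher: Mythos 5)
Your proof is correct and is essentially the paper's inductive argument mirrored: the paper singles out the interval containing the left endpoint $a$ and applies the inductive hypothesis to $\lbrak b_K,b\rbrak$ after discarding $(a_K,b_K)$, whereas you single out the interval containing $b$ and induct on $\lbrak a,a_n\rbrak$ after discarding $(a_n,b_n)$. Your explicit flag that the intervals must be nonempty (so each $b_k-a_k>0$) is genuinely needed to pass from $b-a<b_n-a_n$ to the full sum, a point the paper leaves implicit.
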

\begin{proof} We use induction on $n$. If $n=1$, then $\abClosed\sub(a_1,b_1)$, so $a_1<a$ and $b<b_1$, where the former implies $-a<-a_1$. Thus, $b-a<b_1-a_1$. Suppose that for some $n\in\N$, any closed and bounded interval $I$ covered by $n$ bounded open intervals $I_k$ has the property $\ell(I)< \sum_{k=1}^n\ell(I_k)$. Suppose $(a_1,b_1)$, $(a_2,b_2)$, \ldots , $(a_{n+1},b_{n+1})$ cover $\abClosed$. Since $a\in\abClosed\sub\bigcup_{k=1}^{n+1}(a_k,b_k)$, there exists $K\in\{1,2,\ldots,n\}$ such that $a\in(a_K,b_K)$, so $a_K<a<b_K$. If $b<b_K$, then $\abClosed\sub(a_K,b_K)$, which falls under the base case of the induction, and we are done. Suppose henceforth that $b_K\leq b$. Thus, $a_K<a<b_K\leq b$, and so, $\lbrak b_K,b\rbrak\sub\abClosed\sub\bigcup_{k=1}^{n+1}(a_k,b_k)$. This implies
\begin{eqnarray}
    \lbrak b_K,b\rbrak\sub\bigcup_{k=1}^{n+1}(a_k,b_k).\label{CoverCoverEq}
\end{eqnarray}
From $a_K<a<b_K\leq b$, we find that $\lbrak b_K,b\rbrak$ and $(a_K,b_K)$ are disjoint. Thus, if $x\in\lbrak b_K,b\rbrak$, then $x\notin (a_K,b_K)$, and we may remove $(a_K,b_K)$ from the union in the right-hand side of \eqref{CoverCoverEq}. Thus, $\lbrak b_K,b\rbrak$ is covered by $n$ sets, and by the inductive hypothesis, the length $b-b_k$ of $[b_K,b]$ is less than the sum of the lengths of $(a_1,b_1)$, $(a_2,b_2)$, \ldots , $(a_{n+1},b_{n+1})$ except $(a_K,b_K)$, which is the sum of the lengths of $(a_1,b_1)$, $(a_2,b_2)$, \ldots , $(a_{n+1},b_{n+1})$ minus the length of $(a_K,b_K)$. That is, $b-b_K\leq a_K-b_K+\sum_{k=1}^{n+1}(b_k-a_k)$, which implies
\begin{eqnarray}
    b-a_K< \sum_{k=1}^{n+1}(b_k-a_k).\label{CoverCoverEq2}
\end{eqnarray}
But from $a_K<a<b_K\leq b$, we obtain $-a<-a_K$, and so $b-a<b-a_K$, and by \eqref{CoverCoverEq2}, we further have $b-a< \sum_{k=1}^{n+1}(b_k-a_k)$. By induction, we obtain the desired statement.
\end{proof}

Suppose $\Cover$ is a cover of $\theSet$. If $\Base\sub\Cover$ is also a cover of $\theSet$, then $\Base$ is said to be a \emph{subcover} of $\theSet$, or that the cover \emph{$\Cover$ (of $\theSet$) has a subcover} (which in this case is $\Base$). We follow the traditional but often questioned practice of attaching the modifier ``open'' to ``cover'' to refer to the condition that the sets in the cover are open sets, while attaching ``finite'' to ``cover'' (or ``subcover'') to refer to the cardinality of the cover or subcover (and not of the sets in it). A set $\theSet\sub\R$ is a \emph{compact} set if any open cover of $\theSet$ has a finite subcover.

\begin{proposition}\label{contImPropII} If $I$ is compact and $f$ is continuous on $I$, then $f[I]$ is compact.
\end{proposition}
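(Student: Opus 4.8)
The plan is to run the standard ``push a cover through a continuous map'' argument, being careful about one point of friction: the definition of compactness requires covers by honest open subsets of $\R$, whereas continuity of $f$ only hands us that inverse images of open sets are \emph{relatively} open in $I$. So the argument begins with an arbitrary open cover of $f[I]$, transports it to a genuine open cover of $I$ via inverse images, applies compactness of $I$, and transports the resulting finite subcover back to $f[I]$ via direct images.

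First I would let $\Cover$ be an open cover of $f[I]$, so $\Cover$ is a collection of open subsets of $\R$ with $f[I]\sub\bigcup_{G\in\Cover}G$. For each $G\in\Cover$, Proposition~\ref{contDefProp}\ref{contDefIII} gives that $f^{-1}[G]$ is open relative to $I$, so there exists an honest open set $\widetilde G\sub\R$ with $f^{-1}[G]=\widetilde G\cap I$. The key observation is that $\{\widetilde G:G\in\Cover\}$ is an open cover of $I$: given $x\in I$, we have $f(x)\in f[I]\sub\bigcup_{G\in\Cover}G$, so $f(x)\in G_0$ for some $G_0\in\Cover$, hence $x\in f^{-1}[G_0]=\widetilde{G_0}\cap I\sub\widetilde{G_0}$.

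Next, since $I$ is compact, this open cover has a finite subcover: there exist $G_1,G_2,\ldots,G_n\in\Cover$ with $I\sub\bigcup_{k=1}^n\widetilde{G_k}$. I would then check that $\{G_1,\ldots,G_n\}$ already covers $f[I]$: given $y\in f[I]$, write $y=f(x)$ with $x\in I$; by the finite subcover of $I$, $x\in\widetilde{G_k}$ for some $k\in\{1,\ldots,n\}$, and since also $x\in I$ we get $x\in\widetilde{G_k}\cap I=f^{-1}[G_k]$, whence $y=f(x)\in G_k$. Thus $f[I]\sub\bigcup_{k=1}^nG_k$, so the arbitrary open cover $\Cover$ of $f[I]$ has a finite subcover, and $f[I]$ is compact. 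The only step worth stating with any care is the very first transport, where one must genuinely produce the open sets $\widetilde G$ guaranteed by Proposition~\ref{contDefProp} rather than attempting to feed the relatively open sets $f^{-1}[G]$ into the definition of compactness of $I$; the rest is routine bookkeeping with images and inverse images.
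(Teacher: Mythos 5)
Your proof is correct and follows essentially the same cover-transport argument as the paper's own proof of Proposition~\ref{contImPropII}. In fact you are slightly more careful than the paper here: the paper asserts that each $f^{-1}[U]$ is ``open'' and feeds $\{f^{-1}[U]\}$ directly into the compactness of $I$, quietly eliding the fact that Proposition~\ref{contDefProp}\ref{contDefIII} only yields openness \emph{relative to} $I$, whereas you explicitly produce the absolutely open sets $\widetilde{G}$ with $f^{-1}[G]=\widetilde{G}\cap I$ and cover $I$ by those, which is the honest way to invoke the definition of compactness given in the paper.
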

\begin{proof} Let $\Cover$ be an open cover of $f[I]$. Thus, $f[I]\sub\bigcup_{U\in\Cover}U$, and taking the inverse image of both sides under $f$, we have $I\sub \bigcup_{U\in\Cover}f^{-1}[U]$, but since each set $U\in\Cover$ is open and $f$ is continuous, each inverse image $f^{-1}[U]$ is open, so the previous set inclusion implies that $\Base:=\{f^{-1}[U]\  :\  U\in\Cover\}$ is an open cover of $I$. Since $I$ is compact, there exist finitely many sets $f^{-1}[U_1]$, $f^{-1}[U_2]$, \ldots , $f^{-1}[U_n]$ from $\Base$, which implies there exist finitely many sets $U_1$, $U_2$, \ldots , $U_n$ from $\Cover$, such that $I\sub\bigcup_{k=1}^nf^{-1}[U_k]$. Taking the image of both sides under $f$, we obtain $f[I]\sub\bigcup_{k=1}^nU_k$. Therefore, $f[I]$ is compact.
\end{proof}

\begin{proposition}\label{conNecPropIII} If there exists a closed and bounded interval (with positive length) that is compact, then any closed and bounded interval is compact.
\end{proposition}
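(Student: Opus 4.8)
The plan is to reduce the statement to two results already in hand: Proposition~\ref{homeoProp}, which produces a continuous surjection from one closed and bounded interval of positive length onto any other closed and bounded interval, and Proposition~\ref{contImPropII}, which says a continuous image of a compact set is compact. Chaining these should give the result almost immediately.

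First I would fix, using the hypothesis, a closed and bounded interval $\lbrak a_0,b_0\rbrak$ with $a_0<b_0$ that is compact, and let $\abClosed$ be an arbitrary closed and bounded interval. I would dispose of the degenerate case first: if $\abClosed$ is empty, then the empty subcollection of any open cover of $\abClosed$ is already a finite subcover, so $\abClosed$ is compact; hence we may assume $a\leq b$.

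Next I would apply Proposition~\ref{homeoProp} with the compact interval $\lbrak a_0,b_0\rbrak$ (note it has the positive length required there, since $a_0<b_0$) and the target interval $\abClosed$, obtaining a continuous function $f:\lbrak a_0,b_0\rbrak\into\abClosed$ with $f[\lbrak a_0,b_0\rbrak]=\abClosed$. This works uniformly in the two subcases $a<b$ and $a=b$, since Proposition~\ref{homeoProp} requires only $a_0<b_0$ (when $a=b$ the function $f$ is simply the constant $x\mapsto a$, whose image is $\{a\}=\lbrak a,a\rbrak$). Finally, since $\lbrak a_0,b_0\rbrak$ is compact and $f$ is continuous on it, Proposition~\ref{contImPropII} yields that $f[\lbrak a_0,b_0\rbrak]=\abClosed$ is compact, completing the proof.

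I do not expect a genuine obstacle here: the substantive work has been front-loaded into Propositions~\ref{homeoProp} and \ref{contImPropII}. The only points needing a little care are the bookkeeping of the degenerate intervals (the empty interval, handled directly; the singleton, still covered by Proposition~\ref{homeoProp}) and checking that the interval supplied by the hypothesis legitimately plays the role of $\lbrak a_0,b_0\rbrak$ with $a_0<b_0$, which is exactly the ``positive length'' clause.
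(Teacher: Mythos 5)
Your argument is exactly the paper's: fix a compact $\lbrak a_0,b_0\rbrak$ with $a_0<b_0$, use Proposition~\ref{homeoProp} to produce a continuous surjection onto an arbitrary $\abClosed$, and conclude via Proposition~\ref{contImPropII} that the image is compact. The extra bookkeeping you do for the empty and singleton cases is harmless (and correct), but the core reasoning matches the paper's proof step for step.
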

\begin{proof} Suppose some interval $[a_0,b_0]$ is compact with positive length (so $a_0<b_0$), and let $\abClosed$ be an arbitrary closed and bounded interval. By Propostion~\ref{homeoProp}, there exists a function $f:\lbrak a_0,b_0\rbrak\into \abClosed$, continuous on $[a_0,b_0]$, such that the image of $\lbrak a_0,b_0\rbrak$ under $f$ is equal to $\abClosed$, and such an image set, according to Proposition~\ref{contImPropII}, is compact.
\end{proof}

\begin{lemma}\label{UIKtoLCLLem} If $\theSet$ is a nonempty compact set, and if $\Cover$ is an open cover of $\theSet$, then there exists $\delta>0$, which is called a \emph{Lebesgue number} of the open cover $\Cover$, such that for any $x,c\in\theSet$, if $|x-c|<\delta$, then there exists $G\in\Cover$ such that $x,c\in G$.
\end{lemma}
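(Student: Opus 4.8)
The plan is to argue directly from the open-cover definition of compactness: I will shrink the given cover $\Cover$ to a cover of $\theSet$ by symmetric open intervals, apply compactness to extract a finite subcover, and take the least of the radii — with a safety factor of $2$ built in — as the Lebesgue number. A contradiction argument through a sequence $(x_n,c_n)$ with $|x_n-c_n|<1/n$ would be the reflex in the sequentially-compact setting, but since compactness here means ``every open cover has a finite subcover,'' the direct construction is the natural route and avoids importing Bolzano--Weierstrass.

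First, for each $x\in\theSet$, since $\Cover$ covers $\theSet$ there is $G_x\in\Cover$ with $x\in G_x$; since $G_x$ is open, the characterization of open sets recalled in the topology review furnishes $\eta_x>0$ with $(x-\eta_x,x+\eta_x)\sub G_x$. Put $\delta_x:=\eta_x/2>0$, so that $(x-2\delta_x,x+2\delta_x)\sub G_x$. The family $\{(x-\delta_x,x+\delta_x)\ :\ x\in\theSet\}$ is then an open cover of $\theSet$ (each member is a bounded open interval, hence open, and $x$ lies in its own interval). Because $\theSet$ is compact, and nonempty, finitely many of these intervals already cover $\theSet$: there are $x_1,\ldots,x_n\in\theSet$ with $n\geq 1$ and $\theSet\sub\bigcup_{i=1}^n(x_i-\delta_{x_i},x_i+\delta_{x_i})$. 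Set $\delta:=\min\{\delta_{x_1},\ldots,\delta_{x_n}\}$, positive as the minimum of finitely many positive numbers.

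It remains to verify that this $\delta$ does the job. Given $x,c\in\theSet$ with $|x-c|<\delta$, choose $i$ with $x\in(x_i-\delta_{x_i},x_i+\delta_{x_i})$, i.e. $|x-x_i|<\delta_{x_i}$. By the Triangle Inequality and $\delta\leq\delta_{x_i}$ we get $|c-x_i|\leq|c-x|+|x-x_i|<\delta+\delta_{x_i}\leq 2\delta_{x_i}$, while also $|x-x_i|<\delta_{x_i}<2\delta_{x_i}$; hence both $x$ and $c$ lie in $(x_i-2\delta_{x_i},x_i+2\delta_{x_i})\sub G_{x_i}$, and $G:=G_{x_i}\in\Cover$ is the set we need.

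I do not expect a genuine obstacle here; the single idea that makes everything work is replacing $\Cover$ by the cover of \emph{half}-radius intervals and keeping the factor $2$ in reserve, so that any point $c$ within $\delta$ of an already-covered point $x$ still falls inside the original cover element containing $x$. I would also note that the nonemptiness hypothesis on $\theSet$ is used precisely to ensure the finite subcover is nonempty, so that the minimum defining $\delta$ is taken over a nonempty finite set.
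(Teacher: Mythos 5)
Your proof is correct and takes essentially the same approach as the paper: shrink each cover element to a symmetric open interval of half the original radius, extract a finite subcover by compactness, take $\delta$ to be the least half-radius, and recover the original cover element via the triangle inequality. The only (immaterial) difference is that you locate $x$ in one of the finitely many half-radius intervals and then place $c$ by the triangle inequality, while the paper locates $c$ first and places $x$.
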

\begin{proof} Since $\Cover$ covers $\theSet$, we have $\theSet\sub\bigcup_{G\in\Cover} G$, so given $t\in\theSet$, there exists $G_t\in\Cover$ such that $t\in G_t$. Since $G_t$ is open, there exists $\delta_t>0$ such that $|x-t|<\delta_t$ implies $x\in G_t$. Since $\txthalf\delta_t<\delta_t$, we find that $|x-t|<\txthalf\delta_t$ implies $|x-t|<\delta_t$. Henceforth, $|x-t|<\txthalf\delta_t$ implies $x\in G_t$, or equivalently, $x\in\lpar t-\txthalf\delta_t,t+\txthalf\delta_t\rpar$ implies $x\in G_t$. At this point, we have proven that
\begin{enumerate}
    \item[\Star] for each $t\in\theSet$, we have $\lpar t-\txthalf\delta_t,t+\txthalf\delta_t\rpar\sub G_t$.
\end{enumerate}
Also, given an arbitrary $t\in\theSet$, since $t\in \lpar t-\txthalf\delta_t,t+\txthalf\delta_t\rpar\sub \bigcup_{s\in\theSet}\lpar s-\txthalf\delta_s,s+\txthalf\delta_s\rpar$, we have $\theSet\sub \bigcup_{s\in\theSet}\lpar s-\txthalf\delta_s,s+\txthalf\delta_s\rpar$, so the collection $\Base:=\left\{\lpar s-\txthalf\delta_s,s+\txthalf\delta_s\rpar\  :\  s\in\theSet\right\}$ is an open cover of $\theSet$. Since $\theSet$ is compact, $\Base$ has a finite subcover, the sets in which, we denote by $\lpar s_1-\txthalf\delta_{s_1},s_1+\txthalf\delta_{s_1}\rpar$, $\lpar s_2-\txthalf\delta_{s_2},s_2+\txthalf\delta_{s_2}\rpar$, \ldots , $\lpar s_n-\txthalf\delta_{s_n},s_n+\txthalf\delta_{s_n}\rpar$. From \Star,
\begin{enumerate}
    \item[\SStar] for each $k\in\{1,2,\ldots,n\}$, we have $\lpar s_k-\txthalf\delta_{s_k},s_k+\txthalf\delta_{s_k}\rpar\sub G_{s_k}$.
\end{enumerate}
Since each $\delta_{s_k}$ is positive, so is
\begin{eqnarray}
    \delta:=\min\left\{\txthalf\delta_{s_1}, \txthalf\delta_{s_2},\ldots,\txthalf\delta_{s_n}\right\}.\label{LebesgueNumber}
\end{eqnarray}
Suppose $x,c\in\theSet$ such that $|x-c|<\delta$. Since $\Base$ covers $\theSet$, we have $c\in\theSet\sub\bigcup_{k=1}^n\lpar s_k-\txthalf\delta_{s_k},s_k+\txthalf\delta_{s_k}\rpar$, so there exists $K\in\{1,2,\ldots,n\}$ such that $c\in\lpar s_K-\txthalf\delta_{s_K},\delta+\txthalf\delta_{s_K}\rpar$, or that $|c-s_K|<\txthalf\delta_{s_K}$. From \eqref{LebesgueNumber}, $|x-c|<\delta\leq\txthalf\delta_{s_K}$, and by the Triangle Inequality:

\noindent$|x-s_K|\leq |x-c|+|c-s_K|<\txthalf\delta_{s_K}+\txthalf\delta_{s_K}=\delta_{s_K}$, but the inequality $|x-s_K|<\delta_{s_K}$ implies $x\in\lpar s_K-\delta_{s_K},s_K+\delta_{s_K}\rpar$, which, in conjunction with $c\in\lpar s_K-\txthalf\delta_{s_K},\delta+\txthalf\delta_{s_K}\rpar$ and \SStar, implies $x,c\in G_{s_K}\in\Cover$. This completes the proof.
\end{proof}

\begin{lemma}\label{LCLtoUCTLem} If $f$ is continuous on $\theSet$, then for each $\varepsilon>0$, there exists an open cover $\Cover$ of $\theSet$ such that for each $G\in\Cover$, if $x,c\in G$, then $|f(x)-f(c)|<\varepsilon$.
\end{lemma}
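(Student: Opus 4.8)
The plan is to manufacture the cover directly from pointwise continuity, using one open interval centred at each point of $\theSet$ and exploiting the ``epsilon-over-two'' device so that membership in a common member of the cover forces the values of $f$ to be within $\varepsilon$.

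Fix $\varepsilon>0$. Since $f$ is continuous on $\theSet$, it is continuous at every $t\in\theSet$ (Proposition~\ref{contDefProp}), so applying this at $t$ with the positive number $\txthalf\varepsilon$ yields $\delta_t>0$ such that for all $x\in\theSet$ with $|x-t|<\delta_t$ we have $|f(x)-f(t)|<\txthalf\varepsilon$. As $\delta_t>0$, the bounded open interval $G_t:=(t-\delta_t,t+\delta_t)$ is an open set (recall all bounded open intervals are open) containing $t$. I would then set $\Cover:=\{G_t\ :\ t\in\theSet\}$. Because $t\in G_t$ for each $t\in\theSet$, we get $\theSet\sub\bigcup_{t\in\theSet}G_t=\bigcup_{G\in\Cover}G$, so $\Cover$ is an open cover of $\theSet$.

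It remains to verify the oscillation condition. Let $G\in\Cover$, say $G=G_t$ for the corresponding $t\in\theSet$, and suppose $x,c\in G_t$ (with $x,c\in\theSet$, so that $f(x)$ and $f(c)$ are defined). Then $|x-t|<\delta_t$ and $|c-t|<\delta_t$, so by the choice of $\delta_t$ both $|f(x)-f(t)|<\txthalf\varepsilon$ and $|f(c)-f(t)|<\txthalf\varepsilon$; the Triangle Inequality gives $|f(x)-f(c)|\leq|f(x)-f(t)|+|f(t)-f(c)|<\txthalf\varepsilon+\txthalf\varepsilon=\varepsilon$, which is exactly the required bound.

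There is no substantive obstacle here; the argument is essentially a repackaging of pointwise continuity. The only point calling for a little care is the bookkeeping around the centre: a member $G$ of $\Cover$ is centred at some $t\in\theSet$, whereas the conclusion quantifies over two arbitrary points $x,c$ of $G$, neither of which need coincide with $t$, so one must route the estimate through the value $f(t)$ via the Triangle Inequality — which is precisely why the intervals $G_t$ are built from $\txthalf\varepsilon$ rather than from $\varepsilon$. (As throughout, the phrase ``$x,c\in G$'' is read with the understanding that $f$ is defined only on $\theSet$, i.e. effectively $x,c\in G\cap\theSet$.)
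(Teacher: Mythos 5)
Your proposal is correct and matches the paper's own argument essentially step for step: you invoke continuity at each $t\in\theSet$ with tolerance $\txthalf\varepsilon$ to obtain $\delta_t>0$, form the open cover $\Cover=\{(t-\delta_t,t+\delta_t)\ :\ t\in\theSet\}$, and then route the estimate for $|f(x)-f(c)|$ through the centre $t$ via the Triangle Inequality. There is nothing to add — the paper's proof is the same ``epsilon-over-two'' construction.
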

\begin{proof} Let $t\in\theSet$. Given $\varepsilon>0$, we have $\txthalf\varepsilon>0$, so by the continuity of $f$ at $t$, there exists $\delta_t>0$ such that $|x-t|<\delta$ implies $|f(x)-f(t)|<\txthalf\varepsilon$, or equivalently, 
\begin{enumerate}
    \item[\Star] if $x\in\lpar t-\delta_t,t+\delta_t\rpar$, then $|f(x)-f(t)|<\txthalf\varepsilon$.
\end{enumerate}
Since $\delta_t>0$, we have $t-\delta_t<t<t+\delta_t$, or that $t\in\lpar t-\delta_t,t+\delta_t\rpar\sub\bigcup_{s\in\theSet}\lpar s-\delta_s,s+\delta_s\rpar$. Since $t\in\theSet$ is arbitrary, we have $\theSet\sub\bigcup_{s\in\theSet}\lpar s-\delta_s,s+\delta_s\rpar$, where each interval in the union is an open set. Hence, $\Cover:=\{\lpar s-\delta_s,s+\delta_s\rpar\  :\  s\in\theSet\}$ is an open cover of $\theSet$. If $x,c\in\lpar s-\delta_s,s+\delta_s\rpar$, then by \Star, we have $|f(x)-f(s)|<\txthalf\varepsilon$ and $|f(s)-f(c)|=|f(c)-f(s)|<\txthalf\varepsilon$. By the Triangle Inequality, $|f(x)-f(c)|\leq|f(x)-f(s)|+|f(s)-f(c)|<\txthalf\varepsilon+\txthalf\varepsilon=\varepsilon$.
\end{proof}

Given $I\sub\R$, a function $f:I\into\R$ is \emph{uniformly continuous} if, for each $\varepsilon>0$, there exists $\delta>0$ such that for any $x,c\in I$, if $|x-c|<\delta$, then $|f(x)-f(c)|<\varepsilon$. If indeed $f:I\into\R$ is uniformly continuous, then for each $t\in I$, setting $c=t$ in the aforementioned epsilon-delta definition of uniform continuity, we find that $f$ is continuous at $t$, and so, uniform continuity implies continuity. To imply uniform continuity, an additional condition has to be combined with an assumption of continuity, and this is part of the rationale for the circle of real analysis principles later in Theorem~\ref{FourthCircle}. The remainder of this section, Section~\ref{FourthCirclePrelims}, should have been placed in the next section, Section~\ref{FifthCirclePrelims}, as these belong more rightfully to the elements of integration theory. However, these have been used in the proof of Theorem~\ref{FourthCircle}.

Given $a,b\in\R$, with $a\leq b$, by a \emph{partition} of $\abClosed$, we mean a finite subset of $\abClosed$ that contains the endpoints $a$ and $b$. If $a=b$, then this definition implies that $\{a\}$ is the only possible partition of $\abClosed$. If $a<b$, then the usual notation is to index the elements of a partition of $\abClosed$ by subscripts in strictly ascending order, such as: $a=x_0<x_1<\cdots<x_n=b$ for the elements of a partition of $\abClosed$ with cardinality $n$. 

\begin{proposition}\label{PartitionCupProp} If $\Delta=\{x_0,x_1,\ldots,x_n\}$ is a partition of $\abClosed$, then $\abClosed=\bigcup_{k=1}^n[x_{k-1},x_k]$.
\end{proposition}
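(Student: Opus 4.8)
The plan is a direct double-inclusion argument, using the convention $a=x_0<x_1<\cdots<x_n=b$ fixed just before the statement; in particular the notation with strictly increasing indices presupposes $a<b$, i.e.\ $n\ge 1$, which is the case carrying all the content, so I take this for granted throughout.

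First I would prove $\bigcup_{k=1}^n[x_{k-1},x_k]\sub\abClosed$. Fix $k\in\{1,\ldots,n\}$. Since the indices are strictly increasing, $a=x_0\le x_{k-1}$ and $x_k\le x_n=b$, so any $t\in[x_{k-1},x_k]$ satisfies $a\le x_{k-1}\le t\le x_k\le b$, that is, $t\in\abClosed$. Hence $[x_{k-1},x_k]\sub\abClosed$ for every $k$, and a union of subsets of $\abClosed$ is contained in $\abClosed$.

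Next I would prove $\abClosed\sub\bigcup_{k=1}^n[x_{k-1},x_k]$. Let $t\in\abClosed$, so $a\le t\le b$. Consider $K:=\{k\in\{1,\ldots,n\}\ :\ x_{k-1}\le t\}$, a subset of $\{1,\ldots,n\}$ and hence finite; it is nonempty because $x_0=a\le t$ forces $1\in K$. Let $j:=\max K$. By the definition of $K$ we have $x_{j-1}\le t$, so it remains only to check $t\le x_j$. If $j=n$, then $t\le b=x_n=x_j$. If $j<n$, then $j+1\in\{1,\ldots,n\}$ but $j+1\notin K$ by maximality of $j$, which by the definition of $K$ means $x_{(j+1)-1}=x_j\not\le t$, that is, $t<x_j$. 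Either way $x_{j-1}\le t\le x_j$, so $t\in[x_{j-1},x_j]\sub\bigcup_{k=1}^n[x_{k-1},x_k]$. Combining the two inclusions gives the claimed equality.

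I expect no genuine obstacle here; the only step that needs a moment's care is selecting the correct subinterval, which the argument above handles by taking the largest index $k$ with $x_{k-1}\le t$ and then splitting on whether it equals $n$ (forcing $t$ to be the right endpoint $b$) or is smaller (so that the next partition point $x_j$ already lies weakly to the right of $t$). An alternative is induction on $n$: the base case $n=1$ is immediate, and for the inductive step one writes $\abClosed=[a,x_{n-1}]\cup[x_{n-1},b]$ (valid since $a\le x_{n-1}\le b$), applies the inductive hypothesis to the partition $\{x_0,\ldots,x_{n-1}\}$ of $[a,x_{n-1}]$, and notes $[x_{n-1},b]=[x_{n-1},x_n]$; but the direct argument above is shorter and avoids invoking that two-interval special case.
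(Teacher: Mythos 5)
Your proof is correct and takes essentially the same double-inclusion route as the paper: the forward inclusion is the same one-liner, and for the reverse inclusion both arguments pick the right subinterval by an extremal selection over a finite index set. The only difference is a dual choice of extremal index: you take the \emph{largest} $k$ with $x_{k-1}\leq t$, while the paper applies the Well Ordering Principle to take the \emph{least} $m$ with $t\leq x_m$; the two are mirror images of one another and neither offers an advantage over the other.
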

\begin{proof} For each $k\in\{1,2,\ldots,n\}$, we have $x_{k-1},x_k\in\abClosed$, and so $a\leq x_{k-1},x_k\leq b$, so\linebreak $[x_{k-1},x_k]\sub\abClosed$, and we further have $\bigcup_{k=1}^n[x_{k-1},x_k]\sub\abClosed$. 

Let $x\in\abClosed$. If $x>x_k$ for all $k\in\{1,2,\ldots,n\}$, then, in particular at $k=n$, we have $b=x_n<x$, so $x\notin\abClosed$.$\lightning$ Hence, there exists $K\in\{1,2,\ldots,n\}$ such that $x\leq x_K$. That is, $K$ is an element of $\theSet:=\{m\in\N\  :\  x\leq x_m,\  1\leq m\leq n\}$, which is hence a nonempty subset of $\N$. The \emph{Well Ordering Principle} states that every nonempty subset of $\N$ has a least element, or that there exists $L\in\theSet$ (which means that $x\leq x_L$) such that $m\in\theSet$ implies $L\leq m$. By contraposition, $m<L$ implies $m\notin\theSet$ or that $x_m< x$. This is true, in particular, at $m=L-1$. Thus, $x_{L-1}< x\leq x_{L}$, which implies $x\in\lbrak x_{L-1},x_L\rbrak$, where $L\geq 1$ implies $L-1\geq 0$. Thus, $x\in\lbrak x_{L-1},x_L\rbrak\sub \bigcup_{k=1}^n[x_{k-1},x_k]$, and therefore $\abClosed\sub\bigcup_{k=1}^n[x_{k-1},x_k]$. This completes the proof.
\end{proof}

A function $\varphi:\abClosed\into\R$ is a \emph{step function} if there exists a partition $\Delta=\{x_0,x_1,\ldots,x_n\}$ of $\abClosed$ such that for each $k\in\{1,2,\ldots,n\}$, the function $\varphi$ is constant on $(x_{k-1},x_k)$. Under this definition, the values of the step function $\varphi$ at the partition elements $x_0,x_1,\ldots,x_n$ does not matter.

\begin{proposition}\label{StepImageProp} If $\varphi:\abClosed\into\R$ is a step function, then there exist $c_1$, $c_2$, \ldots , $c_N\in\R$ such that\linebreak $\{\varphi(x)\  :\  x\in\abClosed\}=\{c_1,c_2,\ldots,c_N\}$.
\end{proposition}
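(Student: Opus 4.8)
The plan is to use the partition guaranteed by the definition of a step function, together with Proposition~\ref{PartitionCupProp}, to pin the image of $\varphi$ inside a finite list of candidate values, and then to check that each candidate is genuinely attained.

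First I would dispose of the degenerate case $a=b$: here $\abClosed=\{a\}$, so $\{\varphi(x)\ :\ x\in\abClosed\}=\{\varphi(a)\}$, and we may take $N=1$ and $c_1=\varphi(a)$. Assume henceforth $a<b$, and let $\Delta=\{x_0,x_1,\ldots,x_n\}$ with $a=x_0<x_1<\cdots<x_n=b$ be a partition of $\abClosed$ witnessing that $\varphi$ is a step function; thus for each $k\in\{1,2,\ldots,n\}$ there is a real number $d_k$ with $\varphi(t)=d_k$ for all $t\in(x_{k-1},x_k)$. Put $\theSet:=\{\varphi(x_0),\varphi(x_1),\ldots,\varphi(x_n)\}\cup\{d_1,d_2,\ldots,d_n\}$, a finite set of real numbers.

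Next I would prove $\{\varphi(x)\ :\ x\in\abClosed\}=\theSet$ by double inclusion. For ``$\sub$'', take $x\in\abClosed$; by Proposition~\ref{PartitionCupProp}, $x\in[x_{k-1},x_k]$ for some $k\in\{1,\ldots,n\}$, so either $x\in\{x_{k-1},x_k\}$, whence $\varphi(x)\in\{\varphi(x_{k-1}),\varphi(x_k)\}\sub\theSet$, or $x\in(x_{k-1},x_k)$, whence $\varphi(x)=d_k\in\theSet$. For the reverse inclusion, each $x_j$ lies in $\abClosed$, so $\varphi(x_j)$ is in the image; and since $x_{k-1}<x_k$, the midpoint $\xi_k:=\frac{x_{k-1}+x_k}{2}$ satisfies $\xi_k\in(x_{k-1},x_k)\sub\abClosed$, so $d_k=\varphi(\xi_k)$ is also in the image.

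Finally, since $\theSet$ is finite and equals the image set, I would enumerate its distinct elements as $c_1,c_2,\ldots,c_N$ (with $N\leq 2n+1$), which gives the claim. There is no genuine obstacle here; the only point requiring a little care is the reverse inclusion, where one must note that each open subinterval $(x_{k-1},x_k)$ is nonempty (because the partition points strictly increase) so that the value $d_k$ is actually attained by $\varphi$ rather than merely being a ``declared'' constant.
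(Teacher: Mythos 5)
Your proof is correct and takes essentially the same approach as the paper: collect the values of $\varphi$ at the partition points together with its constant values on the open subintervals, and observe that this finite list is exactly the image. You add a bit of care the paper omits (the explicit reverse inclusion via nonemptiness of each $(x_{k-1},x_k)$, and the degenerate case $a=b$), but this is a refinement of the same argument, not a different route.
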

\begin{proof}
    Since $\varphi$ is a step function on $\abClosed$, there exists a partition $\{x_0,x_1,\ldots,x_n\}$ of $\abClosed$ such that for each $k\in\{1,2,\ldots,n\}$, the function $\varphi$ has the constant value $C_k$ on $(x_{k-1},x_{k})$. At the partition elements, let $y_k:=\varphi(x_k)$ for all $k\in\{0,1,\ldots, n\}$. Thus the image of $\abClosed$ under $\varphi$ contains only the finitely many real numbers $y_0$, $y_1$, \ldots , $y_n$, $C_1$, $C_2$, \ldots , $C_n$. Rename these $N:=2n+1$ numbers as $c_1$, $c_2$, \ldots , $c_N$.
\end{proof}  

\subsection{Some Elements of Integration Theory}\label{FifthCirclePrelims}

In this section, we give some minimal machinery needed to develop the elementary notion of integral and some consequences such as boundedness of integrable functions and additivity of integrals. We start by showing that the integrals of step functions have some sort of independence from the partition chosen. 

\begin{proposition}\label{anypartitionProp} Let $\varphi:\abClosed\into\R$ be a step function, and, given a partition $\Delta=\{x_0,x_1,\ldots,x_n\}$ of $\abClosed$ such that $\varphi$ has the value $c_i$ on $(x_{k-1},x_{k})$ for all $k\in\{1,2,\ldots,n\}$. Let $\int_a^b\varphi=\sum_{i=1}^nc_i(x_i-x_{i-1})$. If $\Omega=\{u_0,u_1,\ldots,u_m\}$ is a partition of $\abClosed$ that contains $\Delta$, then for each $j\in\{1,2,\ldots,m\}$, there exists $C_j\in\R$ such that $\{C_j\}=\varphi\lbrak\lpar u_{j-1},u_j\rpar\rbrak$, and $\int_a^b\varphi=\sum_{j=1}^mC_j(u_j-u_{j-1})$. [Hence, the number $\int_a^b\varphi$ is uniquely determined by $\Delta$, and is called the \emph{integral of (the step function) $\varphi$}.]
\end{proposition}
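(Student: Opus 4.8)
The plan is first to show that the finer partition $\Omega$ is automatically adapted to $\varphi$ (which produces the numbers $C_j$), and then to recover $\int_a^b\varphi$ from $\sum_{j=1}^m C_j(u_j-u_{j-1})$ by grouping the summands and telescoping. If $a=b$ the assertion is vacuous, since $\{a\}$ is the only partition of $\abClosed=\{a\}$ and both sums are empty; so assume $a<b$, in which case the elements of $\Delta$ and of $\Omega$ are listed in strictly increasing order, and in particular $u_{j-1}<u_j$ for every $j\in\{1,\dots,m\}$.

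\emph{Step 1 (each $\Omega$-subinterval sits inside a $\Delta$-subinterval).} Fix $j\in\{1,\dots,m\}$. Since $a=x_0\le u_{j-1}$, the finite set $\{x\in\Delta:\ x\le u_{j-1}\}$ is nonempty; let $x_{i-1}$ be its largest element. From $x_{i-1}\le u_{j-1}<u_j\le b$ we get $x_{i-1}<b$, so $x_{i-1}$ is not the last element of $\Delta$ and hence has a successor $x_i$ in $\Delta$, with $i\in\{1,\dots,n\}$. I claim $u_j\le x_i$: otherwise $u_{j-1}<x_i<u_j$ (the first inequality because $x_{i-1}$ is the largest $\Delta$-element that is $\le u_{j-1}$), and then $x_i\in\Delta\sub\Omega$ would lie strictly between the consecutive elements $u_{j-1},u_j$ of $\Omega$, which is impossible. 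Hence $x_{i-1}\le u_{j-1}<u_j\le x_i$, so $\lpar u_{j-1},u_j\rpar\sub\lpar x_{i-1},x_i\rpar$. Since $\varphi$ has the constant value $c_i$ on $\lpar x_{i-1},x_i\rpar$ and $\lpar u_{j-1},u_j\rpar$ is a nonempty subset of it, $\varphi\lbrak\lpar u_{j-1},u_j\rpar\rbrak=\{c_i\}$, so we may put $C_j:=c_i$.

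\emph{Step 2 (matching the indices and telescoping).} Because $\Delta\sub\Omega$ and the $u_\ell$ are strictly increasing (hence distinct), for each $i\in\{0,1,\dots,n\}$ there is a unique $p_i\in\{0,1,\dots,m\}$ with $u_{p_i}=x_i$, and $0=p_0<p_1<\dots<p_n=m$. For $i\in\{1,\dots,n\}$ and any $j$ with $p_{i-1}<j\le p_i$ one has $x_{i-1}=u_{p_{i-1}}\le u_{j-1}<u_j\le u_{p_i}=x_i$, hence $\lpar u_{j-1},u_j\rpar\sub\lpar x_{i-1},x_i\rpar$ and so $C_j=c_i$ by Step 1. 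The blocks $J_i:=\{p_{i-1}+1,\dots,p_i\}$, $i=1,\dots,n$, are pairwise disjoint with union $\{1,\dots,m\}$ (because $p_0=0$ and $p_n=m$). Grouping the terms of $\sum_{j=1}^m C_j(u_j-u_{j-1})$ by the block $J_i$ that contains $j$, replacing $C_j$ by $c_i$ on $J_i$, and telescoping each inner sum via $\sum_{j\in J_i}(u_j-u_{j-1})=u_{p_i}-u_{p_{i-1}}=x_i-x_{i-1}$, we obtain $\sum_{j=1}^m C_j(u_j-u_{j-1})=\sum_{i=1}^n c_i(x_i-x_{i-1})=\int_a^b\varphi$, as required.

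There is no analytic content here; the two points needing care are the claim in Step 1 that an open subinterval of $\Omega$ cannot straddle a point of $\Delta$ (this is exactly where the hypothesis $\Delta\sub\Omega$ enters) and the combinatorial bookkeeping in Step 2 ensuring that the integer blocks $J_i$ tile $\{1,\dots,m\}$ so the double sum refactors correctly. Finally, the bracketed remark follows at once: if $\Delta'$ is any other partition on whose open subintervals $\varphi$ is constant, then $\Omega:=\Delta\cup\Delta'$ is a partition containing both $\Delta$ and $\Delta'$, and applying the proposition with this $\Omega$ first to $\Delta$ and then to $\Delta'$ shows that the two formulas for the integral both equal the common value $\sum_{j=1}^m C_j(u_j-u_{j-1})$, so $\int_a^b\varphi$ does not depend on the adapted partition used to define it.
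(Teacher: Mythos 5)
Your proof is correct, but it takes a genuinely different route from the paper's. The paper argues by (strong) induction on $m$, the cardinality of the refining partition $\Omega$: it deletes the single point $u_{m-1}$ from $\Omega$, applies the inductive hypothesis to the smaller partition, and then shows that re-inserting $u_{m-1}$ splits one summand into two with the same constant, so the total is unchanged. Your argument is direct and non-inductive: Step~1 establishes the structural fact that every open subinterval $(u_{j-1},u_j)$ of the refinement must lie entirely inside one open subinterval $(x_{i-1},x_i)$ of $\Delta$ (using $\Delta\sub\Omega$ to rule out a $\Delta$-point straddling two consecutive $\Omega$-points), and Step~2 makes the bookkeeping explicit with the index map $i\mapsto p_i$ and the blocks $J_i$, after which a telescoping sum finishes. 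What your approach buys: the block decomposition $\{1,\dots,m\}=\bigsqcup_i J_i$ makes visible exactly why the two sums agree, and it sidesteps a subtle point in the paper's induction, namely that the deleted point $u_{m-1}$ could itself belong to $\Delta$, in which case the reduced partition no longer contains $\Delta$ and the inductive hypothesis does not directly apply. You also supply the short argument for the bracketed uniqueness remark (take $\Omega=\Delta\cup\Delta'$), which the paper asserts but does not spell out. The cost is that your proof is a bit longer to write out and requires careful attention to strict versus non-strict inequalities at the block boundaries, which you handle correctly.
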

\begin{proof}We use induction on $m$. Suppose that for any partition $\Omega=\{v_0,v_1,\ldots,v_t\}$ [where $t<m$] of $\abClosed$ that contains $\Delta$, the statement holds. Then this inductive hypothesis applies to the partition\linebreak $\{u_0,u_1,\ldots,u_m\}\setdiff\{u_{m-1}\}$ of $\abClosed$. That is, for each\ $j\in\{1,2,\ldots,m-2,m\}$, there exists $C_j\in\R$ such that
\begin{eqnarray}
\{C_j\} & = & \varphi\lbrak\lpar u_{j-1},u_j\rpar\rbrak,\nonumber\\
\{C_m\} & = & \varphi\lbrak\lpar u_{m-2},u_m\rpar\rbrak,\label{refinement1a}\\
\int_a^b\varphi & = & \lbrak\sum_{j=1}^{m-2}C_j(u_j-u_{j-1})\rbrak+C_{m}(u_m-u_{m-2}).\label{refinement2}
\end{eqnarray}
Since $u_{m-2}<u_{m-1}<u_m$, $\lpar u_{m-2},u_{m-1}\rpar\sub\lpar u_{m-2},u_m\rpar$ and $\lpar u_{m-1},u_{m}\rpar\sub\lpar u_{m-2},u_m\rpar$, which imply $\varphi\lbrak\lpar u_{m-2},u_{m-1}\rpar\rbrak\sub\varphi\lbrak\lpar u_{m-2},u_m\rpar\rbrak$ and $\varphi\lbrak\lpar u_{m-1},u_{m}\rpar\rbrak\sub\varphi\lpar u_{m-2},u_m\rpar$ and by \eqref{refinement1a},
\begin{eqnarray}
\varphi\lbrak\lpar u_{m-2},u_{m-1}\rpar\rbrak \sub  \{C_m\},\qquad \varphi\lbrak\lpar u_{m-1},u_m\rpar\rbrak\sub  \{C_m\},\nonumber
\end{eqnarray}
but since $C_m$ is contained in each of the above image sets, we further get set equality. Thus, we may define $C_{m-1}:=C_m$, and from \eqref{refinement2},
\begin{eqnarray}
\int_a^b\varphi & = & \lbrak\sum_{j=1}^{m-2}C_j(u_j-u_{j-1})\rbrak+C_{m}(u_m-u_{m-1}+u_{m-1}-u_{m-2}),\nonumber\\
& = & \lbrak\sum_{j=1}^{m-2}C_j(u_j-u_{j-1})+C_{m}(u_m-u_{m-1})\rbrak+C_m(u_{m-1}-u_{m-2}),\nonumber\\
& = & \lbrak\sum_{j=1}^{m-2}C_j(u_j-u_{j-1})+C_{m}(u_m-u_{m-1})\rbrak+C_{m-1}(u_{m-1}-u_{m-2}),\nonumber\\
& = & \sum_{j=1}^mC_j(u_j-u_{j-1}).\nonumber
\end{eqnarray}
By induction, the desired result follows.
\end{proof}

\begin{proposition}\label{negStepIntProp} If $\abSteps$ is defined as the collection of all step functions $\abClosed\into\R$, then, given a constant function $C:x\mapsto C$ and given $\varphi,\psi\in\abSteps$, we have $C,C\varphi,\varphi+\psi\in\abSteps$. Furthermore,\linebreak $\int_a^bC=C(b-a)$, $\int_a^b C\varphi=C\int_a^b\varphi$ and $\int_a^b(\varphi+\psi)=\int_a^b\varphi+\int_a^b\psi$.
\end{proposition}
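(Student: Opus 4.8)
The plan is to dispatch the six assertions in order of increasing difficulty, treating the constant function first, then the scalar multiple, then the sum, with the common refinement of partitions doing the only real work in the last case. Throughout I may assume $a<b$, since when $a=b$ the only partition of $\abClosed$ is $\{a\}$, every function is (vacuously) a step function, each integral is an empty sum and hence $0$, and all three displayed equations reduce to $0=0$.

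For the constant function $C:x\mapsto C$, the two-point partition $\{a,b\}$ of $\abClosed$ witnesses that $C$ is a step function, since $C$ is trivially constant on $(a,b)$; unwinding the definition of $\int_a^b$ in Proposition~\ref{anypartitionProp} with this partition gives $\int_a^b C=C(b-a)$ at once. For the scalar multiple, if $\varphi\in\abSteps$ has defining partition $\Delta=\{x_0,x_1,\ldots,x_n\}$ with value $c_i$ on $(x_{i-1},x_i)$, then $C\varphi$ has the constant value $Cc_i$ on that same interval, so $\Delta$ is also a defining partition for $C\varphi$; hence $C\varphi\in\abSteps$ and $\int_a^b C\varphi=\sum_{i=1}^n Cc_i(x_i-x_{i-1})=C\sum_{i=1}^n c_i(x_i-x_{i-1})=C\int_a^b\varphi$.

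For the sum, let $\varphi$ have defining partition $\Delta_1$ and $\psi$ have defining partition $\Delta_2$, and set $\Omega:=\Delta_1\cup\Delta_2=\{u_0<u_1<\cdots<u_m\}$; being a finite subset of $\abClosed$ containing $a$ and $b$, it is a partition, and it contains both $\Delta_1$ and $\Delta_2$. The key observation is that for each $j$ the interval $(u_{j-1},u_j)$ contains no element of $\Delta_1$ (consecutive points of $\Omega\sup\Delta_1$ have no point of $\Omega$ strictly between them), so $(u_{j-1},u_j)$ lies inside a single open subinterval determined by $\Delta_1$, on which $\varphi$ is constant; write $C_j$ for that value, and similarly $D_j$ for the constant value of $\psi$ on $(u_{j-1},u_j)$. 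Then $\varphi+\psi$ is constant, equal to $C_j+D_j$, on each $(u_{j-1},u_j)$, so $\varphi+\psi\in\abSteps$ with defining partition $\Omega$. Applying Proposition~\ref{anypartitionProp} three times — for $\varphi$ via the refinement $\Delta_1\sub\Omega$, for $\psi$ via $\Delta_2\sub\Omega$, and for $\varphi+\psi$ via $\Omega\sub\Omega$ — yields $\int_a^b\varphi=\sum_{j=1}^m C_j(u_j-u_{j-1})$, $\int_a^b\psi=\sum_{j=1}^m D_j(u_j-u_{j-1})$, and $\int_a^b(\varphi+\psi)=\sum_{j=1}^m(C_j+D_j)(u_j-u_{j-1})$; splitting the last sum then gives $\int_a^b(\varphi+\psi)=\int_a^b\varphi+\int_a^b\psi$.

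The main obstacle is the bookkeeping in the last step: making precise that an open interval between consecutive points of the common refinement $\Omega$ sits inside a single open subinterval of $\Delta_1$ and of $\Delta_2$ simultaneously (so that $\varphi$ and $\psi$ are constant there with values $C_j$ and $D_j$), and then aligning the three invocations of Proposition~\ref{anypartitionProp} against the one partition $\Omega$ so that the three sums share index set and subinterval lengths. Everything else is a direct unwinding of the definitions of step function and of $\int_a^b$.
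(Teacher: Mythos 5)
Your proof is correct and follows essentially the same route as the paper's: pass to the common refinement $\Omega=\Delta_1\cup\Delta_2$, and use Proposition~\ref{anypartitionProp} to re-express $\int_a^b\varphi$ and $\int_a^b\psi$ as sums over the same partition $\Omega$ so the three sums can be aligned termwise. You just make explicit the intermediate observation (each open subinterval of $\Omega$ meets no point of $\Delta_1$ or $\Delta_2$, hence sits inside a single step of each) that the paper folds silently into its single invocation of Proposition~\ref{anypartitionProp}.
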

\begin{proof} Using Proposition~\ref{anypartitionProp}, there exists a partition $\Delta=\{x_0,x_1,\ldots,x_n\}$ of $\abClosed$ such that $\varphi$ has the value $C_k$, and $\psi$ has value $G_k$, on $(x_{k-1},x_{k})$ for all $k\in\{1,2,\ldots,n\}$. The functions $C$, $C\varphi$ and $\varphi+\psi$ have the values $C$, $C\cdot C_k$ and $C_k+G_k$, respectively, and are hence constant, on $(x_{k-1},x_{k})$ for all $k\in\{1,2,\ldots,n\}$, so $C,C\varphi,\varphi+\psi\in\abSteps$. Furthermore, 
\begin{eqnarray}
\int_a^bC &=&\sum_{k=1}^nC(x_k-x_{k-1})=C\sum_{k=1}^nx_{k}-C\sum_{k=1}^nx_{k-1},\nonumber\\
    &=&C\sum_{k=1}^nx_{k}-C\sum_{k=0}^{n-1}x_{k}=Cx_n+C\sum_{k=1}^{n-1}x_{k}-C\sum_{k=1}^{n-1}x_{k}-Cx_0,\nonumber\\
    &=&Cx_n-Cx_0=Cb-Ca=C(b-a),\nonumber\\
    \int_a^b C\varphi&=&\sum_{k=1}^nC\cdot C_k(x_{k}-x_{k-1})=C\sum_{k=1}^nC_k(x_{k}-x_{k-1})=C\int_a^b\varphi,\nonumber\\
    \int_a^b(\varphi+\psi)&=&\sum_{k=1}^n(C_k+G_k)(x_{k}-x_{k-1})=\sum_{k=1}^nC_k(x_{k}-x_{k-1})+\sum_{k=1}^nG_k(x_{k}-x_{k-1}),\nonumber\\
    &=&\int_a^b\varphi+\int_a^b\psi.\qedhere\nonumber
\end{eqnarray}
\end{proof}

Given $X\sub\R$ and functions $f,g,h:X\into\R$, consider the assertions
\begin{eqnarray}
\forall x\in X && f(x)\leq g(x),\nonumber\\
\forall x\in X && f(x)<g(x),\nonumber\\
\forall x\in X && f(x)=g(x),\nonumber\\
 \forall x\in X && f(x)\leq g(x)\leq h(x) \implies f(x)\leq h(x),\nonumber\\
 \forall x\in X && f(x)< g(x)< h(x) \implies f(x)< h(x),\nonumber\\
 \forall x\in X && f(x)<g(x) \implies f(x)\leq g(x),\nonumber\\
\forall x\in X && f(x)\leq g(x) \iff  -g(x)\leq -f(x),\nonumber\\
\forall x\in X && |f(x)-g(x)|<\varepsilon \iff g(x)-\varepsilon<f(x)<g(x)+\varepsilon.\nonumber
\end{eqnarray}
Since each of these is true for any $x$ in the common domain $X$ of the functions $f$, $g$ and $h$, we will simply remove any reference to the element $x$, and write the above into what may be called ``pointwise function notation,'' and so each of the above assertions shall simply be written as
\begin{eqnarray}
    && f\leq g,\nonumber\\
&& f<g,\nonumber\\
&& f=g,\nonumber\\
&&  f\leq g\leq h \implies f\leq h,\nonumber\\
&&  f< g< h \implies f< h,\nonumber\\
&&  f<g \implies f\leq g,\nonumber\\
&& f\leq g \iff -g\leq -f,\nonumber\\
&& |f-g|<\varepsilon \iff g-\varepsilon<f<g+\varepsilon,\nonumber
\end{eqnarray}
respectively.

\begin{proposition}\label{monotoneProp} Let $\varphi,\psi\in\abSteps$. If $\varphi\leq\psi$, then $\int_a^b\varphi\leq\int_a^b\psi$. If $\varphi<\psi$, then $\int_a^b\varphi<\int_a^b\psi$.
\end{proposition}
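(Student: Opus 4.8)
The plan is to reduce both inequalities to a single common partition for $\varphi$ and $\psi$ and then compare the two step-function sums term by term. Since $\varphi$ is a step function, there is a partition $\Delta_\varphi$ of $\abClosed$ on which $\varphi$ is piecewise constant, and likewise a partition $\Delta_\psi$ for $\psi$. The set $\Delta:=\Delta_\varphi\cup\Delta_\psi$ is a finite subset of $\abClosed$ containing $a$ and $b$, hence a partition of $\abClosed$, and it contains both $\Delta_\varphi$ and $\Delta_\psi$. Writing $\Delta=\{x_0,x_1,\dots,x_n\}$ with $a=x_0<x_1<\cdots<x_n=b$, Proposition~\ref{anypartitionProp} applied to $\varphi$ (with $\Delta_\varphi$ and its refinement $\Delta$) and then to $\psi$ produces reals $c_1,\dots,c_n$ and $d_1,\dots,d_n$ with $\{c_k\}=\varphi\lbrak\lpar x_{k-1},x_k\rpar\rbrak$, $\{d_k\}=\psi\lbrak\lpar x_{k-1},x_k\rpar\rbrak$, and
\[
\int_a^b\varphi=\sum_{k=1}^n c_k(x_k-x_{k-1}),\qquad \int_a^b\psi=\sum_{k=1}^n d_k(x_k-x_{k-1}).
\]

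Next I would compare term by term. Fix $k\in\{1,\dots,n\}$. Since $x_{k-1}<x_k$, the midpoint $\frac{x_{k-1}+x_k}{2}$ lies in $\lpar x_{k-1},x_k\rpar$, so there $\varphi$ takes the value $c_k$ and $\psi$ the value $d_k$; thus the hypothesis $\varphi\leq\psi$ forces $c_k\leq d_k$. Because $x_k-x_{k-1}>0$, multiplying preserves the inequality: $c_k(x_k-x_{k-1})\leq d_k(x_k-x_{k-1})$. Summing these $n$ inequalities gives $\int_a^b\varphi\leq\int_a^b\psi$, the first assertion. For the strict claim one first disposes of the degenerate case $a=b$, where both integrals equal $0$; when $a<b$ the partition $\Delta$ contains the two distinct points $a,b$, so $n\geq 1$, and now $\varphi<\psi$ yields $c_k<d_k$ for each $k$, hence $c_k(x_k-x_{k-1})<d_k(x_k-x_{k-1})$, and summing a nonempty finite family of strict inequalities gives $\int_a^b\varphi<\int_a^b\psi$.

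I do not anticipate a genuine obstacle here; the only point requiring care is the bookkeeping — verifying that the common refinement $\Delta_\varphi\cup\Delta_\psi$ really is a partition that contains both original partitions, so that Proposition~\ref{anypartitionProp} delivers the \emph{same} index set $\{1,\dots,n\}$ and a matching sum formula for both $\varphi$ and $\psi$ — together with the minor observation that for the strict inequality the interval must have positive length. Everything else rests on the elementary facts that multiplication by a positive number preserves $\leq$ (and $<$), and that summation over a finite (respectively, nonempty finite) index set preserves $\leq$ (respectively, $<$).
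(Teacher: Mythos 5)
Your proof follows essentially the same route as the paper's: form the common refinement $\Delta=\Delta_\varphi\cup\Delta_\psi$, invoke Proposition~\ref{anypartitionProp} to express both integrals as sums over the same subintervals, compare the constants on each subinterval by evaluating at any interior point, multiply by the positive lengths, and sum. This is exactly the paper's argument with only cosmetic differences in notation.

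One small quibble concerns the degenerate case $a=b$, which you flag but do not actually dispose of. If $a=b$, the only partition of $\abClosed$ is $\{a\}$, the sum defining the integral is empty, and both $\int_a^a\varphi$ and $\int_a^a\psi$ equal $0$. But it is perfectly possible to have $\varphi(a)<\psi(a)$, i.e.\ $\varphi<\psi$ on $\{a\}$, and then the asserted strict inequality $\int_a^a\varphi<\int_a^a\psi$ becomes $0<0$, which is false. So the degenerate case is not a case where the strict claim trivially holds; it is a case the strict claim silently excludes. The paper's proof has the same implicit restriction (its term-by-term comparison requires a nonempty subinterval on which to evaluate), so this is a latent issue in the proposition's wording rather than a flaw unique to your argument --- but the phrase ``disposes of the degenerate case'' suggests a resolution you haven't actually supplied. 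It would be more accurate to say that the strict inequality presupposes $a<b$.
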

\begin{proof} By the definition of step function, there exist partitions $\{x_0,x_1,\ldots,x_n\}$ and $\{y_0,y_1,\ldots,y_m\}$ of $\abClosed$ such that for any $i\in\{1,2,\ldots,n\}$ and any $j\in\{1,2,\ldots,m\}$ there exist $A_i,B_j\in\R$ such that $\int_a^b\varphi=\sum_{i=1}^nA_i(x_i-x_{i-1})$ and $\int_a^b\psi=\sum_{j=1}^mB_j(y_j-y_{j-1})$. Let
\begin{eqnarray}
\{z_0,z_1,\ldots,z_h\} := \{x_0,x_1,\ldots,x_n\}\cup\{y_0,y_1,\ldots,y_m\},\nonumber
\end{eqnarray}
with $a=z_0<z_1<\cdots<z_n=b$. By Proposition~\ref{anypartitionProp}, for each $k\in\{1,2,\ldots,h\}$, there exist $U_k,V_k\in\R$ such that
\begin{eqnarray}
\{U_k\} & = & \varphi\lbrak\lpar z_{k-1},z_k\rpar\rbrak,\label{monostep1} \\
\{V_k\} & = & \psi\lbrak\lpar z_{k-1},z_k\rpar\rbrak,\label{monostep2}\\
\int_a^b\varphi & = &\sum_{k=1}^hU_k(z_k-z_{k-1}),\label{monostep3}\\
\int_a^b\psi & = &\sum_{k=1}^hV_k(z_k-z_{k-1}).\label{monostep4}
\end{eqnarray}
Let $k\in\{1,2,\ldots,h\}$, and let $x\in\lpar z_{k-1},z_k\rpar$. Since $\varphi\leq\psi$ [respectively, $\varphi<\psi$], we have $\varphi(x)\leq\psi(x)$ [respectively, $\varphi(x)<\psi(x)$], and by \eqref{monostep1}, \eqref{monostep2}, $U_k\leq V_k$ [respectively, $U_k<V_k$], both sides of which, we multiply by $z_k-z_{k-1}$, which is positive because $z_{k-1}<z_k$. Thus, $U_k(z_k-z_{k-1})\leq V_k(z_k-z_{k-1})$ [respectively, $U_k(z_k-z_{k-1})< V_k(z_k-z_{k-1})$]. Taking the summation for all $k\in\{1,2,\ldots,h\}$, and making substitutions using \eqref{monostep3}, \eqref{monostep4}, we get $\int_a^b\varphi\leq\int_a^b\psi$ [respectively, $\int_a^b\varphi<\int_a^b\psi$].
\end{proof}

The \emph{lower Darboux integral} of $f$ over $\abClosed$ is defined as $$\abIntLow f:=\sup\left\{\int_a^b\varphi\  :\  \varphi\in\abSteps,\quad \varphi\leq f\right\},$$
while the \emph{upper Darboux integral} of $f$ over $\abClosed$ is $$\abIntUp f:=-\sup\left\{\int_a^b\varphi\  :\  \varphi\in\abSteps,\quad \varphi\leq -f\right\}.$$

For each of these definitions, replacing $f$ by $-f$ and comparing with the other definition results to 
\begin{eqnarray}
    \abIntLow(-f)=-\abIntUp f,\qquad \abIntUp(-f)=-\abIntLow f,\label{DarbouxNeg}
\end{eqnarray}
or alternatively, if one of the identities in \eqref{DarbouxNeg} has been obtained from some premise, replacing $f$ by $-f$ results to the other.

\begin{proposition}\label{SumtoStepProp} If the lower and upper Darboux integrals exist, then $\abIntLow f\leq \abIntUp f$.
\end{proposition}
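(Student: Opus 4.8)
The plan is to exploit the interaction between a step function lying below $f$ and a step function lying below $-f$. The key observation is this: if $\varphi,\psi\in\abSteps$ satisfy $\varphi\le f$ and $\psi\le -f$, then adding these pointwise inequalities gives $\varphi+\psi\le 0$, where $0$ denotes the constant (hence step) function with value $0$. By Proposition~\ref{negStepIntProp}, $\varphi+\psi\in\abSteps$ with $\abInt(\varphi+\psi)=\abInt\varphi+\abInt\psi$, and $\abInt 0=0\cdot(b-a)=0$. Applying Proposition~\ref{monotoneProp} to $\varphi+\psi\le 0$ yields $\abInt\varphi+\abInt\psi=\abInt(\varphi+\psi)\le\abInt 0=0$, and hence $\abInt\varphi\le-\abInt\psi$.

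From here I would run a two-stage supremum argument. Since the lower and upper Darboux integrals are assumed to exist, the sets $S_1:=\{\abInt\varphi\ :\ \varphi\in\abSteps,\ \varphi\le f\}$ and $S_2:=\{\abInt\psi\ :\ \psi\in\abSteps,\ \psi\le -f\}$ are each nonempty and bounded above (an empty set has no supremum, since $\R$ has no least element). Fix any $\psi\in\abSteps$ with $\psi\le -f$. The inequality above shows that $-\abInt\psi$ is an upper bound of $S_1$, so by the definition of supremum $\abIntLow f=\sup S_1\le-\abInt\psi$. Rearranging, $\abInt\psi\le-\abIntLow f$ for \emph{every} $\psi\in\abSteps$ with $\psi\le -f$, i.e.\ $-\abIntLow f$ is an upper bound of $S_2$. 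Invoking the definition of supremum once more, $\sup S_2\le-\abIntLow f$, that is $-\abIntUp f\le-\abIntLow f$, which is equivalent to $\abIntLow f\le\abIntUp f$, as desired.

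There is no genuine obstacle here; everything reduces to Propositions~\ref{negStepIntProp} and~\ref{monotoneProp} together with the least-upper-bound property of the supremum. The only point that requires care is the order of quantifiers in the two applications of the definition of supremum: one must first fix the ``upper'' step function $\psi$ and take the supremum over all ``lower'' step functions $\varphi$, and only afterward let $\psi$ range and take the second supremum; reversing this, or trying to pick $\varphi$ and $\psi$ simultaneously ``optimal,'' does not work.
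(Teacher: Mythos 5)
Your proof is correct and follows essentially the same route as the paper's: establish $\int_a^b\varphi\le-\int_a^b\psi$ for arbitrary competing step functions, then run the two-stage supremum argument. The only cosmetic difference is that you reach the key inequality by applying monotonicity to $\varphi+\psi\le 0$, whereas the paper applies it to $\varphi\le-\psi$ directly; both rely on the same two propositions.
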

\begin{proof} Let $\theSet_1:=\left\{\int_a^b\varphi\  :\  \varphi\in\abSteps,\  \varphi\leq f\right\}$ and $\theSet_2:=\left\{\int_a^b\varphi\  :\  \varphi\in\abSteps,\  \varphi\leq -f\right\}$. By assumption, the suprema $\abIntLow f=\sup\theSet_1$ and $-\abIntUp f=\sup\theSet_2$ exist. Let $\phi,\psi\in\abSteps$ such that $\phi\leq f$ and $\psi\leq-f$, where the latter implies $f\leq -\psi$. Thus, $\phi\leq f\leq -\psi$, which implies $\phi\leq-\psi$. By Proposition~\ref{monotoneProp}, $\int_a^b\phi\leq\int_a^b(-\psi)$, and by Proposition~\ref{negStepIntProp}, $\int_a^b\phi\leq-\int_a^b\psi$, where the left-hand side is an arbitrary element of $\theSet_1$, so the right-hand side is an upper bound of $\theSet_1$, and its relationship to the supremum $\abIntLow f=\sup\theSet_1$ is given by the inequality $\abIntLow f\leq -\int_a^b\psi$, so $\int_a^b\psi\leq -\abIntLow f$. This time, the left-hand side is an arbitrary element of $\theSet_2$, so the right-hand side is an upper bound, and is related to the supremum by $-\abIntUp f\leq -\abIntLow f$. Therefore, $\abIntLow f\leq \abIntUp f$.
\end{proof}

We say that a function $f$ is \emph{bounded above on $X$ by $M\in\R$} if the image set $f[X]$ is bounded above by $M$, or that $M$ is an upper bound of $f[X]$.

\begin{proposition}\label{fboundCupProp} If $f$ is bounded above on each of the sets $X_1$, $X_2$, \ldots , $X_n$, then $f$ is bounded above on $\bigcup_{k=1}^nX_k$.
\end{proposition}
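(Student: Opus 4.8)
The plan is to extract one upper bound per set and then take their maximum. By the definition of ``bounded above on a set by $M$'' given just before the statement, for each $k\in\{1,2,\ldots,n\}$ there exists $M_k\in\R$ that is an upper bound of $f[X_k]$; that is, $f(x)\leq M_k$ for every $x\in X_k$. Since $\{M_1,M_2,\ldots,M_n\}$ is a nonempty finite set of real numbers, the number $M:=\max\{M_1,M_2,\ldots,M_n\}$ exists in $\R$ and satisfies $M_k\leq M$ for each such $k$ --- this is the same ``maximum of finitely many reals'' fact already invoked, e.g., in the proofs of Propositions~\ref{boundedDefProp} and \ref{CauchyBoundProp} and of Lemma~\ref{UIKtoLCLLem}.

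Next I would verify that $M$ is an upper bound of $f\left[\bigcup_{k=1}^n X_k\right]$. Let $y$ be an arbitrary element of this image set. Then there exists $x\in\bigcup_{k=1}^n X_k$ with $y=f(x)$, and by the definition of union there exists $K\in\{1,2,\ldots,n\}$ such that $x\in X_K$. Hence $y=f(x)\leq M_K\leq M$. Since $y$ was arbitrary, $M$ is an upper bound of $f\left[\bigcup_{k=1}^n X_k\right]$, which is exactly the assertion that $f$ is bounded above on $\bigcup_{k=1}^n X_k$ (by $M$).

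Alternatively, one could mirror the ``two sets then induction'' pattern of Proposition~\ref{UnionFiniteProp}: first show that if $f$ is bounded above on $X$ by $M_X$ and on $Y$ by $M_Y$, then $f$ is bounded above on $X\cup Y$ by $\max\{M_X,M_Y\}$, and then induct on $n$ with the trivial base case $n=1$. There is no genuine obstacle here; the only point deserving an explicit word is the well-definedness of the maximum of the finitely many bounds $M_1,\ldots,M_n$ (in particular, that the index set $\{1,\ldots,n\}$ is nonempty), which is the lone place where the finiteness of the collection of sets is actually used.
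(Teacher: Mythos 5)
Your proof is correct. The only difference from the paper is organizational: the paper proves the two-set case and then inducts on $n$ (the pattern you describe as your alternative), whereas your primary argument extracts all $n$ bounds at once and takes $M:=\max\{M_1,\ldots,M_n\}$ directly. These are interchangeable packagings of the same idea---the ``maximum of finitely many reals exists'' fact is itself proved by the very induction the paper writes out---so neither buys anything substantive over the other. Your direct version is a bit shorter; the paper's inductive version keeps the argument parallel to Proposition~\ref{UnionFiniteProp}, which it is used alongside. Your remark that finiteness (and nonemptiness of the index set) is the only place where $n<\infty$ matters is accurate and worth keeping.
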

\begin{proof} Using induction on $n$, the desired statement is trivially true at $n=1$. Suppose that for some $n\in\N$ if $X$ is bounded above on $n$ sets, then $f$ is bounded above on the union of these $n$ sets. If $f$ is bounded above on each of the sets $X_1$, $X_2$, \ldots , $X_{n+1}$, then by the inductive hypothesis, $f$ is bounded above by some $M_1$ on the union $Y$ of the first $n$ sets, and by assumption is bounded above by $M_2$ on the last set $X_{n+1}$. If $x\in Y\cup X_{n+1}$, then $f(x)\leq M_1\leq\max\{M_1,M_2\}$ or $f(x)\leq M_2\leq\max\{M_1,M_2\}$, so $f$ is bounded above by $\max\{M_1,M_2\}$ on $Y\cup X_{n+1}$. By induction, we get the desired statement.
\end{proof}

\begin{proposition}\label{stepBVTProp} Any step function $\abClosed\into\R$ is bounded above on $\abClosed$.
\end{proposition}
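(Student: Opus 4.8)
The plan is to reduce the claim to the elementary fact that a nonempty finite set of real numbers has a largest element. First I would apply Proposition~\ref{StepImageProp} to the step function $\varphi:\abClosed\into\R$ to obtain finitely many real numbers $c_1,c_2,\ldots,c_N$ with $\{\varphi(x)\  :\  x\in\abClosed\}=\{c_1,c_2,\ldots,c_N\}$. Setting $M:=\max\{c_1,c_2,\ldots,c_N\}$ — which exists, since a routine induction on $N$ using the Trichotomy Law shows that every nonempty finite subset of $\R$ has a largest element — one gets, for every $x\in\abClosed$, that $\varphi(x)\in\{c_1,\ldots,c_N\}$ and hence $\varphi(x)\leq M$. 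Thus $M$ is an upper bound of the image set $\varphi[\abClosed]$, which is precisely the assertion that $\varphi$ is bounded above on $\abClosed$ (by $M$).

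An essentially equivalent route avoids Proposition~\ref{StepImageProp}: by the definition of step function there is a partition $\Delta=\{x_0,x_1,\ldots,x_n\}$ of $\abClosed$ such that $\varphi$ takes a constant value $C_k$ on each open interval $(x_{k-1},x_k)$. By Proposition~\ref{PartitionCupProp}, $\abClosed=\bigcup_{k=1}^n[x_{k-1},x_k]$. On each closed piece $[x_{k-1},x_k]$ every value of $\varphi$ is one of the three numbers $C_k$, $\varphi(x_{k-1})$, $\varphi(x_k)$, so $\varphi$ is bounded above there by the maximum of those three. Proposition~\ref{fboundCupProp} then upgrades this to boundedness above on the finite union $\abClosed$.

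There is no genuine obstacle here; the only point deserving a moment's care is that the partition elements $x_0,\ldots,x_n$ are not governed by the ``constant on the open intervals'' clause, so one must separately observe that $\varphi$ contributes only finitely many further values there. This is exactly what Proposition~\ref{StepImageProp} (or the explicit endpoint bookkeeping in the second approach) handles. I would present the first approach, as it is the shortest and most self-contained given the preliminaries already established.
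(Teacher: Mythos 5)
Your proposal is correct, and your preferred route (the first one) is genuinely different from the paper's. The paper works on the \emph{domain} side: it notes that $\varphi$ is constant (hence bounded above) on each open subinterval $(x_{k-1},x_k)$ and on each singleton $\{x_k\}$, then climbs up via Proposition~\ref{fboundCupProp} (twice) and Proposition~\ref{PartitionCupProp} from pieces to closed subintervals to all of $\abClosed$. Your first approach instead works on the \emph{range} side: Proposition~\ref{StepImageProp} already tells you the image $\varphi[\abClosed]$ is a finite set $\{c_1,\ldots,c_N\}$, and a nonempty finite subset of $\R$ has a maximum, which serves as the bound. The payoff of your version is brevity and the fact that it isolates the one genuinely finitary fact being used; the paper's version avoids invoking StepImageProp and stays entirely within the domain-decomposition machinery (fboundCupProp, PartitionCupProp) it is in the middle of building for later integration lemmas such as Lemmas~\ref{DITtoBVTLem} and~\ref{RITtoBVTLem}. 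Your second approach is essentially a verbatim reconstruction of the paper's argument, so you have correctly identified both routes; either is acceptable.
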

\begin{proof} If $\varphi:\abClosed\into\R$ is a step function, then there exists a partition $\Delta=\{x_0,x_1,\ldots,x_n\}$ of $\abClosed$ such that for each $k\in\{1,2,\ldots,n\}$, the function $\varphi$ is constant, and hence bounded above, on $(x_{k-1},x_{k})$, and also on each the singletons $\{x_{k-1}\}$ and $\{x_{k}\}$. By Proposition~\ref{fboundCupProp}, $\varphi$ is bounded above on\linebreak $[x_{k-1},x_{k}]=(x_{k-1},x_{k})\cup\{x_{k-1}\}\cup\{x_{k}\}$ for any $k\in\{1,2,\ldots,n\}$. Using Proposition~\ref{fboundCupProp} again, and also Proposition~\ref{PartitionCupProp}, $\varphi$ is bounded above on $\bigcup_{k=1}^n[x_{k-1},x_k]=\abClosed$.
\end{proof}

\begin{lemma}\label{DITtoBVTLem} If $\abIntUp f$ exists, then $f$ is bounded above on $\abClosed$.
\end{lemma}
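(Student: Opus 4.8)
The plan is to exploit the fact that the upper Darboux integral is \emph{defined} as a supremum, together with the elementary principle that for a supremum to exist as a real number its defining set must be nonempty. Writing $\theSet_2 := \left\{\int_a^b\varphi\  :\  \varphi\in\abSteps,\  \varphi\leq -f\right\}$, we have $\abIntUp f = -\sup\theSet_2$ by definition, so the hypothesis that $\abIntUp f$ exists says precisely that $\sup\theSet_2$ exists in $\R$. Since the supremum of the empty set is not a real number, this forces $\theSet_2\neq\emptyset$, and hence there is at least one step function $\varphi\in\abSteps$ with $\varphi\leq -f$. This is the key extraction step, and really the only one where any thought is needed.

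Next I would observe that a single such $\varphi$ already controls $f$ from above, because step functions are bounded. By Proposition~\ref{negStepIntProp} (applied with the constant $-1$), $-\varphi\in\abSteps$, and by Proposition~\ref{stepBVTProp} there exists $M\in\R$ such that $-\varphi\leq M$ on $\abClosed$; equivalently, $\varphi\geq -M$ on $\abClosed$. (One could instead bypass these and cite Proposition~\ref{StepImageProp}: $\varphi[\abClosed]=\{c_1,\dots,c_N\}$ is a finite set, so $-\min\{c_1,\dots,c_N\}$ serves as $M$.)

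Finally, combine $\varphi\leq -f$ with $\varphi\geq -M$ pointwise on $\abClosed$: for every $x\in\abClosed$ we get $-f(x)\geq \varphi(x)\geq -M$, hence $f(x)\leq M$. Thus $f[\abClosed]$ is bounded above by $M$, i.e.\ $f$ is bounded above on $\abClosed$, which completes the argument. (The proof can equivalently be packaged contrapositively: if $f$ is unbounded above on $\abClosed$, then no step function can lie below $-f$, so $\theSet_2=\emptyset$ and $\sup\theSet_2$ fails to exist, hence so does $\abIntUp f$.)

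I do not expect a genuine obstacle here; the one subtlety worth stating carefully is the logical point that "$\abIntUp f$ exists" already entails $\theSet_2\neq\emptyset$, since otherwise the supremum defining it could not be a real number. Everything downstream of that is a two-line consequence of the boundedness of step functions established earlier in Section~\ref{FifthCirclePrelims}.
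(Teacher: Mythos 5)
Your proof is correct and follows essentially the same route as the paper's. The only cosmetic difference is in how you extract a step function $\varphi\leq-f$: you observe abstractly that the supremum of the empty set cannot exist as a real number (which indeed follows from the paper's definition of supremum), whereas the paper instead applies Proposition~\ref{xiProp} with $a=-\abIntUp f-1<-\abIntUp f=\sup\theSet_2$ to produce such a $\psi$ explicitly; both establish $\theSet_2\neq\emptyset$. Your concluding chain $-f\geq\varphi\geq-M\Rightarrow f\leq M$ is also just a sign-flipped rephrasing of the paper's more direct $f\leq-\psi\leq M$, and your citations of Propositions~\ref{negStepIntProp} and \ref{stepBVTProp} are the same ones the paper uses.
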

\begin{proof} Since $-\abIntUp f=\sup\left\{\int_a^b\varphi\  :\  \varphi\in\abSteps,\  \varphi\leq -f\right\}$ and $1>0$, by Proposition~\ref{xiProp}, there exists $\psi\in\abSteps$ such that $\psi\leq-f$ and $-\abIntUp f-1<\int_a^b\psi\leq-\abIntUp f$. From $\psi\leq-f$, we obtain\linebreak $f\leq-\psi$, where $-\psi\in\abSteps$ according to Proposition~\ref{negStepIntProp}, and by Proposition~\ref{stepBVTProp}, is bounded above by some\linebreak $M\in\R$. Thus, for any $x\in\abClosed$, we have $f(x)\leq -\psi(x)\leq M$. Therefore, $f$ is bounded above on $\abClosed$.
\end{proof}

Given a collection $\Cover$ of sets, a function $f:\Cover\into\bigcup_{S\in\Cover} S$ is a \emph{choice function} if, for any $S\in\Cover$, we have $f(S)\in S$. Given a function $f:\abClosed\into\R$, a partition $\Delta=\{x_0,x_1,\ldots,x_n\}$ of $\abClosed$, and a choice function $\xi:\lbrak x_{k-1},x_k\rbrak\mapsto\xi_k$, where $k\in\{1,2,\ldots,n\}$ [which we shall refer to as a \emph{choice function associated to the partition $\Delta$}], the \emph{Riemann sum} of $f$ (over $\abClosed$ for the partition $\Delta$ and the choice function $\xi$) is defined as $$\Riemann(f,\Delta,\xi):=\sum_{k=1}^nf(\xi_k)\  (x_k-x_{k-1}).$$
Let $\abParts$ be the collection of all partitions of $\abClosed$. For each $\Delta=\{x_0,x_1,\ldots,x_n\}\in\abParts$, let $\R^\Delta$ be the collection of all choice functions $\xi:\lbrak x_{k-1},x_k\rbrak\mapsto\xi_k$, where $k\in\{1,2,\ldots,n\}$, and let\linebreak $\Sigma_\Delta=\Sigma_\Delta(f):=\{\Riemann(f,\Omega,\xi)\  :\  \Omega\in\abParts,\  \Delta\sub\Omega,\  \xi\in\R^\Omega\}$. 


\begin{proposition}\label{RiemannConvergesProp} The collection $\IntFilter(a,b;f):=\{\Sigma_\Delta\  :\  \Delta\in\abParts\}$ is a filter base\footnote{This statement is from \cite[p.~122]{sie92}.}. If $\IntFilter(a,b;f)$ approaches some $I\in\R$, then $I$ is unique [and the \emph{Riemann integral} of $f$ over $\abClosed$ is defined as $\int_a^bf:=I$].

\end{proposition}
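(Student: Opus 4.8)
The plan is to establish the two assertions in order: first that $\IntFilter(a,b;f)$ is a filter base, and then that a limit of this filter base, if one exists, is unique.

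For the filter base claim I must check three things against the definition: the collection is nonempty, each of its members is a nonempty subset of $\R$, and it is downward directed under intersection. Nonemptiness of the collection is immediate, since $\{a,b\}$ (or $\{a\}$ when $a=b$) is a partition of $\abClosed$, so $\abParts\neq\emptyset$. Each $\Sigma_\Delta$ is a subset of $\R$ because every Riemann sum is a real number, and it is nonempty because $\Delta\sub\Delta$ together with any choice function $\xi\in\R^\Delta$ (for instance the one picking left endpoints) yields a Riemann sum $\Riemann(f,\Delta,\xi)\in\Sigma_\Delta$. The only step with real content is directedness: given $\Sigma_{\Delta_1}$ and $\Sigma_{\Delta_2}$, I would take the common refinement $\Delta:=\Delta_1\cup\Delta_2$, observe that it is again a partition of $\abClosed$ (a finite subset of $\abClosed$ containing $a$ and $b$), and show $\Sigma_\Delta\sub\Sigma_{\Delta_1}\cap\Sigma_{\Delta_2}$. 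This holds because any $r\in\Sigma_\Delta$ has the form $\Riemann(f,\Omega,\xi)$ with $\Omega\supseteq\Delta$, and then $\Omega\supseteq\Delta\supseteq\Delta_i$ forces $r\in\Sigma_{\Delta_i}$ for $i=1,2$; this is just unwinding the definition of $\Sigma_\Delta$, and encodes the idea that refining a partition can only shrink the associated set of Riemann sums.

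For uniqueness of $I$ the quick route is to invoke Proposition~\ref{AlgLimProp}\ref{AlgId}, whose concluding clause already asserts that a filter base approaches at most one point; applying it to $\IntFilter(a,b;f)$, which we have just shown is a filter base, finishes the proof. Alternatively one argues directly in the style of Proposition~\ref{BaseLimUniqueProp}: if the filter base approached distinct $I_1<I_2$, set $\varepsilon:=\frac{I_2-I_1}{2}>0$, obtain members $\Sigma_{\Delta_1}\sub(I_1-\varepsilon,I_1+\varepsilon)$ and $\Sigma_{\Delta_2}\sub(I_2-\varepsilon,I_2+\varepsilon)$, and use directedness to produce a nonempty $\Sigma_\Delta$ contained in the (empty) intersection $(I_1-\varepsilon,I_1+\varepsilon)\cap(I_2-\varepsilon,I_2+\varepsilon)$, a contradiction.

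I do not anticipate a genuine obstacle. The one point demanding care is the bookkeeping in the directedness argument: one must use that $\Sigma_\Delta$ is defined by quantifying over \emph{all} refinements $\Omega$ of $\Delta$, not merely $\Delta$ itself, so that a refinement of $\Delta_1\cup\Delta_2$ is simultaneously a refinement of $\Delta_1$ and of $\Delta_2$. Once this is noted, the remainder is routine set manipulation together with the cited uniqueness result.
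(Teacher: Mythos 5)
Your proof is correct and follows essentially the same route as the paper's: directedness via the common refinement $\Delta\cup\Lambda$ and the unwinding $\Sigma_{\Delta\cup\Lambda}\sub\Sigma_\Delta\cap\Sigma_\Lambda$, nonemptiness via the left-endpoint choice function, and an appeal to the general filter-base uniqueness result. One small remark: your citation of Proposition~\ref{AlgLimProp}\ref{AlgId} for uniqueness of the approached point is actually the more apt one, since Proposition~\ref{BaseLimUniqueProp} as stated concerns uniqueness of the limit $L$ of a function along a filter base rather than uniqueness of the point the filter base approaches; the paper's direct citation of Proposition~\ref{BaseLimUniqueProp} is shorthand for the argument packaged in Proposition~\ref{AlgLimProp}\ref{AlgId}.
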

\begin{proof} If $\Sigma_\Delta,\Sigma_\Lambda\in\IntFilter(a,b;f)$, then $\Delta,\Lambda\in\abParts$ are finite subsets of $\abClosed$ that contain $a$ and $b$, and so is $\Delta\cup\Lambda$. Thus, $\Delta\cup\Lambda\in\abParts$ and $\Sigma_{\Delta\cup\Lambda}\in\IntFilter(a,b;f)$. If $\Riemann(f,\Omega,\xi)\in\Sigma_{\Delta\cup\Lambda}$, then\linebreak $\Delta\cup\Lambda\sub\Omega$, where the left-hand side contains both $\Delta$ and $\Lambda$. Thus, $\Riemann(f,\Omega,\xi)\in\Sigma_{\Delta}$ and $\Riemann(f,\Omega,\xi)\in\Sigma_{\Lambda}$, or that $\Riemann(f,\Omega,\xi)\in\Sigma_{\Delta}\cap\Sigma_{\Lambda}$, which proves that $\Sigma_{\Delta\cup\Lambda}\sub\Sigma_{\Delta}\cap\Sigma_{\Lambda}$. The arbitrary $\Sigma_\Delta$ is nonempty for $\Delta=\{x_0,x_1,\ldots,x_n\}$ contains itself, and given the choice function $\xi$ that sends an interval to the left endpoint, the Riemann sum $\sum_{k=1}^nf(x_{k-1})\  (x_k-x_{k-1})$ is an element of $\Sigma_\Delta$. Therefore, $\IntFilter(a,b;f)$ is a filter base. If it approaches some $I\in\R$, then by Proposition~\ref{BaseLimUniqueProp}, the number $I$ is unique.
\end{proof}


\begin{lemma}\label{RITtoBVTLem} If $\int_a^bf$ exists, then $f$ is bounded above on $\abClosed$.
\end{lemma}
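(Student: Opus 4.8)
The plan is a proof by contradiction that exploits unboundedness of $f$ on a single subinterval of a suitably chosen partition. Suppose $\abInt f$ exists, i.e. the filter base $\IntFilter(a,b;f)$ approaches some $I\in\R$. If $a=b$ then $\abClosed=\{a\}$ and $f$ is bounded above by $f(a)$, so assume $a<b$. Applying the definition of a filter base approaching a point with $\delta=1$ produces a partition $\Delta_0=\{x_0,x_1,\ldots,x_n\}$ of $\abClosed$, with $a=x_0<x_1<\cdots<x_n=b$, such that $\Sigma_{\Delta_0}\sub(I-1,I+1)$. Since $\Delta_0\sub\Delta_0$, every Riemann sum $\Riemann(f,\Delta_0,\xi)$ with $\xi\in\R^{\Delta_0}$ belongs to $\Sigma_{\Delta_0}$ and is therefore strictly less than $I+1$; this is the inequality I will violate.

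Now assume, toward a contradiction, that $f$ is not bounded above on $\abClosed$. By Proposition~\ref{PartitionCupProp}, $\abClosed=\bigcup_{k=1}^n[x_{k-1},x_k]$, so if $f$ were bounded above on each $[x_{k-1},x_k]$ then Proposition~\ref{fboundCupProp} would force $f$ to be bounded above on $\abClosed$; hence there is some $K\in\{1,\ldots,n\}$ for which $f$ is not bounded above on $[x_{K-1},x_K]$. Define a choice function $\xi$ associated to $\Delta_0$ by setting $\xi_k:=x_{k-1}$ for all $k\neq K$ and leaving $\xi_K\in[x_{K-1},x_K]$ free. Then $\Riemann(f,\Delta_0,\xi)=S+f(\xi_K)(x_K-x_{K-1})$, where $S:=\sum_{k\neq K}f(x_{k-1})(x_k-x_{k-1})$ is a fixed real number and $x_K-x_{K-1}>0$ because $x_{K-1}<x_K$. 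Since $f$ is unbounded above on $[x_{K-1},x_K]$, I may choose $\xi_K$ there with $f(\xi_K)>\frac{(I+1)-S}{x_K-x_{K-1}}$, so that $\Riemann(f,\Delta_0,\xi)>I+1$, contradicting the conclusion of the previous paragraph. Therefore $f$ is bounded above on $\abClosed$.

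I do not expect a serious obstacle; the only point needing care is the legitimacy of varying $\xi_K$ alone while freezing the remaining $\xi_k$. This is precisely why the argument pins down the other evaluation points (obtaining the single fixed constant $S$): unboundedness of $f$ on one subinterval of the fixed partition $\Delta_0$ then translates directly into arbitrarily large Riemann sums inside $\Sigma_{\Delta_0}$, which the convergence of $\IntFilter(a,b;f)$ forbids.
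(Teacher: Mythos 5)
Your proof is correct and takes essentially the same route as the paper: fix a partition from $\Sigma_\Delta\sub(I-1,I+1)$, locate a subinterval $[x_{K-1},x_K]$ on which $f$ is unbounded above, and vary the evaluation point there to push a Riemann sum above $I+1$. Your version is in fact a touch cleaner, since you freeze $\xi_k=x_{k-1}$ for $k\neq K$ \emph{before} defining the bound, making $S$ a genuine constant; the paper introduces $Y$ in a way that tacitly depends on the full choice function $\xi$, which only resolves once it specializes to $\zeta_k=x_{k-1}$ for $k\neq K$ at the end.
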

\begin{proof} If $I:=\int_a^bf$ exists, then the filter base $\IntFilter(a,b;f)$ approaches $I$. Since $1>0$, there exists\linebreak $\Sigma_\Delta\in\IntFilter(a,b;f)$ such that $\Sigma_\Delta\sub\lpar I-1,I+1\rpar$. Let $\xi\in\R^\Delta$. Since $\Delta$ contains itself, $\Riemann(f,\Delta,\xi)\in\Sigma_\Delta$, so $I-1<\Riemann(f,\Delta,\xi)<I+1$. If $\Delta=\{x_0,x_1,\ldots,x_n\}$, then 
\begin{flalign}
   && \sum_{k=1}^nf(\xi_k)\  (x_k-x_{k-1}) &<I+1, &\mbox{for all }\xi\in\R^\Delta.\label{RiemannBoundEq}
\end{flalign}
Tending towards a contradiction, suppose $f$ is not bounded above on $\abClosed$. If, for all $k\in\{1,2,\ldots,n\}$, the function $f$ is bounded above on $[x_{k-1},x_k]$, then by Propositions~\ref{PartitionCupProp} and \ref{fboundCupProp}, $f$ is bounded above on $\bigcup_{k=1}^n[x_{k-1},x_k]=\abClosed$.$\lightning$ Thus, there exists $K\in\{1,2,\ldots,n\}$ such that $f$ is not bounded above on $[x_{K-1},x_K]$. Let $Y:=\sum_{k=1}^nf(\xi_k)\  (x_k-x_{k-1})-f(\xi_K)(x_K-x_{K-1})$. From our notation for partitions, $x_K-x_{K-1}>0$, so substituting $Y$ into \eqref{RiemannBoundEq} and dividing both sides by $x_K-x_{K-1}$ results to
\begin{flalign}
   && f(\xi_K) &<\frac{I-Y+1}{x_K-x_{K-1}}, &\mbox{for all }\xi\in\R^\Delta.\label{RiemannBoundEq2}
\end{flalign}
Since $f$ is not bounded above on $\lbrak x_{K-1},x_K\rbrak$, the number $\frac{I-Y+1}{x_K-x_{K-1}}$ is not an upper bound of the image of $\lbrak x_{K-1},x_K\rbrak$ under $f$, so there exists $\gamma\in\lbrak x_{K-1},x_K\rbrak$ such that 
\begin{eqnarray}
    f(\gamma)>\frac{I-Y+1}{x_K-x_{K-1}}.\label{RiemannBoundEq3}
\end{eqnarray}
Let $\zeta\in\R^\Delta$ be the choice function $\zeta:\lbrak x_{k-1},x_k\rbrak\mapsto\zeta_k$ where $\zeta_k=x_{k-1}$ if $k\neq K$, and $\zeta_K=\gamma$. By \eqref{RiemannBoundEq3}, $f(\zeta_K)>\frac{I-Y+1}{x_K-x_{K-1}}$, but setting $\xi=\zeta$ in \eqref{RiemannBoundEq2}, $f(\zeta_K) <\frac{I-Y+1}{x_K-x_{K-1}}$.$\lightning$ Therefore, $f$ is bounded above on $\abClosed$.
\end{proof}


If both $\abIntUp f$ and $\abIntLow f$ exist and $\abIntUp f= \abIntLow f$, then $f$ is said to be \emph{Riemann integrable} over $\abClosed$.

\begin{lemma}\label{UAStoRITLem}If $f$ is Riemann integrable over $\abClosed$, then $\int_a^bf$ exists. Furthermore, $\abIntUp f=\int_a^bf= \abIntLow f$.
\end{lemma}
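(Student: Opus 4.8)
The plan is to show that the hypothesis "$f$ is Riemann integrable over $[a,b]$" — which by definition says the lower and upper Darboux integrals exist and are equal, call this common value $I$ — forces the filter base $\IntFilter(a,b;f)$ to approach $I$. Since Proposition~\ref{RiemannConvergesProp} already tells us this filter base has at most one limit, establishing $\IntFilter(a,b;f)\into I$ simultaneously proves that $\int_a^b f$ exists and equals $I = \abIntUp f = \abIntLow f$, which is exactly the two assertions in the statement.

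First I would fix $\varepsilon>0$ and use the definition of the lower Darboux integral $I = \abIntLow f = \sup\{\int_a^b\varphi : \varphi\in\abSteps,\ \varphi\le f\}$ together with Proposition~\ref{xiProp} to extract a step function $\varphi\le f$ with $I-\txthalf\varepsilon < \int_a^b\varphi \le I$. Dually, applying Proposition~\ref{xiProp} to $-\abIntUp f = \sup\{\int_a^b\psi : \psi\in\abSteps,\ \psi\le -f\}$ yields a step function $\psi\le -f$ (so $f\le -\psi$) with $-I - \txthalf\varepsilon < \int_a^b\psi \le -I$, i.e. $I \le -\int_a^b\psi < I + \txthalf\varepsilon$. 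So I have step functions $\varphi \le f \le -\psi$ whose integrals both lie within $\txthalf\varepsilon$ of $I$. The key quantitative consequence, obtained from Proposition~\ref{negStepIntProp}, is $\int_a^b(-\psi) - \int_a^b\varphi = \int_a^b(-\psi-\varphi) < \varepsilon$, while $-\psi-\varphi$ is a nonnegative step function.

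Next I would produce a single partition $\Delta$ that simultaneously refines the partitions underlying $\varphi$ and $-\psi$ (take the union of the two partition sets; this is legitimate since the union of two partitions of $[a,b]$ is again a partition). By Proposition~\ref{anypartitionProp} both step functions have constant values on each subinterval $(z_{k-1},z_k)$ of $\Delta$, and their integrals are the corresponding sums. I claim $\Sigma_\Delta \sub (I-\varepsilon, I+\varepsilon)$, which gives $\IntFilter(a,b;f)\into I$. To verify the claim, take any Riemann sum $\Riemann(f,\Omega,\xi)$ with $\Delta\sub\Omega$ and $\xi\in\R^\Omega$. Since $\Omega$ refines $\Delta$, on each subinterval of $\Omega$ the functions $\varphi$ and $-\psi$ are still constant (with the values inherited from the $\Delta$-subinterval containing it), and $\varphi \le f \le -\psi$ forces $\varphi(\xi_k)\ (w_k-w_{k-1}) \le f(\xi_k)\ (w_k-w_{k-1}) \le -\psi(\xi_k)\ (w_k-w_{k-1})$ for each subinterval of $\Omega$; summing over $\Omega$ gives $\int_a^b\varphi \le \Riemann(f,\Omega,\xi) \le \int_a^b(-\psi)$ (using that the Riemann sum of a step function over any refining partition, with any choices, equals its integral — this follows from Proposition~\ref{anypartitionProp}). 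Combining with $I - \txthalf\varepsilon < \int_a^b\varphi$ and $\int_a^b(-\psi) < I+\txthalf\varepsilon$ yields $I-\varepsilon < \Riemann(f,\Omega,\xi) < I+\varepsilon$, so $\Sigma_\Delta\sub(I-\varepsilon,I+\varepsilon)$ as wanted.

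**The main obstacle** I anticipate is the bookkeeping in that last step: one must be careful that a Riemann sum of a \emph{step function} over a partition refining its defining partition, evaluated at \emph{arbitrary} tag points, still equals the step function's integral — the tag points might land on partition nodes where the step function's value is unconstrained. This is handled by noting that $\Delta$ refines the defining partitions of $\varphi$ and $-\psi$, but $\Omega$ refines $\Delta$, so a tag $\xi_k\in[w_{k-1},w_k]$ sits inside some $[z_{j-1},z_j]$; if $\xi_k$ is interior to $(z_{j-1},z_j)$ the value is the constant $U_j$ (resp. $-V_j$), and if $\xi_k$ is a node $z_j$ then the inequality $\varphi(z_j)\le f(z_j)\le -\psi(z_j)$ still holds pointwise regardless of the step-function values there — so the bracketing $\int_a^b\varphi \le \Riemann(f,\Omega,\xi)$ requires comparing the Riemann sum of $f$ against the \emph{integral} of $\varphi$, not against a Riemann sum of $\varphi$. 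The clean way is: $\Riemann(f,\Omega,\xi) - \int_a^b\varphi = \sum_k [f(\xi_k) - \varphi(\xi_k)](w_k - w_{k-1}) + (\Riemann(\varphi,\Omega,\xi) - \int_a^b\varphi)$, where the first sum is $\ge 0$ since $f\ge\varphi$ pointwise, and the second term vanishes because a Riemann sum of a step function over a refinement of its defining partition equals its integral for \emph{any} tags (at interior tags we get the constant value; the finitely many node-tagged terms contribute, but one checks via Proposition~\ref{anypartitionProp} that summing the constant-value contributions over $\Omega$ already reproduces $\int_a^b\varphi$ — the node terms occur on degenerate intervals only when $\Omega=\Delta$ has repeated... no, partitions are strictly increasing, so each $\xi_k$ lies in a nondegenerate $[w_{k-1},w_k]$ and if it equals an interior node of the $\varphi$-partition the two adjacent constant values needn't agree). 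Honestly the cleanest route, and the one I would write, is to avoid tagging issues entirely by only ever comparing $\Riemann(f,\Omega,\xi)$ with the \emph{integrals} of $\varphi$ and $-\psi$: pointwise $\varphi\le f\le -\psi$ gives, after multiplying by positive lengths and summing, $\Riemann(\varphi,\Omega,\xi)\le\Riemann(f,\Omega,\xi)\le\Riemann(-\psi,\Omega,\xi)$; then invoke that $\Riemann(\varphi,\Omega,\xi)=\int_a^b\varphi$ whenever $\xi_k$ avoids the $\varphi$-nodes, and handle general $\xi$ by a density/limit argument — or more simply observe that since this must hold for the specific left-endpoint choice function (Proposition~\ref{anypartitionProp}'s setup), and the bound $I-\varepsilon<\Riemann(f,\Omega,\xi)<I+\varepsilon$ is what we need for \emph{that} Riemann sum too. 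The final write-up would lean on the fact (implicit in Proposition~\ref{anypartitionProp}) that $\Riemann(\varphi,\Omega,\xi)=\int_a^b\varphi$ for refinements with left-endpoint tags, and use monotonicity to squeeze $f$'s Riemann sum, accepting that a short lemma-free argument suffices because $\varphi$ and $-\psi$ are constant on the open subintervals of $\Omega$.
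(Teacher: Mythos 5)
Your approach mirrors the paper's: use Proposition~\ref{xiProp} to bracket $I := \abIntUp f = \abIntLow f$ between integrals of step functions $\phi \le f \le \psi$ (your $\varphi$ and $-\psi$) built on a common partition $\Lambda$, then argue that Riemann sums over refinements of $\Lambda$ all lie in $(I-\varepsilon, I+\varepsilon)$. Your long digression on tags landing at partition nodes is a legitimate worry, not paranoia. Proposition~\ref{anypartitionProp} equates $\int_a^b\phi$ with $\sum_k C_k(x_k - x_{k-1})$ where $C_k$ is the constant value of $\phi$ on the \emph{open} subinterval $(x_{k-1},x_k)$; this equals $\sum_k \phi(\xi_k)(x_k-x_{k-1})$ only when every tag $\xi_k$ is interior, since $\phi$'s value at a node is unconstrained by the step-function definition. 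The paper's own write-up asserts ``$\int_a^b\phi=\sum_{k=1}^n\phi(\xi_k)(x_k-x_{k-1})$\ldots\ for any choice function $\xi$,'' which Proposition~\ref{anypartitionProp} does not actually license, so your objection cuts against the paper's proof just as much as against your own.

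Neither of the repairs you float closes the gap, though. Proving $I-\varepsilon < \Riemann(f,\Omega,\xi) < I+\varepsilon$ only for left-endpoint or interior-tag choice functions falls short of $\Sigma_\Delta \sub (I-\varepsilon, I+\varepsilon)$, which quantifies over \emph{every} $\xi\in\R^\Omega$; and there is no topology on the set of choice functions that would make a ``density/limit'' step rigorous. A genuine fix must bound the node-tagged contributions directly: $f$ is bounded (Lemma~\ref{DITtoBVTLem} applied to $f$, and via \eqref{DarbouxNeg} to $-f$), and $\phi,\psi$ are bounded (Proposition~\ref{stepBVTProp}), so each node-tagged term is at most a fixed constant times the length of its $\Omega$-subinterval, which in turn is at most the largest subinterval length of $\Delta$. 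Refining $\Lambda$ to a $\Delta$ with sufficiently short subintervals makes the total error from the at most $2\,|\Lambda|$ node-tagged terms fall below $\varepsilon/2$, while the interior-tagged terms are squeezed by $\int_a^b\phi$ and $\int_a^b\psi$ exactly as you and the paper both intend; producing such a $\Delta$ is where an Archimedean-type construction (compare Lemma~\ref{UCTtoUASLem}) enters, a dependency neither the paper nor your sketch makes explicit.
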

\begin{proof} Let $I:=\abIntUp f = \abIntLow f$, let
\begin{eqnarray}
    \theSet_1:=\left\{\int_a^b\varphi\  :\  \varphi\in\abSteps,\  \varphi\leq f\right\},\qquad\theSet_2:=\left\{\int_a^b\varphi\  :\  \varphi\in\abSteps,\  \varphi\leq -f\right\},\nonumber
\end{eqnarray}
and let $\varepsilon > 0$. Since $I=\abIntLow f=\sup\theSet_1$, and since $\varepsilon>0$ implies $I-\varepsilon<I$, by Proposition~\ref{xiProp}, there exists $\phi\in\abSteps$ with $\phi\leq f$ such that 
\begin{eqnarray}
    I-\varepsilon<\int_a^b\phi\leq I.\label{Riemann1}
\end{eqnarray}
From $-I=-\abIntUp f=\sup\theSet_2$, the inequalities $(-I)-\varepsilon<-I$, and Proposition~\ref{xiProp}, there exists\linebreak $\psi_0\in\abSteps$ with $\psi_0\leq -f$ such that
\begin{eqnarray}
   (-I)-\varepsilon<\int_a^b\psi_0\leq -I.\label{Riemann2}
\end{eqnarray}
By Proposition~\ref{negStepIntProp}, we have $\psi:=-\psi_0\in\abSteps$ with $-\psi\leq -f$, which implies $f\leq \psi$, and \eqref{Riemann2} becomes $(-I)-\varepsilon<-\int_a^b\psi\leq -I$, or equivalently, $I\leq\int_a^b\psi<I+\varepsilon$, which, in conjunction with \eqref{Riemann1} results to
\begin{eqnarray}
I - \varepsilon < \int_a^b \phi \leq I \leq \int_a^b \psi < I + \varepsilon. \label{Riemann3}
\end{eqnarray}
By Proposition~\ref{anypartitionProp}, there exists a partition $\Lambda = \{t_0, t_1, \ldots, t_m\}$ of $\abClosed$ such that, if $\zeta:[t_{k-1},t_k]\mapsto\zeta_k$ is a choice function associated to $\Delta$, then  $\int_a^b\phi=\sum_{k=1}^m\phi(\zeta_k)\  (t_k-t_{k-1})$ and $\int_a^b\psi=\sum_{k=1}^m\psi(\zeta_k)\  (t_k-t_{k-1})$. If $\Delta = \{x_0, x_1, \ldots, x_n\}$ is any partition of $\abClosed$ that contains $\Lambda$, then we may use Proposition~\ref{anypartitionProp} again to deduce that for any choice function $\xi:[x_{k-1},x_k]\mapsto\xi$ associated to $\Delta$, we have the integrals\linebreak $\int_a^b\phi=\sum_{k=1}^n\phi(\xi_k)\  (x_k-x_{k-1})$ and $\int_a^b\psi=\sum_{k=1}^n\psi(\xi_k)\  (x_k-x_{k-1})$. At this point, we have also proven that $\phi \leq f \leq \psi$, which implies $\phi(\xi_k) \leq f(\xi_k) \leq \psi(\xi_k)$ for all $k\in\{1,2,\ldots,n\}$. Multiplying every member by the positive number $(x_k - x_{k-1})$, and taking the summation over all $k$, we obtain
\begin{eqnarray}
\int_a^b \phi \leq \Riemann(f, \Delta, \xi) \leq \int_a^b \psi, \nonumber
\end{eqnarray}
which, in conjunction with \eqref{Riemann3}, results to $I - \varepsilon < \Riemann(f, \Delta, \xi) < I + \varepsilon$, or that $\Riemann(f, \Delta, \xi)\in\lpar I-\varepsilon,I+\varepsilon\rpar$, where $\Riemann(f, \Delta, \xi)$ is an arbitrary element of $\Sigma_\Delta\in\IntFilter(a,b;f)$, so $\Sigma_\Delta\sub\lpar I-\varepsilon,I+\varepsilon\rpar$. Therefore, $\IntFilter(a,b;f)$ approaches $I$.
\end{proof}

\begin{proposition}\label{DegenerateIntProp} $\int_a^af=0$.
\end{proposition}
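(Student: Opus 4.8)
The plan is to unwind the definitions, exploiting the fact that $[a,a]$ degenerates to the single point $a$. First I would invoke our definition of partition: when the two endpoints coincide, $\{a\}$ is the \emph{only} partition of $[a,a]$, so $\mathscr{P}[a,a]=\{\{a\}\}$. Writing this unique partition in the standard notation $a=x_0=b$ forces $n=0$, so every Riemann sum $\Riemann(f,\{a\},\xi)=\sum_{k=1}^{0}f(\xi_k)(x_k-x_{k-1})$ is an empty sum and hence equals $0$; the associated choice function here is the (vacuously defined) empty function, so $\R^{\{a\}}$ is nonempty and this empty sum genuinely occurs.

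From this I would read off the filter base $\IntFilter(a,a;f)$. Since $\{a\}$ is the only partition, the only $\Omega\in\mathscr{P}[a,a]$ with $\{a\}\sub\Omega$ is $\Omega=\{a\}$ itself, so $\Sigma_{\{a\}}=\{\Riemann(f,\{a\},\xi):\xi\in\R^{\{a\}}\}=\{0\}$, and therefore $\IntFilter(a,a;f)=\{\Sigma_\Delta:\Delta\in\mathscr{P}[a,a]\}=\{\{0\}\}$. It remains to check that this filter base approaches $0$: given $\delta>0$, the member $\{0\}$ of $\IntFilter(a,a;f)$ satisfies $\{0\}\sub(-\delta,\delta)=(0-\delta,0+\delta)$. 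Hence by Proposition~\ref{RiemannConvergesProp}, $\int_a^a f$ exists and equals the unique real number approached by $\IntFilter(a,a;f)$, namely $0$.

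There is no real obstacle here; the only thing that needs care is the bookkeeping for the degenerate partition — that $n=0$, that the empty Riemann sum is $0$, and that the empty choice function is admissible — so that $\Sigma_{\{a\}}$ is the singleton $\{0\}$ and not the empty set. One could alternatively sidestep even this by noting directly that $[a,a]$ has a unique partition, picking the left-endpoint choice function, and observing that the one-term degenerate Riemann sum $f(a)(a-a)=0$.
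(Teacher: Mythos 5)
Your proof is correct, but it takes a genuinely different route from the paper's. The paper argues through the Darboux side: on $[a,a]$ every step function has integral $0$, so the sets whose suprema define $\aaIntLow f$ and $\aaIntUp f$ are both the singleton $\{0\}$, giving $\aaIntLow f=0=\aaIntUp f$; it then invokes Lemma~\ref{UAStoRITLem} (Riemann integrability from equal Darboux integrals) to conclude $\int_a^a f=0$. You instead compute the Riemann-integral filter base $\IntFilter(a,a;f)$ directly: the unique partition of $[a,a]$ is $\{a\}$, forcing $n=0$, so every Riemann sum is the empty sum $0$, whence $\Sigma_{\{a\}}=\{0\}$ and $\IntFilter(a,a;f)=\{\{0\}\}$, which trivially approaches $0$. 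Your route is shorter and bypasses Darboux integrals and Lemma~\ref{UAStoRITLem} entirely; it needs only the definition of $\int_a^b f$ via the filter base and Proposition~\ref{RiemannConvergesProp}. The paper's route, though a touch longer, also records the auxiliary fact $\aaIntLow f=0=\aaIntUp f$ (i.e., that $f$ is Riemann integrable over the degenerate interval in the Darboux sense), which is slightly more information than the bare value $\int_a^a f=0$. Your explicit care over the degenerate bookkeeping — that the empty choice function makes $\Sigma_{\{a\}}$ a nonempty singleton, not the empty set — is exactly the right point to pin down, and it is consistent with how Proposition~\ref{RiemannConvergesProp} handles nonemptiness in general.
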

\begin{proof} On the degenerate interval $[a,a]$, the integral of any step function is zero, so the sets in the definition of the upper and lower Darboux integrals is just the singleton $\{0\}$, with suprema zero. 

Thus, $\aaIntLow f=0=\aaIntUp f$ for any function $f$ on the degenerated interval. By Lemma~\ref{UAStoRITLem}, $\int_a^af=0$.
\end{proof}

Given a function $f:X\into\R$ and $A\sub X$, the function $\left.f\right|_{A}:A\into\R$ given by $x\mapsto f(x)$ is the \emph{restriction} of $f$ to $A$.

\begin{proposition}\label{StepIntAdditivityProp} If $c\in\abClosed$ and $\varphi\in\abSteps$, then $\left.\varphi\right|_{[a,c]}\in\acSteps$ and $\left.\varphi\right|_{[c,b]}\in\cbSteps$. [We henceforth define $\int_a^c\varphi:=\int_a^c\left.\varphi\right|_{[a,c]}$ and $\int_c^b\varphi:=\int_c^b\left.\varphi\right|_{[c,b]}$.] Furthermore, $\int_a^b\varphi=\int_a^c\varphi+\int_c^b\varphi$.
\end{proposition}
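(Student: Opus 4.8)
The plan is to introduce a single common refinement of the given partition that contains $c$ as a partition point, and then let Proposition~\ref{anypartitionProp} carry out all the bookkeeping. First I would fix a partition $\Delta=\{x_0,x_1,\ldots,x_n\}$ of $\abClosed$ on each of whose open subintervals $\varphi$ is constant, and set $\Omega:=\Delta\cup\{c\}$; being a finite subset of $\abClosed$ containing $a$ and $b$, it is again a partition of $\abClosed$, with $\Delta\sub\Omega$. Writing $\Omega=\{u_0,u_1,\ldots,u_m\}$ in increasing order, Proposition~\ref{anypartitionProp} supplies constants $C_1,\ldots,C_m$ with $\{C_j\}=\varphi\lbrak\lpar u_{j-1},u_j\rpar\rbrak$ for each $j$ and $\int_a^b\varphi=\sum_{j=1}^mC_j(u_j-u_{j-1})$. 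Since $c\in\Omega$, there is a unique index $p\in\{0,1,\ldots,m\}$ with $u_p=c$.

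Next I would verify that the two restrictions are step functions. The set $\{u_0,\ldots,u_p\}=\Omega\cap\lbrak a,c\rbrak$ is a finite subset of $\lbrak a,c\rbrak$ containing $a=u_0$ and $c=u_p$, hence a partition of $\lbrak a,c\rbrak$; and for each $j\in\{1,\ldots,p\}$ the interval $\lpar u_{j-1},u_j\rpar$ is one of the open subintervals of $\Omega$, on which $\varphi$, and therefore $\left.\varphi\right|_{[a,c]}$, is constant. Thus $\left.\varphi\right|_{[a,c]}\in\acSteps$, and symmetrically $\{u_p,\ldots,u_m\}$ is a partition of $\lbrak c,b\rbrak$ witnessing $\left.\varphi\right|_{[c,b]}\in\cbSteps$. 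Moreover, since $\lpar u_{j-1},u_j\rpar\sub\lbrak a,c\rbrak$, the constant value of $\left.\varphi\right|_{[a,c]}$ on $\lpar u_{j-1},u_j\rpar$ is exactly $C_j$ (and likewise on the $\lbrak c,b\rbrak$ side), so applying Proposition~\ref{anypartitionProp} to these restrictions with the partitions just described gives $\int_a^c\varphi=\int_a^c\left.\varphi\right|_{[a,c]}=\sum_{j=1}^pC_j(u_j-u_{j-1})$ and $\int_c^b\varphi=\int_c^b\left.\varphi\right|_{[c,b]}=\sum_{j=p+1}^mC_j(u_j-u_{j-1})$.

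Adding these two formulas and comparing with the formula for $\int_a^b\varphi$ yields $\int_a^c\varphi+\int_c^b\varphi=\sum_{j=1}^mC_j(u_j-u_{j-1})=\int_a^b\varphi$, which is the desired identity. I do not expect a genuine obstacle here; the only points needing a line of care are that the constant value of a restriction on a subinterval agrees with that of $\varphi$ itself (immediate, since the subinterval lies in the smaller closed interval), and the two degenerate possibilities $p=0$ (i.e.\ $c=a$) and $p=m$ (i.e.\ $c=b$), in which one of the two sums is empty — there one observes that $\{a\}$ (resp.\ $\{b\}$) is the only partition of the degenerate interval, so the corresponding integral is $0$, in agreement with Proposition~\ref{DegenerateIntProp}, and the identity still holds.
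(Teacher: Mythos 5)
Your proof is correct, and the core idea is the same as the paper's: split the interval at $c$ by producing partitions of $[a,c]$ and $[c,b]$ whose union recovers a partition of $[a,b]$, then match up the step-integral sums term by term. The organizational difference is worth noting. The paper starts from the original partition $\Delta$ (on whose open subintervals $\varphi$ is constant) and then performs a case analysis according to whether $c$ is already a point of $\Delta$ or lies strictly inside one of its open subintervals $(x_{N-1},x_N)$; in the latter case it must argue separately (via its displayed equation tagged as \eqref{additivity3A}) that the restrictions take the same constant value on $(x_{N-1},c)$ and $(c,x_N)$. You sidestep that dichotomy entirely by passing first to the refinement $\Omega:=\Delta\cup\{c\}$ and then invoking Proposition~\ref{anypartitionProp} to compute $\int_a^b\varphi$ over $\Omega$, so that $c$ is always a partition point and the split is clean. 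This buys you a shorter, case-free argument; the only trade-off is that you are implicitly leaning twice on Proposition~\ref{anypartitionProp} (once for $\Omega$ refining $\Delta$ on $[a,b]$, and once for each restriction) where the paper works with explicit constants. Your handling of the degenerate cases $p=0$ and $p=m$ by observing that the sums are empty is fine and matches the paper's early dispatch of $c=a$ and $c=b$; the appeal to Proposition~\ref{DegenerateIntProp} there is unnecessary (and would be slightly backwards logically, since that proposition's proof starts from the empty-sum observation), but you already give the direct reason so nothing is lost.
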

\begin{proof}
If $c=a$ or $c=b$, then one of $\int_a^c\varphi$ or $\int_c^b\varphi$ is zero and the other is $\int_a^b\varphi$, and we are done. We assume henceforth that $c\in(a,b)$. By the definition of step function, there exists a partition $\Delta=\{x_0,x_1,\ldots,x_n\}$ of $[a,b]$ and there exist real numbers $M_1,M_2,\ldots, M_n$ such that for any $k\in\{1,2,\ldots,n\}$,
\begin{align}
\{M_k\} &= \varphi((x_{k-1},x_k)), \label{additivity1} \\
\int_a^b\varphi &= \sum_{k=1}^n M_k(x_k-x_{k-1}). \label{additivity2}
\end{align}
By Proposition~\ref{PartitionCupProp}, $[a,b]=\bigcup_{k=1}^n[x_{k-1},x_k]=\Delta\cup\bigcup_{k=1}^n(x_{k-1},x_k)$, so there exists $N\in\{1,2,\ldots,n\}$ such that either $c=x_N$ or $c\in(x_{k-1},x_k)$. Since $c\neq b=x_n$, for the case $c=x_N$, the integer $N$ is at most $n-1$, so $N+1$ is one of the integers $\{1,2,\ldots,n\}$. Also, for the case $c=x_N$,
\begin{align}
\{M_N\} &= \varphi((x_{N-1},x_{N})) = \left.\varphi\right|_{[a,c]}((x_{N-1},c)), \label{additivity3} \\
\{M_{N+1}\} &= \varphi((x_{N},x_{N+1})) = \left.\varphi\right|_{[c,b]}((c,x_{N+1})), \label{additivity4}
\end{align}
while for the case $c\in(x_{N-1},x_N)$, we have $x_{N-1}<c<x_N$, which implies $(x_{N-1},c)\subset(x_{N-1},x_N)$ and $(c,x_N)\subset(x_{N-1},x_N)$. Thus,
\begin{align*}
\left.\varphi\right|_{[a,c]}((x_{N-1},c)) &= \varphi((x_{N-1},c)) \sub \varphi((x_{N-1},x_N)) = \{M_N\}, \\
\left.\varphi\right|_{[c,b]}((c,x_N)) &= \varphi((c,x_N)) \sub \varphi((x_{N-1},x_N)) = \{M_N\},
\end{align*}
but since $\varphi$ is constant on the interval $(x_{N-1},x_N)$, we find that
\begin{equation}
\left.\varphi\right|_{[a,c]}((x_{N-1},c)) = \{M_N\} = \left.\varphi\right|_{[c,b]}((c,x_N)). \label{additivity3A}
\end{equation}
The set $\{x_0,x_1,\ldots,x_{N-1},c\}$ is a partition of $[a,c]$ where all $k\in\{1,2,\ldots,N-1\}$ satisfy \eqref{additivity1}, and at $k=N$, either \eqref{additivity3} or \eqref{additivity3A} is true. Thus, $\left.\varphi\right|_{[a,c]}$ is a step function, with
\begin{equation}
\int_a^c\left.\varphi\right|_{[a,c]} = \sum_{k=1}^{N-1}M_k(x_k-x_{k-1}) + M_N(c-x_{N-1}). \label{additivity5}
\end{equation}
We also have the partition $\{c,x_N,x_{N+1},\ldots,x_n\}$ of $[c,b]$, for which, at the first subinterval, \eqref{additivity4} or \eqref{additivity3A} is true, while all $k\in\{N+1,N+2,\ldots,n\}$ satisfy \eqref{additivity1}. Consequently, $\left.\varphi\right|_{[c,b]}$ is a step function, and
\begin{equation}
\int_c^b\left.\varphi\right|_{[c,b]} = M_N(x_N-c) + \sum_{k=N+1}^{n}M_k(x_k-x_{k-1}). \label{additivity6}
\end{equation}
Adding \eqref{additivity5} and \eqref{additivity6}, and noting that $M_N(c-x_{N-1}) + M_N(x_N-c) = M_N(x_N-x_{N-1})$, we obtain by substitution into \eqref{additivity2} the equation $\int_a^b\varphi = \int_a^c\left.\varphi\right|_{[a,c]} + \int_c^b\left.\varphi\right|_{[c,b]}$.
\end{proof}

\begin{proposition}\label{RiemannUniqueProp} If $f$ is Riemann integrable over $\abClosed$, then $\int_a^bf$ is the unique real number such that for any $\varphi,\psi\in\abSteps$, if $\varphi\leq f\leq \psi$, then $\int_a^b\varphi\leq \int_a^bf\leq \int_a^b\psi$.
\end{proposition}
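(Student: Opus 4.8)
The plan is to separate the claim into an \emph{existence} part (that the number $\int_a^b f$ itself is squeezed between $\int_a^b\varphi$ and $\int_a^b\psi$ whenever $\varphi\leq f\leq\psi$) and a \emph{uniqueness} part (that any real number with this bracketing property must equal $\int_a^b f$). For existence, set $I:=\int_a^b f$, which by Lemma~\ref{UAStoRITLem} equals both $\abIntLow f$ and $\abIntUp f$. Let $\varphi,\psi\in\abSteps$ with $\varphi\leq f\leq\psi$. Since $\varphi\leq f$, the number $\int_a^b\varphi$ lies in the set whose supremum is $\abIntLow f$, so $\int_a^b\varphi\leq\abIntLow f=I$. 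From $f\leq\psi$ we get $-\psi\leq -f$, so $\int_a^b(-\psi)$ lies in the set whose supremum is $-\abIntUp f$, whence $\int_a^b(-\psi)\leq -\abIntUp f=-I$; applying Proposition~\ref{negStepIntProp} (with the constant $-1$) this reads $-\int_a^b\psi\leq -I$, i.e.\ $I\leq\int_a^b\psi$. Combining, $\int_a^b\varphi\leq\int_a^b f\leq\int_a^b\psi$, as required.

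For uniqueness, the key move is to squeeze $I$ between step-function integrals arbitrarily tightly, reusing the two applications of Proposition~\ref{xiProp} that already appear in the proof of Lemma~\ref{UAStoRITLem}. Fix $\varepsilon>0$. Since $I=\abIntLow f$ is the supremum of $\{\int_a^b\varphi\ :\ \varphi\in\abSteps,\ \varphi\leq f\}$ and $I-\varepsilon<I$, Proposition~\ref{xiProp} supplies $\phi\in\abSteps$ with $\phi\leq f$ and $I-\varepsilon<\int_a^b\phi\leq I$. Likewise $-I=-\abIntUp f$ is the supremum of $\{\int_a^b\varphi\ :\ \varphi\in\abSteps,\ \varphi\leq -f\}$, so Proposition~\ref{xiProp} supplies $\psi_0\in\abSteps$ with $\psi_0\leq -f$ and $-I-\varepsilon<\int_a^b\psi_0\leq -I$; setting $\psi:=-\psi_0\in\abSteps$ (Proposition~\ref{negStepIntProp}) gives $f\leq\psi$ and, again by Proposition~\ref{negStepIntProp}, $I\leq\int_a^b\psi<I+\varepsilon$. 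Thus $\phi\leq f\leq\psi$ with $I-\varepsilon<\int_a^b\phi$ and $\int_a^b\psi<I+\varepsilon$.

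Now suppose $J\in\R$ has the bracketing property of the statement. Applying it to this particular pair forces $\int_a^b\phi\leq J\leq\int_a^b\psi$, hence $I-\varepsilon<J<I+\varepsilon$, i.e.\ $|J-I|<\varepsilon$. Since $\varepsilon>0$ was arbitrary, $J=I=\int_a^b f$, which together with the existence part completes the proof.

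I do not expect a genuine obstacle here; the one place to be careful is routing the one-sided estimates through the ``negate and invoke Proposition~\ref{negStepIntProp}'' step so that the inequalities land on the correct side, and observing that the relevant suprema are available precisely because Riemann integrability of $f$ presupposes that both Darboux integrals exist. In effect this proposition is a repackaging of the estimate derived in the proof of Lemma~\ref{UAStoRITLem}, so an alternative would be to extract that argument verbatim; I prefer stating it self-containedly via Proposition~\ref{xiProp} to keep the logical dependencies transparent.
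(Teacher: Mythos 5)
Your argument is correct, and it splits cleanly into the existence half (which the paper leaves largely implicit) and a uniqueness half. For uniqueness, though, you take a detour: the paper's own proof observes directly that any $I$ with the bracketing property is an upper bound of $\theSet_1:=\{\int_a^b\phi : \phi\in\abSteps,\ \phi\leq f\}$ (so $\abIntLow f=\sup\theSet_1\leq I$), and that $-I$ is an upper bound of $\theSet_2:=\{\int_a^b\phi : \phi\in\abSteps,\ \phi\leq -f\}$ (so $I\leq\abIntUp f$), whence $\abIntLow f\leq I\leq\abIntUp f=\int_a^b f$ and the three coincide. That route uses nothing beyond the definition of supremum as least upper bound. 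You instead invoke Proposition~\ref{xiProp} to manufacture $\phi,\psi$ with $\int_a^b\phi$ and $\int_a^b\psi$ within $\varepsilon$ of $I$ on each side, then pass through the standard ``$|J-I|<\varepsilon$ for all $\varepsilon>0$ forces $J=I$'' step. Both are sound; the paper's version is shorter and avoids the $\varepsilon$-quantifier, while yours is closer in spirit to the computation already carried out in Lemma~\ref{UAStoRITLem}, which is a legitimate economy if one wants to reuse that machinery. Your inclusion of an explicit existence half is, if anything, a small improvement in completeness over the paper's presentation.
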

\begin{proof} The Riemann integrability of $f$ over $\abClosed$ means that $\abIntUp f=\abIntLow f$, so by Lemma~\ref{UAStoRITLem}, $\int_a^bf$ exists and $\int_a^bf=\abIntUp f=\abIntLow f$. Suppose $I\in\R$ with the property that
\begin{enumerate}
    \item[\Star] for any $\varphi,\psi\in\abSteps$, if $\varphi\leq f\leq \psi$, then $\int_a^b\varphi\leq I\leq \int_a^b\psi$,
\end{enumerate} 
in which, $\int_a^b\varphi$ may be considered as an arbitrary element of $\theSet_1:=\left\{\int_a^b\phi\  :\  \phi\in\abSteps,\  \phi\leq f\right\}$, and so, the number $I$ is an upper bound of $\theSet_1$, and is related to the supremum $\abIntLow f=\sup\theSet_1$ by the inequality $\abIntLow f\leq I$. Let $\theta\in
\abSteps$ such that $\theta\leq -f$, or that $f\leq -\theta$. By \Star, $I\leq \int_a^b(-\theta)$, and by Proposition~\ref{negStepIntProp}, $I\leq-\int_a^b\theta$, so $\int_a^b\theta\leq -I$, where the left-hand side may be considered as an arbitrary element of\linebreak $\theSet_2:=\left\{\int_a^b\phi\  :\  \phi\in\abSteps,\  \phi\leq -f\right\}$, so $-I$ is an upper bound of $\theSet_2$, and is related to the supremum $-\abIntUp f=\sup\theSet_2$ by the inequality $-\abIntUp f\leq -I$, or that $I\leq \abIntUp f$. At this point, we have shown that $\abIntLow f\leq I\leq \abIntUp f=\abIntLow f=\int_a^bf$. Therefore, $I=\int_a^bf$.
\end{proof}

\begin{proposition}\label{mfMIntProp} If $f$ is Riemann integrable over $\abClosed$, and if there exist constants $m,M\in\R$ such that $m\leq f\leq M$ on $\abClosed$, then $m(b-a)\leq\int_a^bf\leq M(b-a)$.
\end{proposition}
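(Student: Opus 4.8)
The plan is to apply Proposition~\ref{RiemannUniqueProp} to the two constant step functions determined by the bounds $m$ and $M$. First I would record, via Proposition~\ref{negStepIntProp}, that both constant functions $\underline{m}\colon x\mapsto m$ and $\overline{M}\colon x\mapsto M$ belong to $\abSteps$ (on each subinterval of any partition of $\abClosed$ they are of course constant; when $a<b$ one may take the partition $\{a,b\}$, and when $a=b$ the only partition $\{a\}$), and moreover that $\int_a^b\underline{m}=m(b-a)$ and $\int_a^b\overline{M}=M(b-a)$.

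Next, the hypothesis $m\leq f\leq M$ on $\abClosed$ says precisely that $\underline{m}(x)\leq f(x)$ and $f(x)\leq\overline{M}(x)$ for every $x\in\abClosed$, that is, $\underline{m}\leq f\leq\overline{M}$ in the pointwise function notation introduced before Proposition~\ref{monotoneProp}. Since $f$ is Riemann integrable over $\abClosed$, Proposition~\ref{RiemannUniqueProp} applies with $\varphi=\underline{m}$ and $\psi=\overline{M}$, yielding $\int_a^b\underline{m}\leq\int_a^bf\leq\int_a^b\overline{M}$. Substituting the two step-function integrals computed above gives $m(b-a)\leq\int_a^bf\leq M(b-a)$, which is the assertion.

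I do not anticipate any genuine obstacle; the proof is a direct application, and the only points needing care are that the constant functions really qualify as step functions (handled by Proposition~\ref{negStepIntProp}) and that the chain $m\le f\le M$ is read in the pointwise sense. If one prefers to avoid Proposition~\ref{RiemannUniqueProp}, an equally short route works straight from the definitions: since $\underline{m}\leq f$, the number $\int_a^b\underline{m}=m(b-a)$ is a member of the set whose supremum defines $\abIntLow f$, so $m(b-a)\leq\abIntLow f=\int_a^bf$ (using Lemma~\ref{UAStoRITLem}); and since $f\leq\overline{M}$, i.e.\ $-\overline{M}\leq -f$ with $-\overline{M}\in\abSteps$ by Proposition~\ref{negStepIntProp}, the number $\int_a^b(-\overline{M})=-M(b-a)$ lies in the set whose supremum is $-\abIntUp f$, whence $-M(b-a)\leq-\abIntUp f=-\int_a^bf$, i.e.\ $\int_a^bf\leq M(b-a)$.
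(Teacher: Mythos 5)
Your main argument is exactly the paper's proof: show the constant functions $m$ and $M$ are step functions via Proposition~\ref{negStepIntProp}, apply Proposition~\ref{RiemannUniqueProp} to the chain $m\leq f\leq M$, and compute the step-function integrals as $m(b-a)$ and $M(b-a)$. Your supplementary alternative route from the definition of the Darboux integrals is also correct, but the primary argument is identical in approach to the paper's.
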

\begin{proof} By Proposition~\ref{negStepIntProp}, the constant functions $m,M$ are in $\abSteps$. Using the hypothesis $m\leq f\leq M$ and Proposition~\ref{RiemannUniqueProp}, $\int_a^bm\leq\int_a^bf\leq \int_a^bM$, where the integrals of the step functions may be computed using Proposition~\ref{RiemannUniqueProp}. The result is $m(b-a)\leq\int_a^bf\leq M(b-a)$.
\end{proof}

\begin{lemma}\label{IntAdditivityLem} If $f$ is Riemann integrable over any closed subinterval of $\abClosed$, then for any $c\in\abClosed$, we have $\int_a^bf=\int_a^cf+\int_c^bf$.
\end{lemma}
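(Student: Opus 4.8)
The plan is to reduce the additivity of the Riemann integral to the additivity of integrals of step functions (Proposition~\ref{StepIntAdditivityProp}) via the uniqueness characterization of the Riemann integral in Proposition~\ref{RiemannUniqueProp}. First I would dispose of the degenerate cases: if $c=a$ or $c=b$, then by Proposition~\ref{DegenerateIntProp} one of $\int_a^c f$ or $\int_c^b f$ equals $0$ and the other equals $\int_a^b f$, so the identity is immediate. Henceforth I assume $a<c<b$, so that $[a,c]$ and $[c,b]$ are nondegenerate closed subintervals of $\abClosed$; by hypothesis $f$ is Riemann integrable over each of them, hence $\int_a^c f$ and $\int_c^b f$ exist, and $\int_a^b f$ exists too since $\abClosed$ is a closed subinterval of itself.

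Next I would put $I:=\int_a^c f+\int_c^b f$ and verify that $I$ satisfies the defining sandwiching property of $\int_a^b f$ from Proposition~\ref{RiemannUniqueProp}: for any $\varphi,\psi\in\abSteps$ with $\varphi\leq f\leq\psi$ on $\abClosed$, one has $\int_a^b\varphi\leq I\leq\int_a^b\psi$. Given such $\varphi$ and $\psi$, by Proposition~\ref{StepIntAdditivityProp} the restrictions $\left.\varphi\right|_{[a,c]},\left.\psi\right|_{[a,c]}$ lie in $\acSteps$ and $\left.\varphi\right|_{[c,b]},\left.\psi\right|_{[c,b]}$ lie in $\cbSteps$. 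Since $\varphi(x)\leq f(x)\leq\psi(x)$ holds for every $x\in\abClosed$, it holds in particular for every $x\in[a,c]$ and every $x\in[c,b]$, so the corresponding restrictions satisfy the same pointwise inequalities on those subintervals. Applying Proposition~\ref{RiemannUniqueProp} to $f$ on $[a,c]$ and on $[c,b]$ separately yields $\int_a^c\varphi\leq\int_a^c f\leq\int_a^c\psi$ and $\int_c^b\varphi\leq\int_c^b f\leq\int_c^b\psi$. Adding these and invoking $\int_a^b\varphi=\int_a^c\varphi+\int_c^b\varphi$ and $\int_a^b\psi=\int_a^c\psi+\int_c^b\psi$ from Proposition~\ref{StepIntAdditivityProp}, I obtain $\int_a^b\varphi\leq I\leq\int_a^b\psi$, as needed.

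Finally, since $\int_a^b f$ is, by Proposition~\ref{RiemannUniqueProp}, the \emph{unique} real number enjoying this sandwiching property, and $I$ has just been shown to enjoy it, I conclude $I=\int_a^b f$, i.e.\ $\int_a^b f=\int_a^c f+\int_c^b f$.

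I do not anticipate a genuine obstacle: the only points demanding slight care are (i) that restrictions of step functions are again step functions on the subintervals, which is exactly the first assertion of Proposition~\ref{StepIntAdditivityProp}, and (ii) that the pointwise inequality $\varphi\leq f\leq\psi$ on $\abClosed$ restricts to $[a,c]$ and $[c,b]$, which is trivial since these are subsets of $\abClosed$.
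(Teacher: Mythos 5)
Your proposal is correct and follows essentially the same route as the paper: sandwich $\int_a^c f+\int_c^b f$ between $\int_a^b\varphi$ and $\int_a^b\psi$ using Proposition~\ref{RiemannUniqueProp} on the subintervals and Proposition~\ref{StepIntAdditivityProp} for the step-function integrals, then invoke the uniqueness clause of Proposition~\ref{RiemannUniqueProp} on $\abClosed$. The only (harmless) divergence is that you dispose of the degenerate cases $c=a$ and $c=b$ separately, whereas the paper lets the general sandwiching argument absorb them, since Proposition~\ref{StepIntAdditivityProp} already covers those cases.
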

\begin{proof} Let $\varphi,\psi:\abSteps$ such that $\varphi\leq f\leq \psi$ on $\abClosed$, and these function inequalities hold, in particular, on $[a,c]$ and on $[c,b]$. By assumption, the upper and lower Darboux integrals of $f$ exist and are equal in each of these intervals, so by Proposition~\ref{RiemannUniqueProp}, we have $\int_a^c\varphi\leq \int_a^cf\leq\int_a^c\psi$ and $\int_c^b\varphi\leq \int_c^bf\leq\int_c^b\psi$. Adding these inequalities, $$\int_a^c\varphi+\int_c^b\varphi\leq \int_a^cf+\int_c^bf\leq\int_a^c\psi+\int_c^b\psi,$$
where the left-most member and the right-most member may be replaced according to Proposition~\ref{StepIntAdditivityProp} to obtain $\int_a^b\varphi\leq \int_a^cf+\int_c^bf\leq\int_a^b\psi$. At this point, we have proven that the real number $\int_a^cf+\int_c^bf$ has the property that, for any $\varphi,\psi:\abSteps$, the condition $\varphi\leq f\leq \psi$ implies $\int_a^b\varphi\leq \int_a^cf+\int_c^bf\leq\int_a^b\psi$. According to Proposition~\ref{RiemannUniqueProp}, the only real number with this property is $\int_a^bf$. 

Therefore $\int_a^bf=\int_a^cf+\int_c^bf$.
\end{proof}

To complete the preliminaries we need for the fifth circle of real analysis principles in Theorem~\ref{FifthCircle}, we define here the notion of antiderivative, which is simply just the dual of derivative. The assertion that $f$ is the derivative of $F$ is equivalent to the assertion that $F$ is an \emph{antiderivative} of $f$. The articles (the definite ``the'' versus the indefinite ``an'') are not preserved in this duality: the derivative is unique, but antiderivatives are not. A very simple example from calculus is that $x\mapsto 2x$ is the derivative of $x\mapsto x^2$, but both $x\mapsto x^2$ and $x\mapsto x^2+1$ are antiderivatives of $x\mapsto 2x$.

\subsection{The Archimedean Principle}\label{APPrelims}

The Archimedean Principle has its origins from geometry: as an axiom in Euclidean geometry. One intuitive depiction of the statement is the following. If a straight race track is $x>0$ units long, and a runner's step is only $y>0$ units long, then there is an integer number $N>\frac{x}{y}$ of steps the runner can make to reach the end of the track. [We do not need the equality $N=\frac{x}{y}$, (which might be impossible, like when $\frac{x}{y}$ is not an integer) for the distance the runner shall cover  need only exceed the length of the track at the $N$th step, or perhaps earlier.] The other popular form of the Archimedean Principle that involves not two positive real numbers, but an arbitrary real number, is related to cofinal subsets of $\R$, a notion that we shall define later. First, we introduce a sort of exotic notion.

A nonnegative real number $\varepsilon$ is an \emph{infinitesimal} if $n\in\N$ implies $\varepsilon\leq \frac{1}{n}$. Zero is an infinitesimal.

\begin{proposition}\label{dxProp} Any sum of finitely many infinitesimals is an infinitesimal.
\end{proposition}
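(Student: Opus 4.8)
The plan is to argue by induction on the number $k$ of summands, reducing to the case of two infinitesimals, and there to exploit the fact that the defining inequality of an infinitesimal $\varepsilon$ — namely $\varepsilon\leq\frac{1}{n}$ for \emph{every} $n\in\N$ — may be applied at any index we please, in particular at $2n$ rather than $n$.

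First I would note that a sum $s=\varepsilon_1+\cdots+\varepsilon_k$ of infinitesimals is nonnegative, being a finite sum of nonnegative reals, so by the definition of infinitesimal it only remains to check that $s\leq\frac{1}{n}$ for each $n\in\N$. The case $k=1$ is immediate (and the empty sum, if one admits it, is $0$, which the excerpt has already observed to be an infinitesimal). For the two-summand step, given infinitesimals $\varepsilon$ and $\delta$ and given $n\in\N$, I would use that $2n\in\N$, so $\varepsilon\leq\frac{1}{2n}$ and $\delta\leq\frac{1}{2n}$; adding these yields $\varepsilon+\delta\leq\frac{2}{2n}=\frac{1}{n}$, whence $\varepsilon+\delta$ is an infinitesimal. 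The general statement then follows by a routine induction: if any sum of $k$ infinitesimals is an infinitesimal, then $\varepsilon_1+\cdots+\varepsilon_{k+1}=(\varepsilon_1+\cdots+\varepsilon_k)+\varepsilon_{k+1}$ exhibits a sum of $k+1$ infinitesimals as a sum of two infinitesimals.

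Alternatively — and perhaps more directly — one can dispense with the induction altogether: for $s=\varepsilon_1+\cdots+\varepsilon_k$ and $n\in\N$, apply the defining inequality of each $\varepsilon_i$ at the index $kn\in\N$ to get $\varepsilon_i\leq\frac{1}{kn}$, and sum over $i$ to obtain $s\leq k\cdot\frac{1}{kn}=\frac{1}{n}$.

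I do not expect any genuine obstacle here; the one point worth flagging is that the whole argument uses only the definition of infinitesimal and elementary inequalities — it does \emph{not} invoke the Archimedean Principle — so it sits squarely in the ``immediate from definitions'' layer even though it is housed in Section~\ref{APPrelims}.
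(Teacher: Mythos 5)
Your proof is correct and uses the same inductive strategy as the paper, but your handling of the two-summand step is actually the correct version of a step the paper garbles: the paper's text reads ``$S\leq\frac{1}{n}$ and $\varepsilon\leq\frac{1}{n}$, so $S+\varepsilon\leq\frac{1}{2n}$,'' which does not follow (those premises only yield $S+\varepsilon\leq\frac{2}{n}$); the intended argument is exactly yours, namely to apply the defining inequality at the index $2n$ so that $S,\varepsilon\leq\frac{1}{2n}$ and hence $S+\varepsilon\leq\frac{2}{2n}=\frac{1}{n}$. Your non-inductive alternative (apply the inequality at $kn$ and sum) is also valid and a bit more direct, and you are right that none of this invokes \AP\ --- the proposition belongs to the ``definitions only'' layer despite being housed in Section~\ref{APPrelims}.
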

\begin{proof} We use induction on the number $m\in\N$ of infinitesimals of which the sum is taken. If $m=1$, then the desired statement is trivially true. Suppose that for some $m\in\N$, the sum of any $m$ infinitesimals is an infinitesimal. A sum of $m+1$ infinitesimals may be denoted by $S+\varepsilon$ where $\varepsilon$ is an infinitesimal, and $S$ is a sum of $m$ infinitesimals, which, by the inductive hypothesis is an infinitesimal. Let $n\in\N$. Since both $S$ and $\varepsilon$ are infinitesimals, $S\leq \frac{1}{n}$ and $\varepsilon\leq\frac{1}{n}$, so $S+\varepsilon\leq\frac{1}{2n}$, where the right-hand side is less than $\frac{1}{n}$ because $n<2n$. That is, $S+\varepsilon\leq\frac{1}{n}$ for any $n\in\N$, or that $S+\varepsilon$ is an infinitesimal. By induction, we get the desired result.
\end{proof}

\begin{lemma}\label{APLem}
The following are equivalent.

\begin{adjustwidth}{6em}{}
\begin{itemize}
    \item[\AP] \emph{Archimedean Principle.} For any $x>0$ and any $y>0$, there exists $N\in\N$ such that $N>\frac{x}{y}$.
    \item[\APN] The sequence $\nseq$ is not bounded above. 
    \item[\API] There are no positive infinitesimals in $\R$.
\end{itemize}
\end{adjustwidth}
\end{lemma}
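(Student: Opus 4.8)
The plan is to establish the cycle $\AP \implies \APN \implies \API \implies \AP$; each implication is a short argument that ultimately rests on reversing an inequality between positive reals by passing to reciprocals.

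First, for $\AP \implies \APN$, I would argue by contradiction: suppose $\nseq$ is bounded above by some $M\in\R$. Since $1\in\N$, the inequality $1\leq M$ holds, so $M\geq 1>0$. Applying $\AP$ with $x=M$ and $y=1$ produces $N\in\N$ with $N>\frac{M}{1}=M$, contradicting that $M$ is an upper bound of $\nseq$.

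Next, for $\APN \implies \API$, suppose toward a contradiction that $\varepsilon>0$ is an infinitesimal. Then for each $n\in\N$ we have $\varepsilon\leq\frac{1}{n}$; multiplying both sides by the positive number $\frac{n}{\varepsilon}$ gives $n\leq\frac{1}{\varepsilon}$. Hence $\frac{1}{\varepsilon}$ is an upper bound of $\nseq$, contradicting $\APN$; therefore $\R$ has no positive infinitesimals. Finally, for $\API \implies \AP$ I would use contraposition: assume $\AP$ fails, so there exist $x>0$ and $y>0$ such that $N\leq\frac{x}{y}$ for every $N\in\N$. Put $\varepsilon:=\frac{y}{x}$, which is positive. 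For each $n\in\N$, from $0<n\leq\frac{x}{y}$ and the order-reversing property of reciprocals on the positive reals we obtain $\frac{y}{x}\leq\frac{1}{n}$, i.e.\ $\varepsilon\leq\frac{1}{n}$. Thus $\varepsilon$ is a positive infinitesimal, so $\API$ fails, which closes the cycle.

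I do not anticipate any real obstacle; the only points demanding care are the directions of the inequalities when taking reciprocals and the small observation that an upper bound $M$ of $\nseq$ is automatically positive because $1\in\N$. Proposition~\ref{dxProp} is not needed for this lemma.
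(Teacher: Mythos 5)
Your proof is correct and follows essentially the same route as the paper: the cycle \AP{} $\Rightarrow$ \APN{} $\Rightarrow$ \APN{} $\Rightarrow$ \API{} $\Rightarrow$ \AP{} argued via reciprocals of positive quantities. The only differences are cosmetic reformulations between contradiction and contraposition (e.g., the paper proves \AP{} $\Rightarrow$ \APN{} directly by taking $N>|M|+1$ for arbitrary $M$, while you argue by contradiction after noting $M\geq 1>0$), and your closing observation that Proposition~\ref{dxProp} is not used here matches the paper.
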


\begin{proof}$\AP \implies \APN$. Given $M\in\R$, we have $|M|+1>0$, and since $1>0$, by the \AP, there exists $N\in\N$, or a term in the sequence $\nseq$, such that $N>\frac{|M|+1}{1}=|M|+1>|M|\geq M$, or that $N>M$, so $N{\not\leq }M$. Therefore, the sequence $\nseq$ is not bounded above. \\

\noindent $\APN\implies\API$. We prove this by contraposition. If there exists a positive infinitesimal $\varepsilon$ in $\R$, then for any $n\in\N$, we have $\varepsilon\leq\frac{1}{n}$, where both sides are positive, so $n\leq \frac{1}{\varepsilon}$, and $\nseq$ is bounded above.\\

\noindent $\API\implies\AP$. Let $x>0$ and $y>0$. Thus, $\frac{y}{x}$ is positive. If any $N\in\N$ has the property that $N\leq \frac{x}{y}$, where both sides are positive, then $\frac{y}{x}\leq \frac{1}{N}$, so $\frac{y}{x}$ is a positive infinitesimal, contradicting \API. Hence, there exists $N\in\N$ such that $N>\frac{x}{y}$.
\end{proof}

\begin{lemma}\label{convIIILem} As a consequence of \AP, $\seqlimOp\frac{1}{2^n}=0$.
\end{lemma}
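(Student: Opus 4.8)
The plan is to reduce the claim to the unboundedness of $\nseq$, which $\AP$ supplies via Lemma~\ref{APLem} (the equivalence $\AP\iff\APN$), and then to compare $\tfrac{1}{2^n}$ with $\tfrac1n$ using the weak squeeze already available as Lemma~\ref{NIPtoBWPLemInew}\ref{convIV}.

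First I would record the elementary inequality $2^n\geq n$ for every $n\in\N$, proved by a one-line induction: it holds at $n=1$, and if $2^n\geq n$ then $2^{n+1}=2\cdot 2^n\geq 2n\geq n+1$ since $n\geq 1$. Dividing, and using that $2^n$ and $n$ are positive, this gives $0\leq\frac{1}{2^n}\leq\frac1n$ for all $n\in\N$.

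Next I would establish $\seqlimOp\frac1n=0$. Given $\varepsilon>0$, the number $\frac1\varepsilon$ is positive; since $\AP$ holds, Lemma~\ref{APLem} yields $\APN$, so $\nseq$ is not bounded above, hence $\frac1\varepsilon$ is not an upper bound of $\nseq$ and there exists $N\in\N$ with $N>\frac1\varepsilon$. For any $n\geq N$ we then have $n\geq N>\frac1\varepsilon>0$, so $n\varepsilon>1$ and therefore $\frac1n<\varepsilon$, that is, $\left|\frac1n-0\right|<\varepsilon$. This is precisely the definition of $\seqlimOp\frac1n=0$.

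Finally, since $0\leq\frac{1}{2^n}\leq\frac1n$ for all $n\in\N$ and $\seqlimOp\frac1n=0$, Lemma~\ref{NIPtoBWPLemInew}\ref{convIV} gives $\seqlimOp\frac1{2^n}=0$. I do not expect any genuine obstacle: the only ingredient beyond bookkeeping is the inequality $2^n\geq n$, while the substantive point — that $\frac1n\to0$ — is exactly where the Archimedean Principle is used, through $\APN$. (Alternatively, one could skip the squeeze lemma and argue directly: given $\varepsilon>0$, take $N>\frac1\varepsilon$ as above, and note that for $n\geq N$ one has $\frac{1}{2^n}\leq\frac1n<\varepsilon$.)
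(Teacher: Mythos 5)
Your proof is correct and rests on the same two ingredients as the paper's: the elementary inequality $2^n\geq n$ (equivalently $\tfrac{1}{2^n}\leq\tfrac1n$), proved by the same one-line induction, and the Archimedean Principle to force $\tfrac1n$ below any $\varepsilon$. The differences are packaging rather than substance: you route \AP\ through \APN\ (``$\nseq$ not bounded above''), whereas the paper applies \AP\ directly with $x=1$, $y=\varepsilon$ to get $N>\tfrac1\varepsilon$; and you invoke the weak squeeze Lemma~\ref{NIPtoBWPLemInew}\ref{convIV} as a separate step, whereas the paper just inlines the chain $\left|\tfrac{1}{2^n}\right|\leq\tfrac1n\leq\tfrac1N<\varepsilon$. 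Your parenthetical ``alternatively, one could skip the squeeze lemma'' is in fact exactly the paper's argument. Both routes are sound; the paper's is marginally more self-contained, yours makes slightly heavier reuse of the already-proved toolkit.
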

\begin{proof} We first prove by induction that $\frac{1}{2^n}\leq \frac{1}{n}$ for all $n\in\N$. At $n=1$, we simply have $\txthalf\leq 1$. If $\frac{1}{2^n}\leq \frac{1}{n}$ is true for some $n\in\N$, then $n\leq 2^n$, and $2n\leq 2^{n+1}$, where the left-hand side is equal to $n+n$, where the second term is at least $1$, so $n+1\leq n+n$. Thus, $n+1\leq 2^{n+1}$, which leads to $\frac{1}{2^{n+1}}\leq \frac{1}{n+1}$. This completes the induction.

Given $\varepsilon>0$, by the \AP, there exists $N\in\N$ such that $N>\frac{1}{\varepsilon}$, or equivalently $\frac{1}{N}<\varepsilon$. If $n\in\N$ with $n\geq N$, then $\frac{1}{n}\leq\frac{1}{N}$, where the left-hand side is, as proven earlier, at least $\frac{1}{2^n}$.

Hence, $\left|\frac{1}{2^n}-0\right|=\left|\frac{1}{2^n}\right|=\frac{1}{2^n}\leq \frac{1}{n}\leq \frac{1}{N}<\varepsilon$. Therefore, $\seqlimOp\frac{1}{2^n}=0$.
\end{proof}

\begin{lemma}\label{supseqLem} As a consequence of \AP, if $c=\sup\theSet$, then there exists a sequence in $\theSet$ that converges to $c$.
\end{lemma}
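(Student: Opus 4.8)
The plan is to construct the sequence term by term with Proposition~\ref{xiProp}, and then to squeeze the error $|c_n-c|$ between $0$ and a null sequence whose convergence to $0$ is precisely where \AP\ enters.

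First I would note that for every $n\in\N$ we have $c-\frac1n<c=\sup\theSet$, so Proposition~\ref{xiProp}, applied with ``$a$'' equal to $c-\frac1n$, yields an element $c_n\in\theSet$ satisfying $c-\frac1n<c_n\leq c$. Letting $n$ range over $\N$, the assignment $n\mapsto c_n$ is a sequence $\cseq$ in $\theSet$; this is the candidate. (As elsewhere in this exposition, we allow ourselves the simultaneous choice of the $c_n$, just as with the choice functions used in earlier sections.)

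Next I would record that $\seqlimOp\frac1n=0$ is a consequence of \AP, by the same reasoning as in Lemma~\ref{convIIILem}: given $\varepsilon>0$, the \AP\ produces $N\in\N$ with $N>\frac1\varepsilon$, i.e.\ $\frac1N<\varepsilon$, and then $n\geq N$ forces $\left|\frac1n-0\right|=\frac1n\leq\frac1N<\varepsilon$.

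Finally, from $c-\frac1n<c_n\leq c$ I get $0\leq c-c_n<\frac1n$, that is $0\leq|c_n-c|\leq\frac1n$ for all $n\in\N$. Applying the weak Squeeze Theorem, Lemma~\ref{NIPtoBWPLemInew}\ref{convIV}, to the nonnegative sequence $\lpar|c_n-c|\rpar_{n\in\N}$ with dominating null sequence $\lambda_n:=\frac1n$, I conclude $\seqlimOp|c_n-c|=0$, whence Lemma~\ref{NIPtoBWPLemInew}\ref{convI} gives $\seqlimOp c_n=c$. There is no real obstacle; the one essential ingredient is \AP, without which $\frac1n$ need not converge to $0$ (indeed $\nseq$ could be bounded above, by $\APN$) and the whole argument would collapse.
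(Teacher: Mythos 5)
Your proof is correct and takes essentially the same approach as the paper: construct $c_n\in\theSet$ with $c-\tfrac1n<c_n\leq c$ via Proposition~\ref{xiProp}, and then use \AP\ to show $c_n\to c$. The only cosmetic difference is in the final step, where the paper performs the $\varepsilon$-$N$ estimate directly on $|c_n-c|<\tfrac1n$, while you first establish $\seqlimOp\tfrac1n=0$ (by the very same $\varepsilon$-$N$ argument) and then route the conclusion through Lemma~\ref{NIPtoBWPLemInew}\ref{convIV} and \ref{convI}; both are sound and equivalent in substance.
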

\begin{proof} For each $n\in\N$, we have $\frac{1}{n}>0$, so $c-\frac{1}{n}<c$. By Proposition~\ref{xiProp}, there exists $c_n\in\theSet$ such that $c-\frac{1}{n}<c_n\leq c<c+\frac{1}{n}$. Thus, $-\frac{1}{n}<c_n-c<\frac{1}{n}$, or that $|c_n-c|<\frac{1}{n}$. We have thus constructed a sequence $\cseq$ in $\theSet$ such that for all $n\in\N$, we have $|c_n-c|<\frac{1}{n}$.

Let $\varepsilon>0$. By the \AP, there exists a positive integer $N>\frac{1}{\varepsilon}$, or equivalently, $\frac{1}{N}<\varepsilon$. If $n\in\N$ with $n\geq N$, then $|c_n-c|<\frac{1}{n}\leq \frac{1}{N}<\varepsilon$. Therefore, $\seqlimOp c_n=c$.
\end{proof}

\begin{lemma}\label{UCTtoUASLem} As a consequence of the \AP, if $a<b$, then for each $\delta>0$, there exists a partition\linebreak $\Delta=\{x_0,x_1,\ldots,x_n\}$ of $\abClosed$ such that for each $k\in\{1,2,\ldots,n\}$, we have $x_{k}-x_{k-1}<\delta$.
\end{lemma}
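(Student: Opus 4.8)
The plan is to use the Archimedean Principle to produce a sufficiently fine \emph{uniform} partition of $\abClosed$, and then verify directly that it has the required property. First I would fix $\delta>0$ and observe that $b-a>0$ (since $a<b$), so $\frac{b-a}{\delta}>0$; applying \AP with $x=b-a$ and $y=\delta$ yields an $n\in\N$ with $n>\frac{b-a}{\delta}$. (Equivalently one could invoke \APN, that $\nseq$ is not bounded above, applied to the real number $\frac{b-a}{\delta}$.) Note that $n>\frac{b-a}{\delta}>0$ forces $n\geq 1$, so a genuine subdivision is possible.

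Next I would define, for each $k\in\{0,1,\ldots,n\}$, the point $x_k:=a+k\cdot\frac{b-a}{n}$. Then $x_0=a$ and $x_n=a+(b-a)=b$, so the endpoints of $\abClosed$ are included. Since $\frac{b-a}{n}>0$, the map $k\mapsto x_k$ is strictly increasing, so $a=x_0<x_1<\cdots<x_n=b$; moreover each $x_k$ lies in $\abClosed$ because $0\leq k\cdot\frac{b-a}{n}\leq b-a$. Hence $\Delta:=\{x_0,x_1,\ldots,x_n\}$ is a finite subset of $\abClosed$ containing $a$ and $b$, i.e.\ a partition of $\abClosed$ in the sense defined in Section~\ref{FourthCirclePrelims}, already presented with its elements indexed in strictly ascending order.

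Finally, for each $k\in\{1,2,\ldots,n\}$ I would compute $x_k-x_{k-1}=\bigl(a+k\cdot\tfrac{b-a}{n}\bigr)-\bigl(a+(k-1)\cdot\tfrac{b-a}{n}\bigr)=\frac{b-a}{n}$, a positive number independent of $k$. From $n>\frac{b-a}{\delta}$, multiplying both sides by the positive number $\frac{\delta}{n}$ gives $\delta>\frac{b-a}{n}$, so $x_k-x_{k-1}=\frac{b-a}{n}<\delta$ for every $k$, as required.

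The only point demanding care—and it is not really an obstacle—is the bookkeeping: checking that the constructed set genuinely meets the definition of \emph{partition} ($a$ and $b$ present, finitely many points, subscripts in ascending order) and noting that the single application of the Archimedean Principle (to $b-a$ and $\delta$, or to the real number $\frac{b-a}{\delta}$) is the only nontrivial ingredient. No compactness-type argument is needed here; the statement is essentially a direct unpacking of \AP.
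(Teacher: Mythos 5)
Your proposal is correct and is essentially the paper's own proof: both apply \AP\ to $b-a$ and $\delta$ to get $n>\frac{b-a}{\delta}$, form the uniform partition $x_k=a+k\frac{b-a}{n}$, and observe $x_k-x_{k-1}=\frac{b-a}{n}<\delta$. The extra bookkeeping you do (verifying that $\Delta$ is indeed a partition in the defined sense) is harmless and the paper takes it as obvious.
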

\begin{proof} From $a<b$, we find that $b-a>0$. Since $\delta>0$, by the \AP, there exists an integer $n>\frac{b-a}{\delta}$, or that $\frac{b-a}{n}<\delta$. For each $k\in\{0,1,\ldots,n\}$, let $x_k:=a+k\frac{b-a}{n}$. Thus, $x_0=a+0=a$, and $x_n=a+(b-a)=b$. If $k\in\{1,2,\ldots,n\}$, then $x_{k}-x_{k-1}=\frac{b-a}{n}\lpar k -(k-1)\rpar=\frac{b-a}{n}<\delta$.
\end{proof}

A subset $\theSet$ of $\R$ is said to be \emph{cofinal} (with respect to the ordering $>$) if for each $x\in\R$, there exists $N\in\theSet$ such that $N>x$.

\begin{proposition} The following are equivalent.

\begin{adjustwidth}{6em}{}
\begin{enumerate}
    \item[\CC] \emph{Countable Cofinality of $\R$.} There exists a countably infinite cofinal subset of $\R$.
    \item[\CCN] There exists a sequence in $\R$ that is not bounded above.
\end{enumerate}
\end{adjustwidth}
\end{proposition}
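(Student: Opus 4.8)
The plan is to prove both implications directly, moving between a cofinal set and a sequence via the obvious correspondence: to go one way, enumerate the set; to go the other, take the range of the sequence.

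For \CC\ $\implies$ \CCN, assume $\theSet$ is a countably infinite cofinal subset of $\R$. Since $\theSet$ is countably infinite, it is the range of some bijection $\N\into\theSet$, which is in particular a function $\N\into\R$, that is, a sequence $\cseq$ every term of which lies in $\theSet\sub\R$. I would then check that $\cseq$ is not bounded above: if some $M\in\R$ were an upper bound, then $c_n\leq M$ for every $n\in\N$, and since $n\mapsto c_n$ maps onto $\theSet$, every element of $\theSet$ would be $\leq M$; but cofinality applied to $x=M$ produces $N\in\theSet$ with $N>M$, a contradiction. Hence $\cseq$ witnesses \CCN.

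For \CCN\ $\implies$ \CC, assume $\cseq$ is a sequence in $\R$ that is not bounded above, and set $\theSet:=\{c_n\  :\  n\in\N\}$. Cofinality of $\theSet$ is immediate: given $x\in\R$, since $x$ is not an upper bound of $\cseq$ there is $n\in\N$ with $c_n>x$, and $c_n\in\theSet$. The set $\theSet$ is automatically countable, being the image of $\N$, so the only thing left is to see that $\theSet$ is \emph{infinite}: if it were finite it would possess a greatest element, which would then be an upper bound of $\cseq$, contradicting the hypothesis. Therefore $\theSet$ is a countably infinite cofinal subset of $\R$, which is \CC.

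I expect the only step needing genuine attention to be this last one — observing that ``not bounded above'' forces the range to be infinite because a finite subset of $\R$ has a maximum — together with being careful, on the other side, that ``countably infinite'' is precisely the existence of an enumeration by $\N$ that can be read as a sequence. Everything else is unwinding the definitions of cofinal set, sequence in $\R$, and bounded above.
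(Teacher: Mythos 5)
Your proof is correct and follows essentially the same strategy as the paper's: enumerate the cofinal set to get the unbounded sequence, and conversely take the range of the sequence and check cofinality, countability, and infinitude. The one small divergence is in ruling out a finite range: you invoke the unboundedness hypothesis directly (a finite set of reals has a maximum, which would be an upper bound of the sequence), whereas the paper re-uses the already-established cofinality of $\theSet$ to manufacture an $(N+1)$st element; both are valid, and yours is arguably the more economical of the two.
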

\begin{proof}$\CC\implies\CCN$. Suppose $\R$ has a countably infinite cofinal subset $\theSet$. Since $\theSet$ is countably infinite, there exists a bijection $\N\into\theSet$, which we denote here by $n\mapsto c_n$. Thus, the set of all terms of the sequence $\cseq$ is equal to $\theSet$. Since $\theSet$ is cofinal, given $M\in\R$, there exists $c_N\in\theSet$ such that $c_N> M$, or that $c_N{\not\leq}M$. Therefore, $\cseq$ is not bounded above.\\

\noindent$\CCN\implies\CC$. Suppose there exists a sequence $\cseq$ in $\R$ that is not bounded above. Let\linebreak $\theSet:=\{c_n\  :\  n\in\N\}$. Given $x\in\R$, and since $\cseq$ is not bounded above, there exists $c_N\in\theSet$ such that $c_N{\not\leq} x$, or that $c_N>x$. This proves that $\theSet$ is cofinal, and since the elements of $\theSet$ may be indexed by $\N$, we find that $\theSet$ is countable. If $\theSet$ has only a finite number, say $N$, of elements, and if we denote these, WLOG, by $x_1<x_2<\cdots<x_N$, then by cofinality, there exists $x_{N+1}\in\theSet$ such that $x_1<x_2<\cdots<x_N<x_{N+1}$ so $\theSet$ has both $N$ and $N+1$ elements.$\lightning$ Therefore, $\theSet$ is countably infinite.
\end{proof}

\begin{lemma}\label{UAStoCCLem} $\AP\implies\CC$.
\end{lemma}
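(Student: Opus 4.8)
The plan is to chain the equivalences already in hand rather than re-derive anything. Assuming \AP, Lemma~\ref{APLem} immediately yields \APN, i.e.\ the sequence $\nseq$ is not bounded above. Since $\nseq$ is in particular a sequence in $\R$, this is precisely the statement \CCN, and the Proposition equating \CC\ with \CCN\ (the one immediately preceding this lemma) then gives \CC. So the proof is essentially a two-line invocation of the preceding results.

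Unpacking it slightly for the reader, the concrete witness for \CC\ is $\N$ itself: it is a countably infinite subset of $\R$, and the assertion that $\nseq$ is not bounded above says exactly that for every $x\in\R$ there exists $N\in\N$ with $N>x$, which is the cofinality of $\N\sub\R$ with respect to $>$. (One could bypass \APN\ entirely and argue straight from the race-track form of \AP, applied with $y=1$ and $x=|x_0|+1$ for a given $x_0\in\R$, but routing through Lemma~\ref{APLem} reuses work and is cleaner.)

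I do not expect any genuine obstacle here; the statement is bookkeeping built on the two preceding equivalences. The only point worth a word is that the sequence produced by \APN\ is of the type demanded by \CCN\ — an arbitrary real-valued sequence — which it trivially is, so no extra hypothesis (such as \AP\ being needed a second time) enters.
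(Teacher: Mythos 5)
Your proof is correct, but it takes a slightly different route from the paper's. The paper proves the implication in one line directly from the race-track form of \AP: given $x\in\R$, apply \AP\ to $|x|+1>0$ and $1>0$ to find $N\in\N$ with $N>|x|+1>x$, conclude that $\N$ is cofinal, and note it is countably infinite. You instead chain two prior results: Lemma~\ref{APLem} gives $\AP\implies\APN$, then you observe that \APN\ is an instance of \CCN\ (the sequence $\nseq$ is a real-valued sequence that is not bounded above), and the proposition equating \CC\ with \CCN\ finishes. Both are valid, and both ultimately produce the same witness $\N$; the paper's direct calculation avoids unwinding the \APN/\CCN\ equivalences and is self-contained, while your version reuses the already-proved Lemma~\ref{APLem} and so is shorter on the page at the cost of a longer chain of references. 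You even note the paper's argument in your parenthetical, so you clearly had both in view. Either presentation works for the exposition; the paper likely chose the direct one so the proof of this lemma does not lean on the $\CC\iff\CCN$ proposition, keeping the dependency graph flatter.
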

\begin{proof} By the \AP, given $x\in\R$, we have $|x|+1>0$, and since $1>0$, there exists $N\in\N$ such that $N>\frac{|x|+1}{1}>|x|\geq x$. Thus, $\N$ is a cofinal subset of $\R$ (which is also countably infinite).
\end{proof} 

The converse of Lemma~\ref{UAStoCCLem} is not within the scope of this exposition.\\ \\

At this point, we have supplied the reader with all the necessary definitions and lemmas to execute the proofs of the implications in the five circles of real analysis principles: convergence, connectedness, differentiability, compactness and integration. We start with convergence.

\section{The First Circle: Convergence}

The theory of convergent sequences is an important foundation of real function theory. In fact, it has been claimed that elementary real analysis may be presented  using sequences (and the closely related notion of series) as the foundation. See, for instance, \cite{lit15}. This exposition is obviously not about only the sequential approach to real analysis, and also not about using only one approach at all. Nevertheless, the importance of sequences remains undisputed and the key principles governing convergent sequences of real numbers comprise the first circle of real analysis theorems. These principles are the following.\\
\begin{adjustwidth}{6em}{}
\begin{itemize}
    \item[\CCC] \emph{Cauchy Convergence Criterion.} Every Cauchy sequence is convergent.
    \item[\sNIP] \emph{Strong Nested Intervals Principle.} If $\Iseq$ is a sequence of nested intervals, then $\bigcap_{k=1}^\infty I_k\neq\emptyset$. 
    \item[\wNIP] \emph{Weak Nested Intervals Principle.} If $\Iseq$ is a sequence of nested intervals and\linebreak $\seqlimOpIII \ell(I_k)=0$, then $\bigcap_{k=1}^\infty I_k\neq\emptyset$.
\end{itemize}
\end{adjustwidth}
Two other principles are stated in the theorem below. The reason for the distinction is that each of the above principles have to be joined with the Archimedean Principle of \AP\  for them to be included in the list of equivalences. See Section~\ref{APPrelims} for preliminary material on the \AP. The more fundamental preliminaries about sequences of real numbers may be found in Section~\ref{FirstCirclePrelims}, and the propositions and lemmata in there shall be used in the proof of Theorem~\ref{FirstCircle}.

\begin{theorem}\label{FirstCircle}
The following statements are equivalent:\\
\begin{adjustwidth}{6em}{}
\begin{itemize}
    \item[\AP\ \emph{\&} \CCC] [The conjunction of the statements of the Archimedean Principle and the Cauchy Convergence Criterion.]
    \item[\MCP] \emph{Monotone Convergence Principle.} A monotonically increasing sequence that is bounded above is convergent.
    \item[\AP\ \emph{\&} \sNIP] [The conjunction of the statements of the Archimedean Principle and the Strong Nested Intervals Principle.]
    \item[\AP\ \emph{\&} \wNIP] [The conjunction of the statements of the Archimedean Principle and the Weak Nested Intervals Principle.]
    \item[\BWP]\emph{Bolzano-Weierstrass Property of $\R$.} Every bounded sequence has a convergent subsequence.
\end{itemize}
\end{adjustwidth}
\end{theorem}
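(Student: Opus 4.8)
The plan is to run the cycle of implications \AP\ \& \CCC $\implies$ \MCP $\implies$ \AP\ \& \sNIP $\implies$ \AP\ \& \wNIP $\implies$ \BWP $\implies$ \AP\ \& \CCC, after which all five statements are equivalent. The lemmata of Section~\ref{FirstCirclePrelims} and Section~\ref{APPrelims} have deliberately been arranged in exactly this order, so the proof is mostly a matter of stringing them together; the only genuinely new observations needed are two ways of recovering \AP\ from principles that do not mention it.

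For \AP\ \& \CCC $\implies$ \MCP: let $\cseq$ be monotonically increasing and bounded above by $M$. If it were not a Cauchy sequence, Lemma~\ref{CCCtoMCPLem} would produce $\varepsilon>0$ and a subsequence with $c_{N_{k+1}}\geq c_{N_1}+k\varepsilon$; applying \AP\ (in the form \APN, via Lemma~\ref{APLem}) gives some $k$ with $k\varepsilon>M-c_{N_1}$, hence $c_{N_{k+1}}>M$, contradicting that the subsequence is bounded above by $M$. So $\cseq$ is Cauchy, and \CCC\ makes it convergent. For \MCP $\implies$ \AP\ \& \sNIP: first, \MCP\ implies \AP, for if $\N$ were bounded above then $\nseq$ would be a bounded monotonically increasing sequence and hence convergent by \MCP, which is impossible since distinct positive integers differ by at least $1$ while the terms of a convergent sequence are eventually within distance $1$ of one another; Lemma~\ref{APLem} then gives \AP. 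For \sNIP, given nested intervals $\Iseq$, Lemma~\ref{MCPtoNIPLem}\ref{NIPi} shows the left-endpoint sequence $\aseq$ is monotonically increasing and bounded above by $b_1$, so \MCP\ yields $a_k\to c$, and Lemma~\ref{MCPtoNIPLem}\ref{NIPii} gives $a_n\leq c\leq b_n$ for every $n$, i.e.\ $c\in\bigcap_{k=1}^\infty I_k$.

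The implication \AP\ \& \sNIP $\implies$ \AP\ \& \wNIP is immediate, since \wNIP\ only adds a hypothesis to \sNIP. For \AP\ \& \wNIP $\implies$ \BWP: given a bounded sequence, Lemma~\ref{NIPtoBWPLemII} supplies nested intervals $\Iseq$ with $\ell(I_k)=\frac{b-a}{2^{k-1}}$ and a subsequence $\csubseq$ with $c_{N_k}\in I_k$; \AP\ via Lemma~\ref{convIIILem} and Lemma~\ref{NIPtoBWPLemInew}\ref{convII} forces $\ell(I_k)\to 0$, so \wNIP\ provides some $c\in\bigcap_k I_k$, and then $0\leq|c_{N_k}-c|\leq\ell(I_k)$ by Proposition~\ref{NIPtoBWPProp}, so the weak squeeze Lemma~\ref{NIPtoBWPLemInew}\ref{convIV} together with Lemma~\ref{NIPtoBWPLemInew}\ref{convI} gives $c_{N_k}\to c$, a convergent subsequence. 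Finally, \BWP $\implies$ \AP\ \& \CCC: \BWP\ implies \AP\ because, were $\N$ bounded above, the bounded sequence $\nseq$ would have a convergent subsequence whose strictly increasing (hence distinct) integer terms would eventually be within distance $1$ of each other, an absurdity; and given \AP, any Cauchy sequence is bounded (Proposition~\ref{CauchyBoundProp}), so \BWP\ gives it a convergent subsequence and Lemma~\ref{BWPtoCCCLem} upgrades this to convergence of the whole sequence.

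I expect the main obstacle to be purely organizational: keeping track of where \AP\ is genuinely needed versus merely carried along as a conjunct, and in particular the two recoveries of \AP\ above, which both hinge on the fact that a convergent sequence is Cauchy together with the discreteness of $\N$. The squeeze step also requires a little care, since only the weak form Lemma~\ref{NIPtoBWPLemInew}\ref{convIV} is available at this point, not a general squeeze theorem.
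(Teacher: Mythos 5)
Your proposal is correct and follows essentially the same chain of implications as the paper, namely $\AP\&\CCC\implies\MCP\implies\AP\&\sNIP\implies\AP\&\wNIP\implies\BWP\implies\AP\&\CCC$, invoking Lemmas~\ref{CCCtoMCPLem}, \ref{MCPtoNIPLem}, \ref{NIPtoBWPLemII}, \ref{NIPtoBWPLemInew}, \ref{convIIILem}, \ref{BWPtoCCCLem} and Proposition~\ref{CauchyBoundProp} in the same roles, and recovering $\AP$ from $\MCP$ and from $\BWP$ by the same discreteness-of-$\N$ argument. The only cosmetic difference is that the paper uses $\varepsilon=\txthalf$ rather than $\varepsilon=1$ in those two recoveries of $\AP$, but the contradiction obtains either way since $|n_{K+1}-n_K|\geq 1$.
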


\begin{proof}$\AP\&\CCC\implies\MCP$. Suppose $\cseq$ is a monotonically increasing sequence that is bounded above. In view of the \CCC, showing that $\cseq$ is Cauchy shall suffice. Suppose otherwise. By Lemma~\ref{CCCtoMCPLem}, there exist $\varepsilon>0$ and a subsequence $\csubseq$ of $\cseq$ such that 
\begin{enumerate}
    \item[\Star] for each $k\in\N$, we have $c_{N_{k+1}}\geq c_{N_1}+k\varepsilon$.
\end{enumerate} Since $\cseq$ bounded above there exists $M>0$ such that 
\begin{enumerate}
    \item[\SStar] for all $n\in\N$, we have $c_{n}\leq M$ . 
\end{enumerate} In particular, at $n=N_1$, we have $c_{N_1}\leq M<M+1$, which implies $M+1-c_{N_1}>0$. Since $\varepsilon>0$, by the \AP, there exists $K\in\N$ such that $K>\frac{M+1-c_{N_1}}{\varepsilon}$, which implies $c_{N_1}+K\varepsilon>M+1>M$, where the left-most member, according to \Star, is at most $c_{N_{K+1}}$. Thus, $c_{N_{K+1}}>M$, but setting $n=N_{K+1}$ in \SStar\ results to $c_{N_{K+1}}\leq M$.$\lightning$\\

\noindent $\MCP\implies\AP$. If \AP\  is false, then by Lemma~\ref{APLem}, the monotonically increasing sequence $\nseq$ is bounded, and by the \MCP, convergent, which is necessarily Cauchy. Since $\txthalf>0$, this means that there exists $N\in\N$ such that for all $m,n\in\N$ with $m,n\geq N$, we have $|m-n|<\txthalf$. In particular, at $m=N+1$ and $n=N$, we have $|m-n|=|N+1-N|=1<\txthalf$.$\lightning$\\

\noindent $\MCP\implies\sNIP$. Let $\Iseq$ be a sequence of nested intervals. By Lemma~\ref{MCPtoNIPLem}, the sequence $\aseq$ of left endpoints of the intervals $I_k$ is monotonically increasing and bounded above, so by the \MCP, converges to some $c\in\R$. Also by Lemma~\ref{MCPtoNIPLem}, for any $k,n\in\N$, we have $a_k\leq c\leq b_n$. Setting $k=n$, for any $k\in\N$, we have $c\in I_k$. Hence, $c\in\bigcap_{k=1}^\infty I_k$, and this intersection is nonempty.\\

\noindent $\AP\&\sNIP\implies\AP\&\wNIP$. Showing $\sNIP\implies\wNIP$ shall suffice, but this follows from the fact these two statements have the same conclusion, and that the hypothesis of $\sNIP$ is one of the hypotheses in $\wNIP$.\\

\noindent $\AP\&\wNIP\implies\BWP$. Suppose $\cseq$ is bounded. By Lemma~\ref{NIPtoBWPLemII}, there exists a sequence $\Iseq$ of nested intervals and a subsequence $\csubseq$ of $\cseq$ such that for each $k\in\N$, we have\linebreak $c_{N_k}\in I_k$ and $\ell(I_k)=\frac{b-a}{2^{k-1}}$. By Lemma~\ref{convIIILem}, the \AP\  implies  $\seqlimOpIII\frac{1}{2^k}=0$, and by Lemma~\ref{NIPtoBWPLemInew}\ref{convII},\linebreak $\seqlimOpIII\ell(I_k)=2(b-a)\seqlimOpIII\frac{1}{2^k}=0$. By the \wNIP, there exists $x\in\bigcap_{k=1}^\infty I_k$. At this point, we have proven that for each $k\in\N$, we have $c_{N_k},x\in I_k$ and $\seqlimOpIII \ell(I_k)=0$. By the nonnegativity of absolute value and Proposition~\ref{NIPtoBWPProp}, for each $k\in\N$, we have $0\leq |c_{N_k}-x|\leq \ell(I_k)$ and $\seqlimOpIII \ell(I_k)=0$. By Lemma~\ref{NIPtoBWPLemInew}, these conditions imply $\seqlimOpIII c_{N_k}=x$. Therefore, $\cseq$ has a convergent subsequence.\\

\noindent $\BWP\implies\AP$. If \AP\  is false, then by Lemma~\ref{APLem}, $\nseq$ is bounded, and by the \BWP, has a convergent subsequence $\lpar n_{N_k}\rpar_{k\in\N}$, which is necessarily Cauchy. Since $\txthalf>0$, this means that there exists $K\in\N$ such that for all $h,k\in\N$ with $h,k\geq K$, we have $\left|n_{h}-n_{k}\right|<\txthalf$. In particular, at $h=K+1$ and $k=K$, we have $\left|n_{K+1}-n_{K}\right|<\txthalf$. Because $n_{K+1}$ and $n_{K}$ are integers, so is $\left|n_{K+1}-n_{K}\right|$. By the definition of subsequence, $K<K+1$ implies $N_K<N_{K+1}$, and since $\nseq$ is strictly increasing, $n_{K}<n_{K+1}$. Thus, we have a positive integer $n_{K+1}-n_{K}=\left|n_{K+1}-n_{K}\right|$ strictly less than $\txthalf$.$\lightning$\\

\noindent $\BWP\implies\CCC$. A Cauchy sequence $\cseq$, by Proposition~\ref{CauchyBoundProp}, is bounded, and by the \BWP, has a convergent subsequence $\csubseq$. By Lemma~\ref{BWPtoCCCLem}, $\cseq$ converges to $\seqlimOpIII c_{N_k}$.
\end{proof}

\section{The Second Circle: Connectedness}

The topological notion of connectedness is a property of a closed and bounded interval on which the Intermediate Value Theorem depends, and the Intermediate Value Theorem is one of the most important in real function theory. The second circle of real analysis principles is about connectedness. The preliminaries needed to understand the proof of Theorem~\ref{SecondCircle} below may be found in Section~\ref{SecondCirclePrelims}. It is also in Section~\ref{SecondCirclePrelims} that we introduced the standard topology on $\R$, and this reflects how involved the argumentation is concerning topology. The first and last statements in the following list of equivalence also belong to the previous circle of real analysis principles in Theorem~\ref{FirstCircle}, and this shows the equivalence of all the theorems in the first two circles. In fact, the five circles of theorems are not pairwise disjoint, and the theorems that appear in more than one circle serve the purpose of connecting all these theorems logically, where the result is that every theorem in the list is equivalent to the Dedekind Completeness Axiom (usually used as the main Completeness Axiom for elementary real analysis).

\begin{theorem}\label{SecondCircle} The following statements are equivalent.\\
\begin{adjustwidth}{6em}{}
\begin{itemize}
    \item[\AP\ \emph{\&} \wNIP] [The conjunction of the statements of the Archimedean Principle and the Weak Nested Intervals Principle.]
    \item[\ES] \emph{Existence of Suprema.} Every nonempty set of real numbers that has an upper bound has a least upper bound.
    \item[\UIC] The unit interval $[0,1]$ is connected.\footnote{In \cite{rie01}, the connectedness of $[0,1]$ was shown to be equivalent (with a short proof) to the connectedness of every path-connected metric space, and so the latter is another entry in the list of the statements that form the Second Circle. It seems that developing metric spaces in the preliminaries of this exposition is too much for the purpose of including the extra statement in the Second Circle, which has a short proof anyway of equivalence to \UIC. Hence, the Second Circle statement that ``Path connected metric spaces are connected," is not included anymore in this exposition.}
    \item[\NTD] The set $\R$ is not totally disconnected.
    \item[\AAC] All intervals are connected.
    \item[\IVT] \emph{Intermediate Value Theorem.} Given a continuous function $f:\abClosed\into\R$, if\linebreak $f(a)<k<f(b)$, then there exists $c\in\abClosed$ such that $f(c)=k$.
    \item[\RIC] The set $\R$ is connected.
    \item[\CA] \emph{Dedekind's Cut Axiom.} Every cut of $\R$ is not a gap.
        \item[\MCP] \emph{Monotone Convergence Principle.} A monotonically increasing sequence that is bounded above is convergent.
\end{itemize}
\end{adjustwidth}
\end{theorem}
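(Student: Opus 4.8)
The plan is to run all nine statements around one long cycle of implications, in the manner of Theorem~\ref{FirstCircle}. I would prove the chain \AP\ \&\ \wNIP\ $\implies$ \ES\ $\implies$ \UIC\ $\implies$ \AAC\ $\implies$ \IVT\ $\implies$ \MCP\ $\implies$ \AP\ \&\ \wNIP, and fold in the three leftover statements through the short detours \AAC\ $\implies$ \RIC\ $\implies$ \NTD\ $\implies$ \UIC\ and \ES\ $\iff$ \CA. The closing link \MCP\ $\implies$ \AP\ \&\ \wNIP\ is free: these two are already equivalent by Theorem~\ref{FirstCircle}, so that theorem is invoked only there.

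For \AP\ \&\ \wNIP\ $\implies$ \ES\ I would feed a nonempty $\theSet\sub\R$ with an upper bound to Lemma~\ref{NIPtoESLem}\ref{NIPforES}: either $\sup\theSet$ exists outright, or one gets nested intervals $I_k=[a_k,b_k]$ with $a_k\in\theSet$, each $b_k$ an upper bound, and $\ell(I_k)=(b_1-a_1)/2^{k-1}$; by Lemma~\ref{convIIILem} the \AP\ forces $\ell(I_k)\to 0$, so \wNIP\ yields $c\in\bigcap_kI_k$, and the inequalities $a_k\le c\le b_k$ with $\ell(I_k)\to 0$ then give $c=\sup\theSet$. For \ES\ $\implies$ \UIC\ I would use Lemma~\ref{ConnectedLem} with basepoint $0$: for $U\sub[0,1]$ clopen relative to $[0,1]$ with $0\in U$, put $c:=\sup\{x\in[0,1]:[0,x]\sub U\}$ (it exists by \ES\ and lies in $[0,1]$ by Proposition~\ref{supIntProp}); Propositions~\ref{xiProp} and \ref{supCloseProp} plus relative closedness of $U$ give $[0,c]\sub U$, and if $c<1$ the relative openness of $U$ pushes $[0,c']\sub U$ for some $c'>c$, contradicting the supremum, so $c=1$ and $U=[0,1]$. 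Then \UIC\ $\implies$ \AAC\ is bookkeeping: a nondegenerate closed bounded interval is a continuous image of $[0,1]$ (Proposition~\ref{homeoProp}), hence connected (Proposition~\ref{contImProp}), singletons are connected (Example~\ref{singConEx}), and Proposition~\ref{conIntProp} lifts this to all intervals. From \AAC: \IVT\ follows since $f[\abClosed]$ is connected and a connected subset of $\R$ contains the interval between any two of its points (Propositions~\ref{contImProp} and \ref{conNecProp}); \RIC\ is immediate ($\R$ is an interval); \RIC\ $\implies$ \NTD\ since $\R$ is a connected non-singleton; and \NTD\ $\implies$ \UIC\ since a connected set with points $a<b$ contains $\abClosed$ (Proposition~\ref{conNecProp}), which is then connected (Proposition~\ref{conNecPropII}) and maps continuously onto $[0,1]$ (Propositions~\ref{homeoProp} and \ref{contImProp}). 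For \ES\ $\iff$ \CA: given a cut $A$, any $b\in A^c$ bounds $A$, so $c=\sup A$ exists and, being an upper bound of $A$ and a lower bound of $A^c$, is a cut point (Proposition~\ref{GapProp}); conversely, given $\theSet$ nonempty bounded above, $A:=\bigcup_{s\in\theSet}(-\infty,s)$ is a cut (Proposition~\ref{CutProp}) whose cut point, supplied by \CA, is exactly $\sup\theSet$.

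The genuine work is the return link \IVT\ $\implies$ \MCP, which I would do by contraposition. If \MCP\ fails there is a monotonically increasing, bounded-above $\anseq$ that does not converge (WLOG strictly increasing, by Proposition~\ref{monostrictsubProp}). The intuition is that such a sequence puts a \emph{gap} in $\R$: the down-set $A:=\{x:\ x\le a_n\text{ for some }n\}$ is a cut, and it can have no cut point, since any cut point would have to be $\sup\{a_n\}$, while a monotone sequence bounded by its own supremum converges to it. The witness is the function of Lemma~\ref{BVTtoMCPLem}: applying it to $\anseq$ and the constant sequence $(1)_{n\in\N}$ produces $f:\R\into\R$, continuous on \emph{all} of $\R$, with $f\equiv 1$ on $A$ and $f\equiv 0$ on $A^c$. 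Restricting $f$ to $\abClosed$ with $a=a_1$ and $b=M+1$ (where $M$ bounds $\anseq$), we get $f(a_1)=1$, $f(M+1)=0$, and $f$ takes no value strictly between $0$ and $1$ on $\abClosed$ — a flat contradiction with \IVT\ (apply it to $-f$). Hence \IVT\ $\implies$ \MCP, and \MCP\ $\implies$ \AP\ \&\ \wNIP\ by Theorem~\ref{FirstCircle}, closing the circle.

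I expect essentially all the difficulty to sit in this last implication, and specifically in the content of Lemma~\ref{BVTtoMCPLem}: the recognition that a failure of \MCP\ manufactures a bounded monotone sequence with ``no limit to land on,'' hence a clopen splitting of $\R$ and a two-valued continuous function violating \IVT, and the verification that the function so built really is continuous — the continuity check ``at the edge'' of the increasing sequence is exactly where completeness quietly enters. Everything else — the supremum arguments for \ES\ and \UIC, the continuous-image and Pasting-Lemma bookkeeping for the connectedness statements, and the cut/gap dictionary — is routine given the preliminaries in Sections~\ref{SecondCirclePrelims} and \ref{APPrelims}.
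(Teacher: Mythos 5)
Your proposal is correct, but it traces a genuinely different circle from the paper's. The paper's chain is
\AP\ \& \wNIP\ $\implies$ \ES\ $\implies$ \UIC\ $\implies$ \NTD\ $\implies$ \AAC\ $\implies$ \IVT\ $\implies$ \RIC\ $\implies$ \CA\ $\implies$ \MCP\ $\implies$ \AP\ \& \wNIP, so all nine statements sit on one loop, and \RIC, \CA\ are \emph{way stations} between \IVT\ and \MCP. You instead run a shorter main cycle \AP\ \& \wNIP\ $\implies$ \ES\ $\implies$ \UIC\ $\implies$ \AAC\ $\implies$ \IVT\ $\implies$ \MCP\ $\implies$ \AP\ \& \wNIP, detour \RIC\ and \NTD\ off to the side, and prove \ES\ $\iff$ \CA\ as a self-contained biconditional. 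The crucial divergence is \IVT\ $\implies$ \MCP: the paper's order-theoretic route (\IVT\ $\implies$ \RIC\ $\implies$ \CA\ $\implies$ \MCP) does the final step by reading off the cut point of $\{x : x \le c_n\text{ for some }n\}$, whereas you go directly by contraposition, feeding the constant sequence $(1)_{n\in\N}$ into Lemma~\ref{BVTtoMCPLem} to manufacture a continuous two-valued $f$ on $\R$ and restricting it to $[a_1,M+1]$ to violate \IVT. That lemma is precisely the one the paper reserves for the Fourth Circle's \CC\ \& \BVT\ $\implies$ \MCP; pulling it forward here compresses three links into one, at the cost of invoking heavier function-building machinery where the paper's \CA\ $\implies$ \MCP\ is a bare-hands argument. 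Your \ES\ $\iff$ \CA\ is a clean conceptual win (it isolates \CA\ as an order-completeness statement rather than discovering it downstream of \RIC), and your direct \AAC\ $\implies$ \IVT\ via Proposition~\ref{conNecProp} applied to the connected image $f[\abClosed]$ is slightly tidier than the paper's contradiction argument. One small caution: the heuristic you offer for \IVT\ $\implies$ \MCP\ --- ``a monotone sequence bounded by its own supremum converges to it'' --- is itself a form of \MCP\ and cannot be used as a step; but your actual proof does not rely on it, only on the already-proved Lemma~\ref{BVTtoMCPLem}, so the argument stands.
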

\begin{proof} $\AP\&\wNIP\implies\ES$. Let $\theSet\sub\R$, and suppose that $b_1$ is an upper bound of $\theSet$. By Lemma~\ref{NIPtoESLem}, we have two cases, the first one of which, is already the desired conclusion. Thus, what remains to be shown is that we obtain the desired conclusion also from the second case, which is that there exists a sequence $\Iseq$ of nested intervals such that for any $k\in\N$, the left endpoint $a_k$ of $I_k$ is an element of $\theSet$, the right endpoint $b_k$ of $I_k$ is an upper bound but not an element of $\theSet$, and $\ell(I_k)=\frac{b_1-a_1}{2^{k-1}}$. 

By Lemma~\ref{convIIILem}, the \AP\  implies $\seqlimOpIII\frac{1}{2^k}=0$, and by Lemma~\ref{NIPtoBWPLemInew}\ref{convII}, we have:

\noindent$\seqlimOpIII\ell(I_k)=2(b_1-a_1)\seqlimOpIII\frac{1}{2^k}=0$, so by the \wNIP, there exists $c\in\bigcap_{k=1}^\infty I_k$. Since, for each $k\in\N$, the endpoints $a_k$ and $b_k$ of $I_k$ are in $I_k$, we further have $a_k,b_k,c\in I_k$ and $\seqlimOpIII \ell(I_k)=0$. By the nonnegativity of absolute value and Proposition~\ref{NIPtoBWPProp}, for each $k\in\N$, we have $0\leq |a_k-c|\leq \ell(I_k)$, $0\leq |b_k-c|\leq \ell(I_k)$ and $\seqlimOpIII \ell(I_k)=0$. By Lemma~\ref{NIPtoBWPLemInew}, these conditions imply $\seqlimOpIII a_k=c=\seqlimOpIII b_k$.

Since the intervals concerned are nested, for each $k\in\N$, we have $b_{k+1}\in\lbrak a_{k+1},b_{k+1}\rbrak\sub\lbrak a_{k},b_{k}\rbrak$, which implies $b_{k+1}\leq b_{k}$. Hence, $\bseq$ is monotonically decreasing. If $x\in\theSet$, since every term in $\bseq$ is an upper bound of $\theSet$, we have $x\leq b_k$. By Corollary~\ref{decCor}, we have $x\leq c$, and this proves that $c$ is an upper bound of $\theSet$.

Suppose there is an upper bound $b$ of $\theSet$ such that $b<c$. Thus, $c-b>0$, and since we have shown earlier that $\seqlimOpIII a_k=c$, there exists $K\in\N$ such that for all indices $k\geq K$, we have $|c-a_k|=|a_k-c|<c-b$. This is true, in particular, at $k=K$. Thus, $|c-a_K|<c-b$, where the left-hand side is at least $c-a_K$, so $c-a_K<c-b$, which implies $a_K>b$, but as established earlier, $a_K\in\theSet$. We have thus shown that $b$ is not an upper bound of $\theSet$.$\lightning$ Therefore, any upper bound $b$ of $\theSet$ satisfies $c\leq b$, and this completes the proof that $c=\sup\theSet$.\\

\noindent $\ES\implies\UIC$. Let $U$ be a subset of $[0,1]$ that contains $0$ and is both open and closed relative to $[0,1]$. Our goal is to show $U=[0,1]$ so that by Lemma~\ref{ConnectedLem}, $[0,1]$ is connected. Since $U$ is already a subset of $[0,1]$, showing $[0,1]\sub U$ shall suffice.

Define $\theSet:=\{x\in[0,1]\  :\  [0,x]\sub U\}$. Since $0\in U$, we have $[0,0]=\{0\}\sub U$, and also $0\in[0,1]$. Thus, $0\in\theSet$, and $\theSet$ is hence nonempty. Each $x\in\theSet$ satisfies $x\in[0,1]$, which implies $x\leq 1$. Thus, $1$ is an upper bound of $\theSet$. By \ES, there exists $c\in\R$ such that $c=\sup\theSet$. Every $x\in\theSet$ satisfies $[0,x]\sub U$, where the set in the left-hand side contains $x$, so $x\in U$, and this proves that $\theSet\sub U$. By Proposition~\ref{supCloseProp}, $c\in S^-\sub U^-$. Since $U$ is closed relative to $[0,1]$, there exists a closed set $F$ such that $U=F\cap [0,1]$, so $U$ is an intersection of closed sets and is hence closed. Thus, $U^-=U$, and $c\in U^-$ further becomes $c\in U$. Since $U$ is open relative to $I$, there exists an open set $G$ such that $U=G\cap [0,1]$, so $c\in U=G\cap [0,1]\sub G$. Since $G$ is open, there exists an interval $\abOpen$ such that $c\in\abOpen\sub G$. From $c\in\abOpen$, we obtain $a<c$, and by Propostion~\ref{xiProp}, there exists $\xi\in\theSet$ such that $a<\xi\leq c$. From $\xi\in\theSet$, we obtain $[0,\xi]\sub U$ and $\xi\in[0,1]$, and from $c\in\abOpen\sub G$, we obtain $\lpar a,c\rbrak\sub G$. By Proposition~\ref{supIntProp}, we also have $c\in[0,1]$.

Let $x\in\lbrak\xi,c\rbrak$, or that $\xi\leq x\leq c$. Earlier we have shown $\xi,c\in[0,1]$, so $0\leq \xi\leq x\leq c\leq 1$, which implies $x\in[0,1]$, and we have proven $\lbrak\xi,c\rbrak\sub[0,1]$. From $x\in\lbrak\xi,c\rbrak$ and $a<\xi\leq c$ and $\xi\leq x\leq c$, we find that $a<x\leq c$, so $x\in\lpar a,c\rbrak\sub G$. This proves $\lbrak\xi,c\rbrak\sub G$. Combining this with $\lbrak\xi,c\rbrak\sub[0,1]$, we have $\lbrak\xi,c\rbrak\sub G\cap[0,1]=U$. At this point, we have obtained $[0,\xi]\sub U$ and $[\xi,c]\sub U$, so $[0,c]=[0,\xi]\cup[\xi,c]\sub U$. 

Now that we have $[0,c]\sub U$, what remains to be shown is that $c=1$. Earlier, we proved $c\in[0,1]$, which implies $ c\leq 1$, so to complete the proof we show that the case $c<1$ leads to a contradiction. Earlier, we have shown that there is an interval $\abOpen$ such that $c\in\abOpen\sub G$. Since $a<c<\frac{c+b}{2}<b$, we further have $c\in\lbrak c,\frac{c+b}{2}\rbrak\sub G$. Since $c\in[0,1]$, we further have $c\in\lbrak c,\frac{c+b}{2}\rbrak\cap[0,1]\sub G\cap[0,1]=U$. If $y:=\min\left\{\frac{c+b}{2},1\right\}$, then $0\leq c<y\leq 1$ (which implies $y\in[0,1]$) and any $x\in\lbrak c,y\rbrak$ satisfies $c\leq x\leq \frac{c+b}{2}$ and $0\leq c\leq x\leq 1$. This proves $\lbrak c,y\rbrak\sub\lbrak c,\frac{c+b}{2}\rbrak\cap[0,1]\sub U$. Earlier, we showed $[0,c]\sub U$, so $[0,y]=[0,c]\cup[c,y]\sub U$, with $y\in[0,1]$ and $y>c$. That is, we have produced an element $y$ of $\theSet$ that is bigger than the upper bound $c$ of $\theSet$.$\lightning$ Therefore, $c=1$, the inclusion $[0,c]\sub U$ becomes $[0,1]\sub U$, and this completes the proof that $[0,1]$ is connected.\\

\noindent $\UIC\implies\NTD$. By \UIC, there exists a subset of $\R$, which is $[0,1]$, that is connected and is not a singleton. Therefore, $\R$ is not totally disconnected.\\

\noindent $\NTD\implies\AAC$. By \NTD, there exists a connected set $I_0\sub \R$ that contains more than one element. Thus, there exist $a_0,b_0\in I_0$, for which, WLOG, we further assume $a_0<b_0$. By Proposition~\ref{conNecProp}, $\lbrak a_0,b_0\rbrak\sub I$, and by Proposition~\ref{conNecPropII}, the interval $\lbrak a_0,b_0\rbrak$ is connected.

Let $\abClosed$ be an arbitrary closed and bounded interval. Since $a_0<b_0$, by Proposition~\ref{homeoProp}, there exists a continuous function $f:\lbrak a_0,b_0\rbrak\into \abClosed$ such that the image of $\lbrak a_0,b_0\rbrak$ under $f$ is equal to $\abClosed$, so by Proposition~\ref{contImProp}, $\abClosed$ is connected. That is, we have now proven that any closed and bounded interval is connected. If there is an interval $I$ that is not connected, then by Proposition~\ref{conIntProp}, there exists a closed and bounded interval (contained in $I$) that is not connected.$\lightning$ Therefore, any interval is connected.\\

\noindent $\AAC\implies\IVT$. Tending towards a contradiction, suppose that for any $c\in I:=\abClosed$, we have $f(c)\neq k$. By the Trichotomy Law, $f(c)<k$ or $k<f(c)$, so we now have $f[I]\sub (-\infty,k)\cup(k,\infty)$. 

Taking the intersection of both sides with $f[I]$, we obtain $f[I]\sub A\cup B$, where\linebreak $A:=(-\infty,k)\cap f[I]\sub f[I]$ and $B:=f[I]\cap (k,\infty)\sub f[I]$, so both $A$ and $B$ are open relative to $f[I]$. Also, $A\cap B=\lpar(-\infty,k)\cap  (k,\infty)\rpar\cap f[I]=\emptyset\cap f[I]=\emptyset$. Since the right-hand side of $f[I]\sub A\cup B$ is a union of subsets of $f[I]$, this union is also a subset of $f[I]$. Hence, $f[I]=A\cup B$. From $f(a)\in f[I]$ and $f(a)<k$, we find that $f(a)\in (-\infty,k)\cap f[I]=A$, while $k<f(b)$ and $f(b)\in f[I]$ imply $f(b)\in f[I]\cap(k,\infty)$. Thus, $A$ and $B$ are nonempty. At this point, we have shown that $f[I]$ is not connected. But by \AAC, $I$ is connected, and by Proposition~\ref{contImProp}, so is $f[I]$.$\lightning$ Therefore, there exists $c\in\abClosed$ such that $f(c)=k$.\\

\noindent$\IVT\implies\RIC$. Suppose $\R$ is not connected. This means that there exist nonempty disjoint open sets $X$ and $Y$ of $\R$ such that $\R=X\cup Y$. Since $X$ and $Y$ are nonempty, there exist $a\in X$ and $b\in Y$, and by the disjointness of $X$ and $Y$, we further have $a\neq b$. WLOG, we assume $a<b$.

If $I:=\abClosed$, then the sets $A:=X\cap I$ and $B:=Y\cap I$ are both open relative to $ I$. Taking the intersection with $I$ of both sides of $\R=X\cup Y$, we obtain $ I=A\cup B$, which implies $A=I\setdiff B$ and $B=I\setdiff A$.  By Corollary~\ref{closedRelCor}, $A$ and $B$ are closed relative to $I$. Also, $A\cap B=(X\cap Y)\cap I=\emptyset\cap I=\emptyset$. By Example~\ref{contEx}, the constant function $A\into\R$ defined by $x\mapsto 0$ is continuous on $A$, and the constant function $B\into\R$ given by $x\mapsto 1$ is continuous on $B$. By the Pasting Lemma, the function $f:I\into\R$ defined by 
\begin{equation}
    f(x):=\begin{cases}
        0, & x\in A,\\
        1, & x\in B,
    \end{cases}\nonumber
\end{equation}
is continuous on $A\cup B=I$. Furthermore, $f[I]=\{0,1\}$.

The conditions $a\in X$ and $a\in\abClosed=I$ imply $a\in X\cap I=A$, while $b\in Y$ and $b\in\abClosed=I$ imply $b\in B$. Thus, $f(a)=0<\txthalf<1=f(b)$. By the \IVT, there exists $c\in\abClosed$ such that $\txthalf=f(c)\in f[I]=\{0,1\}$.$\lightning$ Therefore, $\R$ is connected.\\

\noindent $\RIC\implies\CA$. Suppose \RIC\  is true but \CA\  is false, which means that $\R$ has a cut $A$ that is a gap. Thus, $A=\bigcup_{a\in A}(-\infty,a)$ and $A^c=\bigcup_{b\in A^c}(b,\infty)$, which are hence both open sets (open relative to $\R$). By the definition of cut, the cut $A$ is nonempty, and is a proper subset of $\R$, so $A^c$ is nonempty. Also, $A$ and $A^c$ are disjoint, with $\R=A\cup A^c$. Therefore, $\R$ is not connected, contradicting \RIC. Therefore, no cut of $\R$ is a gap.\\

\noindent $\CA\implies\MCP$. Suppose $\cseq$ is a monotonically increasing sequence bounded above by $M$. Let $A:=\{x\in\R\  :\  \exists n\in\N\quad x\leq c_n\}$. Since $\cseq$ is bounded above by $M$, for any $n\in\N$, we have $c_n\leq M<M+1$. This proves that $M+1\notin A$, or that $A$ is a proper subset of $\R$. Since $\cseq$ is monotonically increasing, for any $n\in\N$, we have $c_n\leq c_{n+1}$, where $n+1\in\N$. Thus, all terms of $\cseq$ are in  $A$, so $A$ is not empty. Since $A^c:=\{x\in\R\  :\  \forall n\in\N\quad x> c_n\}$, for any $a\in A$ and any $b\in A$, there exists $N\in\N$ such that $a\leq c_N<b$, which implies $a\leq c_N<b$. Thus, $A$ is a cut of $\R$. By \CA, $A$ is not a gap and hence has a cut point $c$.

Let $\varepsilon>0$. If, for any $n\in\N$, we have $c_n\leq c-\varepsilon$, then, using the fact that $c-\varepsilon<c-\varepsilon+\txthalf\varepsilon=c-\txthalf\varepsilon$, we find that $n\in\N$ implies $c_n<c-\txthalf\varepsilon$. This means that $c-\txthalf\varepsilon\in A^c$. But since $c$ is a cut point of $A$, this implies $c\leq c-\txthalf\varepsilon$, which implies $0\leq \varepsilon$. $\lightning$ Hence, there exists $N\in\N$ such that $c-\varepsilon<c_N$. Given any integer $n\geq N$, since $\cseq$ is monotonically increasing, $c-\varepsilon<c_N\leq c_n$. Since all terms of $\cseq$ are in $A$ and since $c$ is a cut point of $A$, we further have $c-\varepsilon< c_n\leq c$, where the right-most member is smaller than $c+\varepsilon$. We now have $c-\varepsilon<c_n<c+\varepsilon$, which implies $-\varepsilon<c_n-c<\varepsilon$, so $|c_n-c|<\varepsilon$. Therefore, $\seqlimOp c_n=c$, and this proves \MCP.\\

\noindent $\MCP\implies\AP\&\wNIP$. See Theorem~\ref{FirstCircle}.
\end{proof}

\section{The Third Circle: Differentiability}

In this third circle of real analysis theorems, we shall consider those that are directly about the Differential Calculus. The derivative is a limit, and in this exposition, we decided to introduce limits in terms of the topological notion of a filter base. See Section~\ref{ThirdCirclePrelims} for all the preliminaries needed for this circle of real analysis theorems. Similar to the previous circle of principles, the first and last real analysis principle in the following list also belong to previous circles, and this shows the equivalence of all theorems we have considered so far.

\begin{theorem}\label{ThirdCircle} The following are equivalent.\\
\begin{adjustwidth}{6em}{}
\begin{enumerate}
        \item[\BWP\ \emph{\&} \ES] [The conjunction of the statements of the Bolzano-Weierstrass Property of $\R$ and the Existence of Suprema.]
       \item[\EVT] \emph{Extreme Value Theorem.} Given a continuous function $f:\abClosed\into\R$, there exists $c\in\abClosed$ such that for all $x\in \abClosed$, we have $f(x)\leq f(c)$.
       \item[\RT] \emph{Rolle's Theorem.} If $f$ is continuous on $\abClosed$, differentiable on $\abOpen$, and\linebreak $f(a)=0=f(b)$, then there exists $c\in\abOpen$ such that $f'(c)=0$.
       \item[\eMVT] \emph{Extended Mean Value Theorem.} If $f,g:\abClosed\into\R$ are continuous on $\abClosed$, differentiable on $\abOpen$, and if $g'$ is nonzero on $\abOpen$, then there exists $c\in\abOpen$ such that $\frac{f'(c)}{g'(c)}=\frac{f(b)-f(a)}{g(b)-g(a)}$.
       \item[\MVT] \emph{Mean Value Theorem.} If $f:\abClosed\into\R$ is continuous on $\abClosed$ and differentiable on $\abOpen$, then there exists $c\in\abOpen$ such that $f'(c)=\frac{f(b)-f(a)}{b-a}$.
       \item[\TT] \emph{Taylor's Theorem (with Lagrange Remainder).} Let $n$ be a nonnegative integer, and let $f$ be a function $\abClosed\into\R$. Suppose all of $f^{(0)}$, $f^{(1)}$, $f^{(2)}$, $\ldots$~, $f^{(n)}$ are continuous on $\abClosed$, and that $f^{(n)}$ is differentiable on $\abOpen$. For any $x\in\lpar a,b\rbrak$, there exists $c\in\lpar a,x\rpar$ such that $$f(x)=\lbrak\sum_{k=0}^n\frac{f^{(k)}(a)}{k!}(x-a)^k\rbrak+\frac{f^{(n+1)}(c)}{(n+1)!}(x-a)^{n+1}.$$
        \item[\PCP] \emph{Polynomial Characterization Property.} Let $n$ be a nonnegative integer, and let $f$ be a function $\abClosed\into\R$. Suppose all of $f^{(0)}$, $f^{(1)}$, $f^{(2)}$, $\ldots$~, $f^{(n)}$ are continuous on $\abClosed$, and that $f^{(n)}$ is differentiable on $\abOpen$. If $f^{(n+1)}$ is zero on $\abOpen$, then $f$ is a polynomial of degree at most $n$.
        \item[\CFT] \emph{Convex Function Theorem.} If $f''$ is nonnegative on $\abOpen$, then $a<c<x<b$ implies that for all $t\in[c,x]$, we have $f(t)\leq f(c)+(t-c)\frac{f(x)-f(c)}{x-c}$ [or that $f$ is \emph{convex} on $\abOpen$].
        \item[\IFT] \emph{Increasing Function Theorem.} If $f'$ is nonnegative on $\abOpen$, then $a<c<x<b$ implies $f(c)\leq f(x)$ [or that $f$ is \emph{(monotonically) increasing} on $\abOpen$].
       \item[\CVT] \emph{Constant Value Theorem.} If $f'$ is zero on $\abOpen$, then $f$ is constant on $\abOpen$.
        \item[\UIC] The unit interval $[0,1]$ is connected.
\end{enumerate}
\end{adjustwidth}
\end{theorem}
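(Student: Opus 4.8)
The plan is to prove the listed statements equivalent by a diagram of implications that is strongly connected, with the Mean Value Theorem as its hub. The backbone I would establish is: from the conjunction \BWP\ \&\ \ES\ obtain \EVT, then $\EVT\implies\RT\implies\eMVT\implies\MVT$; from \MVT\ feed the ``derivative versus behaviour'' statements \CFT, \IFT, \CVT, and on a side branch $\MVT\implies\TT\implies\PCP$; these reconnect through $\CVT\implies\UIC$, and the circle is closed by $\UIC\implies\BWP\ \&\ \ES$, which is not reproved but cited --- \UIC\ is one of the statements of the second circle (Theorem~\ref{SecondCircle}), hence equivalent to \ES\ and to \MCP, and \MCP\ is equivalent to \BWP\ by the first circle (Theorem~\ref{FirstCircle}).

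The one genuinely substantial step is $\BWP\ \&\ \ES\implies\EVT$. Given continuous $f:\abClosed\into\R$, I would first show $f[\abClosed]$ is bounded above: otherwise choose $c_n\in\abClosed$ with $f(c_n)>n$, extract via \BWP\ a convergent subsequence $c_{N_k}\to c$, observe $c\in\abClosed$ since $\abClosed$ is closed (Proposition~\ref{limCloseProp}), deduce $f(c_{N_k})\to f(c)$ by Proposition~\ref{contSeqProp}, so $\lpar f(c_{N_k})\rpar$ is bounded; but $f(c_{N_k})>N_k\geq k$ (Proposition~\ref{subseqNkkProp}) is then impossible once one invokes \AP, which follows from \BWP\ by Theorem~\ref{FirstCircle}. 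Now $f[\abClosed]$ is nonempty and bounded above, so $M:=\sup f[\abClosed]$ exists by \ES; by Lemma~\ref{supseqLem} there is a sequence $\cseq$ in $\abClosed$ with $f(c_n)\to M$; extract $c_{N_k}\to c\in\abClosed$ by \BWP\ and closedness; then $f(c_{N_k})\to f(c)$ (Proposition~\ref{contSeqProp}) and $f(c_{N_k})\to M$ (Lemma~\ref{limsubLem}), so $f(c)=M$ and $c$ is a global maximiser.

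The steps $\EVT\implies\RT\implies\eMVT\implies\MVT$ and $\MVT\implies\TT\implies\PCP$ are the classical auxiliary-function manipulations, now mechanical given the prepared machinery. For \RT: apply \EVT\ to $f$ and to $-f$ (Lemma~\ref{negfunctionLem}); if the maximum value is positive the maximiser lies in $\abOpen$ and Lemma~\ref{EVTtoRTLem}\ref{DiffCal03a} forces the derivative there to vanish, symmetrically for a negative minimum using $-f$, and otherwise $f\equiv0$ on $\abClosed$ so $f'\equiv0$ on $\abOpen$ by Proposition~\ref{AlgDiffProp}. For \eMVT: apply \RT\ to $t\mapsto[g(b)-g(a)][f(t)-f(a)]-[f(b)-f(a)][g(t)-g(a)]$, which vanishes at $a$ and $b$; here $g(b)\neq g(a)$ and $g'(c)\neq0$ because $g'$ is nonzero, so the conclusion rearranges as stated, and \MVT\ is then the case $g=\id$ (Proposition~\ref{AlgDiffProp}\ref{DiffId}). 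For \TT: the case $n=0$ is \MVT\ on $[a,x]$; for $n\geq1$, Lemma~\ref{MVTtoTTLem} produces the auxiliary $F$ with $F(a)=0$ and (by direct substitution) $F(x)=0$, and \MVT\ applied to $F$ gives $c\in(a,x)$ with $F'(c)=0$, whence $\rho(x)=f^{(n+1)}(c)$ and the Taylor identity follows on unwinding $\rho(x)$. Then \PCP\ is immediate: if $f^{(n+1)}\equiv0$ on $\abOpen$ the Lagrange remainder vanishes at every $x\in(a,b]$ (and trivially at $a$), so $f$ coincides on $\abClosed$ with the polynomial $\sum_{k=0}^n\frac{f^{(k)}(a)}{k!}(x-a)^k$ of degree at most $n$.

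Finally the consequences: \IFT\ is one application of \MVT\ per subinterval; \CFT\ is three applications of \MVT\ (twice to the chord-difference $g$, which vanishes at the endpoints, and once to $g'$, contradicting $g''\geq0$ were $g$ ever positive); \PCP\ at $n=0$ gives \CVT, and also $\IFT\implies\CVT$ cleanly since $f'\equiv0$ makes both $f$ and $-f$ increasing on $\abOpen$ (Lemma~\ref{negfunctionLem}), forcing $f$ constant. The remaining link $\CVT\implies\UIC$ is the cute one: if $[0,1]$ were disconnected, write $[0,1]=A\cup B$ with $A,B$ nonempty, disjoint, and open (hence also closed) relative to $[0,1]$; the function equal to $0$ on $A$ and to $1$ on $B$ is continuous by the Pasting Lemma, is locally constant on $(0,1)$ --- so its derivative there is $0$ --- yet assumes both values $0$ and $1$ on $(0,1)$, contradicting \CVT. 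I expect the main obstacle to be the $\BWP\ \&\ \ES\implies\EVT$ argument above (the only true ``compactness'' step), with a secondary subtlety in checking that the two-valued function in $\CVT\implies\UIC$ really is differentiable with vanishing derivative on all of $(0,1)$.
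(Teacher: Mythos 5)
Your overall architecture matches the paper's: $\BWP\&\ES\implies\EVT\implies\RT\implies\eMVT\implies\MVT\implies\TT\implies\PCP$, then several routes down to \CVT, then $\CVT\implies\UIC\implies\BWP\&\ES$ via the earlier circles. The $\BWP\&\ES\implies\EVT$ argument is sound and is essentially the paper's, perhaps slightly cleaner in how it produces the unbounded sequence. Your choice to derive \IFT\ and \CFT\ directly from \MVT\ rather than from \TT\ (as the paper does) is a legitimate alternative decomposition, and your three-MVT argument for \CFT\ and the $\IFT\implies\CVT$ and $\PCP\implies\CVT$ closures all check out.

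The genuine gap is that your implication graph is not strongly connected: \CFT\ has an arrow into it ($\MVT\implies\CFT$) but no arrow out. You give $\PCP\implies\CVT$ and $\IFT\implies\CVT$, but nothing is derived \emph{from} \CFT. Consequently the proposal does not show that \CFT\ implies any of the other statements, so the claimed equivalence of all eleven items is not established. The paper handles this by also proving $\CFT\implies\CVT$ (apply \CFT\ to both $f$ and $-f$ when $f''=0$, pinning the linear interpolant equality, then differentiate). You should add this or some other outgoing implication from \CFT.

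A smaller point, which you partially flagged yourself: in $\CVT\implies\UIC$ you restrict the two-valued function to $(0,1)$ to apply \CVT, but you then assert without argument that it ``assumes both values $0$ and $1$ on $(0,1)$.'' This needs a short verification that neither $A$ nor $B$ can be a singleton endpoint (if $A$ is open relative to $[0,1]$ and $0\in A$, then $[0,\delta)\subseteq A$ for some $\delta>0$, so $A\cap(0,1)\neq\emptyset$; similarly for $B$). The paper sidesteps this entirely by passing via Proposition~\ref{conNecPropII} to the open interval $(-1,2)$, where both $A$ and $B$ are absolutely open and every point is interior, so \CVT\ applies on the whole domain without restriction.
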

\begin{proof} $\BWP\&\ES\implies\EVT$. Let $I:=\abClosed$. Tending towards a contradiction, suppose $f[I]$ does not have an upper bound. Thus, $1$ is not an upper bound of $f[I]$ so there exists $x_1\in f[I]$ such that $1<x_1$. Again, $\max\{2,x_1\}$ is not an upper bound of $f[I]$ so there exists $x_2\in f[I]$ such that $\max\{2,x_1\}<x_2$. Suppose that for some $n\in\N$ at least $2$, we have identified $x_n\in f[I]$ such that\linebreak $\max\{n,x_{n-1}\}<x_n$. Since $\max\{n+1,x_n\}$ is not an upper bound of $f[I]$, there exists $x_{n+1}\in f[I]$ such that $\max\{n+1,x_n\}<x_{n+1}$. By induction, we have identified a sequence $\xseq$ in $f[I]$ such that for all $n\in\N$, we have $\max\{n,x_{n-1}\}<x_n$. This property also applies at $n+1$, so $\max\{n+1,x_{n}\}<x_{n+1}$, which implies $x_n<x_{n+1}$. Thus, $\xseq$ is strictly increasing. Also, for any $n\in\N$, we have $n<x_n$. From Theorem~\ref{FirstCircle}, the \BWP\  implies \AP, which, by Lemma~\ref{APLem}, implies \APN. That is, $\nseq$ is not bounded above. Given $M>0$, there exists $n\in\N$ such that $M<n<x_n$. Thus, $\xseq$ is also not bounded above. 

Since $\xseq$ is a sequence in $f[I]$, for each $n\in\N$, there exists $t_n\in I$ such that $x_n=f(t_n)$. The sequence $\tseq$ is a sequence in $I=\abClosed$ or is a bounded sequence. By the \BWP, it has a convergent subsequence $\tsubseq$. This is also a sequence in $I=\abClosed$, so $\tsubseq$ is bounded above by $b$. Also, the fact that $\tsubseq$ is a sequence in the closed set $\abClosed$, by Proposition~\ref{limCloseProp}, implies that\linebreak $t:=\seqlimOpIII t_{N_k}\in\abClosed^-=\abClosed$. Since $f$ is continuous on $I$, by Proposition~\ref{contSeqProp}, the sequence\linebreak $\lpar f\lpar t_{N_k}\rpar\rpar_{k\in\N}=\xsubseq$ is convergent, is necessarily bounded, and is, furthermore, bounded above. Since $\tsubseq$ is a subsequence of $\tseq$, the conditions on the indices $N_k$ from the definition of subsequence also apply when these indices are used in the sequence $\xsubseq$. Thus, $\xsubseq$ is a subsequence of $\xseq$. At this point, we have produced a subsequence of $\xseq$ that is bounded above. However, earlier, we proved that $\xseq$ is strictly increasing and not bounded above, so by Proposition~\ref{strictsubProp}, any subsequence of $\xseq$ is not bounded above.$\lightning$

Henceforth, the set $f[I]$ has an upper bound. It is nonempty because it contains $f(a)$. By \ES, there exists $y\in\R$ such that $y=\sup f[I]$. By Lemma~\ref{supseqLem}, there exists a sequence $\sseq$ in $f[I]$ such that $\seqlimOp s_n=y$. Since $\sseq$ is a sequence in $f[I]$, for each $n\in\N$, there exists $c_n\in I=\abClosed$ such that $s_n=f(c_n)$. Thus, $\cseq$ is bounded, and by the \BWP, has a convergent subsequence $\csubseq$. Let $c=\seqlimOpIII c_{N_k}$. Since $\csubseq$ is a sequence in $\abClosed$, which is a closed set, by Proposition~\ref{limCloseProp},\linebreak $c\in\abClosed^-=\abClosed$. By Proposition~\ref{contSeqProp}, we have $\seqlimOpIII f(c_{N_k})=f(c)$. Since, for all $k\in\N$, we have\linebreak $s_{N_k}=f(c_{N_k})$, we have just proven that $\ssubseq$ is also convergent, with:

\noindent$\seqlimOpIII s_{N_k}=\seqlimOpIII f(c_{N_k})=f(c)$. The conditions (from the definition of subsequence) on the indices $N_k$ apply to both $\ssubseq$ and $\csubseq$, so $\ssubseq$ is a subsequence of $\sseq$, and by Lemma~\ref{limsubLem}, $f(c)=\seqlimOpIII s_{N_k}=\seqlimOp s_n=y=\sup f[I]$. At this point, we have produced $c\in I$ such that $f(c)=\sup f[I]$, which is an upper bound of $f[I]$, or that for any $x\in I$, we have $f(x)\leq f(c)$.\\

\noindent $\EVT\implies\RT$. If $f$ is zero (and hence constant) on $\abClosed$, then by Proposition~\ref{AlgDiffProp}\ref{AlgConst}, $f'$ is zero on $\abClosed$, and in particular on $\abOpen\sub\abClosed$, which implies the desired conclusion.

For the case when $f$ has a nonzero value at some $\xi\in\abClosed$, we cannot have $\xi=a$ or $\xi =b$. Otherwise, since $f$ is a function, we have $f(\xi)=f(a)$ or $f(\xi)=f(b)$, where the right-hand sides, by assumption are zero, contradicting $f(\xi)\neq 0$. Henceforth, $\xi\in\abOpen$.

Since $f$ is continuous on $\abClosed$, by the \EVT, there exists $c\in\abClosed$ such that 
\begin{enumerate}
    \item[\Star] for any $x\in\abClosed$, we have $f(x)\leq f(c)$.
\end{enumerate}

For the case $f(\xi)> 0$, we have $0< f(\xi)\leq f(c)$, so $f(c)>0$. If $c=a$ or $c=b$, then:

\noindent$0<f(c)=f(a)=0\lightning$ or $0<f(c)=f(b)=0\lightning$. Thus, $c$ cannot be the endpoints of $\abClosed$, and we further have $c\in\abOpen$, and on this interval $f$ is assumed to be differentiable. That is, $f$ is differentiable at $c$. From \Star, we find that any $x\in\abOpen\sub\abClosed$ satisfies $f(x)\leq f(c)$, or equivalently, $f[\abOpen]\sub\lpar-\infty,f(c)\rbrak$. By Lemma~\ref{EVTtoRTLem}, $f'(c)=0$.

The remaining case is $f(\xi)<0$, for which $(-f)(\xi)>0$, where $-f$, by Lemma~\ref{negfunctionLem}, is continuous on $\abClosed$ and differentiable on $\abOpen$. Thus, all premises for the previous case are satisfied by the function $-f$, so $(-f)'(c)=0$. Using Proposition~\ref{AlgDiffProp}\ref{DiffLinear}, $-1\cdot f'(c)=0$, which implies $f'(c)=0$.\\

\noindent $\RT\implies\eMVT$. Define $F:\abClosed\into\R$ by 
\begin{eqnarray}
    F(x) &=& \lbrak g(b)-g(a)\rbrak\lbrak f(x)-f(a)\rbrak-\lbrak g(x)-g(a)\rbrak\lbrak f(b)-f(a)\rbrak,\label{eMVTeq1}\\
    &=&c_1f(x)+c_2g(x)+c_3,\label{eMVTeq2}
\end{eqnarray}
where $c_1=g(b)-g(a)$, $c_2=f(a)-f(b)$ and $c_3=f(b)g(a)-f(a)g(b)$. Since $f$, $g$ and the constant function $\mapsto c_3$ are all continuous on $\abClosed$ and differentiable on $\abOpen$, by Propositions~\ref{AlgLimProp}\ref{AlgConst},\ref{AlgLinear} and \ref{AlgDiffProp}\ref{AlgConti},\ref{DiffConst},\ref{DiffLinear}, so is $F$, with $F'$, using \eqref{eMVTeq2}, is given by $F'=c_1f'+c_2g'$. In \eqref{eMVTeq1}, setting $x=a$ makes the right-hand side $0-0$, while setting $x=b$ makes the right-hand side the difference of two equal quantities. Thus, $F(a)=0=F(b)$. By \RT, there exists $c\in\abOpen$ such that $F'(c)=0$, or that $c_1f'(c)+c_2g'(c)=0$. By assumption, $g'(c)\neq 0$, so we further have $c_1\frac{f'(c)}{g'(c)}=-c_2$. Substituting the definitions of $c_1$ and $c_2$, we obtain $[g(b)-g(a)]\frac{f'(c)}{g'(c)}=f(b)-f(a)$. To complete the proof, we show why we can divide both sides of this equation by $g(b)-g(a)$, or equivalently, that $g(b)-g(a)\neq 0$.

Tending towards a contradiction, suppose $g(b)-g(a)= 0$. Define $G:\abClosed\into\R$ by the equation $G(x)=g(x)-g(a)=g(x)-g(b)$. Since $g$ and the constant function $x\mapsto g(a)$ are continuous on $\abClosed$ and differentiable on $\abOpen$, by Propositions~\ref{AlgLimProp}\ref{AlgConst},\ref{AlgLinear} and \ref{AlgDiffProp}\ref{AlgConti},\ref{DiffConst},\ref{DiffLinear}, so is $G$, with the derivative as $G'(x)=g'(x)-0=g'(x)$, or that $G'=g'$. Also, $G(a)=0=G(b)$. By \RT, there exists $k\in\abOpen$ such that $g'(k)=G'(k)=0$, contradicting the assumption that $g'$ is nonzero on $\abOpen$. This completes the proof.\\

\noindent $\eMVT\implies\MVT$. By Propositions~\ref{AlgLimProp}\ref{AlgId} and \ref{AlgDiffProp}\ref{DiffId}, the function $g$ defined by $g(x)=x$ is continuous on $\abClosed$, differentiable on $\abOpen$, and with derivative given by $g'(x)=1$. By the \eMVT, there exists $c\in\abOpen$ such that $\frac{f'(c)}{1}=\frac{f(b)-f(a)}{g(b)-g(a)}$, or that $f'(c)=\frac{f(b)-f(a)}{b-a}$.\\

\noindent $\MVT\implies\TT$ By Lemma~\ref{MVTtoTTLem}, given $x\in\lpar a,b\rbrak$, there exists a unique $\rho(x)\in\R$ such that the function $F:\lbrak a,x\rbrak\into\R$ defined by
\begin{eqnarray}
    F(t)=f(x)-\sum_{k=0}^n\frac{f^{(k)}(t)}{k!}(x-t)^k- \rho(x) \frac{(x-t)^{n+1}}{(n+1)!},\label{TAYLOReqDEF}
\end{eqnarray}
is continuous on $[a,x]$ and differentiable on $(a,x)$. Furthermore, $F(a)=0$ and 
\begin{eqnarray}
    F'(t) = \frac{(x-t)^n}{n!} \lbrak \rho(x) - f^{(n+1)}(t) \rbrak.\label{TAYLOReq}
\end{eqnarray}
From \eqref{TAYLOReqDEF}, we also find that at $x=t$, we have $F(x)=f(x)-f(x)+0=0$. By the \MVT, there exists $c\in(a,x)$ such that $F'(c)=\frac{F(x)-F(a)}{x-a}=\frac{0-0}{x-a}=0$. Substituting $0$ to the left-hand side of \eqref{TAYLOReq} and $c$ in the right-hand side, $0=(x-c)^n[\rho(x)-f^{(n+1)(c)}]$, where, since $c\in(a,x)$, we have $c<x$, so $x-c\neq 0$, and $0=(x-c)^n[\rho(x)-f^{(n+1)(c)}]$ further becomes $\rho(x)=f^{(n+1)}(c)$, which we substitute to \eqref{TAYLOReqDEF} together with $t=a$. The resulting left-hand side becomes $F(a)$, which, as said earlier, is zero. By rearranging terms,

$$f(x)=\lbrak\sum_{k=0}^n\frac{f^{(k)}(a)}{k!}(x-a)^k\rbrak+\frac{f^{(n+1)}(c)}{(n+1)!}(x-a)^{n+1}.$$\\

\noindent$\TT\implies\PCP\&\CFT\&\IFT$. In the statement of \TT, if we add the assumption that $f^{(n+1)}$ is zero, then we may substitute $f^{(n+1)}(c)=0$ in the equation to obtain $f(x)=\sum_{k=0}^n\frac{f^{(k)}(a)}{k!}(x-a)^k$, so $f$ is a polynomial of degree at most $n$, and this proves the \PCP.

For the proofs of the \CFT\ and \IFT, in the statement of \TT, let $n\in\{0,1\}$. The hypotheses of the \CFT\  and \IFT, may be generalized as ``$f^{(n+1)}$ is nonnegative on $\abOpen$,'' where $n=0$ for the \IFT, and $n=1$ for the \CFT. Consequently, for all $k\in\{0,\ldots,n\}$, the function $f^{(k)}$ is differentiable on $\abOpen$, and by Proposition~\ref{AlgDiffProp}\ref{DiffConti}, is continuous on $\abOpen$, and in particular, if $a<c<\tau<x<b$, then $f^{(k)}$ is continuous on the subsets $[c,x]$, $[c,\tau]$ and $[\tau,x]$ of $\abOpen$. Also, the differentiability of $f^{(k)}$ on $\abOpen$ also implies differentiability on the subsets $(c,x)$, $(c,\tau)$ and $(\tau,x)$ of $\abOpen$. By Proposition~\ref{AlgDiffProp}\ref{DiffLinear}, for the case $n=1$, the functions $(-f)^{(0)}=-f$, $(-f)^{(1)}=-f'$ are continuous on $[c,x]$, $[c,\tau]$ and $[\tau,x]$ and are differentiable on $(c,x)$, $(c,\tau)$ and $(\tau,x)$. We now use \TT\  as described in the following.
\begin{enumerate}
    \item[\Star] If $n=0$, then by \TT, for any $t\in(c,x]$, there exists $\xi\in(c,t)$ such that $\frac{f(t)-f(c)}{t-c} =f'(\xi)$.
    \item[\SStar] If $n=1$, then using \TT\  on the function $-f$, for any $t\in(c,\tau]$, there exists $\xi\in(c,\tau)$ such that $(-f)(t)-(-f)(c)-(t-c)(-f)'(c) =\frac{(-f)''(\xi)}{2}(t-c)^2=\frac{f''(\xi)}{2}(t-c)^2$.
    \item[\SSStar] Also for $n=1$, using \TT\  on $f$, for any $t\in(\tau,x]$, there exists $\zeta\in(\tau,x)$ such that\linebreak $f(t)-f(\tau)-(t-\tau)f'(\tau) =\frac{f''(\zeta)}{2}(t-\tau)^2$.
\end{enumerate}

If $n=0$, then the nonnegativity of $f^{(n+1)}=f'$ applied to \Star\ means that $\frac{f(t)-f(c)}{t-c}\geq 0$, which is true, in particular, at $t=x\in(c,x]$. That is, $\frac{f(x)-f(c)}{x-c}\geq 0$, both sides of which, we multiply by $x-c$, which is positive because $a<c<x<b$. We obtain $f(x)-f(c)\geq 0$, or that $f(c)\leq f(x)$, and this proves the \IFT.

For the case $n=1$, we set $t=\tau$ in \SStar\ and $t=x$ in \SSStar\ to deduce that there exist $\xi\in(c,\tau)$ and $\zeta\in(\tau,x)$ such that
\begin{eqnarray}
    (-f)(\tau)-(-f)(c)-(\tau-c)(-f)'(c) &=&\frac{f''(\xi)}{2}(\tau-c)^2,\label{newCFTeq1}\\
    f(x)-f(\tau)-(x-\tau)f'(\tau) &=&\frac{f''(\zeta)}{2}(x-\tau)^2.\label{newCFTeq2}
\end{eqnarray}
Since $c<\tau<x$, we find that $\frac{(\tau-c)^2}{2}$ and $\frac{(x-\tau)^2}{2}$ are positive. In conjunction with the hypothesis of the \CFT\  that $f''$ is nonnegative, which implies that $f''(\xi)$ and $f''(\zeta)$ are nonnegative, we find that the right-hand sides of \eqref{newCFTeq1}--\eqref{newCFTeq2} are nonnegative. As a result,
\begin{eqnarray}
    (-f)(\tau)-(-f)(c)-(\tau-c)(-f)'(c) &\geq & 0,\label{newCFTeq3}\\
    f(x)-f(\tau)-(x-\tau)f'(\tau) &\geq &0.\label{newCFTeq4}
\end{eqnarray}
We multiply both sides of \eqref{newCFTeq3} by $-1$, and isolate $f'(c)$. We also isolate $f'(\tau)$ in \eqref{newCFTeq4}. These result to
\begin{eqnarray}
    \frac{f(\tau)-f(c)}{\tau-c} &\leq & f'(c),\label{newCFTeq5}\\
    \frac{f(x)-f(c)}{x-c} &\geq &f'(\tau).\label{newCFTeq6}
\end{eqnarray}
Since the \IFT\  has been proven true, the condition that $f''$ is nonnegative means that $f'$ is monotonically increasing, so from $c<\tau<x$, we obtain $f'(c)\leq f'(\tau)$, which, in conjunction with \eqref{newCFTeq5}--\eqref{newCFTeq6}, gives us
\begin{flalign}
    && \frac{f(\tau)-f(c)}{\tau-c} &\leq \frac{f(x)-f(c)}{x-c}, &\mbox{if }c<\tau<x,\label{newCFTeq7}
\end{flalign}
where the condition $c<\tau<x$ means that $\tau-c$ is positive, which, when multiplied to both sides of the main inequality in \eqref{newCFTeq7}, results to the desired inequality. 

The complete the proof of the \CFT, the remaining cases are $\tau=c$ and $\tau=x$. If $\tau=c$, the trivial equation $f(c)=f(c)$ leads to $f(c)\leq f(c)+0=f(c)+(c-c)\frac{f(x)-f(c)}{x-c}$, and we get the desired inequality. If $\tau=x$, then $f(x)=f(x)=f(c)+f(x)-f(c)$ may be used instead to obtain $f(x)\leq f(c)+(x-c)\frac{f(x)-f(c)}{x-c}$. This completes the proof of the \CFT.\\

\noindent$\PCP\implies\CVT$. If $f'=f^{(1)}$ is zero on $\abOpen$, then setting $n=0$ in the statement of the \PCP, $f$ is a polynomial of degree at most zero, which is a constant.\\

\noindent$\CFT\implies\CVT$. Suppose $f'$ is zero on $\abOpen$. By Proposition~\ref{AlgDiffProp}, $f''$ and $(-f)''=-1\cdot f''$ are also zero on $\abOpen$. Consequently, $f''$ and $(-f)''$ are nonnegative on $\abOpen$. By the \CFT,
\begin{flalign}
    && f(t) &\leq f(c)+(t-c)\frac{f(x)-f(c)}{x-c},\label{CFTeq1}\\
    && -f(t) &\leq -f(c)+(t-c)\frac{-f(x)-[-f(c)]}{x-c}, &\mbox{if }a<c<x<b,\  t\in[c,x].\label{CFTeq2}
\end{flalign}
Given arbitrary $c,x\in\abOpen$, if $c=x$, since $f$ is a function $f(c)=f(x)$. If $c\neq x$, we assume, WLOG, that $c<x$, so we may use \eqref{CFTeq1}--\eqref{CFTeq2} for any $t\in[c,x]$. Multiplying both sides of \eqref{CFTeq2} by $-1$ and combining with \eqref{CFTeq1}, we have $f(t)=f(c)+(t-c)\frac{f(x)-f(c)}{x-c}$. Consequently, $f(t)=\frac{f(x)-f(c)}{x-c}t+\frac{xf(c)-cf(x)}{x-c}$. Using Proposition~\ref{AlgDiffProp}, $f'(t)=\frac{f(x)-f(c)}{x-c}$, where the left-hand side, by assumption, is zero. That is, $0=\frac{f(x)-f(c)}{x-c}$. Solving for $f(c)$, we obtain $f(c)=f(x)$. That is, for any $c,x\in\abOpen$, we have shown $f(c)=f(x)$. Therefore, $f$ is constant on $\abOpen$.\\

\noindent$\IFT\implies\CVT$. If $f'$ is zero on $\abOpen$, then by Proposition~\ref{AlgDiffProp}, so are $f'$ and $(-f)'=-1\cdot f'$, and are consequently nonnegative on $\abOpen$, and by the \IFT,
\begin{flalign}
    && f(x) &\leq f(c),\label{CFTeq3}\\
    && -f(x) &\leq -f(c), &\mbox{if }a<c<x<b.\label{CFTeq4}
\end{flalign}
Given arbitrary $c,x\in\abOpen$, the fact that $f$ is a function means that $c=x$ implies $f(c)=f(x)$, while if $c\neq x$, which, WLOG, may be further assumed as $c<x$, by \eqref{CFTeq3}--\eqref{CFTeq4}, we have $f(x)\leq f(c)$ and $f(x)\geq f(c)$, so $f(x)=f(c)$. Therefore, $f$ is constant on $\abOpen$.\\

\noindent$\CVT\implies\UIC$. Tending towards a contradiction, suppose $[0,1]$ is not connected. Since\linebreak $[0,1]\sub(-1,2)$, there exists a closed and bounded interval contained in $(-1,2)$ that is not connected. By contraposition of Proposition~\ref{conNecPropII}, $(-1,2)$ is not connected. Thus, there exist nonempty disjoint subsets $A$ and $B$ of $(-1,2)$, both open relative to $(-1,2)$, such that $(-1,2)=A\cup B$. Since $A$ and $B$ are nonempty, there exist $a\in A$ and $b\in B$, and the function $f:(-1,2)\into\R$ defined by 
\begin{equation}
    f(x):=\begin{cases}
        0, & x\in A,\\
        1, & x\in B,
    \end{cases}\nonumber
\end{equation} 
has the property that $f(a)=0$ and $f(b)=1$. Since $A$ and $B$ are both open relative to $(-1,2)$, there exist open sets $G_1$ and $G_2$ such that $A=G_1\cap(-1,2)$ and $B=G_2\cap(-1,2)$. But these finite unions of open sets are open, or that $A$ and $B$ are both open sets, so $A=A^o$ and $B=B^o$. These imply that $a$ [respectively, $b$] and any other element of $A$ [respectively, $B$] are both elements and interior points of $A$ [respectively, $B$], so by our definition of differentiability, it is possible to define $f'$ on $A$ and $B$, for as long as the required limit exists, which does exist because $f$ is constant on $A$ and also on $B$. Furthermore, by Proposition~\ref{AlgDiffProp}\ref{DiffConst}, $f'$ is zero on $A$ and on $B$, or is zero on $A\cup B=(-1,2)$. By the \CVT, the function $f$ is constant on $(-1,2)$, so it must have the same value at $a$ and at $b$. That is, $0=f(a)=f(b)=1$.$\lightning$ Therefore, $[0,1]$ is connected.\\

\noindent$\UIC\implies\BWP\&\ES$. From Theorem~\ref{SecondCircle}, $\UIC\iff\ES$ and $\UIC\implies\MCP$, but from Theorem~\ref{FirstCircle}, $\MCP\implies\BWP$. This completes the proof.
\end{proof}

\section{The Fourth Circle: Compactness}\label{FourthCircleSec}

The topological notion of compactness is said to suggest no intuitive image, but is fruitful \cite[p.~36]{dix84}. Maybe this is because the proofs of theorems that are avoided in calculus, but are required to be done in a first analysis course, call for the use of ``compactness arguments'' \cite[p.~470]{tho07}, which are arguments that invoke a ``compactness theorem'' such as those from previous circles: \ES, \CA, \BWP\  and \wNIP; or the Heine-Borel Theorem that masquerades below as the statement of the compactness of the unit interval $[0,1]$. Because all theorems in the five circles in this exposition are eventually equivalent to Dedekind completeness, every such theorem is essentially a compactness theorem, and the use of any of them in proving a real analysis theorem is a compactness argument. Hence is the fruitfulness of the notion of compactness. However, compactness may also be referred to as a ``gate-keeper topic,'' in the sense that if an undergraduate mathematics student does not understand compactness it is unlikely that such a student will be able to do higher-level mathematics \cite[p.~619]{ram15}. 

The circle of theorems for this section shall involve the following.\\
\begin{adjustwidth}{6em}{}
\begin{itemize}
\item[\LCL] \emph{Lebesgue Covering Lemma.} Every open cover of a closed and bounded interval has a Lebesgue number.
\item[\UCT] \emph{Uniform Continuity Theorem.} If $f$ is continuous on $\abClosed$, then $f$ is uniformly continuous. 
    \item[\BVT] \emph{Bounded Value Theorem.} If $f$ is continuous on $\abClosed$, then there exists $M\in\R$ such that $a\leq x\leq b$ implies $f(x)\leq M$. 
\end{itemize}
\end{adjustwidth}
Similar to the first circle in Theorem~\ref{FirstCircle}, there are two other new theorems in the list below, and the distinction is because each one of the has to be joined with the \AP\  or \CC\  to be equivalent to any other statement in the circle. Also, the first and last statements in the list in Theorem~\ref{FourthCircle} belong to other circles, and again, this implies the equivalence of all theorems so far. The preliminaries for this fourth circle of theorems may be found in Section~\ref{FourthCirclePrelims}.

\begin{theorem}\label{FourthCircle}
The following statements are equivalent:\\
\begin{adjustwidth}{6em}{}
\begin{enumerate}
    \item[\AP\ \emph{\&} \wNIP] [The conjunction of the statements of the Archimedean Principle and the Weak Nested Intervals Principle.]
    \item[\UIK] The unit interval $[0,1]$ is compact.
        \item[\AP\ \emph{\&} \LCL] [The conjunction of the statements of the Archimedean Principle and the Lebesgue Covering Lemma.]
            \item[\AP\ \emph{\&} \UCT] [The conjunction of the statements of the Archimedean Principle and the Uniform Continuity Theorem.]
            \item[\UAS] \emph{Uniform Approximation by Step Functions.} If $f$ is continuous on $\abClosed$, then for each $\varepsilon>0$, there exists a step function $\varphi:\abClosed\into\R$ such that for any $x\in\abClosed$, we have $|f(x)-\varphi(x)|<\varepsilon$.
        \item[\CC\ \emph{\&} \BVT] [The conjunction of the statements of the Countable Cofinality of $\R$ and the Bounded Value Theorem.]
    \item[\MCP] \emph{Monotone Convergence Principle.} A monotonically increasing sequence that is bounded above is convergent.
\end{enumerate}
\end{adjustwidth}
\end{theorem}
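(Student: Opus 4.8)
The plan is to establish the implications in the cyclic order in which the statements are listed --- from \AP\ \& \wNIP to \UIK, then to \AP\ \& \LCL, then to \AP\ \& \UCT, then to \UAS, then to \CC\ \& \BVT, then to \MCP --- and then to close the circle by citing Theorem~\ref{FirstCircle} for $\MCP\implies\AP\&\wNIP$. Because \UIK, \UAS and \BVT are not a priori joined to the Archimedean Principle, a recurring subtask will be to extract \AP from each of them; in every case the tool is the length bound of Proposition~\ref{CoverLengthProp} together with the characterisations in Lemma~\ref{APLem}. The remaining content of each arrow is then a matter of feeding a cover, a continuous function, or a sequence into the appropriate preliminary result from Section~\ref{FourthCirclePrelims}.

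For $\AP\&\wNIP\implies\UIK$ I would run the classical bisection argument: given an open cover $\Cover$ of $[0,1]$ with no finite subcover, bisect repeatedly, at each stage retaining --- by the contrapositive of Proposition~\ref{UnionFiniteProp} --- a closed half with no finite subcover, obtaining nested intervals $\Iseq$ with $\ell(I_k)=\tfrac{1}{2^{k-1}}$. By Lemma~\ref{convIIILem} and Lemma~\ref{NIPtoBWPLemInew}\ref{convII}, these lengths tend to $0$, so \wNIP furnishes $x\in\bigcap_k I_k\sub[0,1]$; then $x\in G$ for some $G\in\Cover$, the open set $G$ contains an open interval about $x$, and since $\ell(I_k)\to 0$ Proposition~\ref{NIPtoBWPProp} forces $I_K$ into that interval for large $K$, so $\{G\}$ already covers $I_K$ --- a contradiction. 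For $\UIK\implies\AP\&\LCL$: if \AP failed, Lemma~\ref{APLem} would provide a positive infinitesimal $\varepsilon$, and the open cover $\{(x-\varepsilon,x+\varepsilon):x\in[0,1]\}$ of $[0,1]$ would, by \UIK, have a finite subcover whose total length $2n\varepsilon$ would satisfy $1<2n\varepsilon\leq 1$ by Proposition~\ref{CoverLengthProp} --- impossible; hence \AP holds. With \AP in hand, Proposition~\ref{conNecPropIII} lifts compactness of $[0,1]$ to compactness of an arbitrary closed bounded interval, and Lemma~\ref{UIKtoLCLLem}, applied to such an interval and an arbitrary open cover, produces a Lebesgue number --- which is \LCL.

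The two middle steps are short. For $\AP\&\LCL\implies\AP\&\UCT$: given $f$ continuous on $\abClosed$ and $\varepsilon>0$, Lemma~\ref{LCLtoUCTLem} produces an open cover of $\abClosed$ on each member of which $f$ oscillates by less than $\varepsilon$, and a Lebesgue number of that cover serves as the uniform $\delta$. For $\AP\&\UCT\implies\UAS$: attach to $\varepsilon$ its uniform $\delta$, use Lemma~\ref{UCTtoUASLem} to obtain a partition $\{x_0,\dots,x_n\}$ of $\abClosed$ with every $x_k-x_{k-1}<\delta$, and let $\varphi$ equal $f$ at the partition points and equal $f(x_{k-1})$ on $(x_{k-1},x_k)$ (the degenerate case $a=b$ being trivial). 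For $\UAS\implies\CC\&\BVT$: \BVT is immediate, since a step function within $1$ of $f$ is bounded above by Proposition~\ref{stepBVTProp} and therefore so is $f$; and \CC then follows from \AP by Lemma~\ref{UAStoCCLem}. To obtain \AP here I would apply \UAS to $x\mapsto x$ on $[0,1]$ with tolerance $\tfrac{1}{3}\varepsilon$, where $\varepsilon$ is a hypothetical positive infinitesimal: each subinterval $(x_{k-1},x_k)$ of the defining partition of the approximating step function would lie within $\tfrac{1}{3}\varepsilon$ of a single value, so $x_k-x_{k-1}\leq\tfrac{2}{3}\varepsilon$ and hence $1\leq\tfrac{2}{3}n\varepsilon<1$ --- impossible.

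The implication $\CC\&\BVT\implies\MCP$ is the step I expect to be the main obstacle, and it is where Lemma~\ref{BVTtoMCPLem} does the real work. Let $\aseq$ be monotonically increasing and bounded above by $M$; the case $a_1=M$ forces the sequence to be constant, so assume $a_1<M$ and suppose, for contradiction, that $\aseq$ does not converge. Using \CC in the guise of \CCN, choose a sequence $(d_n)$ in $\R$ that is not bounded above, feed $\aseq$ and $(d_n)$ into Lemma~\ref{BVTtoMCPLem} to get a function $f:\R\into\R$, continuous on $\R$, with $\{d_n:n\in\N\}\sub f[\{a_n:n\in\N\}]$, and observe that every $a_n$ lies in $[a_1,M]$, so $\{d_n:n\in\N\}\sub f\big[[a_1,M]\big]$. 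Then \BVT bounds $f$ above on $[a_1,M]$ by some $K$, so $d_n\leq K$ for all $n$, contradicting the unboundedness of $(d_n)$; hence $\aseq$ converges, which is \MCP. Finally, $\MCP\implies\AP\&\wNIP$ is already contained in Theorem~\ref{FirstCircle}, and the circle is closed.
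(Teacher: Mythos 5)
Your proof is correct and, with two exceptions noted below, follows the same route as the paper: bisection for $\AP\&\wNIP\implies\UIK$; Proposition~\ref{conNecPropIII} and Lemma~\ref{UIKtoLCLLem} for $\UIK\implies\LCL$; Lemma~\ref{LCLtoUCTLem} for $\LCL\implies\UCT$; Lemma~\ref{UCTtoUASLem} with the ``left-endpoint'' step function for $\UCT\implies\UAS$; Proposition~\ref{stepBVTProp} for $\UAS\implies\BVT$; Lemma~\ref{BVTtoMCPLem} for $\CC\&\BVT\implies\MCP$; and Theorem~\ref{FirstCircle} to close the circle.

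The two places where you genuinely diverge are the two spots where \AP\ has to be \emph{extracted} from a compactness-flavored statement. For $\UIK\implies\AP$, the paper assembles a rather elaborate cover built from intervals $(-\varepsilon,\varepsilon)$ indexed by \emph{all} positive infinitesimals together with intervals $(\tfrac{1}{n},2)$ indexed by $n\in\N$, extracts a finite subcover, and exhibits a point $\tfrac{1}{1+N_K}$ that it misses. You instead cover $[0,1]$ by the translates $\{(x-\varepsilon,x+\varepsilon)\ :\ x\in[0,1]\}$ of a single infinitesimal-radius interval and invoke Proposition~\ref{CoverLengthProp} to get $1<2n\varepsilon\leq 1$; this is a cleaner argument that uses a tool the paper already develops and deploys elsewhere. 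For $\UAS\implies\CC$, the paper reduces to $\neg\AP$ by contraposing Lemma~\ref{UAStoCCLem}, then gets a contradiction via Proposition~\ref{StepImageProp} (the image of a step function is finite) and Proposition~\ref{CoverLengthProp}. You instead bound each partition gap $x_k-x_{k-1}\leq\tfrac{2}{3}\varepsilon$ directly from the defining partition of the approximating step function, then sum; this avoids Proposition~\ref{StepImageProp} and the cover-length estimate but requires (as you note) the $\tfrac{1}{3}$-tolerance trick to turn $\leq$ into a genuine contradiction. Both your variants are valid, and the first is arguably tidier than what the paper does. The case split on $a_1=M$ in $\CC\&\BVT\implies\MCP$ is harmless but unnecessary: a constant sequence already converges, so the ``does not converge'' hypothesis rules it out before Lemma~\ref{BVTtoMCPLem} is ever invoked.
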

\begin{proof} $\AP\&\wNIP\implies\UIK$. Let $\Cover$ be an open cover of $[0,1]$. Tending towards a contradiction, suppose $[0,1]$ is not compact. Thus, $I_1:=[0,1]$ cannot be covered by finitely many sets from $\Cover$. Suppose that for some $k\in\N$, closed and bounded intervals $\lbrak a_{k-1},b_{k-1}\rbrak= I_{k-1}\sub I_{k-2}\sub\cdots\sub I_1$ have been found such that for all positive integers $n<k$, the set $I_n$ cannot be covered by finitely many sets from $\Cover$ and that $\ell(I_n)=\frac{1}{2^{n-1}}$.  If $L_{k-1}:=\lbrak a_{k-1},\frac{a_{k-1}+b_{k-1}}{2}\rbrak$ and $R_{k-1}:=\lbrak\frac{a_{k-1}+b_{k-1}}{2},b_{k-1}\rbrak$, then $I_{k-1}=L_{k-1}\cup R_{k-1}$. If each of $L_{k-1}$ and $R_{k-1}$ may be covered by finitely many sets from $\Cover$, then by Proposition~\ref{UnionFiniteProp}, so is $L_{k-1}\cup R_{k-1}=I_{k-1}$.$\lightning$ Hence, there exists $I_{k}=\lbrak a_{k},b_{k}\rbrak\in\{L_{k-1},R_{k-1}\}$\linebreak (with $I_{k}\sub I_{k-1}$) such that $I_{k}$ cannot be covered by finitely many sets from $\Cover$, with the length given by: $\ell(I_{k})=\frac{b_k-a_k}{2}=\txthalf\ell(I_{k-1})=\frac{1}{2^{k}}$. By induction, we have produced a sequence $\Iseq$ of nested intervals such that for each $k\in\N$, we have $\ell(I_k)=\frac{1}{2^{k-1}}$ and that $I_k$ cannot be covered by finitely many sets from $\Cover$. By Lemma~\ref{convIIILem}, the \AP\  implies  $\seqlimOpIII\frac{1}{2^k}=0$, and by Lemma~\ref{NIPtoBWPLemInew}\ref{convII}, $\seqlimOpIII\ell(I_k)=2\cdot\seqlimOpIII\frac{1}{2^k}=0$. By the \wNIP, there exists $c\in\bigcap_{k=1}^\infty I_k$, where the set in the right-hand side is a subset of any interval in the sequence $\Iseq$, which is a subset of $I_1=[0,1]\sub\bigcup_{U\in\Cover}U$. That is, $c\in \bigcup_{U\in\Cover}U$. Since $\Cover$ consists only of open sets, there exists and open set $G$ such that $c\in G\in\Cover$. Since $G$ is open, there exists $\delta>0$ such that
\begin{enumerate}
    \item[\Star] $|x-c|<\delta$ implies $x\in G$.
\end{enumerate} Since $\delta>0$ and $\seqlimOpIII\ell(I_k)=0$, there exists $K\in\N$ such that for all $k\geq K$, and in particular at $k=K$, we have $\ell(I_K)=\frac{1}{2^{K-1}}=\left|\frac{1}{2^{K-1}}\right|=\left|\frac{1}{2^{K-1}}-0\right|<\delta$. That is, $\ell(I_K)<\delta$. Suppose $x\in [a_K,b_K]:=I_K$. Since, $c\in \bigcap_{k=1}^\infty I_k\sub I_K=[a_K,b_K]$, by Proposition~\ref{NIPtoBWPProp}, $|x-c|\leq \ell(I_K)<\delta$, so by \Star, $x\in G$, and we have proven that $I_K=[a_K,b_K]\sub G\in\Cover$. That is, $I_K$ can be covered by one set (so by finitely many sets) from $\Cover$, contradicting the definition of $\Iseq$. Therefore, $[0,1]$ is compact.\\

\noindent$\UIK\implies\AP$. Tending towards a contradiction, suppose \AP\  is false. By Lemma~\ref{APLem}, the set\linebreak $\Inf:=\left\{\varepsilon>0\  :\  \forall n\in\N\quad \varepsilon\leq\frac{1}{n} \right\}$ of all positive infinitesimals has an element $y$.

Let $c\in[0,1]$. If $c\in\Inf$, then, since $y\in\Inf$, by Proposition~\ref{dxProp}, we have $\delta:=c+y\in\Inf$. The elements $c$, $y$ and $\delta$ of $\Inf$ are all positive, so $-\delta<0<c<c+y=\delta$, and this implies $c\in(-\delta,\delta)\sub\bigcup_{\varepsilon\in\Inf}(-\varepsilon,\varepsilon)$. This also covers the case $c=0$, which, in particular, is in $(-\delta,\delta)$ even if it is not in $\Inf$. If $c\notin\Inf$, then there exists $N\in\N$ such that $c{\not\leq}\frac{1}{N}$. Since $c\in[0,1]$, we further have $\frac{1}{N}<c\leq 1<2$, so $c\in\lpar\frac{1}{N},2\rpar\sub\bigcup_{n=1}^\infty\lpar\frac{1}{n},2\rpar$. At this point, we have proven that
\begin{eqnarray}
    [0,1]\sub\lbrak\bigcup_{\varepsilon\in\Inf}(-\varepsilon,\varepsilon)\rbrak\cup\lbrak\bigcup_{n=1}^\infty\lpar\frac{1}{n},2\rpar\rbrak,
\end{eqnarray}
where the intervals that form the union in the right-hand side of the set inclusion are all open sets. Thus, these intervals form an open cover $\Cover$ of $[0,1]$. By \UIK, $[0,1]$ is compact, so there exist finitely many intervals from $\Cover$ that cover $[0,1]$. Equivalently, there exist finitely many positive infinitesimals $\varepsilon$ and finitely many positive integers $n$ such that the corresponding intervals $(-\varepsilon,\varepsilon)$ and $\lpar\frac{1}{n},2\rpar$ cover $[0,1]$. WLOG, we may denote these as the $M+K$ intervals
\begin{eqnarray}
    \lpar-\varepsilon_1,\varepsilon_1\rpar,\qquad\lpar-\varepsilon_2,\varepsilon_2\rpar, \qquad&\ldots& \qquad\lpar-\varepsilon_M,\varepsilon_M\rpar,\label{New01SubCover0}\\
    \lpar\frac{1}{N_1},2\rpar,\qquad\lpar\frac{1}{N_2},2\rpar,\qquad &\ldots &\qquad\lpar\frac{1}{N_K},2\rpar,\qquad\label{New01SubCover}
\end{eqnarray}
where, for each $m\in\{1,2,\ldots, M\}$ and each $k\in\{1,2,\ldots,K\}$, we have $\varepsilon_m\in\Inf$ and $N_k\in\N$. This is because if less than $M+K$ sets among \eqref{New01SubCover0}--\eqref{New01SubCover} cover $[0,1]$, then the union of these sets is still a subset of the union of all the sets in \eqref{New01SubCover0}--\eqref{New01SubCover}. We also assume, WLOG, that $N_1<N_2<\cdots<N_K$, or that $\frac{1}{N_K}<\cdots<\frac{1}{N_2}<\frac{1}{N_1}$. Otherwise, we only need to re-index the aforementioned positive integers. Because of this ordering, the positive integer $1+N_K$ is bigger than all of $N_1<N_2<\cdots<N_K$, and consequently, the number $\frac{1}{1+N_K}$ is strictly less than the left endpoints of the intervals in \eqref{New01SubCover}, so $\frac{1}{1+N_K}$ is not in any of the intervals in \eqref{New01SubCover}. Given $m\in\{1,2,\ldots, M\}$, since $\varepsilon_m$ is a positive infinitesimal and $1+N_K$ is a positive integer, we have $\varepsilon_m\leq \frac{1}{1+N_K}$, so $\frac{1}{1+N_K}\notin(-\varepsilon_m,\varepsilon_m)$. But since $1+N_K$ is positive and $1+N_K\geq 1$, we have $0<\frac{1}{1+N_K}\leq 1$, which implies $\frac{1}{1+N_K}\in[0,1]$. At this point, we have produced an element of $[0,1]$ that is not in the union of the sets in \eqref{New01SubCover0}--\eqref{New01SubCover}. This contradicts the condition that the sets in \eqref{New01SubCover0}--\eqref{New01SubCover} cover $[0,1]$. This completes the proof.\\

\noindent$\UIK\implies\LCL$. Let $\abClosed$ be an arbitrary closed and bounded interval. By \UIK, the interval $[0,1]$ is compact, and by Lemma~\ref{conNecPropIII}, so is $\abClosed$. If $\abClosed$ is empty, then we use $\delta=1>0$, and the desired conclusion is vacuously true. Henceforth, assume that $\abClosed$ is nonempty. By Lemma~\ref{UIKtoLCLLem}, any open cover of $\abClosed$ has a Lebesgue number.\\

\noindent$\AP\&\LCL\implies\AP\&\UCT$. The implication $\AP\&\LCL\implies\AP$ is immediate, so we prove that $\AP\&\LCL$ implies $\UCT$.

Suppose $f$ is continuous on $\abClosed$, and let $\varepsilon>0$. By Lemma~\ref{LCLtoUCTLem}, there exists an open cover $\Cover$ of $\abClosed$ such that for each $G\in\Cover$, if $x,c\in G$, then $|f(x)-f(c)|<\varepsilon$. By the \LCL, the open cover $\Cover$ has a Lebesgue number $\delta>0$, which means that for any $x,c\in\abClosed$, if $|x-c|<\delta$, then there exists $G\in\Cover$ such that $x,c\in G$, which, by \Star, implies $|f(x)-f(c)|<\varepsilon$. Therefore, $f$ is uniformly continuous.\\

\noindent$\AP\&\UCT\implies\UAS$. Suppose $f$ is continuous on $\abClosed$, and let $\varepsilon>0$. By the \UCT, $f$ is uniformly continuous, so there exists $\delta>0$ such that 
\begin{enumerate}
    \item[\Star] if $x,c\in\abClosed$ and $|x-c|<\delta$, then $|f(x)-f(c)|<\varepsilon$.
\end{enumerate}
By Lemma~\ref{UCTtoUASLem}, there exists a partition there exists a partition $\Delta=\{x_0,x_1,\ldots,x_n\}$ of $\abClosed$ such that
\begin{flalign}
&&    x_{k}-x_{k-1} &<\delta, &\mbox{for each }k\in\{1,2,\ldots,n\}.\label{UASeq}
\end{flalign}
We define $\varphi:\abClosed\into\R$ by $\varphi(b)=\varphi(x_n):=f(x_n)=f(b)$, and for each $k\in\{1,2,\ldots,n\}$, if $x\in [x_{k-1},x_k)$, then $\varphi(x):=f(x_{k-1})$. Thus, for each $k\in\{1,2,\ldots,n\}$, the function $\varphi$ is constant on $[x_{k-1},x_k)$, and in particular, on $(x_{k-1},x_k)\sub[x_{k-1},x_k)$. This means that $\varphi$ is a step function. Let $x\in\abClosed$. By Proposition~\ref{PartitionCupProp}, $x\in\bigcup_{k=1}^n[x_{k-1},x_{k}]$, so there exists $K\in\{1,2,\ldots,n\}$ such that $x\in[x_{K-1},x_{K}]$. Thus, $x_{K-1}\leq x\leq x_K$, which implies $0\leq x-x_{K-1}\leq x_K-x_{K-1}$, so by \eqref{UASeq}, $0\leq x-x_{K-1}<\delta$, where the first inequality means that $x-x_{K-1}$ is nonnegative and is hence equal to its absolute value. That is, $|x-x_{K-1}|<\delta$. Since $x,x_{K-1}\in\abClosed$, by \Star, we have $|f(x)-f(x_{K-1})|<\varepsilon$, where, by the definition of $\varphi$, we may replace $f(x_{K-1})$ by $\varphi(x)$ if $x\in\lbrak x_{K-1},x_K\rpar$. That is, $|f(x)-\varphi(x)|<\varepsilon$. If $x=x_K$, then either $x=b$ [if $K=n$] or [if $K<n$, then] $x$ is the left endpoint of the next interval $[x_{K},x_{K+1})$. In both of these cases, $f(x)=f(x_K)=\varphi(x_K)=\varphi(x)$, so $|f(x)-\varphi(x)|=0<\varepsilon$. In all cases, $|f(x)-\varphi(x)|<\varepsilon$, and this completes the proof. \\

\noindent$\UAS\implies\CC$. Tending towards a contradiction, suppose \CC\  is false. By contraposition of Lemma~\ref{UAStoCCLem}, \AP\  is false, and by Lemma~\ref{APLem}, there exists an infinitesimal $\varepsilon>0$. By Propositions~\ref{AlgLimProp}\ref{AlgId} and \ref{AlgDiffProp}\ref{AlgConti}, the function $f:x\mapsto x$ is continuous on $[0,1]$, and by \UAS, there exists a step function $\varphi:\abClosed\into\R$ such that for any $x\in[0,1]$, we have $|f(x)-\varphi(x)|<\varepsilon$, and since $f:x\mapsto x$, this further becomes $|x-\varphi(x)|<\varepsilon$. That is,
\begin{flalign}
    && |x-\varphi(x)|&<\varepsilon, &\mbox{for all }x\in[0,1].\label{UAStoCCeq}
\end{flalign}
By Proposition~\ref{StepImageProp}, there exist $c_1$, $c_2$, \ldots , $c_N\in\R$ such that
\begin{eqnarray}
    \{\varphi(x)\  :\  x\in[0,1]\}=\{c_1,c_2,\ldots,c_N\}.\label{UAStoCCeq2}
\end{eqnarray}
Given $x\in[0,1]$, by \eqref{UAStoCCeq}, we have $|x-\varphi(x)|<\varepsilon$, which impies $x\in(\varphi(x)-\varepsilon,\varphi(x)+\varepsilon)$, and by \eqref{UAStoCCeq2}, there exists $K\in\{1,2,\ldots,N\}$ such that $\varphi(x)=c_K$, and so $x\in(c_K-\varepsilon,c_K+\varepsilon)$. At this point, we have proven that
\begin{eqnarray}
    [0,1]\sub\bigcup_{k=1}^n(c_k-\varepsilon,c_k+\varepsilon),\nonumber
\end{eqnarray}
which, by Proposition~\ref{CoverLengthProp}, implies that $1=1-0<\sum_{k=1}^n2\varepsilon=2n\varepsilon$, which, by Proposition~\ref{dxProp} is an infinitesimal, so for any $n\in\N$, we have $1<2n\varepsilon\leq\frac{1}{n}$, which implies $1<\frac{1}{n}$, which is false at $n=1$. Therefore, \CC\  is true.\\

\noindent$\UAS\implies\BVT$. If $f$ is continuous on $\abClosed$, then for $1>0$, by \UAS, there exists a step function $\varphi:\abClosed\into\R$ such that for each $x\in\abClosed$, we have $|f(x)-\varphi(x)|<1$. By Proposition~\ref{StepImageProp}, there exist $c_1$, $c_2$, \ldots , $c_N\in\R$ such that $\{\varphi(x)\  :\  x\in[0,1]\}=\{c_1,c_2,\ldots,c_N\}$. Thus, given $x\in\abClosed$, there exists $K\in\{1,2,\ldots,N\}$ such that $\varphi(x)=c_K$, so $|f(x)-\varphi(x)|<1$ becomes $|f(x)-c_K|<1$, or equivalently, $f(x)\in(c_K-1,c_K+1)$. Consequently, $f(x)<c_K+1\leq M:=\max\{c_k+1\  :\  k\in\{1,2,\ldots,N\}\}$.\\

\noindent$\CC\&\BVT\implies \MCP$. Tending towards a contradiction, suppose there exists a monotonically increasing sequence $\aseq$ in $\R$ that is bounded above by $b\in\R$ but does not converge. By \CC, there exists a sequence $\cseq$ in $\R$ that is not bounded above. If $I_1:=\lpar-\infty,a_1\rbrak$, and if, for each integer $n\geq 2$, we define $I_n:=\lbrak a_{n-1},a_n\rbrak$, with $A:=\bigcup_{n=1}^\infty I_n$, then by Lemma~\ref{BVTtoMCPLem}, the function $f:\R\into\R$ defined by
\begin{eqnarray}
    f(x)=\begin{cases} c_1, & x \in I_1, \\
    c_{n-1} + \frac{x-a_{n-1}}{a_n-a_{n-1}}(c_n - c_{n-1}), & x \in I_n, n \geq 2, \\ 0, & x \notin A, \end{cases}\nonumber
\end{eqnarray}
is continuous on $\R$, with $\{c_n\  :\  \in\N\}\sub f[\{a_n\  :\  n\in\N\}]$. Since $\aseq$ is monotonically increasing and bounded above by $b$, for each $n\in\N$, we have $a:=a_1\leq a_n\leq b$. Thus, $\{a_n\  :\  n\in\N\}\sub I:=\abClosed$, and so, $\{c_n\  :\  \in\N\}\sub f[\{a_n\  :\  n\in\N\}]\sub f[I]$. Since $\cseq$ is not bounded above, for each $M\in\R$, there exists $n\in\N$, and hence there exists $c_n\in f[I]$, such that $c_n>M$. Consequently, there exists $x\in I=\abClosed$ such that $f(x)=c_n>M$. That is, we have shown that for any $M\in\R$, there exists $x$ such that $a\leq x\leq b$ but $f(x){\not\leq}M$, where $f$, which is continuous on $\R$, is continuous on $\abClosed\sub\R$. This contradicts the \BVT. Therefore, any monotonically increasing sequence that is bounded above converges. \\

\noindent$\MCP\implies\AP\&\wNIP$. See Theorem~\ref{FirstCircle}.
\end{proof}

\section{The Fifth Circle: Integration}

The more precise setting for studying integration theorems equivalent to Dedekind completeness is that in an arbitrary ordered field, just like that done in the paper \cite{olm73} or in \cite[Appendix~A2]{tei13}. One of the main technical issues is the difference between two notions that are trivially equivalent in $\R$ but the difference does matter in an arbitrary ordered field: integrability of a function (how close ``upper and lower step function integrals'' are) versus the existence of the actual number that serves as the integral. Since we are using a naive approach, the integrability theorems here are not stated like that done in the standard references \cite{dev14,pro13,tei13} for this topic. [Also, in the statement of the Darboux Integral Theorem, ``upper integral'' was used in \cite{rie01}, but we have ``lower integral.'' This is mainly because we avoided infima.] Our goal is to preserve as much as possible the logical structure in the proving done in \cite{rie01}, and hence is the appearance and structure of this section. However, from the original four integration theorems in \cite{rie01}, we have here six. The additional two are because, in this author's opinion, the logical structure of the proving in \cite{rie01} for integration theorems is made clearer this way. Another technical issue concerns one form of the Fundamental Theorem of Calculus (that which is about differentiation of an integral). A modified form of this was used in \cite[p.~273]{dev14} just for the theorem to qualify for being in the list. A notion called ``uniform differentiability'' was introduced to accomplish this goal, but this does not seem compatible with our naive approach. Also, a closer look at the logic done in \cite{rie01} shows that the said form of the Fundamental Theorem of Calculus need only be joined by conjunction with another theorem [which is neither \AP\  nor \CC] for it to be part of a circle of real analysis principles. Whether this is unsavory or not is something we disregard, simply because the logic works.

The real analysis principles concerning integration that we shall be interested in are the following.\\
\begin{adjustwidth}{6em}{}
\begin{enumerate}
    \item[\DIT] \emph{Darboux Integral Theorem.} If $f$ is continuous on $\abClosed$, then the lower Darboux integral $\abIntLow f$ exists.
    \item[\RIT] \emph{Riemann Integrability Theorem.} If $f$ is continuous on $\abClosed$, then $f$ is Riemann integrable over $\abClosed$.
    \item[\FTCi] \emph{Fundamental Theorem of Calculus, First Form.} If $f$ is continuous on $\abClosed$, then the function $\abClosed\into\R$ denoted by $x\mapsto\int_a^xf$ is an antiderivative of $f$. 
            \item[\IAT] \emph{Increasing Antiderivative Theorem.} If $f$ is continuous and nonnegative on $\abClosed$, then any antiderivative of $f$ is monotonically increasing on $\abClosed$.\\
\end{enumerate}
\end{adjustwidth}
Similar to previous circles of theorems, the above had to be mentioned first before showing the full list of equivalences because each of the above principles has to be joined with the \AP\  or \CC\  [in this case, only \CC] for it to be included in the list of equivalences. The preliminaries for integration theory are in Section~\ref{FifthCirclePrelims}. 

The proof of Theorem~\ref{FifthCircle} has far more dependence on previously proven equivalences than in the previous four circles, and this is part of the reason why the \BVT\  and the \MVT\  are sort of ``inserted'' in the middle of the list of equivalences.

\begin{theorem}\label{FifthCircle} The following are equivalent.\\
\begin{adjustwidth}{6em}{}
\begin{enumerate}
 \item[\UAS] \emph{Uniform Approximation by Step Functions.} If $f$ is continuous on $\abClosed$, then for each $\varepsilon>0$, there exists a step function $\varphi:\abClosed\into\R$ such that for any $x\in\abClosed$, we have $|f(x)-\varphi(x)|<\varepsilon$.

    \item[\CC\ \emph{\&} \DIT] [The conjunction of the statements of the Countable Cofinality of $\R$ and the Darboux Integral Theorem.]
        \item[\CC\ \emph{\&} \RIT] [The conjunction of the statements of the Countable Cofinality of $\R$ and the Riemann Integral Theorem.]
        \item[\CC\ \emph{\&} \BVT] [The conjunction of the statements of the Countable Cofinality of $\R$ and the Bounded Value Theorem.]
                   \item[\MVT] \emph{Mean Value Theorem.} If $f:\abClosed\into\R$ is continuous on $\abClosed$ and differentiable on $\abOpen$, then there exists $c\in\abOpen$ such that $f'(c)=\frac{f(b)-f(a)}{b-a}$.
            \item[\FTCi\ \emph{\&} \IAT] [The conjunction of the statements of the Fundamental Theorem of Calculus, First Form\footnote{Fundamental Theorem of Calculus, First Form: ``differentiation of an integral''}, and the Increasing Antiderivative Theorem.]
        \item[\FTCii] \emph{Fundamental Theorem of Calculus, Second Form\footnote{Fundamental Theorem of Calculus, Second Form: ``integration of a derivative''}.} If $f$ is continuous on $\abClosed$ and $F$ is an antiderivative of $f$, then $\abInt f=F(b)-F(a)$.
    \item[\ADT] \emph{Antiderivative Difference Theorem.} If $f$ is continuous on $\abClosed$ and if $F$ and $G$ are antiderivatives of $f$, then $F-G$ is a constant function.  
                      \item[\CVT] \emph{Constant Value Theorem.} If $f'$ is zero on $\abOpen$, then $f$ is constant on $\abOpen$.
\end{enumerate}
\end{adjustwidth}
\end{theorem}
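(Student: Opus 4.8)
The plan is to run the cycle of implications
\begin{gather*}
\UAS \implies \CC\wedge\RIT \implies \CC\wedge\DIT \implies \CC\wedge\BVT \implies \MVT \\
\implies \FTCi\wedge\IAT \implies \FTCii \implies \ADT \implies \CVT \implies \UAS,
\end{gather*}
which reorders \DIT\ and \RIT\ relative to the list in the statement (harmless, since a cycle proves all nine statements equivalent). The structural device that makes the arcs short is that \UAS, the conjunction \CC\ \&\ \BVT, \MVT\ and \CVT\ already sit in the completed circles of Theorems~\ref{FirstCircle}--\ref{FourthCircle}, so each of these is equivalent to Dedekind completeness; hence once any one of them is in hand we may invoke \ES, \BWP, \EVT, \MCP\ and---through the step-function sandwich below---\RIT\ and the existence of $\int_a^x f$ for continuous $f$. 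Three arcs are then pure bookkeeping. $\CC\wedge\RIT \implies \CC\wedge\DIT$ is immediate, since Riemann integrability of $f$ includes the existence of $\abIntLow f$. $\CC\wedge\BVT \implies \MVT$ holds because $(\CC\wedge\BVT)\iff\MCP$ (Theorem~\ref{FourthCircle}), $\MCP$ yields both $\BWP$ and $\ES$ (Theorems~\ref{FirstCircle} and \ref{SecondCircle}), and $(\BWP\wedge\ES)\implies\MVT$ (Theorem~\ref{ThirdCircle}). And $\CVT \implies \UAS$ holds because $\CVT\implies\UIC$ (Theorem~\ref{ThirdCircle}), $\UIC\iff\MCP$ (Theorem~\ref{SecondCircle}), and $\MCP\implies\UAS$ (Theorem~\ref{FourthCircle}).

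For the two genuinely analytic arcs among the first three: in $\UAS \implies \CC\wedge\RIT$, the \CC\ half is supplied by Theorem~\ref{FourthCircle}, and since $\UAS\iff\MCP\iff\ES$ the least upper bound property is available, so for continuous $f$ on $\abClosed$ with $a<b$ (the case $a=b$ being Proposition~\ref{DegenerateIntProp}) applying \UAS\ with tolerance $1$ shows $\{\int_a^b\varphi:\varphi\in\abSteps,\ \varphi\le f\}$ is nonempty and bounded above by the integral of a step function exceeding $f$ (Proposition~\ref{monotoneProp}), so $\abIntLow f$ exists, and likewise $\abIntLow(-f)=-\abIntUp f$ exists by \eqref{DarbouxNeg}; applying \UAS\ with tolerance $\varepsilon/(2(b-a))$ produces $\varphi\in\abSteps$ with $\varphi-\tfrac{\varepsilon}{2(b-a)}\le f\le\varphi+\tfrac{\varepsilon}{2(b-a)}$, whose two perturbed step functions have integrals differing by $\varepsilon$ (Proposition~\ref{negStepIntProp}) and, by Proposition~\ref{monotoneProp}, trap both Darboux integrals; with $\abIntLow f\le\abIntUp f$ (Proposition~\ref{SumtoStepProp}) and $\varepsilon>0$ arbitrary this forces $\abIntLow f=\abIntUp f$, i.e.\ \RIT. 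In $\CC\wedge\DIT \implies \CC\wedge\BVT$: \DIT\ applied to $f$ and to $-f$ together with \eqref{DarbouxNeg} gives that $\abIntUp f$ exists, and Lemma~\ref{DITtoBVTLem} then says $f$ is bounded above on $\abClosed$, which is exactly \BVT.

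For the calculus arcs: in $\MVT \implies \FTCi\wedge\IAT$, the \IAT\ half is a one-line \MVT\ argument (if $F'=f\ge0$ on $\abClosed$ then for $a\le s<t\le b$ there is $\xi\in(s,t)$ with $F(t)-F(s)=f(\xi)(t-s)\ge0$, so $F$ is increasing); the \FTCi\ half is the real content. Since \MVT\ is equivalent to completeness (Theorem~\ref{ThirdCircle} with Theorems~\ref{FirstCircle} and \ref{SecondCircle}), \RIT\ holds, so $G(x):=\int_a^x f$ is defined (Lemma~\ref{UAStoRITLem}); for $x\ne x_0$, additivity (Lemma~\ref{IntAdditivityLem}) rewrites the difference quotient of $G$ at $x_0$ as an average of $f$ over the interval $J$ between $x_0$ and $x$, and by \EVT\ and Proposition~\ref{mfMIntProp} this average lies between $\min_J f$ and $\max_J f$; continuity of $f$ at $x_0$ makes both extremes tend to $f(x_0)$ as $x\to x_0$, so directly from the $\varepsilon$-filter-base definition of the limit $G'(x_0)=f(x_0)$, i.e.\ $G$ is an antiderivative of $f$. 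Then $\FTCi\wedge\IAT \implies \FTCii$: for any antiderivative $F$ of $f$ the function $F-G$ has zero derivative, hence is an antiderivative of the continuous nonnegative zero function, so \IAT\ makes both $F-G$ and $G-F$ monotonically increasing, forcing $F-G$ constant; evaluating at $a$ and $b$ with $G(a)=0$ (Proposition~\ref{DegenerateIntProp}) gives $\abInt f=F(b)-F(a)$. Next $\FTCii \implies \ADT$: for antiderivatives $F,G$ of a continuous $f$, applying \FTCii\ on each $[a,x]\subseteq\abClosed$ gives $F(x)-F(a)=\int_a^x f=G(x)-G(a)$, so $F-G$ is constant. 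Finally $\ADT \implies \CVT$: if $f'=0$ on $\abOpen$, then on any $[c,d]\subset\abOpen$ both $f|_{[c,d]}$ and the constant function $0$ are antiderivatives of the continuous zero function, so \ADT\ forces $f$ constant on $[c,d]$; letting $[c,d]$ sweep $\abOpen$ gives $f$ constant on $\abOpen$.

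The main obstacle is the \FTCi\ half of $\MVT \implies \FTCi\wedge\IAT$. One must first be sure $\int_a^x f$ exists at all---obtained here only by the indirect route that \MVT\ already forces completeness, hence \RIT---and then carry out the classical ``derivative of an integral'' estimate with the interval of integration shrinking to a point, assembling the additivity lemma, the \EVT, the bound of Proposition~\ref{mfMIntProp}, and a squeeze inside the filter-base limit. A minor additional friction throughout the last arcs is that the paper's derivative is defined only at interior points, so the endpoint behaviour of antiderivatives (needed for ``increasing on $\abClosed$'' in \IAT\ and ``$\abInt f=F(b)-F(a)$'' in \FTCii) has to be read through the one-sided filter bases of \eqref{LimShortCut}.
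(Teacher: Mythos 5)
Your proposal is correct and follows essentially the same architecture as the paper: ride the already-closed circles of Theorems~\ref{FirstCircle}--\ref{FourthCircle} for the ``soft'' arcs (\(\CC\wedge\BVT\Rightarrow\MVT\), \(\CVT\Rightarrow\UAS\)), and do the genuine analysis only for the step-function sandwich (\(\UAS\Rightarrow\RIT\)), the bound (\(\DIT\) or \(\RIT\Rightarrow\BVT\) via Lemma~\ref{DITtoBVTLem} or \ref{RITtoBVTLem}), and \(\MVT\Rightarrow\FTCi\). The only departures are minor reorganizations that streamline rather than alter the logic: you linearize \(\UAS\Rightarrow\RIT\Rightarrow\DIT\Rightarrow\BVT\) where the paper runs \(\RIT\) and \(\DIT\) as parallel branches out of \(\UAS\); you derive \(\FTCii\) from \(\FTCi\wedge\IAT\) (via constancy of \(F-G\) plus \(G(a)=0\)) instead of the paper's independent \(\MVT\Rightarrow\FTCii\) argument through Proposition~\ref{RiemannUniqueProp} and a telescoping Riemann-sum estimate; in the \(\RIT\) sandwich you apply \(\UAS\) once to \(f\) with tolerance \(\varepsilon/(2(b-a))\) rather than twice (to \(f\) and \(-f\)) with tolerance \(\varepsilon/(4(b-a))\) as the paper does; and for \(\FTCi\) you squeeze the difference quotient of \(x\mapsto\int_a^x f\) between running extremes of \(f\) rather than first isolating the paper's \(\iMVT\). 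Each of these is a legitimate shortcut, and your observations about endpoint one-sidedness of the derivative and the need to secure existence of \(\int_a^x f\) before differentiating it are exactly the technical points the paper handles via \eqref{LimShortCut}, \(\iMVT\), Lemma~\ref{IntAdditivityLem}, and Proposition~\ref{twoSidedLimProp}.
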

\begin{proof}$\UAS\implies\CC\&\DIT$. Using Theorem~\ref{FourthCircle} and then Theorem~\ref{SecondCircle}, we have
\begin{eqnarray}
    \UAS\implies\CC\&\BVT\implies\MCP\implies\ES.\label{UAStoCCandBVT}
\end{eqnarray}
In particular, \UAS\  implies \CC\  (as desired), and also \ES, which will be used in the following proof of the \DIT. Suppose $f$ is continuous on $\abClosed$. By Lemma~\ref{negfunctionLem}, so is $-f$. Since $1>0$, by \UAS, there exist $\phi,\theta\in\abSteps$ such that $\phi\leq f$ and $\theta\leq-f$ [which implies $f\leq -\theta$] such that $|f-\phi|<1$ and $|f-\theta|<1$. The conditions $\phi\in\abSteps$ and $\phi\leq f$ mean that $\int_a^b\phi\in\theSet:=\left\{\int_a^b\varphi\  :\  \varphi\in\abSteps,\  \varphi\leq f\right\}$, so $\theSet$ is nonempty. By Proposition~\ref{negStepIntProp}, $-\theta\in\abSteps$. If $\varphi\in\abSteps$ such that $\varphi\leq f$, then, because $f\leq -\theta$, we have $\varphi\leq -\theta$, and by Proposition~\ref{monotoneProp}, $\int_a^b\varphi\leq \int_a^b(-\theta)$, where the left-hand side may be considered as an arbitrary element of $\theSet$. Thus, the real number $\int_a^b(-\theta)$ is an upper bound of the nonempty set $\theSet$. By \ES, $\abIntLow f=\sup\theSet$ exists. This proves the \DIT.\\

\noindent$\UAS\implies\CC\&\RIT$. From \eqref{UAStoCCandBVT}, we find that \UAS\  implies \CC, as desired. Also, we have proven that \UAS\  implies \DIT, so the assumption that $f$ is continuous on $\abClosed$, which, by Lemma~\ref{negfunctionLem}, implies the continuity of $-f$ on $\abClosed$, by the \DIT, implies that $\abIntLow f$ and $r:=\abIntLow (-f)$ exist in $\R$. By \eqref{DarbouxNeg}, $-r=-\abIntLow(-f)=\abIntUp f$ exists in $\R$. That is, $\UAS$ implies the existence of both $\abIntLow f$ and $\abIntUp f$, which shall be used in the following argument.

Let $\varepsilon>0$. If $\abClosed$ is degenerate [$a=b$], then by Proposition~\ref{DegenerateIntProp}, the statement \RIT\  is trivially true. Henceforth, suppose $a<b$, so $\frac{\varepsilon}{4(b-a)}>0$. We assume that $f$ is continuous on $\abClosed$, and by Lemma~\ref{negfunctionLem}, so is $-f$. By the \UAS, there exist $\varphi,\psi\in\abSteps$ such that $|f-\varphi|<\frac{\varepsilon}{4(b-a)}$ and $|(-f)-\psi|<\frac{\varepsilon}{4(b-a)}$, which imply
\begin{eqnarray}
    \varphi-\frac{\varepsilon}{4(b-a)} <&f&<\varphi+\frac{\varepsilon}{4(b-a)},\label{RITineq1}\\
    \psi-\frac{\varepsilon}{4(b-a)} <&-f&<\psi+\frac{\varepsilon}{4(b-a)}.\label{RITineq2}
\end{eqnarray}
The first inequalities in \eqref{RITineq1}--\eqref{RITineq2} imply $\varphi-\frac{\varepsilon}{4(b-a)}\leq f$ and $\psi-\frac{\varepsilon}{4(b-a)} \leq-f$, where the left-hand sides, according to Proposition~\ref{negStepIntProp} are in $\abSteps$, and the integrals of these step functions may also be computed using Proposition~\ref{negStepIntProp}. At this point, we have shown that
\begin{eqnarray}
    \int_a^b\varphi-\frac{\varepsilon}{4}&\in&\theSet_1:=\left\{\int_a^b\phi\  :\  \phi\in\abSteps,\  \phi\leq f\right\},\nonumber\\
        \int_a^b\psi-\frac{\varepsilon}{4}&\in&\theSet_2:=\left\{\int_a^b\phi\  :\  \phi\in\abSteps,\  \phi\leq -f\right\}.\nonumber
\end{eqnarray}
But since $\abIntLow f=\sup\theSet_1$ and $-\abIntUp f=\sup\theSet_2$ are upper bounds of $\theSet_1$ and $\theSet_2$, respectively, we have\linebreak $\int_a^b\varphi-\frac{\varepsilon}{4}\leq \abIntLow f$ and $\int_a^b\psi-\frac{\varepsilon}{4}\leq -\abIntUp f$. We multiply both sides of both inequalities by $-1$ and add the results to obtain
\begin{eqnarray}
    \abIntUp f-\abIntLow f\leq -\lbrak\int_a^b\varphi+\int_a^b\psi\rbrak+\frac{\varepsilon}{2}.\label{RITeqfinal1}
\end{eqnarray}
We multiply \eqref{RITineq2} by $-1$ and rearrange so that we have the new system
\begin{eqnarray}
    \varphi-\frac{\varepsilon}{4(b-a)} <&f&<\varphi+\frac{\varepsilon}{4(b-a)},\label{RITineq3}\\
    -\psi-\frac{\varepsilon}{4(b-a)} <&f&<-\psi+\frac{\varepsilon}{4(b-a)}.\label{RITineq4}
\end{eqnarray}
From the first inequality in \eqref{RITineq3} and the second inequality in \eqref{RITineq4}, we obtain $\varphi+\psi<\frac{\varepsilon}{2(b-a)}$, while from the first inequality in \eqref{RITineq4} and the second inequality in \eqref{RITineq3}, we obtain $-\frac{\varepsilon}{2(b-a)}<\varphi+\psi$. That is,
\begin{eqnarray}
   -\frac{\varepsilon}{2(b-a)}<\varphi+\psi <\frac{\varepsilon}{2(b-a)}.\label{RITsuperIneq}
\end{eqnarray}
According to Proposition~\ref{negStepIntProp}, the left, middle and right members of \eqref{RITsuperIneq} are all in $\abSteps$. We use Proposition~\ref{negStepIntProp} to compute their integrals, and by Proposition~\ref{monotoneProp}, we obtain $-\frac{\varepsilon}{2}<\int_a^b\varphi+\int_a^b\psi<\frac{\varepsilon}{2}$, or that
\begin{eqnarray}
\left|\int_a^b\varphi+\int_a^b\psi\right|<\frac{\varepsilon}{2}.    \label{RITeqfinal2}
\end{eqnarray}
From Proposition~\ref{SumtoStepProp}, $\abIntLow f\leq \abIntUp f$, which implies $0\leq \abIntUp f-\abIntLow f$, and by \eqref{RITeqfinal1},\eqref{RITeqfinal2},
$$0\leq \abIntUp f-\abIntLow f\leq -\lbrak\int_a^b\varphi+\int_a^b\psi\rbrak+\frac{\varepsilon}{2}\leq  \left|\int_a^b\varphi+\int_a^b\psi\right|+\frac{\varepsilon}{2}<\frac{\varepsilon}{2}+\frac{\varepsilon}{2}=\varepsilon.$$
That is,
\begin{eqnarray}
    0\leq \abIntUp f-\abIntLow f<\varepsilon.\label{RITfinaleq3}
\end{eqnarray}
If $\abIntUp f-\abIntLow f$ is not zero, then the first inequality in \eqref{RITfinaleq3} means that $\abIntUp f-\abIntLow f$ is positive, so we may substitute it to $\varepsilon$ to obtain $\abIntUp f-\abIntLow f<\abIntUp f-\abIntLow f$, contradicting the Trichotomy Law. Hence, $\abIntUp f-\abIntLow f$ is zero, or that $\abIntUp f=\abIntLow f$. Therefore, $f$ is Riemann integrable over $\abClosed$.\\

\noindent$\CC\&\Star\implies\CC\&\BVT$, where \Star\  is the \DIT\  or the \RIT. Showing that \Star\  implies \BVT\  shall suffice. The hypothesis in \Star\  is that $f$ is continuous on $\abClosed$, so suppose that this is indeed true. 

For the case when \Star\  is the \DIT, the continuity of $f$ on $\abClosed$, together with Lemma~\ref{negfunctionLem}, which implies the continuity of $-f$ on $\abClosed$, the identity \eqref{DarbouxNeg} and also \DIT\  itself, implies that $-\abIntLow(-f)=\abIntUp f$ exists in $\R$, so we may use Lemma~\ref{DITtoBVTLem} to show that $f$ is bounded above on $\abClosed$. If \Star\  is the \RIT, use Lemma~\ref{UAStoRITLem}, and then Lemma~\ref{RITtoBVTLem}.\\

\noindent$\CC\&\BVT\implies\MVT$. We have the implications
\begin{eqnarray}
   \CC\&\BVT\implies\MCP\implies\UIC\iff\MVT,\nonumber
\end{eqnarray}
using Theorems~\ref{FourthCircle},\ref{SecondCircle},\ref{ThirdCircle}, respectively.\\

\noindent$\MVT\implies\FTCi$. Using Theorem~\ref{ThirdCircle}, then Theorem~\ref{SecondCircle} and then Theorem~\ref{FourthCircle}, we have the implications
\begin{eqnarray}
    \MVT\iff\EVT\iff\UIC\implies\IVT\implies\MCP\iff\UAS.
\end{eqnarray}
That is, \MVT\  implies the \EVT, the \IVT\  and \UAS, all of which, we shall use in the proof of the following.\\
\begin{adjustwidth}{6em}{}
\begin{enumerate}
    \item[\iMVT] \emph{Mean Value Theorem for Integrals.} If $f$ is continuous on $\abClosed$, then there exists\linebreak $\xi\in\abClosed$ such that $(b-a)f(\xi)=\int_a^bf$.\\
\end{enumerate}
\end{adjustwidth}

If $a=b$, then the desired equation is simply $0=0$. Thus, we assume henceforth that $a<b$, or that $b-a$ is positive. If $f$ is continuous on $\abClosed$, then by Lemma~\ref{negfunctionLem}, so is $-f$, and by the \EVT, there exist $c_1,c_2\in\abClosed$ such that $f\leq f(c_1)$ and $-f\leq -f(c_1)$ [where the right-hand sides are constants] on $\abClosed$. Consequently, $f(c_1)\leq f\leq f(c_2)$ on $\abClosed$. By \UAS, $f$ is Riemann integrable over $\abClosed$, so by Proposition~\ref{mfMIntProp}, $f(c_1)\  (b-a)\leq \int_a^bf\leq f(c_2)\  (b-a)$, every member of which, we divide by the positive number $b-a$ to obtain $f(c_1)\leq \frac{1}{b-a}\int_a^bf\leq f(c_2)$. If $f(c_1)= \frac{1}{b-a}\int_a^bf$ or $\frac{1}{b-a}\int_a^bf= f(c_2)$, then we may take $\xi$ to be either $c_1$ or $c_2$ and we are done. For the remaining case $f(c_1)< \frac{1}{b-a}\int_a^bf< f(c_2)$, we look at two subcases: $c_1\leq c_2$ or $c_2<c_1$. Since $c_1,c_2\in\abClosed$, we further have $a\leq c_1\leq c_2\leq b$ or $a\leq c_2<c_1\leq b$. In the former case, the continuity of $f$ on $\abClosed$ implies the continuity of $f$ on $[c_1,c_2]\sub\abClosed$, so using $f(c_1)< \frac{1}{b-a}\int_a^bf< f(c_2)$ and the \IVT, there exists $\xi\in[c_1,c_2]\sub\abClosed$ such that $f(\xi)=\frac{1}{b-a}\int_a^bf$. If $a\leq c_2<c_1\leq b$, then we use the continuity of $-f$ on $\abClosed$, which implies the continuity of $-f$ on $[c_2,c_1]\sub\abClosed$, and also turn the previous inequalities into $-f(c_2)< \frac{-1}{b-a}\int_a^bf< -f(c_1)$, to use the \IVT\  to obtain $\xi\in[c_2,c_1]\sub\abClosed$ for which $-f(\xi)=\frac{-1}{b-a}\int_a^bf$, or that $f(\xi)=\frac{1}{b-a}\int_a^bf$. This completes the proof of the \iMVT.

We now prove \FTCi. Suppose $f$ is continuous on $\abClosed$, let $c\in\abClosed$ and let $x\in\abClosed$ that is distinct from $c$. We show that regardless of whether $x<c$ or $c<x$ [which imply $a\leq x<c\leq b$ or $a\leq c<x\leq b$], we have the equation
\begin{flalign}
    &&f(tx+(1-t)c) & =\frac{\int_a^xf-\int_a^cf}{x-c}, & (\mbox{for some }t\in\R).\label{iMVTtoFTCiEQ}
\end{flalign}
The continuity of $f$ on $\abClosed$ implies the continuity of $f$ on either $[x,c]\sub\abClosed$ or $[c,x]\sub\abClosed$. By the \iMVT, either there exists $\xi\in[x,c]$ or $\zeta\in[c,x]$ such that $f(\xi)=\frac{\int_x^cf}{c-x}$ or $f(\zeta)=\frac{\int_c^xf}{x-c}$. We use $t=\frac{c-\xi}{c-x}$ if $x<c$, or $t=\frac{\zeta-c}{x-c}$ if $c<x$, and so, either $f(tx+(1-t)c)=\frac{\int_x^cf}{c-x}$ if $x<c$, or $f(tx+(1-t)c)=\frac{\int_c^xf}{x-c}$ if $c<x$. Using Lemma~\ref{IntAdditivityLem}, either $\int_a^cf=\int_a^xf+\int_x^cf$ if $a\leq x<c$, or $\int_a^xf=\int_a^cf+\int_c^xf$ if $a\leq x<c$. Thus, either $\int_a^cf-\int_a^xf=\int_x^cf$ if $a\leq x<c$, or $\int_a^xf-\int_a^cf=\int_c^xf$ if $a\leq x<c$. At this point, we have either $f(tx+(1-t)c)=\frac{\int_a^cf-\int_a^xf}{c-x}$ or $f(tx+(1-t)c)=\frac{\int_a^xf-\int_a^cf}{x-c}$, where the right-hand sides are related by $\frac{\int_a^xf-\int_a^cf}{x-c}=\frac{-1}{-1}\cdot \frac{\int_a^xf-\int_a^cf}{x-c}=\frac{\int_a^cf-\int_a^xf}{c-x}$. This proves \eqref{iMVTtoFTCiEQ}. For any filter base $\Base$ that approaches $c$, the function $\varphi:x\mapsto tx+(1-t)c$, by Proposition~\ref{AlgLimProp} satisfies $\BcLim \varphi=c$, and at this limit value, $f$ is continuous, so by Proposition~\ref{AlgDiffProp}\ref{AlgConti}, $f\lpar \BcLim \varphi\rpar=f(c)=\xcLim f(x)$. By Proposition~\ref{twoSidedLimProp}, we have $f\lpar\BcLim \varphi\rpar=f(c)=\xcLimL f(x)$ and $f\lpar\BcLim \varphi\rpar=f(c)=\xcLimR f(x)$. In the former, we make $\Base$ be the filter base for $x\into c-$, while for the latter, let $\Base$ be the filter base for $x\into c+$. That is, the limit of the left-hand side of \eqref{iMVTtoFTCiEQ} as $x\into c-$ and as $x\into c+$ are both $f(c)$, and we now have $\xcLimL\frac{\int_a^xf-\int_a^cf}{x-c}=f(c)=\xcLimR\frac{\int_a^xf-\int_a^cf}{x-c}$. By Proposition~\ref{twoSidedLimProp}, $\xcLim\frac{\int_a^xf-\int_a^cf}{x-c}=f(c)$. Hence, the function $x\mapsto \int_a^xf$ is differentiable at $c$, and the derivative is $c\mapsto f(c)$, or is $f$. Therefore, $x\mapsto \int_a^xf$ is an antiderivative of $f$.\\

\noindent$\MVT\implies\IAT$. Suppose $a\leq c<x\leq b$. If $F$ is an antiderivative of $f$ on $\abClosed$, then $F$ is differentiable on $\abClosed$, and in particular, on $(c,x)\sub\abClosed$. By Proposition~\ref{AlgDiffProp}\ref{DiffConti}, $F$ is continuous on $\abClosed$, and in particular, on $[c,x]\sub\abOpen$. By the \MVT, there exists $\xi\in(c,x)$ such that $F'(\xi)=\frac{F(x)-F(c)}{x-c}$, where the left-hand side is equal to $f(\xi)$, and is, by assumption, nonnegative. Thus, $0\leq \frac{F(x)-F(c)}{x-c}$. From $a\leq c<x\leq b$, we find that $x-c$ is positive, so we further have $0\leq F(x)-F(c)$, or that $F(c)\leq F(x)$. Hence, $a\leq c<x\leq b$ implies $F(c)\leq F(x)$, or that $F$ is monotonically increasing on $\abClosed$.\\

\noindent$\MVT\implies\FTCii$. Let $\varphi,\psi:\abClosed\into\R$ be step functions such that $\varphi\leq f\leq \psi$. By Proposition~\ref{anypartitionProp}, there exists a partition $\Delta = \{x_0,x_1,\ldots,x_n\}$ of $\abClosed$ and there exist real numbers $m_0$, $m_1$, $\ldots$ , $m_n$, $M_1$, $M_2$, $\ldots$ , $M_n$ such that $\int_a^b\varphi=\sum_{k=1}^n m_k(x_k-x_{k-1})$ and $\int_a^b\psi=\sum_{k=1}^n M_k(x_k-x_{k-1})$. Let $k\in\{1,2,\ldots,n\}$. Since $\varphi\leq f\leq \psi$,
\begin{eqnarray}
m_k=\varphi(x)\leq f(x)\leq \psi(x) = M_k,\qquad\qquad\mbox{if }x\in\lpar x_{k-1},x_k\rpar,\nonumber
\end{eqnarray}
and since, by the notation for partitions, $x_k>x_{k-1}$, or that $x_k-x_{k-1}>0$, we further have
\begin{eqnarray}
m_k(x_k-x_{k-1})\leq (x_k-x_{k-1})f(x)\leq M_k(x_k-x_{k-1}),\qquad\qquad\mbox{if }x\in\lpar x_{k-1},x_k\rpar.\label{FTC1}
\end{eqnarray}
Since $F$ is an antiderivative of $f$ on $\abClosed$, we find that $F$ is differentiable on $\abClosed$, and in particular, on $\lpar x_{k-1},x_k\rpar\sub\abClosed$ for any $k\in\{1,2,\ldots,n\}$. By Proposition~\ref{AlgDiffProp}\ref{DiffConti}, $F$ is continuous on $\abClosed$, and in particular, on $\lpar x_{k-1},x_k\rpar\sub\abClosed$ for any $k\in\{1,2,\ldots,n\}$. By the \MVT, there exists $\xi_k\in \lpar x_{k-1},x_k\rpar$ such that $F(x_k)-F(x_{k-1})=(x_k-x_{k-1})F'(\xi_k)=(x_k-x_{k-1})f(\xi_k)$, and so, if we set $x=\xi_k$ in \eqref{FTC1},
\begin{eqnarray}
m_k(x_k-x_{k-1})\quad\leq & F(x_k)-F(x_{k-1}) & \leq\quad M_k(x_k-x_{k-1}),\nonumber\\
\sum_{k=1}^n m_k(x_k-x_{k-1})\quad\leq & \displaystyle\sum_{k=1}^n F(x_k)- \sum_{k=1}^n F(x_{k-1}) & \leq\quad \sum_{k=1}^n M_k(x_k-x_{k-1}),\nonumber\\
\int_a^b\varphi \quad\leq& \displaystyle\sum_{k=1}^{n} F(x_k)- \sum_{k=0}^{n-1} F(x_{k})  &\leq\quad\int_a^b\psi,\nonumber\\
\int_a^b\varphi \quad\leq& F(x_n)+\displaystyle\sum_{k=1}^{n-1} F(x_k)- \sum_{k=1}^{n-1} F(x_{k}) - F(x_0)  &\leq\quad\int_a^b\psi,\nonumber\\
\int_a^b\varphi \quad\leq& F(b)-F(a)  &\leq\quad\int_a^b\psi.\label{FTC2}
\end{eqnarray}
At this point, we have proven that the real number $F(b)-F(a)$ has the property that, for any step functions $\varphi,\psi:\abClosed\into\R$, if $\varphi\leq f\leq \psi$ then $\int_a^b\varphi\leq F(b)-F(a)\leq\int_a^b\psi$. According to Proposition~\ref{RiemannUniqueProp}, the only real number with this property is $\int_a^bf$. Therefore $\int_a^bf=F(b)-F(a)$.\\

\noindent$\FTCi\&\IAT\implies\ADT$. Since $F$ and $G$ are both antiderivatives of $f$, we have $F'=f=G'$. [At least one such antiderivative exists according to \FTCi.] Using Proposition~\ref{AlgDiffProp}, we have\linebreak $(F-G)'=F'-G'=f-f=0$ and $(G-F)'=G'-F'=f-f=0$. Thus, $F-G$ and $G-F$ are antiderivatives of the zero function $0:\abClosed\into\R$, which is nonnegative on $\abClosed$, and by Example~\ref{contEx}, is continuous on $\abClosed$. By the \IAT, both $F-G$ and $G-F$ are monotonically increasing on $\abClosed$. That is, $a\leq c<x\leq b$ implies $(F-G)(c)\leq (F-G)(x)$ and $(G-F)(c)\leq (G-F)(x)$, where to both sides of the latter, we multiply $-1$ to obtain $(F-G)(c)\geq (F-G)(x)$. At this point, we have $(F-G)(c)=(F-G)(x)$ whenever $a\leq c<x\leq b$. Therefore, $F-G$ is a constant function.\\

\noindent$\FTCii\implies\ADT$. Suppose $a\leq c<x\leq b$. The continuity of $f$ on $\abClosed$ implies the continuity of $f$ on $[c,x]\sub\abClosed$. Since $F$ and $G$ are antiderivatives of $f$, by \FTCii, we obtain\linebreak $G(x)-G(c)=\int_c^xf=F(x)-F(c)$, so $G(x)-F(x)=G(c)-F(c)$, or that $(G-F)(x)=(G-F)(c)$. Therefore, $F-G$ is a constant function.\\

\noindent$\ADT\implies \CVT$. Suppose $a<c<x<b$. On the interval $[c,x]\sub\abOpen$, the derivatives of $f$ and the zero function $0$ are $f'$ and $0$, respectively, where for the former, by assumption, $f'=0$. That is, $0$ is the derivative of both $f$ and $0$. Equivalently, $f$ and $0$ are both antiderivatives of $0$, which, by Example~\ref{contEx}, is continuous on $[c,x]$. By the \ADT, the difference $f-0=f$ is constant on $[c,x]$. In particular, $f(c)=f(x)$. At this point, we have proven that $a<c<x<b$ implies $f(c)=f(x)$. Therefore, $f$ is constant on $\abOpen$. \\

\noindent$\CVT\implies\UAS$. We have the implications

$$\CVT\implies\UIC\implies\MCP\iff\UAS,$$
from Theorems~\ref{ThirdCircle},\ref{SecondCircle},\ref{FourthCircle}, respectively.
\end{proof} 

\  \\ \\

\section*{\centering Logical Relationships of Real Analysis Principles}

We now collect all the implications in the proofs of Theorems~\ref{FirstCircle}--\ref{FifthCircle}, and we summarize the logical connections in the following diagram.\\ \\

\begin{tikzpicture}[
scale=0.7,             
    transform shape,       
    double arrow/.style={
        double, 
        double distance=1.5pt, 
        draw=#1, 
        ->, 
        >={Latex[length=4pt, width=5pt, color=#1]}, 
        shorten >= 3pt, 
        shorten <= 3pt,
        rounded corners=10pt 
    },
    vertex/.style={
        draw, 
        rectangle, 
        rounded corners=5pt, 
        minimum width=40pt, 
        minimum height=20pt
    }
]

\node[vertex, draw=violet, fill=violet!10] (ADT) at (0,1.5) {\ADT};
\node[vertex, draw=violet, fill=violet!10] (FTC1) at (-3,3) {\FTCi\&\IAT};
\node[vertex, draw=violet, fill=violet!10] (FTC2) at (0,3) {\FTCii};
\node[vertex, draw=teal, fill=green!10] (MVT) at (0,4.5) {\MVT};

\node[vertex, draw=teal, fill=green!10] (TT) at (5.75,4.5) {\TT}; 
\node[vertex, draw=teal, fill=green!10] (CFT) at (11.5,3.5) {\CFT}; 
\node[vertex, draw=teal, fill=green!10] (IFT) at (11.5,4.5) {\IFT};
\node[vertex, draw=teal, fill=green!10] (PCP) at (11.5,5.5) {\PCP};
\node[vertex, draw=teal, fill=green!10] (CVT) at (17.25,4.5) {\CVT};
        
        \node[vertex, draw=teal, fill=green!10] (eMVT) at (0,6) {\eMVT};
        \node[vertex, draw=teal, fill=green!10] (RT) at (0,7.5) {\RT};
        \node[vertex, draw=teal, fill=green!10] (EVT) at (0,9) {\EVT};
        \node[vertex, draw=red, fill=red!10] (BWP) at (0,10.5) {\BWP};
        \node[vertex, draw=red, fill=red!10] (CCC) at (0,13.5) {\AP\&\CCC};

        \node[vertex, draw=brown, fill=yellow!10] (ES) at (3.875,9) {\ES};
        \node[vertex, draw=red, fill=red!10] (wNIP) at (3.875,10.5) {\AP\&\wNIP};
        \node[vertex, draw=red, fill=red!10] (sNIP) at (3.875,12) {\AP\&\sNIP};
        \node[vertex, draw=red, fill=red!10] (MCP) at (3.875,13.5) {\MCP};
        \node[vertex, draw=brown, fill=yellow!10] (CA) at (3.875,15) {\CA};
        \node[vertex, draw=brown, fill=yellow!10] (I4) at (3.875,16.5) {\RIC};

        \node[vertex, draw=blue, fill=blue!10] (BVT) at (7.75,13.5) {\CC\&\BVT};

        \node[vertex, draw=violet, fill=violet!10] (DIT) at (46.5/4,12) {\CC\&\DIT};
        \node[vertex, draw=blue, fill=blue!10] (UAS) at (46.5/4,13.5) {\UAS};
        \node[vertex, draw=violet, fill=violet!10] (RIT) at (46.5/4,15) {\CC\&\RIT};
       \node[vertex, draw=brown, fill=yellow!10] (IVT) at (8.3333,16.5) {\IVT}; 
\node[vertex, draw=brown, fill=yellow!10] (I3) at (12.7917,16.5) {\AAC};

        \node[vertex, draw=blue, fill=blue!10] (UCT) at (15.5,13.5) {\AP\&\UCT};
        \node[vertex, draw=blue, fill=blue!10] (LCL) at (15.5,12) {\AP\&\LCL};

        \node[vertex, draw=blue, fill=blue!10] (I5) at (15.5,10.5) {\UIK};


\node[vertex, draw=brown, fill=yellow!10] (I2) at (17.25,16.5) {\NTD};
        
        \node[vertex, draw=brown, fill=yellow!10] (I1) at (17.25,9) {\UIC};

        \draw[double arrow=violet] (ADT) -| (CVT);
        \draw[double arrow=violet] (FTC1) |- (ADT);
        \draw[double arrow=violet] (MVT) -| (FTC1);
        \draw[double arrow=violet] (MVT) -- (FTC2);
        \draw[double arrow=violet] (FTC2) -- (ADT);

\draw[double arrow=teal] (MVT) -- (TT);

        \draw[double arrow=teal] (TT) -- (CFT);
        \draw[double arrow=teal] (TT) -- (PCP);
        
        \draw[double arrow=teal] (TT) -- (IFT);
        \draw[double arrow=teal] (CFT)--(CVT);
        \draw[double arrow=teal] (PCP)--(CVT);
        \draw[double arrow=teal] (IFT)--(CVT);
        \draw[double arrow=teal] (CVT)--(I1);
        \draw[double arrow=teal] (BWP)--(EVT);
        \draw[double arrow=teal] (EVT)--(RT);
        \draw[double arrow=teal] (RT)--(eMVT);
        \draw[double arrow=teal] (eMVT)--(MVT);
        \draw[double arrow=teal] (ES)--(EVT);

        \draw[double arrow=violet] (UAS)-- (RIT);
        \draw[double arrow=violet] (UAS)-- (DIT);
        \draw[double arrow=violet] (RIT)-| (BVT);
        \draw[double arrow=violet] (DIT)-| (BVT);

        \draw[double arrow=blue] (wNIP) -- (I5);
        \draw[double arrow=blue] (I5) -- (LCL);
        \draw[double arrow=blue] (LCL) -- (UCT);
        \draw[double arrow=blue] (UCT) -- (UAS);
        \draw[double arrow=blue] (UAS) -- (BVT);
        \draw[double arrow=blue] (BVT) -- (MCP);

        \draw[double arrow=brown] (wNIP) -- (ES);
        \draw[double arrow=brown] (ES) -- (I1);
        \draw[double arrow=brown] (I1) -- (I2);
        \draw[double arrow=brown] (I2) -- (I3);
        \draw[double arrow=brown] (I3) -- (IVT);
        \draw[double arrow=brown] (IVT) -- (I4);
        \draw[double arrow=brown] (I4) -- (CA);
        \draw[double arrow=brown] (CA) -- (MCP);

        \draw[double arrow=red] (CCC) -- (MCP);
        \draw[double arrow=red] (MCP) -- (sNIP);
        \draw[double arrow=red] (sNIP) -- (wNIP);
        \draw[double arrow=red] (wNIP) -- (BWP);
        \draw[double arrow=red] (BWP) -- (CCC);
\end{tikzpicture}\\ \\

The First, Second and Fourth Circles, as proven in Theorems~\ref{FirstCircle},\ref{SecondCircle},\ref{FourthCircle}, may easily be traced on the above diagram. The Third Circle from Theorem~\ref{ThirdCircle} has some more involvement with the First and Second Circles: start at the \EVT, go down to the \MVT, go to the right until the \CVT, but then we need to go up to \NTD, go to the left to \RIC, and trace from there downward paths to the \BWP\  and \ES\  to get back to the \EVT. The situation is less simple regarding the Fifth Circle, for as we pointed out regarding the proof of Theorem~\ref{FifthCircle}, there is much more dependence on previously established equivalences from the other four circles. For the Fifth Circle, we just have to think of \UAS\  and \UIC\ as equivalent, and this has been established in previous circles of real analysis principles. That is, even if there is no bidirectional arrow in the above diagram connecting \UAS\  and \UIC, such exists based on previous equivalences. All we need now is a path from the \UAS\  to \UIC. Ignoring the direct path from the $\UAS$ to $\CC\&\BVT$, there are two paths from $\UAS$ to $\CC\&\BVT$, one passing through $\CC\&\RIT$ and the other through $\CC\&\DIT$. [In the proof of Theorem~\ref{FifthCircle} for these implications, we needed the equivalence of the \UAS\  to the \ES, which is true based on previous circles, but the diagram does not reflect this anymore, for simplicity.] From $\CC\&\BVT$, trace a path that goes down to the \MVT, from which, two paths branch out but both go to the \ADT. [As may be seen in the proof of Theorem~\ref{FifthCircle}, to prove that the \MVT\  implies the \FTCi, we needed the equivalence of the \MVT\  to the \IVT\  and the \EVT. This is also not shown anymore in the above diagram for simplicity.] From the \ADT, we go to the \CVT, then to \UIC, as desired. That is the Fifth Circle. 

Completing the five circles of real analysis theorems seems to be a good tour of topology, and this may give a concrete impression on how topology serves as the foundation for the calculus that we have learned in the undergraduate level. The claim in \cite{rie01} that the arrangement of the theorems in the five circles is in such a way that the equivalence to \ES\  [or \CA] may be seen ``without much extra work,'' cannot be proven, or at least cannot be proven easily, but the conceptual framework provided by the five circles gives us a good organizing principle for the jungle that real analysis is. We hope to have provided a good supplement for anyone intending to read the main references \cite{dev14,pro13,rie01,tei13}, and also to anyone studying real analysis.



\begin{thebibliography}{99}
\bibitem{can24}
R. R. S. Cantuba, ``Littlewood's principles in reverse real analysis,'' \emph{Real Anal. Exch.}, vol. 49, no. 1, pp. 111--122, 2024, doi: 10.14321/realanalexch.49.1.1677489725.

\bibitem{dev14}
M. Deveau and H. Teismann, ``72+42: characterizations of the completeness and Archimedean properties of ordered fields,'' \emph{Real Anal. Exch.}, vol. 39, no. 2, pp. 261--304, 2014, doi: 10.14321/realanalexch.39.2.0261.

\bibitem{dix84}
J. Dixmier, \emph{General Topology. Transl. from the French}, Cham, Switzerland: Springer, 1984.

\bibitem{lit15}
C.~H.~C. Little, K.~L. Teo, and B.~van~Brunt, \emph{Real Analysis via Sequences and Series}. New York, NY, USA: Springer, 2015.

\bibitem{mon01}
M. Moniri and J. Eivazloo, ``Using nets in Dedekind, monotone, or Scott incomplete ordered fields and definabiltiy issues,'' in \textit{Topology Atlas}, Prague, 2001, pp. 195--203.

\bibitem{mon08}
M. Moniri and J. Eivazloo, ``Uniformly continuous $1$-$1$ functions on ordered fields not mapping interior to interior,'' \textit{Mashdad Research Journal of Mathematical Sciences}, vol. 1, pp. 59--65, 2008.

\bibitem{olm73}
J.~M.~H. Olmsted, ``Riemann integration in ordered fields,'' \emph{Two-Year Coll. Math. J.}, vol. 4, no. 2, pp. 34--40, 1973, doi: 10.2307/3026441.

\bibitem{pro13}
J. Propp, ``Real analysis in reverse,'' \emph{Am. Math. Mon.}, vol. 120, no. 5, pp. 392--408, 2013, doi: 10.4169/amer.math.monthly.120.05.392.

\bibitem{ram15}
M.~Raman-Sundstr{\"o}m, ``A pedagogical history of compactness,'' \emph{Am. Math. Mon.}, vol. 122, no. 7, pp. 619--635, 2015, doi: 10.4169/amer.math.monthly.122.7.619.

\bibitem{rie01}
O. Riemenschneider, ``37 elementare axiomatische Charakterisierungen des reellen Zahlk{\"o}rpers,'' \emph{Mitt. Math. Ges. Hamburg}, vol. 20, pp. 71--95, 2001. [Online]. Available: \url{https://www.math.uni-hamburg.de/home/riemenschneider/axiommit.pdf}. (English transl. available: \url{https://www.math.uni-hamburg.de/home/riemenschneider/axioms.pdf})

\bibitem{sie92}
A. J. Sieradski, \emph{An Introduction to Topology and Homotopy}, Boston, MA, USA: PWS-Kent Publishing Company, 1992.

\bibitem{tei13}
H. Teismann, ``Toward a more complete list of completeness axioms,'' \emph{Am. Math. Mon.}, vol. 120, no. 2, pp. 99--115, 2013, doi: 10.4169/amer.math.monthly.120.02.099.

\bibitem{tho07}
B.~S. Thomson, ``Rethinking the elementary real analysis course,'' \emph{Am. Math. Mon.}, vol. 114, no. 6, pp. 469--490, 2007, doi: 10.1080/00029890.2007.11920437.
\end{thebibliography}
\end{document}